\setlist{topsep=-9pt} 
\def\namedlabel#1#2{\begingroup
    #2%
    \def\@currentlabel{#2}%
    \phantomsection\label{#1}\endgroup
}
\definecolor{light-gray}{gray}{0.60}
\definecolor{darkred}{rgb}{0.9,0.,.2}
\definecolor{darkblue}{rgb}{0.,0.,.6}
\definecolor{darkgreen}{rgb}{0.1,0.7,0.1}
\definecolor{light-gray}{gray}{0.60}
\theoremstyle{plain}
\newtheorem{theorem}{Theorem}
\newtheorem{proposition}[theorem]{Proposition}
\newtheorem{corollary}[theorem]{Corollary}
\newtheorem{lemma}[theorem]{Lemma}
\newtheorem{fact}[theorem]{Fact}
\theoremstyle{definition}
\newtheorem{definition}[theorem]{Definition}
\newtheorem{example}[theorem]{Example}
\newtheorem{remark}[theorem]{Remark}
\numberwithin{theorem}{section}
\numberwithin{equation}{section}
\newcommand{\C}{\mathcal{C}}
\newcommand{\NN}{\mathbb{N}}
\newcommand{\ZZ}{\mathbb{Z}}
\newcommand{\RR}{\mathbb{R}}
\newcommand{\HH}{\mathbb{H}}
\newcommand{\PP}{\mathbb{P}}
\newcommand{\SL}{\mathrm{SL}}
\newcommand{\GL}{\mathrm{GL}}
\newcommand{\OO}{\mathrm{O}}
\newcommand{\PGL}{\mathrm{PGL}}
\newcommand{\Hom}{\mathrm{Hom}}
\newcommand{\ie}{i.e.\ }
\newcommand{\eg}{e.g.\ }
\newcommand{\resp}{resp.\ }
\newcommand{\Cart}{\mathcal{A}}
\newcommand{\partiali}{\partial_{\mathrm{i}}}
\newcommand{\partialn}{\partial_{\mathrm{n}}}
\newcommand{\Lambdao}{\Lambda^{\mathsf{orb}}}
\newcommand{\Ccore}{\C^{\mathsf{cor}}}
\newcommand{\OhmVin}{\Omega_{\scriptscriptstyle\mathrm{TV}}}
\renewcommand{\leq}{\leqslant}
\renewcommand{\geq}{\geqslant}
\title{Convex cocompactness for Coxeter groups}
\author[J. Danciger]{Jeffrey Danciger}
\address{Department of Mathematics, The University of Texas at Austin, 1 University Station C1200, Austin, TX 78712, USA}
\email{jdanciger@math.utexas.edu}
\author[F. Gu\'eritaud]{Fran\c{c}ois Gu\'eritaud}
\address{CNRS and IRMA, Universit\'e de Strasbourg, 7 rue Ren\'e Descartes, 67084 Strasbourg Cedex, France}
\email{francois.gueritaud@unistra.fr}
\author[F. Kassel]{Fanny Kassel}
\address{CNRS and Laboratoire Alexander Grothendieck, Institut des Hautes \'Etudes Scientifiques, Universit\'e Paris-Saclay, 35 route de Chartres, 91440 Bures-sur-Yvette, France}
\email{kassel@ihes.fr}
\author[G.-S. Lee]{Gye-Seon Lee}
\address{Department of Mathematical Sciences and Research institute of Mathematics, Seoul National University, Seoul 08826, South Korea}
\email{gyeseonlee@snu.ac.kr}
\author[L. Marquis]{Ludovic Marquis}
\address{Universit\'e de Rennes, CNRS, IRMAR - UMR 6625, 35000 Rennes, France}
\email{ludovic.marquis@univ-rennes.fr}
\dedicatory{Dedicated to the memory of Ernest Borisovich Vinberg (1937--2020)}
\subjclass[2010]{20F55, 22E40, 57M50, 57S30}
\begin{document}

\begin{abstract}
We investigate representations of Coxeter groups into $\GL(n,\RR)$ as geometric reflection groups which are convex cocompact in the projective space $\PP(\RR^n)$.
We characterize which Coxeter groups admit such representations, and we fully describe the corresponding spaces of convex cocompact representations as reflection groups, in terms of the associated Cartan matrices.
The Coxeter groups that appear include all infinite, word hyperbolic Coxeter groups; for such groups the representations as reflection groups that we describe are exactly the projective Anosov ones.
We also obtain a large class of nonhyperbolic Coxeter groups, thus providing many examples for the theory of nonhyperbolic convex cocompact subgroups in $\PP(\RR^n)$ developed in \cite{dgk-proj-cc}.
\end{abstract}

\keywords{Coxeter groups, reflection groups, discrete subgroups of Lie groups, convex cocompact subgroups, Anosov representations}

\maketitle

\setlength{\parskip}{0em}
\tableofcontents
\setlength{\parskip}{1em}

\section{Introduction} \label{sec:intro}

In the setting of semisimple Lie groups of real rank one, convex cocompact subgroups are a well-studied middle ground between the very special class of cocompact lattices and the general class of all finitely generated discrete subgroups.
The actions of such groups on the associated Riemannian symmetric space and its visual boundary at infinity are known to exhibit many desirable geometric and dynamical properties.

In the setting of higher-rank semisimple Lie groups, however, it is not obvious how to generalize the notion of convex cocompact subgroup in a way that guarantees similar desirable properties while at the same time being flexible enough to admit large interesting classes of examples.
In particular, the most natural generalization of convex cocompactness, in higher-rank Riemannian symmetric spaces, turns out to be quite restrictive, see \cite{kl06,qui05}.

In \cite{dgk-proj-cc}, the first three authors investigated several notions of convex cocompactness for discrete subgroups of the higher-rank semisimple Lie group $\PGL(V)$ acting on the projective space $\PP(V)$, where $V$ is a finite-dimensional real vector space.
This generalized both the classical theory of convex cocompactness in real hyperbolic geometry, and the theory of \emph{divisible convex sets}, as developed in particular by Benoist (see \cite{ben08}).
These notions of convex cocompactness in $\PP(V)$ come with a rich world of examples: see \eg \cite{mes90,ben06,mar10,bm12,cm14,bar15,cl15,mar17,bdl18,clm-dehnfill,dgk-ccHpq,dgk-proj-cc,zim,lm18,dgk-ex-cc,clm-ecima}.
The goal of the present paper is to study these notions in the context of Vinberg's theory of discrete reflection groups.

Reflection groups are a major source of interesting examples in the study of Kleinian groups and hyperbolic geometry.
While cocompact reflection groups in the real hyperbolic space $\HH^m$ may exist only for dimensions~$m \leqslant 29$ (see \cite{vin84}), there are rich families of convex cocompact reflection groups in $\HH^m$ for any dimension $m\geq 2$ (see \eg \cite[\S\,4]{dh13}).

In~\cite{vin71}, Vinberg studied a more general notion of reflection group in the setting of general linear groups $\GL(V)$, giving a necessary and sufficient condition for the linear reflections in the hyperplanes bounding a convex polyhedral cone $\widetilde{\Delta}$ in~$V$ to generate a discrete subgroup $\Gamma$ of $\GL(V)$ such that the $\Gamma$-translates of $\operatorname{Int}\,(\widetilde{\Delta})$ are mutually disjoint.
We will call these discrete groups \emph{reflection groups}.\footnote{Vinberg referred to these groups as \emph{discrete linear groups generated by reflections}, or simply \emph{linear Coxeter groups} \cite[Def.\,2]{vin71}.}
When infinite, they naturally identify with discrete subgroups of $\PGL(V)$ via the projection $\GL(V) \to \PGL(V)$.
Vinberg's construction includes the reflection groups in~$\HH^m$ as a special case for which the reflections preserve a nondegenerate quadratic form of signature $(m,1)$ on~$V$.
However, the construction also gives large families of interesting discrete subgroups of $\GL(V)$ which are not contained in $\OO(m,1)$, including some that preserve nondegenerate quadratic forms of signature other than $(m,1)$, and many others that do not preserve any nonzero quadratic form at all.
This is a rich source of examples of discrete subgroups of infinite covolume in higher-rank semisimple Lie groups (see \eg \cite{kv67,benQI,ben06,clm-dehnfill,lm18}).

The goal of the paper is to give an explicit characterization of the notions of convex cocompactness from \cite{dgk-proj-cc} in the setting of Vinberg's reflection groups, with an application to the study of Anosov representations of word hyperbolic Coxeter groups.

\subsection{Convex cocompactness in projective space}

Let $V$ be a real vector space of dimension $n\geq 2$.
We say that an open subset $\Omega$ of the projective space $\PP(V)$ is \emph{convex} if it is contained and convex in some affine chart, and \emph{properly convex} if it is convex and bounded in some affine chart.
Here are the three notions of projective convex cocompactness from \cite{dgk-proj-cc} which we consider.

\begin{definition} \label{def:cc-group}
An infinite discrete subgroup $\Gamma$ of $\GL(V)$ is
\begin{enumerate}[label=(\roman*)]
  \item\label{item:def-naiv-cc} \emph{naively convex cocompact in $\PP(V)$} if it acts properly discontinuously on some properly convex open subset $\Omega$ of $\PP(V)$, and cocompactly on some nonempty $\Gamma$-invariant closed convex subset $\C$ of~$\Omega$;
  \item\label{item:def-cc} \emph{convex cocompact in $\PP(V)$} if \ref{item:def-naiv-cc} holds and $\C \subset \Omega$ can be taken ``large enough'', in the sense that the closure of $\C$ in $\PP(V)$ contains all accumulation points of all possible $\Gamma$-orbits $\Gamma \cdot y$ with $y \in \Omega$;
  \item\label{item:def-strong-cc} \emph{strongly convex cocompact in $\PP(V)$} if \ref{item:def-naiv-cc} holds and $\Omega$ can be taken so that its boundary $\partial \Omega$ is strictly convex (\ie does not contain any nontrivial projective segment) and $C^1$ (\ie every point has a unique supporting hyperplane).
\end{enumerate}
\end{definition}

We note that \ref{item:def-strong-cc} implies \ref{item:def-cc}, as all $\Gamma$-orbits of~$\Omega$ have the same accumulation points when $\Omega$ is strictly convex (Remark~\ref{rem:strict-convex-Lambda}).

The three notions of convex cocompactness in Definition~\ref{def:cc-group} may seem quite similar at first glance, and they are indeed equivalent for discrete subgroups of $\mathrm{Isom}(\HH^m) \subset \OO(m,1)$.
However, the three notions admit a number of subtle differences in general.
In particular, naive convex cocompactness is not always stable under small deformations (see \cite[Rem.\,4.5.(b)]{dgk-proj-cc} and \cite{dgk-ex-cc}), whereas convex cocompactness and strong convex cocompactness are \cite[Th.\,1.16]{dgk-proj-cc}.
By \cite[Th.\,1.15]{dgk-proj-cc}, an infinite discrete subgroup of $\GL(V)$ is strongly convex cocompact in $\PP(V)$ if and only if it is word hyperbolic and convex cocompact~in~$\PP(V)$.

In the case that $\C=\Omega$ in Definition~\ref{def:cc-group}, we say that $\Gamma$ \emph{divides}~$\Omega$, and that $\Omega$ is \emph{divisible}.
Divisible convex sets have been much studied since the 1960s.
Examples with $\Gamma$ nonhyperbolic (or equivalently $\partial\Omega$ not strictly convex --- hence \ref{item:def-cc} is satisfied but not \ref{item:def-strong-cc}) include the case that $\Omega$ is the projective model of the Riemannian symmetric space of $\SL(k,\RR)$ with $k\geq 3$, and $\Gamma$ a uniform lattice (see \eg \cite[\S\,2.4]{ben08}).
The first irreducible and nonsymmetric examples with $\Gamma$ nonhyperbolic were constructed by Benoist \cite{ben06} for $4\leq\dim V\leq 7$, taking $\Gamma$ to be a reflection group.
Later, further examples were found in \cite{mar10,bdl18,clm-dehnfill,clm-ecima}.

In the current paper, we give a necessary and sufficient condition (Theorem~\ref{thm:main}) for a reflection group to be convex cocompact in $\PP(V)$ in the sense of Definition~\ref{def:cc-group}.
As a consequence, in the setting of \emph{right-angled} reflection groups, the three notions of convex cocompactness in Definition~\ref{def:cc-group} are equivalent, and right-angled convex cocompact reflection groups are always word hyperbolic (Corollary~\ref{cor:main-racg}).
For general reflection groups, we prove (Theorems \ref{thm:naive-cc-exists} and~\ref{thm:main}) that naive convex cocompactness is always equivalent to convex cocompactness, but that there also exist many non-right-angled reflection groups which are convex cocompact without being strongly convex cocompact in $\PP(V)$ (beyond the examples of \cite{ben06,mar10,bdl18,clm-dehnfill,clm-ecima}); these groups are not hyperbolic anymore, but relatively hyperbolic (Corollary~\ref{cor:rel-hyp}).

\subsection{Vinberg's theory of reflection groups} \label{subsec:intro-Vinberg-theory}

Let
$$W_S = \big\langle s_1,\dots,s_N ~|~ (s_i s_j)^{m_{i,j}}=1,\,\, \forall \ 1\leq i,j\leq N\big\rangle$$
be a Coxeter group with generating set $S = \{s_1,\dots,s_N\}$, where $m_{i,i}=1$ and $m_{i,j} = m_{j,i} \in\{2,3,\dots,\infty\}$ for $i\neq j$.
(By convention, $(s_i s_j)^{\infty}=1$ means that $s_is_j$ has infinite order in the group~$W_S$.)
Each generator~$s_i$ has order two.
For any subset $S'$ of $S$, we denote by $W_{S'}$ the subgroup of $W_S$ generated by $S'$, which we call a \emph{standard subgroup} of $W_S$.
Recall that $W_S$ is said to be \emph{right-angled} if $m_{i,j}\in\{2,\infty\}$ for all $i\neq j$, and \emph{irreducible} if it cannot be written as the direct product of two nontrivial standard subgroups.
An irreducible Coxeter group is said to be \emph{spherical} if it is finite, and \emph{affine} if it is infinite and virtually abelian.
We refer to Appendix~\ref{appendix:classi_diagram} for the list of all spherical and all affine irreducible Coxeter groups.

In the whole paper, $V$ will denote a finite-dimensional real vector space, and we shall use the following terminology and notation.

\begin{definition} \label{def:refl-group}
A representation $\rho : W_S\to\GL(V)$ is a \emph{representation of~$W_S$ as a reflection group in~$V$} if
\begin{itemize}
  \item each element $\rho(s_i)$ is a hyperplane reflection, of the form $(x \mapsto x - \alpha_i(x) v_i)$ for some linear form $\alpha_i \in V^\ast$ and some vector $v_i \in V$ with $\alpha_i(v_i) = 2$;
  \item the convex polyhedral cone $\widetilde{\Delta} := \{ v\in V \,|\, \alpha_i(v)\leq 0 \ \forall i\}$ has nonempty interior;
  \item $\rho(\gamma) \cdot \operatorname{Int}\,(\widetilde \Delta) \cap \operatorname{Int}\,(\widetilde \Delta) = \emptyset$ for all $\gamma \in W_S \smallsetminus \{e\}$.
\end{itemize}
In this case the matrix $\Cart=(\alpha_i(v_j))_{1\leq i,j\leq N}$ is called a \emph{Cartan matrix} for~$\rho$.
\end{definition}

We denote by $\Hom^{\mathrm{ref}}(W_S, \GL(V))$ the set of all representations of~$W_S$ as a reflection group in~$V$.
Note that any $\rho \in \Hom^{\mathrm{ref}}(W_S, \GL(V))$ is faithful and discrete.
Vinberg \cite[Th.\,2]{vin71} showed that the group $\rho(W_S)$ preserves an open convex cone of~$V$, namely the interior $\widetilde{\Omega}_{\scriptscriptstyle\mathrm{TV}}$ of the union of all $\rho(W_S)$-translates of the fundamental polyhedral cone~$\widetilde \Delta$.
This cone also appears in the work of Tits \cite{tit61}; we call it the \emph{Tits--Vinberg cone}.
The group $\rho(W_S)$ acts properly discontinuously on $\widetilde{\Omega}_{\scriptscriptstyle\mathrm{TV}}$. 
The action of $\rho(W_S)$ is still properly discontinuous on the image $\OhmVin \subset \PP(V)$ of $\widetilde{\Omega}_{\scriptscriptstyle\mathrm{TV}}$, which, for $W_S$ infinite and irreducible, is a nonempty convex open subset of the projective space $\PP(V)$ (the \emph{Tits--Vinberg domain}).
The composition of $\rho$ with the projection $\GL(V) \to \PGL(V)$ is then still faithful and discrete.

The pair $(\alpha, v)\in V^* \times V$ defining a reflection is unique up to the action of $\RR^*$ by $t\cdot (\alpha, v)=(t\alpha, t^{-1}v)$.
The reflections $\rho(s_1),\dots, \rho(s_N)$ therefore determine the matrix $(\alpha_i(v_j))_{1\leq i,j \leq N}$ up to conjugation by a diagonal matrix.
In particular, they determine the real numbers $\alpha_i(v_j)\,\alpha_j(v_i)$.
Vinberg \cite[Th.\,1 \& Prop.\,6, 13, 17]{vin71} gave the following characterization: a representation $\rho : W_S \to \GL(V)$ belongs to $\Hom^{\mathrm{ref}}(W_S, \GL(V))$ if and only if the $\alpha_i$ and $v_j$ can be chosen so that the matrix $\Cart = (\Cart_{i,j})_{1\leq i,j\leq N} = (\alpha_i(v_j))_{1\leq i,j\leq N}$ satisfies the following five conditions:
\begin{enumerate}[label=(\roman*)]
  \item\label{item:cartan-matrix-entry=0} $\Cart_{i,j}=0$ for all $i\neq j$ with $m_{i,j}=2$;
  \item\label{item:cartan-matrix-entry<0} $\Cart_{i,j}<0$ for all $i\neq j$ with $m_{i,j} \neq 2$;
  \item\label{item:cartan-matrix-entry=4cos} $\Cart_{i,j}\Cart_{j,i} = 4 \cos^2(\pi/m_{i,j})$ for all $i\neq j$ with $2 < m_{i,j} < \infty$;
  \item \label{item:Vinberg-weak-inequality} $\Cart_{i,j}\Cart_{j,i}\geq 4$ for all $i\neq j$ with $m_{i,j} = \infty$; and 
  \item\label{item:nonempty-interior} $\widetilde{\Delta} := \{ v\in V \,|\, \alpha_i(v)\leq 0 \ \forall i\}$ has nonempty interior.
\end{enumerate}
In that case, $\Cart$ is a Cartan matrix for $\rho$ and is unique up to conjugation by positive diagonal matrices.
Conditions \ref{item:cartan-matrix-entry=0}--\ref{item:cartan-matrix-entry<0}--\ref{item:cartan-matrix-entry=4cos}--\ref{item:Vinberg-weak-inequality} are semialgebraic, and so is \ref{item:nonempty-interior} by quantifier elimination (see \cite[Prop.\,2.2.4]{bcr}).
Thus $\Hom^{\mathrm{ref}}(W_S, \GL(V))$ is a semialgebraic subset of $\Hom(W_S, \GL(V))$.

If $W_S$ is infinite, irreducible, and not affine, then \ref{item:nonempty-interior} is always satisfied (see Sections~\ref{subsec:Coxeter-type}--\ref{subsec:neg-type}); this implies in particular that $\Hom^{\mathrm{ref}}(W_S, \GL(V))$ is open and closed in the space $\Hom^{\mathrm{fd}}(W_S, \GL(V))$ of all faithful and discrete representations of $W_S$ into $\GL(V)$ (Remark~\ref{rem:open-closed-in-Hom-df}.\eqref{item:open-closed-in-Hom-df}).
We shall prove (Proposition~\ref{prop:maximal}) that in this case the Tits--Vinberg domain $\OhmVin$ is maximal in the sense that it contains every $\rho(W_S)$-invariant properly convex open subset of $\PP(V)$.
We shall also describe (Theorem~\ref{thm:minimal_convex}) the minimal nonempty $\rho(W_S)$-invariant properly convex open subset of $\PP(V)$.

If $W_S$ is irreducible and affine, then $\Hom^{\mathrm{ref}}(W_S, \GL(V))$ is not always closed in\linebreak $\Hom^{\mathrm{fd}}(W_S, \GL(V))$: see Remark~\ref{rem:JWST}.

\subsection{Coxeter groups admitting convex cocompact realizations} \label{subsec:intro-main}

Our first main result is a characterization of those Coxeter groups admitting representations as reflection groups in~$V$ which are convex cocompact in $\PP(V)$.
For this we consider the following two conditions on~$W_S$:
\begin{enumerate}
  \item[\namedlabel{item:no-Z2}{$\neg$\!\texttt{(IC)}}] there do not exist disjoint subsets $S',S''$ of~$S$ such that $W_{S'}$ and $W_{S''}$ are both infinite and commute;
  \item[\namedlabel{item:aff-only-Ak}{\texttt{($\widetilde{\text{\texttt{A}}}$)}}] for any subset $S'$ of~$S$ with $\#S'\geq 3$, if $W_{S'}$ is irreducible and affine, then it is of type $\widetilde{A}_k$ where $k=\#S'-1$ (see Table~\ref{table:affi_diag} in Appendix~\ref{appendix:classi_diagram}).
\end{enumerate}

\begin{theorem} \label{thm:naive-cc-exists}
For an infinite Coxeter group $W_S$, the following are equivalent:
\begin{enumerate}
  \item\label{item:naive-cc-exists} there exist a finite-dimensional real vector space~$V$ and a representation $\rho\in\Hom^{\mathrm{ref}}(W_S,\GL(V))$ such that $\rho(W_S)$ is naively convex cocompact in $\PP(V)$;
  \item\label{item:cond-a-b} $W_S$ satisfies conditions \ref{item:no-Z2} and~\ref{item:aff-only-Ak}.
\end{enumerate}
In this case, $\rho(W_S)$ is actually convex cocompact in $\PP(V)$ (not only naively convex cocompact), and we can take $V$ to be any vector space of dimension $\geq\# S$.
\end{theorem}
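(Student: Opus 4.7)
The plan is to reduce the equivalence to the paper's main characterization of convex cocompact reflection groups (Theorem~\ref{thm:main}), which gives an explicit condition on a Cartan matrix $\Cart$ for the corresponding representation $\rho \in \Hom^{\mathrm{ref}}(W_S,\GL(V))$ to be convex cocompact. The existence question in~\ref{item:naive-cc-exists} then becomes the existence of a $\Cart$ satisfying both Vinberg's axioms and the Cartan-matrix criterion of Theorem~\ref{thm:main}; I want to show this is possible precisely when $W_S$ satisfies \ref{item:no-Z2} and \ref{item:aff-only-Ak}. The strengthening from naive convex cocompactness to convex cocompactness asserted in the final clause should come for free from the construction used for the sufficiency direction.

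For necessity $(1)\!\Rightarrow\!(2)$, I would argue contrapositively. If \ref{item:no-Z2} fails, then $\rho(W_S)$ contains two commuting infinite reflection subgroups $\rho(W_{S'})$ and $\rho(W_{S''})$; choosing infinite-order elements in each, their joint attracting/repelling dynamics in $\PP(V)$ produce a flat-like accumulation set on the boundary of any $\rho(W_S)$-invariant properly convex open subset of $\PP(V)$, which is incompatible with the existence of a properly convex invariant open set containing a cocompact $\rho(W_S)$-invariant core. If \ref{item:aff-only-Ak} fails, a standard subgroup $W_{S'}$ with $\#S' \geq 3$ is irreducible affine of type other than $\widetilde A_{k}$; the rank and geometry of the translation lattice in its Tits affine representation are then too large to embed as a ``cusp stabilizer'' in a convex cocompact reflection group, yielding the analogous obstruction by comparing the translation part to the codimension of the invariant affine hyperplane in $\OhmVin$.

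For sufficiency $(2)\!\Rightarrow\!(1)$, I would take $V = \RR^N$ with $N = \#S$ and construct a Cartan matrix $\Cart$ satisfying conditions~(i)--(v) of Vinberg: set $\Cart_{i,j} = \Cart_{j,i} = -2\cos(\pi/m_{i,j})$ when $m_{i,j}<\infty$ (so entries vanish for $m_{i,j}=2$), and choose $\Cart_{i,j}\Cart_{j,i} > 4$ \emph{strictly} for every $i,j$ with $m_{i,j}=\infty$. Since $W_S$ is infinite and irreducible, Vinberg's positivity condition~(v) is automatic (cf.\ Sections~\ref{subsec:Coxeter-type}--\ref{subsec:neg-type}). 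I would then invoke Theorem~\ref{thm:main} to conclude that $\rho(W_S)$ is convex cocompact (not merely naively so) in $\PP(V)$; the hypotheses \ref{item:no-Z2} and \ref{item:aff-only-Ak} are exactly what is needed to verify that theorem's Cartan-matrix criterion, since with the strict choice on infinite edges the only remaining failure modes would be commuting infinite standard subgroups or non-$\widetilde A_k$ affine standard subgroups. For $\dim V > N$, extend $\rho$ by the trivial representation on a complementary subspace.

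The main obstacle is the sufficiency direction when $W_S$ contains affine standard subgroups of type $\widetilde A_k$ with $k \geq 2$, i.e.\ when $W_S$ is not word-hyperbolic. In that regime the images of these affine pieces act ``parabolically'' on $\OhmVin$, and one must check that they fit into the convex core compatibly with part~\ref{item:def-cc} of Definition~\ref{def:cc-group}, using the precise description of the maximal invariant domain $\OhmVin$ from Proposition~\ref{prop:maximal} together with the minimal invariant properly convex open set of Theorem~\ref{thm:minimal_convex}. This is the nonhyperbolic, non-strictly-convex setting in which the three notions of convex cocompactness in Definition~\ref{def:cc-group} become genuinely delicate (compare Corollary~\ref{cor:rel-hyp}), and a careful accumulation-point analysis there is what justifies the final upgrade ``naively convex cocompact $\Rightarrow$ convex cocompact'' in the theorem statement.
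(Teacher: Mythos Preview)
Your sufficiency argument has a genuine gap. You propose to take $\Cart_{i,j}=\Cart_{j,i}=-2\cos(\pi/m_{i,j})$ for all finite $m_{i,j}$ and then invoke Theorem~\ref{thm:main}. But if $W_{S'}$ is a standard subgroup of type $\widetilde A_k$ with $k\geq 2$, then every edge in $S'$ has $m_{i,j}\in\{2,3\}$, so your Cartan submatrix $\Cart_{W_{S'}}$ is exactly the standard symmetric Cartan matrix of $\widetilde A_k$, which is of zero type and has $\det\Cart_{W_{S'}}=0$. Thus condition~\ref{item:not-zero-type-explicit} of Theorem~\ref{thm:main} fails and the theorem does not apply. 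Your claim that ``with the strict choice on infinite edges the only remaining failure modes would be commuting infinite standard subgroups or non-$\widetilde A_k$ affine standard subgroups'' is precisely wrong here: the $\widetilde A_k$ subgroups themselves are the failure mode for your symmetric choice.

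The paper's fix is to break the symmetry on the edges with $m_{i,j}=3$: replace the $(i,j)$ and $(j,i)$ entries $-1$ by $-t_{i,j}$ and $-t_{i,j}^{-1}$, with the $t_{i,j}>1$ chosen so that no monomial $\prod t_{i,j}^{\pm 1}$ over disjoint index sets equals~$1$. This forces every $\widetilde A_k$ Cartan submatrix to be nonsymmetrizable, hence of negative type by Fact~\ref{fact:types}, hence of nonzero determinant; condition~\ref{item:not-zero-type-explicit} is then satisfied and Theorem~\ref{thm:main} applies. Your final paragraph misdiagnoses the obstacle as a geometric accumulation-point analysis; in fact once the Cartan matrix is chosen correctly, Theorem~\ref{thm:main} handles the $\widetilde A_k$ pieces automatically and no separate ``parabolic'' argument is needed. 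For the necessity direction, your dynamical sketch is too vague to constitute a proof; the paper instead reduces via Lemma~\ref{lem:V-alpha-v} to the reduced, dual-reduced case and reads off both \ref{item:no-Z2} and \ref{item:aff-only-Ak} from Proposition~\ref{prop:sigma_is_inside} (which gives \ref{item:no-Z2} and~\ref{item:not-zero-type}) together with Fact~\ref{fact:types}.
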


\begin{remark}
Conditions~\ref{item:no-Z2} and~\ref{item:aff-only-Ak} are readily checked on the diagram of~$W_S$ using the classification of spherical and affine Coxeter groups (see Appendix~\ref{appendix:classi_diagram}).
If $W_S$ satisfies \ref{item:no-Z2} and~\ref{item:aff-only-Ak}, then any standard subgroup $W_{S'}$ of $W_S$ still satisfies \ref{item:no-Z2} and~\ref{item:aff-only-Ak}.
\end{remark}

\begin{remark}
By a result of Krammer \cite[Th.\,6.8.3]{kra94}, conditions \ref{item:no-Z2} and~\ref{item:aff-only-Ak} together are equivalent to the fact that any subgroup of~$W_S$ isomorphic to $\ZZ^2$ is virtually contained in a conjugate of a standard subgroup $W_{S'}$ of type~$\widetilde{A}_k$ for some $k\geq 2$.
\end{remark}

\begin{remark} \label{rem:moussong}
Conditions \ref{item:no-Z2} and~\ref{item:aff-only-Ak} are always satisfied if $W_S$ is word hyperbolic.
In fact, Moussong \cite{mou87} proved that word hyperbolicity of~$W_S$ is equivalent to condition~\ref{item:no-Z2} together with the following condition (which in particular implies~\ref{item:aff-only-Ak}):
\begin{enumerate}
  \item[\namedlabel{item:non_aff}{$\neg$\!\texttt{(Af)}}] there does not exist a subset $S'$ of~$S$ with $\#S'\geq 3$ such that $W_{S'}$ is irreducible and affine.
\end{enumerate}
We recently used convex cocompact reflection groups in $\OO(p,q)$ to give a new proof of this hyperbolicity criterion: see \cite[Cor.\,8.5]{dgk-ccHpq} in the right-angled case and \cite[\S\,7]{lm18} in the general case.
\end{remark}

Theorem~\ref{thm:naive-cc-exists}, together with a result of Caprace \cite{cap09,cap15}, implies the following.

\begin{corollary} \label{cor:rel-hyp}
For an infinite Coxeter group $W_S$, the following are equivalent:
\begin{enumerate}
  \item\label{item:naive-cc-exists-again} there exist $V$ and $\rho\in\Hom^{\mathrm{ref}}(W_S,\GL(V))$ such that $\rho(W_S)$ is (naively) convex cocompact in $\PP(V)$;
  \item\label{item:relatively-hyperbolic} $W_S$ is relatively hyperbolic with respect to a collection of virtually abelian subgroups of rank $\geq 2$, which are the standard subgroups of~$W_S$ of the form $W_U \times W_{U^\perp}$ where $W_U$  is of type~$\widetilde{A}_k$ for some $k\geq 2$ and $W_{U^\perp}$ is the (finite) standard subgroup of $W_S$ generated by $U^\perp := \{ s \in S\smallsetminus U \, |\, m_{u,s}=2\ \forall u \in U\}$.
\end{enumerate}
\end{corollary}

\subsection{A characterization of convex cocompactness}

Our second main result is, for a Coxeter group $W_S$ as in Theorem~\ref{thm:naive-cc-exists}, a simple characterization of convex cocompactness for representations $\rho\in\Hom^{\mathrm{ref}}(W_S,\GL(V))$.

\begin{theorem} \label{thm:main}
Let $W_S$ be an infinite Coxeter group satisfying conditions \ref{item:no-Z2} and~\ref{item:aff-only-Ak} of Section~\ref{subsec:intro-main}.
For any~$V$ and any $\rho\in\Hom^{\mathrm{ref}}(W_S,\GL(V))$ with Cartan matrix $\Cart=(\Cart_{i,j})_{1\leq i,j\leq N}$, the following are equivalent:
\begin{enumerate}
  \item[\namedlabel{item:naive-cc-cox}{\texttt{(NCC)}}] $\rho(W_S)$ is naively convex cocompact in $\PP(V)$;
  \item[\namedlabel{item:cc-cox}{\texttt{(CC)}}] $\rho(W_S)$ is convex cocompact in $\PP(V)$;
  \item[\namedlabel{item:not-zero-type}{\texttt{$\neg$\!(ZT)}}] for any irreducible standard subgroup $W_{S'}$ of~$W_S$ with $\emptyset\neq S'\subset S$, the Cartan submatrix $\Cart_{S'}:=(\Cart_{i,j})_{s_i,s_j\in S'}$ is \emph{not} of zero type;
  \item[\namedlabel{item:not-zero-type-explicit}{\texttt{$\neg$\!(ZD)}}] for any $S' \subset S$ such that $W_{S'}$ is of type~$\widetilde{A}_k$ for some $k\geqslant 1$, we have $\det(\Cart_{S'})\neq 0$.
\end{enumerate}
(The $k=1$ case of~\ref{item:not-zero-type-explicit} just means that $\Cart_{i,j}\Cart_{j,i} > 4$ for all $i\neq j$ with $m_{i,j}=\infty$.)
\end{theorem}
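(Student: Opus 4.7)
The plan is to establish the cycle \ref{item:cc-cox} $\Rightarrow$ \ref{item:naive-cc-cox} $\Rightarrow$ \ref{item:not-zero-type} $\Rightarrow$ \ref{item:cc-cox} together with the equivalence \ref{item:not-zero-type} $\Leftrightarrow$ \ref{item:not-zero-type-explicit}. The first implication is immediate from the definitions.

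For the equivalence \ref{item:not-zero-type} $\Leftrightarrow$ \ref{item:not-zero-type-explicit}, I would invoke Vinberg's trichotomy on Cartan types. By direct examination of conditions (i)--(iv) on $\Cart$: submatrices attached to irreducible finite (spherical) standard subgroups are always of positive type, while those attached to irreducible infinite non-affine standard subgroups are always of negative type. Hence zero type can arise only at irreducible affine $W_{S'}$, and by hypothesis~\ref{item:aff-only-Ak} these are exactly of type $\widetilde{A}_k$ with $k\geq 1$. For such $W_{S'}$, the radical of $\Cart_{W_{S'}}$ is at most one-dimensional, so zero type is equivalent to $\det \Cart_{W_{S'}} = 0$ (the $k=1$ case reduces to $\Cart_{i,j}\Cart_{j,i}=4$).

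For \ref{item:naive-cc-cox} $\Rightarrow$ \ref{item:not-zero-type}, I would argue by contrapositive. If $\Cart_{W_{S'}}$ is of zero type for some irreducible $W_{S'}$, then $W_{S'}$ is affine and, after a suitable diagonal rescaling of $\Cart$, the restriction $\rho|_{W_{S'}}$ is conjugate to the classical affine reflection representation of $W_{S'}$. In particular $\rho(W_{S'})$ fixes a nonzero vector $v_0 \in V$ spanning the radical of $\Cart_{W_{S'}}$, and a finite-index $\ZZ^k$ subgroup acts unipotently with $[v_0]\in\PP(V)$ as unique attracting point. Such a unipotent virtually abelian subgroup cannot sit inside a naively convex cocompact subgroup: in any $\rho(W_S)$-invariant properly convex open $\Omega$, the $\ZZ^k$-orbit of a point of $\C$ escapes every compact subset while accumulating only at $[v_0]\in \partial\Omega$, contradicting cocompactness of $\rho(W_S)$ on $\C$. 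This can be made precise via the structure theorem for virtually abelian subgroups of naively convex cocompact subgroups of $\PGL(V)$ proved in \cite{dgk-proj-cc}.

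The main work lies in the implication \ref{item:not-zero-type} $\Rightarrow$ \ref{item:cc-cox}. I would take $\Omega := \OhmVin$, which by Proposition~\ref{prop:maximal} is the maximal $\rho(W_S)$-invariant properly convex open subset of $\PP(V)$, and then set $\C$ to be the closed convex hull in $\OhmVin$ of the orbital limit set $\Lambdao$. Proper discontinuity on $\OhmVin$ is Vinberg's theorem, and the ``largeness'' condition built into \ref{item:cc-cox} follows immediately from the maximality of $\OhmVin$, since every accumulation point of a $\rho(W_S)$-orbit in $\OhmVin$ lies in $\overline{\Lambdao}\subset\overline{\C}$. The hard step is cocompactness of $\rho(W_S)$ on $\C$: the ends of $\C/\rho(W_S)$ correspond to conjugates of infinite irreducible standard subgroups $W_{S'}$ along whose action the geometry of $\OhmVin$ degenerates, and by the analysis of the previous paragraph such a degeneration is a genuine (non-compact) parabolic cusp precisely when $\Cart_{W_{S'}}$ is of zero type. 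Under \ref{item:not-zero-type} no such cusps exist; every infinite standard subgroup acts loxodromically on the relevant boundary stratum, and the corresponding end of $\C/\rho(W_S)$ collapses to a compact piece. The principal obstacle is making this end-analysis rigorous: this requires a careful description of the stratification of $\partial\OhmVin$ by faces labelled by standard subgroups, together with quantitative control (via a Moussong-type metric combinatorics on $W_S$) of how far orbits of $W_{S'}$ can drift into such a face before returning to a compact fundamental region for the ambient $\rho(W_S)$-action.
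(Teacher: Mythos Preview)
Your cycle structure and the easy implications are fine, and your argument for \ref{item:naive-cc-cox} $\Rightarrow$ \ref{item:not-zero-type} via unipotent elements is essentially correct (it is exactly how the paper handles the affine case in Proposition~\ref{prop:affine}, and the same reasoning applied to the subgroup $W_{S'}$ works in general via Proposition~\ref{prop:cc-no-unipotent}). The equivalence \ref{item:not-zero-type} $\Leftrightarrow$ \ref{item:not-zero-type-explicit} is also handled the same way as in the paper.

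The genuine gap is in \ref{item:not-zero-type} $\Rightarrow$ \ref{item:cc-cox}. Two concrete problems. First, you take $\Omega = \OhmVin$, but $\OhmVin$ is properly convex only when $\rho(W_S)$ is reduced (Remark~\ref{rem:Vinberg-not-reduced}.\eqref{item:OhmVin-prop-conv-or-not}); in general it contains $\PP(V_\alpha)$ in its closure. The paper addresses this by passing to the induced representation $\rho_v^\alpha$ on $V_v/(V_v\cap V_\alpha)$ (Lemma~\ref{lem:V-alpha-v}), a reduction step you omit. Second, and more seriously, your cocompactness argument is only a heuristic: the phrases ``ends correspond to conjugates of infinite irreducible standard subgroups'' and ``Moussong-type metric combinatorics'' do not constitute a proof, and you yourself flag this as the principal obstacle. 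The paper's mechanism here is quite different and much more explicit: it constructs a concrete compact polytope $\Sigma = \Delta \cap \mathcal{D}^{\overline{*}}$ (the fundamental chamber truncated by the dual cone), proves that $\Sigma^\flat$ is a fundamental domain for the action on the minimal invariant convex set $\Omega_{\min}$ (Theorem~\ref{thm:minimal_convex}), and then shows via Proposition~\ref{prop:boundary_sigma} that $\Sigma \subset \OhmVin$ holds precisely when \ref{item:no-Z2} and \ref{item:not-zero-type} hold. This immediately gives naive convex cocompactness; the upgrade to full convex cocompactness is a separate argument (Proposition~\ref{prop:naive-cc-implies-cc}) using supporting hyperplanes and an induction on dimension. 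None of this structure appears in your sketch.
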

We refer to Definition~\ref{def:type} for the notion of \emph{zero type}.

In Theorem~\ref{thm:main}, the implication \ref{item:cc-cox}~$\Rightarrow$~\ref{item:naive-cc-cox} holds by definition; the equivalence \ref{item:not-zero-type}~$\Leftrightarrow$~\ref{item:not-zero-type-explicit} follows from classical results of Vinberg, see Fact~\ref{fact:types} below.
We prove the other implications in Section~\ref{sec:main_thm}.

\begin{remark}
In \cite{mar17}, the last author studied groups generated by reflections in the codimension-$1$ faces of a polytope $\Delta \subset \PP(V)$ which is \emph{2-perfect} (\ie $\Delta \cap \partial \OhmVin$ is a subset of the vertices of~$\Delta$).
In the case that the group acts strongly irreducibly on~$V$, he gave a criterion~\cite[Th.\,A]{mar17} for naive convex cocompactness (Definition~\ref{def:cc-group}.\ref{item:def-naiv-cc}), as well as for a notion of geometric finiteness, in terms of links of the vertices of the polytope.
\end{remark}

Here is an easy consequence of Theorem~\ref{thm:main}.

\begin{corollary} \label{cor:cc-standard-subgroup}
Let $W_S$ be an infinite Coxeter group.
For any~$V$ and any $\rho\in\linebreak\Hom^{\mathrm{ref}}(W_S,\GL(V))$, if $\rho(W_S)$ is convex cocompact in $\PP(V)$, then so is $\rho(W_{S'})$ for any infinite standard subgroup $W_{S'}$ of $W_S$.
\end{corollary}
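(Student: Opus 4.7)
The plan is to use Theorem~\ref{thm:main} as a bridge, converting the convex cocompactness hypothesis for $\rho(W_S)$ into the combinatorial Cartan condition~\ref{item:not-zero-type-explicit}, observing that this condition is manifestly hereditary for principal Cartan submatrices, and then running Theorem~\ref{thm:main} in the reverse direction for $W_{S'}$. The only nontrivial extra work is handling the case when $W_{S'}$ is reducible.

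Concretely, since $\rho(W_S)$ is convex cocompact, it is in particular naively convex cocompact, so Theorem~\ref{thm:naive-cc-exists} forces $W_S$ to satisfy conditions~\ref{item:no-Z2} and~\ref{item:aff-only-Ak}, and by the remark following that theorem both conditions descend to every standard subgroup $W_{S'}\subseteq W_S$. Theorem~\ref{thm:main} applied to $\rho$ then yields $\det(\Cart_{W_T})\neq 0$ for every $T\subseteq S$ with $W_T$ of type~$\widetilde A_k$, $k\geq 1$. The restriction $\rho|_{W_{S'}}$ lies in $\Hom^{\mathrm{ref}}(W_{S'},\GL(V))$: its generators are still hyperplane reflections, the fundamental cone only grows when one discards constraints from $S\smallsetminus S'$ and hence has nonempty interior, and its Cartan matrix is the principal submatrix $\Cart_{W_{S'}}$. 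Any type-$\widetilde A_k$ subset of $S'$ is a type-$\widetilde A_k$ subset of $S$, so condition~\ref{item:not-zero-type-explicit} passes from $\rho$ to $\rho|_{W_{S'}}$. If $W_{S'}$ is moreover irreducible, the reverse direction of Theorem~\ref{thm:main}, applicable because $W_{S'}$ inherits \ref{item:no-Z2} and~\ref{item:aff-only-Ak}, immediately gives that $\rho(W_{S'})$ is convex cocompact in $\PP(V)$.

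The remaining case, and the main obstacle, is that of a reducible infinite $W_{S'}$. Decomposing $W_{S'}=W_{S'_1}\times\cdots\times W_{S'_k}$ into irreducible standard factors, condition~\ref{item:no-Z2} (inherited from $W_S$) forces exactly one factor, say $W_{S'_1}$, to be infinite, with all others finite. The irreducible case of the previous paragraph applied to $W_{S'_1}$ yields convex cocompactness of $\rho(W_{S'_1})$, and this group has finite index in $\rho(W_{S'})$. The one genuinely nonformal step is then to promote convex cocompactness across this finite-index inclusion: starting from a $\rho(W_{S'_1})$-invariant properly convex open $\Omega$ and convex core $\C\subset\Omega$ as in Definition~\ref{def:cc-group}.\ref{item:def-cc}, one intersects $\Omega$ with the finitely many $\rho(W_{S'})$-translates of $\Omega$ to produce a $\rho(W_{S'})$-invariant properly convex open $\Omega'$, and replaces $\C$ by the convex hull in $\Omega'$ of its finitely many $\rho(W_{S'})$-translates; since $\rho(W_{S'})$-orbits in $\Omega'$ differ from $\rho(W_{S'_1})$-orbits by at most finitely many points, the resulting core is cocompact and still captures all orbital accumulation points, which gives convex cocompactness of $\rho(W_{S'})$ and completes the proof.
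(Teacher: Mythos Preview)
Your proof is correct and follows essentially the same route as the paper's: convert convex cocompactness into the hereditary Cartan condition via Theorem~\ref{thm:main}, apply it back to an irreducible $W_{S'}$, and in the reducible case pass through the unique infinite irreducible factor and then promote across the finite-index inclusion. The paper handles that last step by citing \cite[Lem.\,11.3]{dgk-proj-cc} rather than sketching the construction; your sketch of intersecting the finitely many translates of $\Omega$ and taking the convex hull of the translates of $\C$ is the right idea, though you are tacitly using that the intersection is nonempty (which follows here because the finite factor commutes with $W_{S'_1}$ and hence preserves its proximal limit set).
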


Theorem~\ref{thm:main} yields the following simple characterization of strong convex cocompactness for Coxeter groups.

\begin{corollary} \label{cor:main2}
Let $W_S$ be an infinite Coxeter group.
For any~$V$ and any $\rho\in\linebreak\Hom^{\mathrm{ref}}(W_S,\GL(V))$ with Cartan matrix $\Cart=(\Cart_{i,j})_{1\leq i,j\leq N}$, the following are equivalent:
\begin{enumerate}
  \item[\namedlabel{item:strong-cc-cox}{\texttt{(SCC)}}] $\rho(W_S)$ is strongly convex cocompact in $\PP(V)$;
  \item[\namedlabel{item:word-hyp-+}{\texttt{(WH+)}}] $W_S$ is word hyperbolic and $\Cart_{i,j}\Cart_{j,i} > 4$ for all $i\neq j$ with $m_{i,j}=\infty$.
\end{enumerate}
\end{corollary}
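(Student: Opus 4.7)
The plan is to derive the corollary by combining Theorem \ref{thm:main} with the characterization of strong convex cocompactness from \cite[Th.\,1.15]{dgk-proj-cc} (namely, an infinite discrete subgroup of $\GL(V)$ is strongly convex cocompact in $\PP(V)$ if and only if it is both word hyperbolic \emph{and} convex cocompact in $\PP(V)$) together with Moussong's hyperbolicity criterion recalled in Remark \ref{rem:moussong}.

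For \ref{item:strong-cc-cox} $\Rightarrow$ \ref{item:word-hyp-+}, suppose $\rho(W_S)$ is strongly convex cocompact in $\PP(V)$. Then \cite[Th.\,1.15]{dgk-proj-cc} yields both word hyperbolicity of $W_S$ and convex cocompactness of $\rho(W_S)$ in $\PP(V)$. By Moussong's criterion, word hyperbolicity implies \ref{item:no-Z2} together with the absence of irreducible affine standard subgroups $W_{S'}$ with $\#S' \geq 3$; in particular \ref{item:no-Z2} and \ref{item:aff-only-Ak} both hold, so Theorem \ref{thm:main} applies and convex cocompactness gives \ref{item:not-zero-type-explicit}. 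Specializing \ref{item:not-zero-type-explicit} to the $k=1$ case then yields $\Cart_{i,j}\Cart_{j,i} > 4$ for all $i \neq j$ with $m_{i,j} = \infty$, which completes \ref{item:word-hyp-+}.

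For the converse \ref{item:word-hyp-+} $\Rightarrow$ \ref{item:strong-cc-cox}, assume $W_S$ is word hyperbolic and $\Cart_{i,j}\Cart_{j,i} > 4$ whenever $m_{i,j} = \infty$. Moussong's criterion again supplies \ref{item:no-Z2} and rules out irreducible affine standard subgroups of rank $\geq 3$; hence \ref{item:aff-only-Ak} holds (vacuously), and Theorem \ref{thm:main} is applicable. I would then verify \ref{item:not-zero-type-explicit}. For $k \geq 2$, no $S' \subset S$ can have $W_{S'}$ of type $\widetilde{A}_k$, since such a subgroup would be irreducible affine with $\#S' = k+1 \geq 3$, contradicting word hyperbolicity; so the condition is vacuous there. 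For $k=1$, \ie\ $S' = \{s_i, s_j\}$ with $m_{i,j} = \infty$, the Cartan submatrix is the $2 \times 2$ matrix with diagonal entries $2$ and off-diagonal entries $\Cart_{i,j}, \Cart_{j,i}$, of determinant $4 - \Cart_{i,j}\Cart_{j,i}$, which is nonzero by hypothesis. Thus \ref{item:not-zero-type-explicit} holds; Theorem \ref{thm:main} yields convex cocompactness of $\rho(W_S)$ in $\PP(V)$, and \cite[Th.\,1.15]{dgk-proj-cc} upgrades this to strong convex cocompactness since $W_S$ is word hyperbolic.

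There is essentially no substantive obstacle: the corollary is a direct bookkeeping consequence of three prior inputs. The only point deserving care is the sequencing in the forward direction — word hyperbolicity (and therefore \ref{item:no-Z2} and \ref{item:aff-only-Ak}) must be extracted first, via \cite[Th.\,1.15]{dgk-proj-cc} and Moussong's criterion, before Theorem \ref{thm:main} can legitimately be invoked, since that theorem is stated under those conditions as standing hypotheses on $W_S$.
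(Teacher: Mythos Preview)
Your proposal is correct and follows essentially the same route as the paper's proof: both directions combine Theorem~\ref{thm:main} with \cite[Th.\,1.15]{dgk-proj-cc} (restated in the paper as Proposition~\ref{prop:cc-hyp-strong-cc}), using word hyperbolicity to secure conditions \ref{item:no-Z2} and \ref{item:aff-only-Ak} before invoking Theorem~\ref{thm:main}. Your write-up is in fact slightly more explicit than the paper's about the sequencing in the forward direction and about why the $k\geq 2$ case of \ref{item:not-zero-type-explicit} is vacuous.
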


\begin{remark}
Corollary~\ref{cor:main2} generalizes a classical result stating that for $\rho$ with values in $\OO(m,1)$, the reflection group $\rho(W_S)$ is convex cocompact if and only if $W_S$ is word hyperbolic and $\Cart_{i,j}\Cart_{j,i} > 4$ for all $i\neq j$ with $m_{i,j}=\infty$ (see \eg \cite[Th.\,4.12]{dh13}).
\end{remark}

It easily follows from Theorems \ref{thm:naive-cc-exists} and~\ref{thm:main} and Corollary~\ref{cor:main2} that in the setting of right-angled Coxeter groups, the three notions of convex cocompactness in Definition~\ref{def:cc-group} are equivalent.

\begin{corollary} \label{cor:main-racg}
Let $W_S$ be an infinite Coxeter group with no standard subgroup of type $\widetilde{A}_k$ for $k\geq 2$, for instance an infinite right-angled Coxeter group.
For any~$V$ and any $\rho\in\Hom^{\mathrm{ref}}(W_S,\GL(V))$ with Cartan matrix $\Cart=(\Cart_{i,j})_{1\leq i,j\leq N}$, the following are equivalent:
\begin{enumerate}[label=(\roman*)]
  \item[\ref{item:naive-cc-cox}] $\rho(W_S)$ is naively convex cocompact in $\PP(V)$;
  \item[\ref{item:cc-cox}] $\rho(W_S)$ is convex cocompact in $\PP(V)$;
  \item[\ref{item:strong-cc-cox}] $\rho(W_S)$ is strongly convex cocompact in $\PP(V)$;
  \item[\ref{item:word-hyp-+}] $W_S$ is word hyperbolic and $\Cart_{i,j}\Cart_{j,i} > 4$ for all $i\neq j$ with $m_{i,j}=\infty$.
\end{enumerate}
\end{corollary}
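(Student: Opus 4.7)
The plan is to close the cycle \ref{item:word-hyp-+} $\Rightarrow$ \ref{item:strong-cc-cox} $\Rightarrow$ \ref{item:cc-cox} $\Rightarrow$ \ref{item:naive-cc-cox} $\Rightarrow$ \ref{item:word-hyp-+} by assembling results already at hand, with the hypothesis ``no standard subgroup of type $\widetilde{A}_k$ for $k\geq 2$'' serving as the bridge between Theorem~\ref{thm:main} (which requires both \ref{item:no-Z2} and~\ref{item:aff-only-Ak}) and Corollary~\ref{cor:main2} (which is unconditional). Three of the four arrows are essentially free: \ref{item:word-hyp-+} $\Rightarrow$ \ref{item:strong-cc-cox} is exactly Corollary~\ref{cor:main2}; \ref{item:strong-cc-cox} $\Rightarrow$ \ref{item:cc-cox} was noted right after Definition~\ref{def:cc-group} (strict convexity forces all orbits to share accumulation points); and \ref{item:cc-cox} $\Rightarrow$ \ref{item:naive-cc-cox} holds by definition.

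The substantive step is \ref{item:naive-cc-cox} $\Rightarrow$ \ref{item:word-hyp-+}. Given naive convex cocompactness of $\rho(W_S)$ in $\PP(V)$, Theorem~\ref{thm:naive-cc-exists} produces conditions \ref{item:no-Z2} and~\ref{item:aff-only-Ak} for~$W_S$. Under the standing hypothesis of the corollary, no standard subgroup of type~$\widetilde{A}_k$ with $k\geq 2$ exists, so \ref{item:aff-only-Ak} strengthens to the statement that there is no irreducible affine standard subgroup on $\geq 3$ generators at all, which is Moussong's condition $\neg$\!\texttt{(Af)}. Together with~\ref{item:no-Z2}, Moussong's criterion (Remark~\ref{rem:moussong}) gives that $W_S$ is word hyperbolic.

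With \ref{item:no-Z2} and~\ref{item:aff-only-Ak} now secured, Theorem~\ref{thm:main} applies and in particular yields \ref{item:not-zero-type-explicit}: one has $\det(\Cart_{W_{S'}})\neq 0$ for every $S'\subset S$ such that $W_{S'}$ is of type~$\widetilde{A}_k$ with $k\geq 1$. By hypothesis the only surviving case is $k=1$, so this reduces to $\Cart_{i,j}\Cart_{j,i}>4$ for all $i\neq j$ with $m_{i,j}=\infty$. Combined with word hyperbolicity, this is exactly~\ref{item:word-hyp-+}, which closes the cycle.

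For the parenthetical claim that every infinite irreducible right-angled Coxeter group qualifies, note that in the right-angled setting each off-diagonal label $m_{i,j}$ lies in $\{2,\infty\}$, whereas the diagram of~$\widetilde{A}_k$ with $k\geq 2$ contains edges labeled~$3$; hence the hypothesis is automatic. I do not anticipate any real obstacle here: the proof merely assembles Theorem~\ref{thm:naive-cc-exists}, Theorem~\ref{thm:main}, Corollary~\ref{cor:main2}, and Moussong's criterion. The only conceptual observation is that the absence of large $\widetilde{A}_k$ subgroups simultaneously promotes \ref{item:aff-only-Ak} to $\neg$\!\texttt{(Af)} and collapses condition~\ref{item:not-zero-type-explicit} to the single inequality on $\Cart_{i,j}\Cart_{j,i}$.
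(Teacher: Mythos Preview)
Your proof is correct and follows essentially the same approach as the paper's: both assemble Theorem~\ref{thm:naive-cc-exists}, Theorem~\ref{thm:main}, Corollary~\ref{cor:main2}, and Moussong's criterion, with the hypothesis ``no $\widetilde{A}_k$ for $k\geq 2$'' used exactly as you describe to promote \ref{item:aff-only-Ak} to $\neg$\texttt{(Af)} and to collapse \ref{item:not-zero-type-explicit} to the $k=1$ inequality. If anything, your logical ordering is slightly cleaner than the paper's, since you explicitly secure the hypotheses \ref{item:no-Z2} and~\ref{item:aff-only-Ak} via Theorem~\ref{thm:naive-cc-exists} before invoking Theorem~\ref{thm:main}.
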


\begin{remark}
Corollary~\ref{cor:main-racg} was previously proved in the case that $\rho$ preserves a nondegenerate quadratic form on~$V$, by the first three authors when $W_S$ is right-angled \cite[Th.\,8.2]{dgk-ccHpq} and by the last two authors \cite[Th.\,4.6]{lm18} in general.
\end{remark}

\subsection{The subspace of convex cocompact representations}\label{section:subspace_of_CC_rep}

By \cite[Th.\,1.16]{dgk-proj-cc}, the set of convex cocompact representations is open in $\Hom(W_S,\GL(V))$, but can we characterize it more precisely?

Suppose $W_S$ is irreducible and word hyperbolic and let $\rho\in\Hom^{\mathrm{ref}}(W_S,\GL(V))$ have Cartan matrix $\Cart=(\Cart_{i,j})_{1\leq i,j\leq N}$.
As we recalled in Section~\ref{subsec:intro-Vinberg-theory}, the weak inequality $\Cart_{i,j}\Cart_{j,i} \geq 4$ holds whenever $m_{i,j} = \infty$. 
By Corollary~\ref{cor:main-racg}, the group $\rho(W_S)$ is convex cocompact in $\PP(V)$ if and only if these inequalities are all strict.
It is then natural to ask if the open subset of convex cocompact representations is the full interior of $\Hom^{\mathrm{ref}}(W_S,\GL(V))$ in $\Hom(W_S, \GL(V))$.
We show that the answer is yes in large enough dimension (Corollary~\ref{coro:cc-int-refl}) but not in general (Example~\ref{ex:word-hyperbolic}).

Recall that a representation $\rho$ of a discrete group $\Gamma$ into $\GL(V)$ is said to be \emph{semisimple} if it is a direct sum of irreducible representations; equivalently, the orbit of~$\rho$ under the action of $\GL(V)$ by conjugation is closed.
The quotient of the space of semisimple representations by $\GL(V)$ is called the \emph{space of characters} of $\Gamma$ in $\GL(V)$; we denote it by $\chi(\Gamma,\mathrm{GL}(V))$.
If $W_S$ is a Coxeter group, then a character coming from a representation of $W_S$ as a reflection group in~$V$ is called a \emph{reflection character}, and we shall denote the space of such characters by $\chi^{\mathrm{ref}}(W_S,\mathrm{GL} (V) )$.

As a consequence of Corollary~\ref{cor:main-racg}, we prove the following.

\begin{corollary} \label{coro:nra_cc-int-refl}
Let $W_S$ be an infinite word hyperbolic irreducible Coxeter group.
For any~$V$ with $n:=\dim V\geq N:=\#S$, the set of characters $[\rho] \in \chi^{\mathrm{ref}}(W_S,\GL(V))$ for which $\rho(W_S)$ is convex cocompact in $\PP(V)$ is precisely the interior of $\chi^{\mathrm{ref}}(W_S,\GL(V))$ in\linebreak $\chi(W_S,\GL(V))$.
\end{corollary}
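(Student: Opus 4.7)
The plan is to prove the two inclusions separately, using the character-theoretic identity
\[
\Cart_{i,j}\Cart_{j,i} \;=\; \mathrm{tr}(\rho(s_is_j)) - n + 4
\]
(valid whenever $\rho(s_i)$ and $\rho(s_j)$ are reflections) together with the characterization of convex cocompactness from Corollary~\ref{cor:main-racg}: for word hyperbolic $W_S$, $\rho(W_S)$ is convex cocompact if and only if $\Cart_{i,j}\Cart_{j,i} > 4$ for every pair $i \neq j$ with $m_{i,j} = \infty$.

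For the inclusion ``convex cocompact $\subseteq$ interior of $\chi^{\mathrm{ref}}$'', I combine the stability of convex cocompactness \cite[Th.\,1.17]{dgk-proj-cc} with the openness of Vinberg's conditions. Given a convex cocompact $\rho \in \Hom^{\mathrm{ref}}$, any nearby $\rho' \in \Hom(W_S, \GL(V))$ is convex cocompact, and each $\rho'(s_i)$, being an involution close to the reflection $\rho(s_i)$, remains a reflection (the dimension of the $(-1)$-eigenspace is locally constant on the variety of involutions in $\GL(V)$). By Corollary~\ref{cor:main-racg}, convex cocompactness of $\rho$ forces the strict inequality $\Cart_{i,j}\Cart_{j,i} > 4$ whenever $m_{i,j} = \infty$, an open condition; Vinberg's conditions (i)--(iii) of Section~\ref{subsec:intro-Vinberg-theory} are enforced by the Coxeter relations (after sign normalization) and (v) is open. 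Hence $\rho' \in \Hom^{\mathrm{ref}}$ and $[\rho]$ lies in the interior of $\chi^{\mathrm{ref}}$.

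For the reverse inclusion I argue the contrapositive. If $[\rho] \in \chi^{\mathrm{ref}}$ is not convex cocompact, Corollary~\ref{cor:main-racg} yields a pair $(i_0, j_0)$ with $m_{i_0, j_0} = \infty$ and $\Cart_{i_0, j_0}(\rho)\Cart_{j_0, i_0}(\rho) = 4$. It suffices to construct a continuous family $\rho_t \in \Hom(W_S, \GL(V))$ with $\rho_0 = \rho$ such that $\mathrm{tr}(\rho_t(s_{i_0}s_{j_0})) < n$ for small $t > 0$: then the Cartan product drops below $4$, violating Vinberg's (iv), so the corresponding characters lie outside $\chi^{\mathrm{ref}}$, placing $[\rho]$ on its boundary in $\chi(W_S, \GL(V))$. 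I perturb only $\rho(s_{j_0}) = I - v_{j_0}\alpha_{j_0}$: fixing $v_{j_0}$ and deforming $\alpha_{j_0}(t) \in V^{\ast}$, the remaining Coxeter relations involving $s_{j_0}$ translate to the linear constraints $\alpha_{j_0}(t)(v_{j_0}) = 2$ and $\alpha_{j_0}(t)(v_k) = \Cart_{j_0, k}(\rho)$ for each $k \neq j_0$ with $m_{j_0, k} < \infty$, leaving the values $\alpha_{j_0}(t)(v_k)$ for $m_{j_0, k} = \infty$ unconstrained. Since $i_0$ lies in this free set, the assumption $n \geq N$, together with linear independence of the involved $v_j$'s (valid when $\rho$ has full-rank Cartan matrix, a generic condition that one can arrange first by a preliminary deformation of $\rho$ within $\chi^{\mathrm{ref}}$), makes it possible to freely vary $\alpha_{j_0}(t)(v_{i_0})$ and so decrease the product $\alpha_{j_0}(t)(v_{i_0})\,\alpha_{i_0}(v_{j_0})$ strictly below $4$.

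The main technical obstacle is verifying that the path $\rho_t$ really lies in $\Hom(W_S, \GL(V))$: the trace conditions $\alpha_{j_0}(t)(v_k)\alpha_k(v_{j_0}) = 4\cos^2(\pi/m_{j_0, k})$ are necessary for the Coxeter relations, but one must also check the exact equality $(\rho_t(s_{j_0})\rho_t(s_k))^{m_{j_0, k}} = I$. This follows because $\rho_t(s_{j_0}s_k)$ preserves the plane $\mathrm{span}(v_{j_0}, v_k)$ and acts as the identity on $\ker\alpha_{j_0}(t) \cap \ker\alpha_k$, and these two subspaces are transverse whenever their Cartan product is $\neq 4$---automatic for $m_{j_0, k} < \infty$; on the invariant plane, a direct $2 \times 2$ computation identifies the restriction as a rotation of exact order $m_{j_0, k}$, so the relation holds in $\GL(V)$ and not merely up to trace.
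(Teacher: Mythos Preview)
Your approach differs substantively from the paper's. The paper argues entirely at the level of Cartan matrices via Fact~\ref{fact:parameterization}: since $n \geq N$, the relevant open piece of $\chi(W_S,\GL(V))$ is homeomorphic to the space of equivalence classes of \emph{all} weakly compatible $N\times N$ matrices (no rank constraint), under which $\chi^{\mathrm{ref}}$ corresponds to the closed conditions $\Cart_{i,j}\Cart_{j,i}\geq 4$ for $m_{i,j}=\infty$ and convex cocompactness to the strict inequalities. The interior computation is then immediate in these coordinates: one simply makes a single off-diagonal entry less negative. You instead try to realize such a perturbation by an explicit deformation of the data $(\alpha,v)$ at the level of $\Hom$.

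This runs into a genuine gap. Your perturbation of $\alpha_{j_0}$ only works if $v_{i_0}\notin\mathrm{span}\{v_k:m_{j_0,k}<\infty\}$, which can fail when $\mathrm{rank}(\Cart)<N$; such rank-deficient Cartan matrices do occur on the non-convex-cocompact locus even for word hyperbolic $W_S$ (take the Cartan matrix of Example~\ref{ex:word-hyperbolic} at $(x,y)=(1,1)$, which has rank $5<6=N$). Your proposed remedy --- a ``preliminary deformation of $\rho$ within $\chi^{\mathrm{ref}}$'' to full rank --- is circular: knowing that nearby characters with full-rank Cartan matrix exist is exactly the content of the parametrization (Fact~\ref{fact:parameterization}) that the paper's proof rests on. A valid repair within your framework is to change the \emph{representative} of $[\rho]$ rather than the character: since $n\geq N$ one may realize the same $\Cart$ by data with $v_1,\dots,v_N$ linearly independent (this representative is typically not semisimple, and one must still arrange condition~(v) of Section~\ref{subsec:intro-Vinberg-theory}, which requires extra care when $\Cart$ is of zero type), after which your $\alpha_{j_0}$-perturbation goes through and continuity of the quotient map $\Hom\to\chi$ finishes. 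You should also be more explicit about the passage between $\Hom$ and $\chi$ in the first inclusion: showing that a $\Hom$-neighborhood of $\rho$ lies in $\Hom^{\mathrm{ref}}$ does not immediately give a $\chi$-neighborhood of $[\rho]$ inside $\chi^{\mathrm{ref}}$ without a slice argument at the semisimple point.
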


The main content of Corollary~\ref{coro:nra_cc-int-refl} is the case $n = N$.
Indeed, a semisimple representation $\rho \in \Hom^{\mathrm{ref}}(W_S,\GL(V))$ with Cartan matrix $\Cart$ splits as a direct sum of a representation $\rho'$ of dimension $\mathrm{rank}(\Cart)$ plus a trivial representation of complementary dimension. 
When $n \geqslant N$, the space $\chi^{\mathrm{ref}}(W_S,\GL(V))$ is homeomorphically parametrized by the space of equivalence classes of $N \times N$ Cartan matrices that are compatible with~$W_S$ (see Definition~\ref{def:compatible}).
Hence the structure of $\chi^{\mathrm{ref}}(W_S,\GL(V))$ is the same for all $n = \dim(V) \geq N$.
Note that the bound $n\geq N$ in Corollary~\ref{coro:nra_cc-int-refl} cannot be improved in general, since when $n < N$, the condition that $\mathrm{rank}(\Cart) \leq n$ imposes a nontrivial constraint so that the entries of the Cartan matrix may not be deformed freely.
It can happen that $4$ is an isolated local minimum value of the function $\Cart_{i,j} \Cart_{j,i}$ for some pair of generators with $m_{i,j} = \infty$ (see Example~\ref{ex:word-hyperbolic}).
Note also that Corollary~\ref{coro:nra_cc-int-refl} does not extend to nonhyperbolic groups (see Example~\ref{ex:non-hyperbolic}).

The structure of $\Hom^{\mathrm{ref}}(W_S,\GL(V))$ is more complicated than that of $\chi^{\mathrm{ref}}(W_S,\GL(V))$. 
While a Cartan matrix $\Cart$ determines at most one conjugacy class of semisimple representations in $\Hom^{\mathrm{ref}}(W_S,\GL(V))$, there may be many conjugacy classes of nonsemisimple representations in $\Hom^{\mathrm{ref}}(W_S,\GL(V))$ with Cartan matrix $\Cart$. 
Further, even in cases when $n \geq N$, it is difficult to determine if the map sending a representation $\rho$ generated by reflections to the (equivalence class of a) Cartan matrix for $\rho$ is open. 
We prove the following.

\begin{corollary} \label{coro:cc-int-refl}
Let $W_S$ be an infinite word hyperbolic irreducible Coxeter group in $N=\# S$ generators and let $V$ be a real vector space of dimension~$n$.
If
\begin{enumerate}
  \item $n\geq 2N-2$, or 
  \item $W_S$ is right-angled and $n\geq N$,
\end{enumerate}
then the set of representations $\rho \in \Hom^{\mathrm{ref}}(W_S,\GL(V))$ for which $\rho(W_S)$ is convex cocompact in $\PP(V)$ is precisely the interior of $\Hom^{\mathrm{ref}}(W_S,\GL(V))$ in $\Hom(W_S,\GL(V))$.
\end{corollary}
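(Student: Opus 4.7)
The plan is to prove two inclusions: \textbf{(a)} if $\rho(W_S)$ is convex cocompact then $\rho$ lies in the interior of $\Hom^{\mathrm{ref}}(W_S,\GL(V))$ in $\Hom(W_S, \GL(V))$, and \textbf{(b)} conversely. For \textbf{(a)}, the stability theorem \cite[Th.\,1.17]{dgk-proj-cc} provides a neighborhood $U$ of $\rho$ in $\Hom(W_S, \GL(V))$ consisting of convex cocompact representations. I would verify $U \subset \Hom^{\mathrm{ref}}$ after shrinking: each $\rho'(s_i)$ is an involution close to the reflection $\rho(s_i)$, hence itself a reflection (the dimensions of the $\pm 1$-eigenspaces are locally constant). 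Vinberg's conditions (i)-(iii) for $\rho'$ are automatic from the Coxeter relations and the faithfulness of $\rho'$ (ensured by convex cocompactness), combined with continuity of signs of Cartan entries; condition (iv), $\Cart'_{i,k}\Cart'_{k,i} \geq 4$ for $m_{i,k} = \infty$, holds with strict inequality at $\rho$ by Corollary~\ref{cor:main-racg} and hence persists on a neighborhood; condition (v) on the fundamental cone is open.

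For \textbf{(b)} I argue the contrapositive. Suppose $\rho \in \Hom^{\mathrm{ref}}$ is not convex cocompact, so by Corollary~\ref{cor:main-racg} there exist $i \neq j$ with $m_{i,j} = \infty$ and $\Cart_{i,j}\Cart_{j,i} = 4$. The plan is to build a family $\rho_t \in \Hom(W_S, \GL(V))$ with $\rho_0 = \rho$ and $\Cart^t_{i,j}\Cart^t_{j,i} < 4$ for $t > 0$, which exits $\Hom^{\mathrm{ref}}$. Keep $\rho(s_k)$ fixed for $k \neq i$ and perturb $\rho(s_i)$ via $v_i \mapsto v_i + tw$ and $\alpha_i \mapsto \alpha_i + t\phi$, for some $(w, \phi) \in V \oplus V^*$. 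First-order preservation of the Coxeter relations involving $s_i$ amounts to the linear conditions $\alpha_i(w) + \phi(v_i) = 0$ (from $\alpha_i(v_i) = 2$), $\phi(v_k) = \alpha_k(w) = 0$ for $k$ with $m_{i,k} = 2$, and $\Cart_{k,i}\phi(v_k) + \Cart_{i,k}\alpha_k(w) = 0$ for $k$ with $2 < m_{i,k} < \infty$; relations with $m_{i,k} = \infty$ impose no constraint. The first-order variation of $\Cart_{i,j}\Cart_{j,i}$ equals $\Cart_{j,i}\phi(v_j) + \Cart_{i,j}\alpha_j(w)$, and the goal is to find $(w,\phi)$ in the constraint kernel with this variation nonzero; the sign of $t$ is then chosen to decrease $\Cart_{i,j}\Cart_{j,i}$ below $4$.

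The constraint system involves at most $1 + 2\#\{k : m_{i,k}=2\} + \#\{k : 2 < m_{i,k} < \infty\} \leq 2N-3$ linear equations on the $2n$-dimensional space $V \oplus V^*$. In the right-angled case $2n \geq 2N$ gives slack; the target functional, which involves the data of the $j$-th reflection, is generically independent of the constraint functionals (which only involve $v_k, \alpha_k$ for $k \neq i, j$ and the normalization), and so restricts nontrivially to the kernel once the reflection configuration is placed in general position. In the general case the finite-$m_{i,k}$ constraints couple $w$ and $\phi$ nontrivially through the Cartan entries, potentially collapsing the kernel or aligning the target functional with the constraint span in small dimensions; the stronger bound $n \geq 2N-2$ is what guarantees enough flexibility to avoid these degeneracies. \textbf{The main obstacle is this transversality step:} verifying, under the stated dimension hypotheses, that the functional $\Cart_{j,i}\phi(v_j) + \Cart_{i,j}\alpha_j(w)$ does not lie in the span of the constraint functionals on $V \oplus V^*$, by appealing to the irreducibility of $W_S$ and the freedom to arrange the reflection data in sufficiently general position in $V$.
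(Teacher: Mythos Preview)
Your part \textbf{(a)} is correct and matches the paper's one-line argument from stability.

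For part \textbf{(b)}, your strategy coincides with the paper's at the high level (deform to violate $\Cart_{i,j}\Cart_{j,i} \geq 4$), but the paper makes a different, and sharper, choice of perturbation. Rather than perturbing both $\alpha_i$ and $v_i$ (i.e.\ the single reflection $\rho(s_i)$), the paper perturbs $\alpha_i$ and $v_j$ simultaneously---two \emph{different} reflections, each in only one of its two defining data. Concretely, it takes $\beta \in W_i^\star := \bigcap_{m_{i,k}<\infty} \mathrm{Ann}(v_k) \subset V^*$ and $w \in W_j := \bigcap_{m_{k,j}<\infty} \mathrm{Ker}(\alpha_k) \subset V$, and sets $\alpha_i^t = \alpha_i + t\beta$, $v_j^t = v_j + tw$. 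This choice kills all relevant constraints \emph{exactly}, not just to first order: every $\Cart_{i,k}$ with $m_{i,k} < \infty$ and every $\Cart_{k,j}$ with $m_{k,j} < \infty$ is unchanged, so the Coxeter relations hold identically along the path and $\rho_t \in \Hom(W_S,\GL(V))$ for all $t$. The only entry that moves is $\Cart_{i,j}^t = \Cart_{i,j} + t(\alpha_i(w) + \beta(v_j)) + t^2\beta(w)$, and if one can choose $(\beta,w)$ with $\beta(w) \neq 0$ then $\Cart_{i,j}^t\Cart_{j,i}$ drops below $4$ for suitable small $t$. The problem is thus reduced to the clean linear-algebraic question: is the natural pairing $W_i^\star \times W_j \to \RR$ nonzero? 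Equivalently, is $W_i^\star \not\subset \mathrm{Ann}(W_j) = V_j^\star$? The dimension hypotheses enter precisely here, via a short dimension count in case~(1) and, in the right-angled case~(2), via the observation that word hyperbolicity forces generators commuting with both $s_i$ and $s_j$ to commute with each other, which controls the rank of the pairing $V_j^\star \times V_i \to \RR$.

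Your approach, by contrast, lands you in a first-order analysis whose integration to an actual curve in $\Hom(W_S,\GL(V))$ you do not address, and whose transversality step you yourself flag as the main obstacle. More seriously, your appeal to placing the reflection data ``in general position'' is not available: $\rho$ is a \emph{given} representation that you must show is not interior, not one you may choose, so any linear-algebraic nondegeneracy must be verified at $\rho$ itself. The paper's two-reflection perturbation sidesteps both issues.
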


Corollary~\ref{coro:cc-int-refl} is a consequence of the equivalence \ref{item:cc-cox}~$\Leftrightarrow$~\ref{item:word-hyp-+} of Corollary~\ref{cor:main-racg} together with some analysis of how the entries of the Cartan matrix deform under the dimension restriction (see Section~\ref{subsec:cc-int}).

\subsection{Anosov representations for Coxeter groups}

We give an application of Corollary~\ref{cor:main2} to the topic of \emph{Anosov representations} of word hyperbolic groups into $\GL(V)$.
These are representations with strong dynamical properties which generalize convex cocompact representations into rank-one semisimple Lie groups (see in particular \cite{lab06,gw12,klp17,klp-survey,ggkw17,bps19}).
They have been much studied recently, especially in the setting of higher Teichm\"uller theory.

We shall not recall the definition of Anosov representations in this paper, nor assume any technical knowledge of them.
We shall just use the following relation with convex cocompactness in $\PP(V)$, proven in \cite{dgk-ccHpq,dgk-proj-cc} and also, in a slightly different form, in \cite{zim}.

\begin{fact}[{\cite[Th.\,1.4]{dgk-proj-cc}}] \label{fact:Anosov}
Let $\Gamma$ be an infinite discrete subgroup of $\GL(V)$ preserving some nonempty properly convex open subset $\Omega$ of $\PP(V)$.
Then $\Gamma$ is strongly convex cocompact in $\PP(V)$ (Defini\-tion~\ref{def:cc-group}.\ref{item:def-strong-cc}) if and only if $\Gamma$ is word hyperbolic and the natural inclusion $\Gamma \hookrightarrow \GL(V)$ is $P_1$-Anosov (\ie Anosov with respect to the stabilizer of a line~in~$V$).
\end{fact}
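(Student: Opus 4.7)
The plan is to prove the two directions of the equivalence separately, using in both cases the interplay between the Hilbert geometry of the invariant properly convex set and the limit maps of an Anosov representation.

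For the forward direction (strongly convex cocompact $\Rightarrow$ word hyperbolic and $P_1$-Anosov), I would start from the data in Definition~\ref{def:cc-group}.\ref{item:def-strong-cc}: a $\Gamma$-invariant properly convex open $\Omega$ with strictly convex, $C^1$ boundary, and a $\Gamma$-invariant closed convex $\C\subset\Omega$ on which $\Gamma$ acts cocompactly. First I would invoke Benoist's theorem that the Hilbert metric on such an $\Omega$ is Gromov hyperbolic; by Milnor--\v{S}varc applied to the action on $\C$, this makes $\Gamma$ word hyperbolic, and identifies $\partial_\infty\Gamma$ with the closure of any orbit on $\partial\Omega\cap\overline\C$. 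Then I would build the Anosov boundary maps: define $\xi\colon\partial_\infty\Gamma\to\PP(V)$ as the orbital limit map into $\partial\Omega$, and define $\xi^*\colon\partial_\infty\Gamma\to\PP(V^*)$ by sending a boundary point $\eta$ to the unique supporting hyperplane of $\Omega$ at $\xi(\eta)$ (well defined by the $C^1$ hypothesis). Equivariance and continuity are immediate; transversality, $\xi(\eta)\notin\xi^*(\eta')$ for $\eta\ne\eta'$, follows from strict convexity, since two distinct boundary points cannot share a supporting hyperplane. The dynamical contraction property of $P_1$-Anosov would then be deduced from the standard fact that high powers of an infinite-order element acting on $\Omega$ in Hilbert distance become north--south dynamics on $\PP(V)$ with attracting line $\xi(\gamma^+)$ and repelling hyperplane $\xi^*(\gamma^-)$, giving uniform contraction along sequences escaping to infinity in the Cayley graph (this is the content of~\cite{bps19}, but can be seen directly from Hilbert-geometric arguments for proximal dynamics).

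For the reverse direction (word hyperbolic and $P_1$-Anosov $\Rightarrow$ strongly convex cocompact), the input is a pair of continuous, equivariant, transverse, dynamics-preserving maps $\xi\colon\partial_\infty\Gamma\to\PP(V)$ and $\xi^*\colon\partial_\infty\Gamma\to\PP(V^*)$. The main construction is to produce a suitable $\Omega$. I would define $\Omega$ as the connected component, containing the convex hull of $\xi(\partial_\infty\Gamma)$, of the complement in $\PP(V)$ of the union $\bigcup_{\eta}\ker\xi^*(\eta)$. Transversality ensures that this complement does meet the said convex hull, and hence $\Omega$ is a nonempty, $\Gamma$-invariant, properly convex open subset. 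The boundary $\partial\Omega$ is automatically strictly convex: a nontrivial segment in $\partial\Omega$ would have to lie in a single supporting hyperplane which, by duality with $\xi^*$, would force two distinct limit points to share that hyperplane, contradicting transversality. The $C^1$ property is the dual statement: a corner at a boundary point would give two distinct supporting hyperplanes separating two limit points from $\Omega$, again violating transversality. Finally, for the existence of a cocompact convex core $\C\subset\Omega$, I would take $\C$ to be the convex hull of $\xi(\partial_\infty\Gamma)$ intersected with $\Omega$ (or a small neighborhood thereof) and show cocompactness via a standard Arzel\`a--Ascoli / limit-set argument using the convergence-group behavior granted by the Anosov property.

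The main obstacle I expect lies in the reverse direction, specifically in verifying that the candidate $\Omega$ is honestly properly convex and not just convex in some affine chart, and in establishing cocompactness of the $\Gamma$-action on the convex core. Properness of $\Omega$ requires showing that the projective span of $\xi(\partial_\infty\Gamma)$ together with the cutting hyperplanes $\xi^*(\partial_\infty\Gamma)$ is all of $\PP(V)$ in a sufficiently strong sense (otherwise $\overline\Omega$ could meet a projective subspace). This can be handled by an irreducibility-type argument, possibly after first reducing to an invariant subspace, using the fact that the Anosov image of $\Gamma$ has no invariant proper subspaces in the irreducible components cut out by $\xi,\xi^*$. Once proper convexity and continuous extension to boundary are in place, cocompactness follows from the compactness of $\partial_\infty\Gamma$ and the uniform north--south dynamics supplied by the Anosov condition.
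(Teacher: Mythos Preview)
The paper does not prove this statement: it is quoted as a Fact from \cite[Th.\,1.4]{dgk-proj-cc} (see also \cite{dgk-ccHpq,zim}), and the paper explicitly says it will not even recall the definition of Anosov representations. So there is no proof here to compare against; your proposal should be read as an attempted reconstruction of the argument in \cite{dgk-proj-cc}.

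Your forward direction is broadly the correct outline, though ``Benoist's theorem'' is not quite the right citation: Benoist's Gromov-hyperbolicity result is for \emph{divisible} $\Omega$, whereas here $\Gamma$ is only cocompact on $\C\subset\Omega$. One needs the more general statement (Karlsson--Noskov, or the treatment in \cite{dgk-ccHpq}) that strict convexity together with $C^1$ boundary implies Gromov hyperbolicity of the Hilbert metric, and then Milnor--\v{S}varc on~$\C$.

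The reverse direction has a genuine gap. Your candidate $\Omega$, the component of $\PP(V)\smallsetminus\bigcup_\eta\PP(\mathrm{Ker}\,\xi^*(\eta))$, is exactly the set $\Omega_{\max}$ of Proposition~\ref{prop:max-inv-conv}.\eqref{item:Omega-max}. This set is in general \emph{neither} strictly convex \emph{nor} $C^1$: for each $\eta$, the hyperplane $\PP(\mathrm{Ker}\,\xi^*(\eta))$ typically meets $\partial\Omega_{\max}$ in a set much larger than the single point $\xi(\eta)$, producing flats in the boundary. Your transversality argument only rules out $\xi(\eta')\in\mathrm{Ker}\,\xi^*(\eta)$ for $\eta'\neq\eta$; it says nothing about other points of $\partial\Omega_{\max}\cap\PP(\mathrm{Ker}\,\xi^*(\eta))$. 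The actual construction in \cite{dgk-proj-cc} is more delicate: one first shows convex cocompactness on the convex core $\Ccore_{\Omega_{\max}}(\Gamma)$, then proves that word hyperbolicity forces the full orbital limit set to coincide with $\xi(\partial_\infty\Gamma)$ and to contain no segments, and finally upgrades to a strictly convex $C^1$ domain by an explicit interpolation between $\Omega_{\min}$ and $\Omega_{\max}$ (or a smoothing of a neighbourhood of the core). The cocompactness step and the passage from ``no segments in the limit set'' to ``there exists a strictly convex $C^1$ invariant $\Omega$'' are the substantial parts, and your sketch does not supply them.
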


This relation allows for the construction of new examples of discrete subgroups of $\GL(V)$ which are strongly convex cocompact in $\PP(V)$, by using classical examples of Anosov representations: see \eg \cite[Prop.\,1.7]{dgk-proj-cc} or \cite[Cor.\,1.33]{zim}.
Conversely, Fact~\ref{fact:Anosov} gives a way to obtain new examples of Anosov representations by constructing strongly convex cocompact groups: this is the point of view adopted in \cite[\S\,8]{dgk-ccHpq} and in the present paper.

More precisely, in \cite[\S\,8]{dgk-ccHpq} and \cite[\S\,7]{lm18} we constructed, for any infinite, word hyperbolic, irreducible Coxeter group~$W_S$, examples of representations $\rho\in\linebreak\Hom^{\mathrm{ref}}(W_S,\GL(V))$ which are convex cocompact in $\PP(V)$ for some~$V$; by Fact~\ref{fact:Anosov} this yielded examples of $P_1$-Anosov representations of $W_S$ (or any quasi-isometrically embedded subgroup) into $\GL(V)$.
These representations took values in the orthogonal group of a nondegenerate quadratic form.
Obtaining such representations is interesting because, while Anosov representations of free groups and surface groups are abundant in the literature, examples of Anosov representations of more complicated hyperbolic groups have been much less commonly known outside the realm of Kleinian groups.

Consider the set of $P_1$-Anosov representations of a word hyperbolic (not necessarily right-angled) Coxeter group into $\GL(V)$ for an arbitrary~$V$.
Corollaries~\ref{cor:main2} and~\ref{coro:cc-int-refl}, together with Fact~\ref{fact:Anosov}, yield the following description of those Anosov representations that realize the group as a reflection group  in~$V$ (see Section~\ref{subsec:proof-Ano-cc-Coxeter}).

\begin{corollary} \label{cor:hyp-CG-Anosov}
Let $W_S$ be an infinite word hyperbolic Coxeter group in $N=\# S$ generators and let $V$ be a real vector space of dimension~$n$.
For any $\rho\in\Hom^{\mathrm{ref}}(W_S,\GL(V))$ with Cartan matrix $\Cart=(\Cart_{i,j})_{1\leq i,j\leq N}$, the following are equivalent:
\begin{itemize}
  \item $\rho$ is $P_1$-Anosov;
  \item $\Cart_{i,j}\Cart_{j,i} > 4$ for all $i\neq j$ with $m_{i,j}=\infty$.
\end{itemize}
Assume moreover that $W_S$ is irreducible.
If either $n \geq 2N-2$, or $n \geq N$ and $W_S$ is right-angled, then the space of $P_1$-Anosov representations of $W_S$ as a reflection group is precisely the interior of $\Hom^{\mathrm{ref}}(W_S, \GL(V))$ in $\Hom(W_S, \GL(V))$.
\end{corollary}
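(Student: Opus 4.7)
My plan is to deduce the corollary by assembling three previously established results of the paper: Corollary \ref{cor:main2} (characterizing strong convex cocompactness by the condition \ref{item:word-hyp-+} on the Cartan matrix), Fact \ref{fact:Anosov} (identifying strong convex cocompactness with the $P_1$-Anosov property whenever a properly convex invariant open set is present), and Corollary \ref{coro:cc-int-refl} (identifying the locus of convex cocompact reflection representations with the interior of $\Hom^{\mathrm{ref}}(W_S, \GL(V))$ in $\Hom(W_S, \GL(V))$ under the dimension hypothesis).

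For the first equivalence, given $\rho \in \Hom^{\mathrm{ref}}(W_S, \GL(V))$, the group $\rho(W_S)$ acts properly discontinuously on the Vinberg domain $\OhmVin$, which is properly convex since $W_S$ is infinite, irreducible and word hyperbolic (using Proposition \ref{prop:maximal} and the preceding discussion in the non-affine rank-$\geq 3$ case; the infinite dihedral case $\widetilde A_1$ is handled by direct inspection). Fact \ref{fact:Anosov} therefore applies to the pair $(\rho(W_S), \OhmVin)$, showing that $\rho$ is $P_1$-Anosov if and only if $\rho(W_S)$ is strongly convex cocompact in $\PP(V)$. Corollary \ref{cor:main2} translates this into condition \ref{item:word-hyp-+}, which under the standing assumption that $W_S$ is word hyperbolic reduces exactly to $\Cart_{i,j}\Cart_{j,i} > 4$ for all $i \neq j$ with $m_{i,j} = \infty$.

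The second statement is then immediate. Under either dimension hypothesis, Corollary \ref{coro:cc-int-refl} identifies the subset of $\rho \in \Hom^{\mathrm{ref}}(W_S, \GL(V))$ with $\rho(W_S)$ convex cocompact in $\PP(V)$ as the interior of $\Hom^{\mathrm{ref}}(W_S,\GL(V))$ in $\Hom(W_S,\GL(V))$. For word hyperbolic $W_S$, convex cocompactness coincides with strong convex cocompactness by \cite[Th.\,1.15]{dgk-proj-cc}, and by the first part of the corollary these are in turn the $P_1$-Anosov reflection representations. Concatenating these identifications yields the claim.

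I do not expect a serious obstacle here, since the content of the corollary already resides in the earlier results and the argument is essentially a chain of equivalences. The only point warranting explicit verification is the proper convexity of the Vinberg domain for a word hyperbolic infinite irreducible reflection group, which is what justifies invoking Fact \ref{fact:Anosov} in the first direction of the first equivalence.
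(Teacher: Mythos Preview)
Your overall architecture is the same as the paper's, but there is a genuine gap in the step where you invoke Fact~\ref{fact:Anosov} for the implication ``$P_1$-Anosov $\Rightarrow$ strongly convex cocompact''. Fact~\ref{fact:Anosov} requires $\rho(W_S)$ to act on some nonempty \emph{properly} convex open subset of $\PP(V)$, and you propose to use $\OhmVin$. However, Proposition~\ref{prop:maximal}.\eqref{item:max-1} only asserts proper convexity of $\OhmVin$ when $\rho(W_S)$ is \emph{reduced}; by Remark~\ref{rem:Vinberg-not-reduced}.\eqref{item:OhmVin-prop-conv-or-not}, if $V_\alpha\neq\{0\}$ then $\OhmVin$ contains $\PP(V_\alpha)$ in its boundary and is not properly convex. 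An arbitrary $\rho\in\Hom^{\mathrm{ref}}(W_S,\GL(V))$ need not be reduced (the same issue arises already for $N=2$: in Example~\ref{ex:N=2}(i) the cone $\widetilde\Omega_{\mathrm{Vin}}$ contains the full linear subspace~$V_\alpha$). The paper's proof of Lemma~\ref{lem:Ano-cc-Coxeter} flags exactly this obstruction: ``We cannot immediately apply Fact~\ref{fact:Anosov} because $\rho(W_S)$ might not preserve a \emph{properly} convex open subset of $\PP(V)$.''

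The paper's fix is to pass to the induced representation $\rho^\alpha_v$ on $V^\alpha_v$, which is reduced and dual-reduced and has the same Cartan matrix. One checks (via the block form~\eqref{eqn:ninezeros}) that $\rho^\alpha_v$ is still $P_1$-Anosov, and now its Vinberg domain is properly convex by Proposition~\ref{prop:maximal}.\eqref{item:max-1}, so Fact~\ref{fact:Anosov} applies at the level of $V^\alpha_v$. Convex cocompactness is then transported back to $\PP(V)$ via Lemma~\ref{lem:V-alpha-v}.\eqref{item:V-alpha-v-cc}. Once this lemma is in place, your chain of equivalences (Corollary~\ref{cor:main2}, Proposition~\ref{prop:cc-hyp-strong-cc}, Corollary~\ref{coro:cc-int-refl}) goes through exactly as you describe.
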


\begin{remark}
If $n<N$, then the space of $P_1$-Anosov representations of $W_S$ as a reflection group may be smaller than the interior of $\Hom^{\mathrm{ref}}(W_S, \GL(V))$.
This is the case in Example~\ref{ex:word-hyperbolic} below, where $W_S$ is word hyperbolic and the interior of $\Hom^{\mathrm{ref}}(W_S, \GL(V))$ contains a unique representation which is faithful and discrete but not Anosov.
The existence of such a non-Anosov representation of a word hyperbolic group $\Gamma$ admitting a neighborhood in $\Hom(\Gamma,\GL(V))$ consisting entirely of faithful and discrete representations provides a negative answer to a question asked in \cite[\S\,8]{kas19} and \cite[\S\,4.3]{pot19}.
\end{remark}

\subsection{Organization of the paper}

In Section~\ref{sec:cc} we recall some well-known facts in convex projective geometry, as well as some results from \cite{dgk-ccHpq,dgk-proj-cc}.
In Section~\ref{sec:deform-Tits-repr} we recall Vinberg's theory of linear reflection groups and provide proofs of some basic results.
In Section~\ref{sec:Omega-Vin-max} we establish the maximality of the Tits--Vinberg domain (Proposition~\ref{prop:maximal}), and in Section~\ref{sec:Omega-min} we describe the minimal invariant convex domain of a reflection group.
In Section~\ref{sec:main_thm} we prove Theorem~\ref{thm:main}, and in Section~\ref{sec:mainproof-easy} we establish Theorem~\ref{thm:naive-cc-exists} and Corollaries \ref{cor:rel-hyp}, \ref{cor:cc-standard-subgroup}, \ref{cor:main2}, and~\ref{cor:main-racg}.
In Section~\ref{sec:deformation} we conclude with the proofs of Corollaries \ref{coro:nra_cc-int-refl}, \ref{coro:cc-int-refl}, and~\ref{cor:hyp-CG-Anosov}, and give examples showing that the dimension condition in Corollary~\ref{coro:nra_cc-int-refl} is optimal.

\subsection*{Acknowledgements}

Upon hearing about the first three authors' results on the right-angled case, Mike Davis kindly shared with them some personal notes he had been gathering on the topic over the past years.
We would like to thank him for his keen interest.
The last two authors are especially grateful to Ryan Greene, since they started thinking about this topic with him several years ago.
We also thank Anna Wienhard for helpful conversations. Finally, we thank the referee for many useful comments which helped improve the paper.

This project received funding from the European Research Council (ERC) under the European Union's Horizon 2020 research and innovation programme (ERC starting grant DiGGeS, grant agreement No 715982, and ERC consolidator grant GeometricStructures, grant agreement No 614733).
The authors also acknowledge support from the GEAR Network, funded by the National Science Foundation under grants DMS 1107452, 1107263, and 1107367 (``RNMS: GEometric structures And Representation varieties'').
J.D. was partially supported by an Alfred P. Sloan Foundation fellowship and by the National Science Foundation under grants DMS 1510254, DMS 1812216, and DMS 1945493.
F.G. and F.K. were partially supported by the Agence Nationale de la Recherche through the grant DynGeo (ANR-16-CE40-0025-01) and the Labex CEMPI (ANR-11-LABX-0007-01), and F.G. through the Labex IRMIA (11-LABX-0055).
Part of this work was completed while F.K. was in residence at the MSRI in Berkeley, California, for the program \emph{Geometric Group Theory} (Fall 2016) supported by NSF grant DMS 1440140, and at the INI in Cambridge, UK, for the program \emph{Nonpositive curvature group actions and cohomology} (Spring~2017) supported by EPSRC grant EP/K032208/1.
G.L. was partially supported by the DFG research grant ``Higher Teichm\"{u}ller Theory'' and by the National Research Foundation of Korea (NRF) grant funded by the Korean government (MSIT) (No 2020R1C1C1A01013667).
L.M. acknowledges support by the Centre Henri Lebesgue (ANR-11-LABX-0020 LEBESGUE).

\section{Reminders: Actions on properly convex subsets of projective space} \label{sec:cc}

In the whole paper, we fix a real vector space $V$ of dimension $n\geq 2$. We first recall some well-known facts in convex projective geometry, as well as some results from \cite{dgk-ccHpq,dgk-proj-cc}.
Here we work with the projective general linear group $\PGL(V)$, which naturally acts on the projective space $\PP(V)$; in the rest of the paper, we shall work with subgroups of $\GL(V)$, which also acts on $\PP(V)$ via the projection $\GL(V) \to \PGL(V)$.
All of the infinite discrete subgroups of $\GL(V)$ that we consider project injectively to discrete subgroups of $\PGL(V)$.

\subsection{Properly convex domains in projective space} \label{subsec:prop-conv-proj}

Recall that an open domain $\Omega$ in the projective space $\PP(V)$ is said to be \emph{convex} if it is contained and convex in some affine chart, \emph{properly convex} if it is convex and bounded in some affine chart, and \emph{strictly convex} if in addition its boundary does not contain any nontrivial projective line segment.
It is said to have \emph{$C^1$ boundary} if every point of the boundary of~$\Omega$ has a unique supporting hyperplane.

Let $\Omega$ be a properly convex open subset of $\PP(V)$, with boundary $\partial\Omega$.
Recall the \emph{Hilbert metric} $d_{\Omega}$ on~$\Omega$:
$$d_{\Omega}(y,z) := \frac{1}{2} \log \, [a,y,z,b] $$
for all distinct $y,z\in\Omega$, where $[\cdot,\cdot,\cdot,\cdot]$ is the cross-ratio on $\PP^1(\RR)$, normalized so that $[0,1,z,\infty]=z$, and where $a,b$ are the intersection points of $\partial\Omega$ with the projective line through $y$ and~$z$, with $a,y,z,b$ in this order (see Figure~\ref{disttt}).
The metric space $(\Omega,d_{\Omega})$ is complete (\ie Cauchy sequences converge) and proper (\ie closed balls are compact), and the group
$$\mathrm{Aut}(\Omega) := \{g\in\PGL(V) ~|~ g\cdot\Omega=\Omega\}$$
acts on~$\Omega$ by isometries for~$d_{\Omega}$.
As a consequence, any discrete subgroup of $\mathrm{Aut}(\Omega)$ acts properly discontinuously on~$\Omega$.

\begin{figure}[h!]
\centering
\begin{tikzpicture}
\filldraw[draw=black,fill=gray!20]
 plot[smooth,samples=200,domain=0:pi] ({4*cos(\x r)*sin(\x r)},{-4*sin(\x r)});
cycle;
\draw (-1,-2.5) node[anchor=north west] {$y$};
\fill [color=black] (-1,-2.5) circle (2.5pt);
\draw (1,-2) node[anchor=north west] {$z$};
\fill [color=black] (1,-2) circle (2.5pt);
\draw [smooth,samples=200,domain=-4:4] plot ({\x},{0.25*\x-2.25});
\draw (-1.97,-2.75) node[anchor=north west] {$a$};
\fill [color=black] (-1.98,-2.75) circle (2.5pt);
\draw (1.73,-1.75) node[anchor=north west] {$b$};
\fill [color=black] (1.63,-1.85) circle (2.5pt);
\draw (0.7,-3) node[anchor=north west] {$\Omega$};
\end{tikzpicture}
\caption{Hilbert distance}
\label{disttt}
\end{figure}

Let $V^*$ be the dual vector space of~$V$.
By definition, the \emph{dual convex set} of~$\Omega$ is
$$\Omega^* := \PP\big(\big\{ \varphi \in V^* ~|~ \varphi(x)<0\quad \forall x\in\overline{\widetilde{\Omega}}\big\}\big),$$
where $\overline{\widetilde{\Omega}}$ is the closure in $V\smallsetminus \{0\}$ of an open convex cone of $V$ lifting~$\Omega$.
The set $\Omega^*$ is a properly convex open subset of $\PP(V^*)$ which is preserved by the dual action of $\mathrm{Aut}(\Omega)$ on $\PP(V^*)$.

Projective line segments in $\Omega$ are always geodesics for the Hilbert metric $d_{\Omega}$.
When $\Omega$ is not strictly convex, there may be other geodesics as well.
However, a biinfinite geodesic of $(\Omega,d_{\Omega})$ always has well-defined, distinct endpoints in $\partial \Omega$, by \cite[Th.\,3]{fk05} (see also \cite[Lem.\,2.6]{dgk-ccHpq}).

\begin{remark} \label{rem:strict-convex-Lambda}
If $\Omega$ is strictly convex, then all $\Gamma$-orbits of~$\Omega$ have the same accumulation points in $\partial \Omega$.
Indeed, let $y\in\Omega$ and $(\gamma_n) \in \Gamma^{\NN}$ be such that $(\gamma_n\cdot y)$ converges to some $y_{\infty}\in \partial\Omega$. 
Let $z$ be  another point of $\Omega$.
After possibly passing to a subsequence, $(\gamma_n \cdot z)$ converges to some $z_\infty \in \partial \Omega$. By properness of the action of $\Gamma$ on $\Omega$, the limit $[y_\infty, z_\infty]$ of the sequence of compact intervals $(\gamma_n\cdot [y,z])$ is contained in $\partial \Omega$.
Strict convexity then implies that $y_\infty = z_\infty$.
This shows that any accumulation point of $\Gamma\cdot y$ is also an accumulation point of $\Gamma\cdot z$.
\end{remark}

\subsection{The proximal limit set and the full orbital limit set}

Recall that an element of $\PGL(V)$ is called \emph{proximal} in $\PP(V)$ if it has a unique eigenvalue of largest modulus and if this eigenvalue (which is then necessarily real) has multiplicity~1.
We shall use the following terminology as in \cite{dgk-ccHpq,dgk-proj-cc}.

\begin{definition} \label{def:prox-lim-set}
Let $\Gamma$ be a discrete subgroup of $\PGL(V)$.
The \emph{proximal limit set} of $\Gamma$ in $\PP(V)$ is the closure $\Lambda_{\Gamma}$ of the set of attracting fixed points of elements of~$\Gamma$ which are proximal in $\PP(V)$.
\end{definition}

When $\Gamma$ acts \emph{irreducibly} on $\PP(V)$, the proximal limit set $\Lambda_{\Gamma}$ was studied in \cite{gui90,ben97,ben00}.
In that setting, the action of $\Gamma$ on $\Lambda_{\Gamma}$ is minimal (\ie any orbit is dense) and $\Lambda_{\Gamma}$ is contained in any nonempty closed $\Gamma$-invariant subset of $\PP(V)$ (see \cite[Lem.\,3.6]{ben97} and \cite[Lem.\,2.5 \& Prop.\,3.1]{ben00}).

We shall use the following terminology from \cite{dgk-proj-cc}.

\begin{definition} \label{def:Lambdao}
Let $\Gamma$ be an infinite discrete subgroup of $\PGL(V)$ preserving a nonempty properly convex open subset $\Omega$ of $\PP(V)$.
\begin{itemize}
  \item The \emph{full orbital limit set} of $\Gamma$ in~$\Omega$ is the set $\Lambdao_{\Omega}(\Gamma) \subset \overline{\Omega}$ of all accumulation points of all $\Gamma$-orbits of~$\Omega$; it is $\Gamma$-invariant and contained in $\partial\Omega$.
  \item The \emph{convex core} of $\Gamma$ in~$\Omega$ is the convex hull $\Ccore_\Omega(\Gamma)$ of $\Lambdao_{\Omega}(\Gamma)$ in~$\Omega$.
  \item The action of $\Gamma$ on~$\Omega$ is \emph{convex cocompact} if $\Gamma$ acts cocompactly on~$\Ccore_\Omega(\Gamma)$.
\end{itemize}
\end{definition}

Thus an infinite discrete subgroup $\Gamma$ of $\PGL(V)$ is convex cocompact in $\PP(V)$ (Definition~\ref{def:cc-group}.\ref{item:def-cc}) if and only if it acts convex cocompactly (Definition~\ref{def:Lambdao}) on some properly convex open subset $\Omega \subset \PP(V)$. In this case the proximal limit set $\Lambda_{\Gamma}$ is always nonempty: see \cite[Prop.\,2.3.15]{bla-PhD}.

In the proof of Theorem~\ref{thm:main} (more specifically, Proposition~\ref{prop:naive-cc-implies-cc}), we shall use the following classical fact.

\begin{fact}[{\cite[Prop.\,3]{vey70}}] \label{fact:vey}
Let $\Gamma$ be a discrete subgroup of $\PGL(V)$ dividing (\ie acting properly discontinuously and cocompactly on) some properly convex open subset $\Omega$ of $\PP(V)$.
Then the convex hull of $\Lambda_{\Gamma}$ in~$\Omega$ is equal to~$\Omega$.
\end{fact}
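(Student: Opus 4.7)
The plan is to prove the stronger claim that $\overline{\Omega}$ equals the closed convex hull of $\Lambda_{\Gamma}$ in the ambient projective space, and then recover the statement about $\Omega$ via Carath\'eodory's theorem. The starting observation is that $\Lambda_{\Gamma}\subset\partial\Omega$: the attracting fixed point of any proximal $\gamma\in\Gamma$ is the limit of $\gamma^n\cdot y$ for $y\in\Omega$ in generic position relative to the repelling hyperplane of~$\gamma$, and properness of the action of~$\Gamma$ on~$\Omega$ forces this accumulation point into $\partial\Omega$. Nonemptiness of $\Lambda_{\Gamma}$ can be obtained by taking any element $\gamma\in\Gamma$ of infinite order and using cocompactness to show that a suitable power of~$\gamma$ (or a combination with another element) must be proximal.

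The heart of the proof is to show that every extreme point $z$ of the compact convex body $\overline{\Omega}$ (viewed in an affine chart containing it) belongs to $\Lambda_{\Gamma}$. Using cocompactness, I would fix a compact $K\subset\Omega$ with $\Gamma\cdot K=\Omega$, pick a sequence $y_n\in\Omega$ converging to~$z$, and write $y_n=\gamma_n\cdot k_n$ with $k_n\in K$. After extracting subsequences, one may assume $k_n\to k_{\infty}\in K$ and, for a well-chosen norm on $\mathrm{End}(V)$, that $\gamma_n/\|\gamma_n\|$ converges to a nonzero linear operator~$\pi$. The key geometric observation is the following: if $\pi$ had rank $\geq 2$, then the image under~$\pi$ of a small ball around~$k_{\infty}$ in~$\overline{\Omega}$ would be a nondegenerate convex subset of~$\overline{\Omega}$ whose relative interior contains~$z$, contradicting that $z$ is extreme. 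Hence $\pi$ has rank one with image the line representing~$z$. Standard dynamical manipulations (for instance, considering $\gamma_n\gamma_m^{-1}$ for large $n\gg m$, or passing to squares $\gamma_n^{2}$ and analyzing their Cartan decomposition) then produce genuine proximal elements of~$\Gamma$ whose attracting fixed points cluster at~$z$, placing $z$ in $\Lambda_{\Gamma}$.

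To finish, the Krein--Milman theorem in the chosen affine chart gives
\[
\overline{\Omega} \,=\, \overline{\mathrm{conv}\bigl(\mathrm{Ext}(\overline{\Omega})\bigr)} \,\subseteq\, \overline{\mathrm{conv}(\Lambda_{\Gamma})},
\]
while the reverse inclusion is immediate from $\Lambda_{\Gamma}\subset\overline{\Omega}$. Hence the two closed convex sets coincide. For any $y\in\Omega$, Carath\'eodory's theorem in the affine chart writes $y$ as a convex combination of at most $n+1$ extreme points of~$\overline{\Omega}$, each of which lies in $\Lambda_{\Gamma}$. Since $y\in\Omega$ is in the interior of this finite convex combination, it lies in the convex hull of $\Lambda_{\Gamma}$ taken inside~$\Omega$, which is precisely the statement of the fact.

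The main obstacle will be the middle step: extracting a genuine proximal element of~$\Gamma$ from the accumulation operator~$\pi$, and showing rigorously that rank-one dynamics at~$z$ can be converted into attracting fixed points of elements of~$\Gamma$ arbitrarily close to~$z$. This ``automatic proximality at extreme points'' is the substantive dynamical content of Vey's argument and relies crucially both on extremality and on cocompactness.
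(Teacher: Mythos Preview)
The paper does not give its own proof of this fact; it is simply quoted from Vey's 1970 paper \cite[Prop.\,3]{vey70}.  So there is nothing in the paper to compare your argument against, and your proposal should be assessed on its own merits.

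Your overall architecture is the standard one and is sound.  Two minor cleanups.  First, the last step is slightly overcomplicated: in finite dimensions Minkowski's theorem already gives $\overline{\Omega}=\mathrm{conv}\bigl(\mathrm{Ext}(\overline{\Omega})\bigr)$ with no closure, so once $\mathrm{Ext}(\overline{\Omega})\subset\Lambda_{\Gamma}$ you have $\overline{\Omega}=\mathrm{conv}(\Lambda_{\Gamma})$ directly, and intersecting with~$\Omega$ finishes; Carath\'eodory is not needed.  Second, in your rank argument you should note the harmless case $k_{\infty}\in[\ker\pi]$: replace the $k_n$ by any fixed $p\in\Omega\smallsetminus[\ker\pi]$ and observe that $\gamma_n\cdot p$ still converges into $[\mathrm{Im}\,\pi]\cap\partial\Omega$, so the image of~$\pi$ restricted to~$\overline{\Omega}$ is visible regardless.

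You are right that the only substantive gap is converting the rank-one limit~$\pi$ into genuine proximal elements with attracting fixed points accumulating on~$z$.  One clean way to close it: once $\gamma_n/\|\gamma_n\|\to\pi$ with $\mathrm{Im}\,\pi=\RR z$ and $\ker\pi=H$, if $z\notin[H]$ then for large~$n$ the element $\gamma_n$ itself is proximal with attracting fixed point near~$z$ (a large singular-value gap together with transversality of image and kernel forces an eigenvalue gap --- this is the elementary ``ping-pong'' criterion for proximality).  If $z\in[H]$, pick $\gamma_0\in\Gamma$ with $\gamma_0\cdot z\notin[H]$ (such $\gamma_0$ exists since the $\Gamma$-orbit of an extreme point cannot lie in a proper subspace supporting~$\overline{\Omega}$) and run the same argument with $\gamma_n\gamma_0$, whose rank-one limit $\pi\gamma_0$ still has image $\RR z$ but kernel $\gamma_0^{-1}H$, now transverse to~$z$.
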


\subsection{Maximal and minimal domains}

We shall use the following properties, which are due to Benoist \cite{ben00} in the irreducible case.
We denote by $\overline{Z}$ the closure of a set $Z$ in $\PP(V)$ or $\PP(V^*)$.

\begin{proposition}[{see \cite[Prop.\,3.1]{ben00} and \cite[Prop.\,3.10 \& Lem.\,3.13]{dgk-proj-cc}}] \label{prop:max-inv-conv}
Let $\Gamma$ be a discrete subgroup of $\PGL(V)$ preserving a nonempty properly convex open subset $\Omega$ of $\PP(V)$ and containing an element which is proximal in $\PP(V)$.
Let $\Lambda_{\Gamma}$ (\resp $\Lambda_{\Gamma}^*$) be the proximal limit set of $\Gamma$ in $\PP(V)$ (\resp $\PP(V^*)$).
Then
\begin{enumerate}
  \item\label{item:Lambda-prox-in-boundary} $\Lambda_{\Gamma} \subset \partial \Omega \cap \overline{\Gamma\cdot y}$ for all $y\in \Omega$, and $\Lambda_{\Gamma}^* \subset \partial \Omega^* \cap \overline{\Gamma\cdot [\varphi]}$ for all $[\varphi] \in\Omega^*$;
  \item\label{item:Lambda-prox-lift-Omega} more specifically, $\Omega$ and $\Lambda_{\Gamma}$ lift to cones $\widetilde{\Omega}$ and $\widetilde{\Lambda}_\Gamma$ of $V\smallsetminus\{0\}$ with $\widetilde{\Omega}$ properly convex containing $\widetilde{\Lambda}_{\Gamma}$ in its boundary, and $\Omega^*$ and $\Lambda_{\Gamma}^*$ lift to cones $\widetilde{\Omega}^*$ and $\widetilde{\Lambda}_\Gamma^*$ of $V^*\smallsetminus\{0\}$ with $\widetilde{\Omega}^*$ properly convex containing $\widetilde{\Lambda}_{\Gamma}^*$ in its boundary, such that $\varphi(x)\geq 0$ for all $x\in\widetilde{\Lambda}_{\Gamma}$ and $\varphi\in\widetilde{\Lambda}_{\Gamma}^*$;
  \item\label{item:Omega-max} for $\widetilde{\Lambda}_{\Gamma}^*$ as in \eqref{item:Lambda-prox-lift-Omega}, the set
  $$\Omega_{\max} := \PP\big(\big\{ x\in V ~|~ \varphi(x)>0\quad \forall\varphi\in\widetilde{\Lambda}_{\Gamma}^*\big\}\big)$$
is the connected component of $\PP(V) \smallsetminus \bigcup_{[\varphi]\in\Lambda_{\Gamma}^*} \PP\mathrm{Ker}(\varphi)$ containing~$\Omega$; it is $\Gamma$-inva\-riant, convex, and open in $\PP(V)$; any $\Gamma$-invariant properly convex open subset $\Omega'$ of $\PP(V)$ containing~$\Omega$ is contained in~$\Omega_{\max}$.
\end{enumerate}
\end{proposition}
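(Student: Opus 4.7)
The plan is to work with consistent lifts in $V$ and $V^\ast$. Because $\Omega$ is properly convex, its preimage in $V \smallsetminus \{0\}$ has two opposite connected components; I select one, $\widetilde{\Omega}$, and replace $\Gamma$ by the subgroup $\widetilde{\Gamma} \subset \GL(V)$ of lifts preserving $\widetilde{\Omega}$ (which surjects onto $\Gamma$). The induced dual action preserves the open cone $\widetilde{\Omega}^\ast = \{\varphi \in V^\ast \colon \varphi(x) > 0\ \forall x \in \overline{\widetilde{\Omega}} \smallsetminus \{0\}\}$. (This is the sign convention opposite to the one in Section~\ref{subsec:prop-conv-proj} for $\Omega^\ast$, but it matches the inequality in~(2); the global sign is harmless.)

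\textbf{Parts (1) and (2).} For a proximal element $\gamma \in \Gamma$ with attracting line $[x_+]$ and repelling hyperplane $H^-_\gamma$, the lift $\tilde\gamma \in \widetilde{\Gamma}$ has a positive simple top eigenvalue $\lambda$; I normalize the eigenvector as $x_+ = \lim_n \lambda^{-n} \tilde\gamma^n \cdot y$ for some $y \in \widetilde{\Omega} \smallsetminus H^-_\gamma$, which lands in $\overline{\widetilde{\Omega}}$. Properness of the $\Gamma$-action on $\Omega$ prevents $[x_+]$ from lying inside $\Omega$, so $x_+ \in \partial\widetilde{\Omega}$. To extend this to $[x_+] \in \overline{\Gamma\cdot y}$ for every $y \in \Omega$ (not only $y \notin H^-_\gamma$), I pick $z \in \Omega \smallsetminus H^-_\gamma$: the chord $\gamma^n \cdot [y,z] \subset \Omega$ has one endpoint converging to $[x_+]$ while the other subconverges to some $y_\infty \in \partial\Omega$, so by convexity the limit segment $[y_\infty, x_+] \subset \partial\Omega$ exhibits $[x_+]$ as an accumulation point of $\Gamma \cdot y$. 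Density of attracting fixed points in $\Lambda_\Gamma$ gives~(1), with the dual statement proved symmetrically. Collecting the normalized lifts yields $\widetilde{\Lambda}_\Gamma \subset \partial\widetilde{\Omega}$ (and dually for $\widetilde{\Lambda}_\Gamma^\ast$); the compatibility $\varphi(x) \geq 0$ of~(2) then follows by continuity from $\widetilde{\Lambda}_\Gamma^\ast \subset \overline{\widetilde{\Omega}^\ast}$.

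\textbf{Part (3).} The cone $\widetilde{\Omega}_{\max} = \{x \in V \colon \varphi(x) > 0\ \forall \varphi \in \widetilde{\Lambda}_\Gamma^\ast\}$ is open, convex, $\Gamma$-invariant (since $\widetilde{\Lambda}_\Gamma^\ast$ is), and contains $\widetilde{\Omega}$ by~(2). Its projectivization $\Omega_{\max}$ is convex, hence connected, and avoids every $\PP\mathrm{Ker}(\varphi)$ with $[\varphi] \in \Lambda_\Gamma^\ast$; conversely, along any path in $\PP(V) \smallsetminus \bigcup_{[\varphi] \in \Lambda_\Gamma^\ast} \PP\mathrm{Ker}(\varphi)$ starting in $\Omega$, each $\varphi$ keeps its sign by continuity, so the full connected component containing $\Omega$ sits inside $\Omega_{\max}$. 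For maximality, if $\Omega' \supset \Omega$ is $\Gamma$-invariant properly convex open, I lift to $\widetilde{\Omega}' \supset \widetilde{\Omega}$; applying (1)--(2) to $\Omega'$ places $\widetilde{\Lambda}_\Gamma^\ast$ inside $\overline{\widetilde{\Omega}'^\ast}$, forcing $\varphi(x) > 0$ on $\widetilde{\Omega}'$ by openness, hence $\widetilde{\Omega}' \subset \widetilde{\Omega}_{\max}$.

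\textbf{Main difficulty.} The whole argument is an exercise in simultaneous sign-bookkeeping in $V$ and $V^\ast$: projective objects like proximal eigenlines and kernels of functionals only determine vectors up to scalar, yet the inequalities in~(2) and the inclusion in~(3) are statements about \emph{signed} vectors in specific cones. The key is to commit at the very start to a pair $(\widetilde{\Omega},\widetilde{\Omega}^\ast)$ and a lift $\widetilde{\Gamma} \subset \GL(V)$ preserving them, and to normalize every proximal eigenvector as a limit of orbits in $\widetilde{\Omega}$; once this is done, positivity propagates to $\widetilde{\Lambda}_\Gamma$, $\widetilde{\Lambda}_\Gamma^\ast$, and to any larger invariant cone automatically by continuity and openness.
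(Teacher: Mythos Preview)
The paper does not prove this proposition; it is stated as a reminder with citations to \cite{ben00} and \cite{dgk-proj-cc}, so there is no in-paper argument to compare against. Your outline is essentially the standard one and parts~(2) and~(3) are fine, but part~(1) contains a genuine (though easily repaired) gap.

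The problematic sentence is ``the limit segment $[y_\infty, x_+] \subset \partial\Omega$ exhibits $[x_+]$ as an accumulation point of $\Gamma\cdot y$''. This is a non sequitur: along your subsequence, $\gamma^n\cdot y \to y_\infty$, not $[x_+]$, and nothing you wrote forces $y_\infty = [x_+]$. (What you seem to have in mind is the strictly convex case, as in Remark~\ref{rem:strict-convex-Lambda}; but when $\partial\Omega$ contains a nontrivial segment through $[x_+]$, a hypothetical point $y \in H^-_\gamma \cap \Omega$ would have $\gamma^n y$ trapped in $H^-_\gamma$ and hence accumulating away from $[x_+]$.)

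The fix is that the worrisome case is vacuous: $H^-_\gamma$ never meets $\Omega$. Write $H^-_\gamma = \ker\varphi_+$ where $\varphi_+\circ\tilde\gamma = \lambda\,\varphi_+$, normalized so that $\varphi_+(x_+)>0$. For $y\in\widetilde\Omega$, decompose $y = c\,x_+ + y^-$ with $y^-\in H^-_\gamma$; then $\lambda^{-n}\tilde\gamma^n y \to c\,x_+ \in \overline{\widetilde\Omega}$. If $c<0$ this puts $-x_+\in\overline{\widetilde\Omega}$, contradicting proper convexity; if $c=0$ a small perturbation of $y$ inside the open cone $\widetilde\Omega$ produces $c<0$. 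Hence $c>0$, i.e.\ $\varphi_+>0$ on $\widetilde\Omega$, so $H^-_\gamma\cap\Omega=\emptyset$. Consequently $\gamma^n\cdot y\to [x_+]$ for \emph{every} $y\in\Omega$, and your segment argument is unnecessary.
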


In \eqref{item:Omega-max}, the convex set $\Omega_{\max}$ is not always properly convex. However, in the irreducible case it is:

\begin{proposition}[{\cite[Prop.\,3.1]{ben00}}] \label{prop:benoist-irred-Gamma}
Let $\Gamma$ be a discrete subgroup of $\PGL(V)$ preserving a nonempty properly convex open subset $\Omega$ of $\PP(V)$.

If $\Gamma$ acts irreducibly on $\PP(V)$, then $\Gamma$ always contains a proximal element and the set $\Omega_{\max}$ of Proposition~\ref{prop:max-inv-conv}.\eqref{item:Omega-max} is always properly convex.
Moreover, there is a smallest nonempty $\Gamma$-invariant convex open subset $\Omega_{\min}$ of $\Omega_{\max}$, namely the interior of the convex hull of $\Lambda_{\Gamma}$ in~$\Omega_{\max}$.

If moreover $\Gamma$ acts strongly irreducibly on $\PP(V)$ (\ie all finite-index subgroups of~$\Gamma$ act irreducibly), then $\Omega_{\max}$ is the unique maximal $\Gamma$-invariant properly convex open set in $\PP(V)$; it contains all other $\Gamma$-invariant properly convex open subsets.
\end{proposition}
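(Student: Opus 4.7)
The plan is to establish the four assertions in the order stated, under the running hypothesis of irreducibility (with $\Gamma$ understood to be infinite, since the conclusion about proximal elements is otherwise vacuous and reflection groups are anyway infinite in our applications).

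\textbf{Existence of a proximal element.} I would invoke the classical theorem of Goldsheid--Margulis--Benoist: an infinite irreducible subgroup of $\GL(V)$ preserving a properly convex open cone contains a proximal element. A discrete $\Gamma$ acting on properly convex $\Omega$ lifts to $\SL^{\pm}(V)$ preserving a properly convex cone $\widetilde{\Omega}$; infiniteness and discreteness force the lifted group to be unbounded. Proximality is then obtained by rescaling a divergent sequence to a rank-deficient limit and using irreducibility in a ping-pong argument to produce an element with a unique eigenvalue of largest modulus.

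\textbf{Proper convexity of $\Omega_{\max}$.} Applying Step 1 to the dual action (which is again discrete and irreducible on $V^\ast$) gives proximal elements in $\PP(V^\ast)$, so $\Lambda_\Gamma^\ast \neq \emptyset$. The linear span of a lift $\widetilde{\Lambda}_\Gamma^\ast \subset V^\ast$ is a nonzero $\Gamma$-invariant subspace, hence equals $V^\ast$ by irreducibility. Since $\Omega_{\max}$ is cut out by the linear conditions $\varphi(x) > 0$ for $\varphi \in \widetilde{\Lambda}_\Gamma^\ast$, and these forms span $V^\ast$, the preimage cone in $V$ contains no full line and $\Omega_{\max}$ is properly convex.

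\textbf{Existence and minimality of $\Omega_{\min}$.} Let $C$ be the convex hull of $\Lambda_\Gamma$ inside $\Omega_{\max}$. By the analogous span argument applied to $V$ itself, $\widetilde{\Lambda}_\Gamma$ spans $V$, so $C$ has nonempty interior; set $\Omega_{\min} := \mathrm{Int}(C)$. It is $\Gamma$-invariant, open, and convex. For minimality, take any nonempty $\Gamma$-invariant convex open $\Omega' \subset \Omega_{\max}$ and pick $y \in \Omega'$. Proposition~\ref{prop:max-inv-conv}.\eqref{item:Lambda-prox-in-boundary} applied to $\Omega'$ gives $\Lambda_\Gamma \subset \overline{\Gamma \cdot y} \subset \overline{\Omega'}$, hence $C \subset \overline{\Omega'}$. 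Since a convex open set coincides with the interior of its closure, taking interiors in $\Omega_{\max}$ yields $\Omega_{\min} \subset \Omega'$.

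\textbf{Uniqueness under strong irreducibility.} Suppose now $\Gamma$ acts strongly irreducibly and let $\Omega'$ be any $\Gamma$-invariant properly convex open subset of $\PP(V)$. Applying Steps 1--2 with $\Omega'$ in place of $\Omega$ yields a maximal $\Omega'_{\max}$, defined by the same intrinsic set $\Lambda_\Gamma^\ast$. Both $\Omega_{\max}$ and $\Omega'_{\max}$ are connected components of the complement $\PP(V) \setminus \bigcup_{[\varphi] \in \Lambda_\Gamma^\ast} \PP\mathrm{Ker}(\varphi)$; the action of $\Gamma$ permutes these components, so the stabilizer of any component is a finite-index subgroup of~$\Gamma$. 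Strong irreducibility (all finite-index subgroups act irreducibly) applied to such a stabilizer, together with Step~2, forces the component to be properly convex and equal to $\Omega_{\max}$. Hence $\Omega' \subset \Omega'_{\max} = \Omega_{\max}$.

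The main obstacle is Step 1, which is the deepest input and relies on a nontrivial theorem about proximal dynamics of irreducible linear groups preserving a convex cone. Step 4 is also delicate: one must carefully distinguish the connected components of the hyperplane complement and use strong irreducibility (not just irreducibility) to rule out multiple invariant properly convex components.
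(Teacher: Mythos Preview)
The paper does not give its own proof of this proposition: it is quoted from \cite{ben00} as a known result in the ``Reminders'' section, so there is no paper's approach to compare against. Your proposal is an attempt to reconstruct Benoist's argument.

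Steps 1--3 are a reasonable outline. Step~1 correctly identifies the existence of a proximal element as a nontrivial external input, and Steps 2--3 are standard once that is granted.

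Step~4 has a genuine gap. You correctly observe that $\Omega_{\max}$ and $\Omega'_{\max}$ are both $\Gamma$-invariant connected components of $\PP(V)\smallsetminus\bigcup_{[\varphi]\in\Lambda_\Gamma^*}\PP\mathrm{Ker}(\varphi)$. But the assertion that ``the stabilizer of any component is a finite-index subgroup'' is either unjustified (the complement may have infinitely many components) or vacuous (for the $\Gamma$-invariant components $\Omega_{\max}$ and $\Omega'_{\max}$, the stabilizer is all of~$\Gamma$). Either way, invoking strong irreducibility for this stabilizer and reapplying Step~2 only re-establishes that each such component is properly convex; it does \emph{not} show $\Omega_{\max}=\Omega'_{\max}$. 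The missing idea is that under strong irreducibility the proximal limit set $\Lambda_\Gamma$ admits a canonical lift to $V\smallsetminus\{0\}$ (up to a global sign): for any two proximal elements $\gamma,\gamma'\in\Gamma$, strong irreducibility forces the attracting eigenline of $\gamma$ to avoid the repelling hyperplane of $\gamma'$, which pins down the relative signs of all lifts (this is the content behind the compatibility $\varphi(x)\geq 0$ in Proposition~\ref{prop:max-inv-conv}.\eqref{item:Lambda-prox-lift-Omega}). Consequently the convex hull of $\Lambda_\Gamma$, and hence $\Omega_{\min}$, is an intrinsic subset of $\PP(V)$ independent of the initial choice of~$\Omega$; then $\Omega_{\min}\subset\Omega'$ for every $\Gamma$-invariant properly convex open $\Omega'$, forcing $\Omega'$ to lie in the single component $\Omega_{\max}$ containing $\Omega_{\min}$. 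Without strong irreducibility this genuinely fails: the subgroup of $\PGL(2,\RR)$ generated by $\mathrm{diag}(\lambda,\lambda^{-1})$ and the coordinate swap is irreducible but not strongly irreducible, and it preserves two disjoint open intervals of $\PP^1(\RR)$.
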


\subsection{Convex cocompactness in projective space}

Recall the notions of naive convex cocompactness, convex cocompactness, and strong convex cocompactness from Definition~\ref{def:cc-group}, as well as the notion of convex cocompact action from Definition~\ref{def:Lambdao}.
We shall use the following characterizations and properties from \cite{dgk-proj-cc}.

\begin{proposition}{\cite[Th.\,1.15]{dgk-proj-cc}} \label{prop:cc-hyp-strong-cc}
Let $\Gamma$ be an infinite discrete subgroup of $\PGL(V)$.
Then $\Gamma$ is strongly convex cocompact in $\PP(V)$ if and only if it is convex cocompact in $\PP(V)$ and word hyperbolic.
\end{proposition}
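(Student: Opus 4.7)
The plan is to treat the two implications separately. The forward direction (strong convex cocompactness implies convex cocompactness and word hyperbolicity) splits cleanly. The first conclusion is essentially Remark~\ref{rem:strict-convex-Lambda}: strict convexity of $\Omega$ forces all $\Gamma$-orbits in $\Omega$ to accumulate on the same subset of $\partial\Omega$, so any $\Gamma$-invariant closed convex $\C \subset \Omega$ with $\Gamma\backslash\C$ compact automatically satisfies $\overline{\C} \supset \Lambdao_\Omega(\Gamma)$. For word hyperbolicity I would invoke the theorem of Benoist (with antecedents due to Karlsson--Noskov) that the Hilbert metric on a properly convex open subset of $\PP(V)$ with $C^1$ and strictly convex boundary is Gromov hyperbolic. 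Since straight segments in $\Omega$ are $d_\Omega$-geodesics and remain inside any closed convex subset, the convex core $\Ccore_\Omega(\Gamma)$ inherits a proper, geodesic, Gromov-hyperbolic metric on which $\Gamma$ acts properly discontinuously, cocompactly, and by isometries; the \v{S}varc--Milnor lemma then yields word hyperbolicity of~$\Gamma$.

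The reverse implication is the substantial content. Starting from convex cocompactness data $(\Omega,\C)$ and word hyperbolicity of $\Gamma$, the goal is to replace $\Omega$ with a new $\Gamma$-invariant properly convex open $\Omega'$ whose boundary is $C^1$ and strictly convex, and on which $\Gamma$ still acts convex cocompactly. The first step is to analyze the proximal limit set via Proposition~\ref{prop:max-inv-conv}: the dual pair $(\Lambda_\Gamma,\Lambda_\Gamma^\ast) \subset \partial\Omega \times \partial\Omega^\ast$ encodes the supporting-hyperplane data one needs to control. Using word hyperbolicity I would argue that $\Lambda_\Gamma$ contains no nontrivial projective segment and that every point of $\Lambda_\Gamma$ admits a unique supporting hyperplane, coming from a point of $\Lambda_\Gamma^\ast$; the idea is that a boundary segment in $\Lambda_\Gamma$ would produce a flat strip in the convex core, hence via a flat-plane/North--South dynamics argument a subgroup isomorphic to $\ZZ^2$ in $\Gamma$, contradicting hyperbolicity.

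With this structural information, the strategy is to build $\Omega'$ as a $\Gamma$-equivariant modification of the maximal domain $\Omega_{\max}$ of Proposition~\ref{prop:max-inv-conv}: near $\Lambda_\Gamma$ the strict convexity and $C^1$ regularity of $\partial\Omega'$ are forced by the corresponding properties of $\Lambda_\Gamma$ and $\Lambda_\Gamma^\ast$ already established, while away from a $\Gamma$-invariant neighborhood of $\Lambda_\Gamma$ there are only finitely many orbits of boundary pieces modulo $\Gamma$ (by convex cocompactness), so one can smoothly round off the boundary by intersecting with a $\Gamma$-equivariant family of open half-spaces. Taking $\C' := \Ccore_{\Omega'}(\Gamma)$, cocompactness of the $\Gamma$-action transfers from $\Ccore_\Omega(\Gamma)$. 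The main obstacle is carrying out this modification so that the resulting $\Omega'$ is simultaneously strictly convex \emph{and} $C^1$ on all of $\partial\Omega'$, not merely near $\Lambda_\Gamma$, while preserving cocompactness of the $\Gamma$-action on the new convex core; this delicate boundary-reshaping relies on the fine analysis of proximality and of the joint limit set in $\PP(V) \sqcup \PP(V^\ast)$ developed in~\cite{dgk-proj-cc}.
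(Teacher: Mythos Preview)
The paper does not prove this proposition: it is imported verbatim as a black box from \cite[Th.\,1.15]{dgk-proj-cc}, with no argument given here. So there is no ``paper's own proof'' to compare against.

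Your sketch is a reasonable outline of the actual proof in \cite{dgk-proj-cc}. The forward direction is essentially as you describe: strict convexity forces all orbital limit sets to coincide (Remark~\ref{rem:strict-convex-Lambda}), and Gromov hyperbolicity of the Hilbert metric on a strictly convex $C^1$ domain, together with \v{S}varc--Milnor, gives word hyperbolicity. For the reverse direction, your strategy---show the limit set carries no segments and has unique supporting hyperplanes, then equivariantly smooth the boundary away from the limit set---is indeed the architecture of the argument in \cite{dgk-proj-cc}. You are right to flag the smoothing step as the delicate one; in \cite{dgk-proj-cc} this is handled not by a direct construction on $\Omega_{\max}$ but via a duality argument (strict convexity and $C^1$ regularity are exchanged under passing to the dual convex set) combined with an explicit uniform neighborhood construction. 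Your ``flat strip $\Rightarrow$ $\ZZ^2$'' heuristic for ruling out segments in $\Lambda_\Gamma$ is morally correct but needs care: the actual argument uses the bisaturated structure of the boundary of a cocompact convex set rather than a literal flat-plane theorem.
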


\begin{proposition}[{\cite[Prop.\,10.3]{dgk-proj-cc}}] \label{prop:cc-no-unipotent}
Let $\Gamma$ be an infinite discrete subgroup of $\PGL(V)$.
If $\Gamma$ is naively convex cocompact in $\PP(V)$, then it does not contain any unipotent element.
\end{proposition}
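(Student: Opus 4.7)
The plan is to derive a contradiction from the combination of unipotent dynamics (vanishing Hilbert translation length, no interior fixed point) with proper discontinuity of $\Gamma$ on~$\Omega$ and cocompactness on~$\C$: a non-trivial unipotent in $\Gamma$ would produce infinitely many distinct elements of $\Gamma$ with arbitrarily small Hilbert displacement on a compact set, contradicting proper discontinuity. Suppose $u \in \Gamma$ is non-trivial unipotent. Since $u-I$ is a nonzero nilpotent, $u$ has infinite order in $\PGL(V)$ (an eventual relation $u^n = \lambda I$ would force $\lambda = 1$ by eigenvalue analysis and then $u^n = I$, hence $u = I$ by expanding in powers of $u-I$). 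Moreover $u$ cannot fix any point $y_0 \in \Omega$: such a fixed point would give $\langle u \rangle \subset \mathrm{Stab}_\Gamma(y_0)$, an infinite stabilizer, contradicting the proper discontinuity of the $\Gamma$-action on~$\Omega$.

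Next, I would show that the Hilbert translation length $\ell_\Omega(u) := \inf_{y \in \Omega} d_\Omega(y, uy)$ vanishes. The Cartan decomposition yields the standard bound $d_\Omega(y, u^n y) \leq \frac{1}{2} \log \bigl( \mu_1(u^n)/\mu_{\dim V}(u^n) \bigr) + O(1)$, where $\mu_1 \geq \cdots \geq \mu_{\dim V}$ are the singular values, and the singular values of $u^n$ grow only polynomially in~$n$ (like $n^{\pm(k-1)}$, where $k$ is the size of the largest Jordan block of $u$), giving $d_\Omega(y, u^n y) = O(\log n)$ and $\ell_\Omega(u) = 0$. Since $u$ has no interior fixed point, any minimizing sequence $(y_n)$ for the displacement $D_u(y) := d_\Omega(y, uy)$ must escape every compact subset of~$\Omega$ and converges, up to a subsequence, to a boundary point $p^+ \in \partial\Omega$ fixed by~$u$.

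The main step exploits the cocompactness of $\Gamma$ on $\C$. Given a minimizing sequence $(y_n) \subset \C$ with $D_u(y_n) \to 0$, cocompactness provides $\alpha_n \in \Gamma$ such that $\alpha_n y_n$ lies in a fixed compact fundamental domain $K \subset \C$, and the conjugate elements $u_n := \alpha_n u \alpha_n^{-1} \in \Gamma$ satisfy $D_{u_n}(\alpha_n y_n) = D_u(y_n) \to 0$, moving points of~$K$ by arbitrarily small amounts. If the $u_n$ take infinitely many distinct values in~$\Gamma$, this contradicts proper discontinuity: only finitely many $\gamma \in \Gamma$ can satisfy $d_\Omega(k, \gamma k) < \epsilon$ for some $k \in K$.

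The main obstacle is twofold: (i) to arrange the minimizing sequence inside~$\C$ rather than in $\Omega \smallsetminus \C$; and (ii) to handle the residual case in which the centralizer $Z_\Gamma(u)$ has finite index in~$\Gamma$, so that the $u_n$ take only finitely many values. For (ii) I would pass to $\Gamma_0 := Z_\Gamma(u)$, still acting cocompactly on~$\C$, and use that $\Gamma_0$ preserves the proper invariant subspace $\ker(u-I) \subsetneq V$ together with the dual fixed hyperplane in $V^*$. This reducibility---combined with the $\Gamma_0$-invariance of $D_u$, which then descends to a continuous function on the compact quotient $\C/\Gamma_0$ whose infimum cannot be $0$ by (i)---should yield a contradiction, possibly via induction on $\dim V$ reducing to a lower-dimensional instance of the statement.
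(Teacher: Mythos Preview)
The paper does not prove this proposition: it is quoted from \cite[Prop.\,11.4]{dgk-proj-cc} as background, so there is no ``paper's own proof'' to compare against here.

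On its merits, your outline has the right shape but two real gaps, which you yourself flag as obstacles (i) and (ii) without resolving them.

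First, the singular-value estimate $d_\Omega(y,u^n y)=O(\log n)$ only gives that the \emph{stable} translation length $\tau_\Omega(u):=\lim_n d_\Omega(y,u^n y)/n$ vanishes. You use $\ell_\Omega(u)=\inf_y d_\Omega(y,uy)$, and in a general metric space one only has $\tau\le\ell$. The equality $\ell_\Omega(\gamma)=\tfrac12\log(\lambda_1(\gamma)/\lambda_n(\gamma))$ does hold for automorphisms of properly convex domains, but it needs an argument (or a citation, e.g.\ Cooper--Long--Tillmann); your Cartan-decomposition sentence does not supply it.

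Second, and more seriously, even granting $\ell_\Omega(u)=0$, obstacle~(i) is the crux and you do not close it: nothing forces an $\Omega$-minimizing sequence for $D_u$ to lie in~$\C$, so the cocompactness/conjugation step has no traction. Your treatment of (ii) then becomes circular, since it appeals back to the unresolved~(i). A way to repair~(i) is to avoid abstract minimizing sequences altogether: for unipotent $u$ one has $u^{\pm n}y_0\to p$ with the \emph{same} limit $p=[(u-I)^d\,\tilde y_0]\in\partial\Omega$ for any $y_0\in\C$ (since $(u-I)^d u=(u-I)^d$), so $p\in\overline{\C}\cap\partial\Omega$, and the ray $[y_0,p)$ lies in~$\C$ by convexity. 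One then checks directly via the cross-ratio that $d_\Omega(r(t),u\,r(t))\to 0$ along this ray, which places the minimizing sequence inside~$\C$ and lets your conjugation argument go through. Alternatively, argue that if some infinite-order $\gamma\in\Gamma$ had $\gamma^{\pm n}y_0$ converging to the same boundary point, then the segments $[\gamma^{-n}y_0,\gamma^n y_0]\subset\C$ can be recentred by cocompactness to produce a $\gamma$-axis and hence $\lambda_1(\gamma)>\lambda_n(\gamma)$, contradicting unipotency.
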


\begin{proposition}[{\cite[Lem.\,4.7]{dgk-proj-cc}}] \label{prop:finite-index}
Let $\Gamma$ be a discrete subgroup of $\PGL(V)$ preserving a properly convex open subset $\Omega$ of $\PP(V)$, and let $\Gamma'$ be a finite-index subgroup of~$\Gamma$.
Then $\Gamma'$ is convex cocompact (\resp naively convex cocompact) in $\PP(V)$ if and only if $\Gamma$~is.
\end{proposition}

\begin{proposition}[{\cite[Th.\,1.16.(A)]{dgk-proj-cc}}] \label{prop:cc-dual}
Let $\Gamma$ be an infinite discrete subgroup of $\PGL(V)$.
The group $\Gamma$ is convex cocompact in $\PP(V)$ if and only if it is convex cocompact in $\PP(V^*)$ (for the dual action).
\end{proposition}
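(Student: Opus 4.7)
The statement is symmetric in $V$ and $V^*$ through the biduality $(\Omega^*)^* = \Omega$, so it suffices to prove one direction: convex cocompactness in $\PP(V)$ implies convex cocompactness in $\PP(V^*)$. Assume $\Gamma$ is convex cocompact in $\PP(V)$, witnessed by a properly convex open $\Gamma$-invariant $\Omega \subset \PP(V)$ and a closed convex $\Gamma$-invariant $\C \subset \Omega$ on which $\Gamma$ acts cocompactly, and whose closure in $\PP(V)$ contains the full orbital limit set $\Lambdao_\Omega(\Gamma)$. The natural candidates on the dual side are $\Omega^\sharp := \Omega^* \subset \PP(V^*)$, which is properly convex, open, and $\Gamma$-invariant, together with $\C^\sharp$, the closed convex hull in $\Omega^\sharp$ of the dual proximal limit set $\Lambda_\Gamma^*$.

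The first step is to verify that $\Lambda_\Gamma^*$ is nonempty and lies in $\partial \Omega^\sharp$. Nonemptiness should follow by producing a proximal element of $\Gamma$ directly from the convex cocompact dynamics: a biinfinite geodesic in $(\C, d_\Omega)$ has two distinct endpoints in $\partial \Omega$ (by the result recalled in Section~\ref{subsec:prop-conv-proj}), and cocompactness of $\Gamma$ on $\C$ yields some $\gamma \in \Gamma$ translating such a geodesic along its axis; such a $\gamma$ is proximal in both $\PP(V)$ and $\PP(V^*)$. The containment $\Lambda_\Gamma^* \subset \partial\Omega^\sharp$ and the density $\Lambda_\Gamma^* \subset \overline{\Gamma \cdot [\varphi]}$ for every $[\varphi] \in \Omega^\sharp$ are then supplied by Proposition~\ref{prop:max-inv-conv}.\eqref{item:Lambda-prox-in-boundary} applied to the dual action.

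The second step is the ``largeness'' condition for $\C^\sharp$: the closure of $\C^\sharp$ in $\PP(V^*)$ must contain every accumulation point of every orbit $\Gamma \cdot [\varphi]$ with $[\varphi] \in \Omega^\sharp$. This should follow from the density noted above by a convex-combination argument. Any orbital accumulation point $[\varphi_\infty] \in \partial \Omega^\sharp$ can be approximated by convex combinations of nearby orbit points which, under the attractive dynamics of proximal elements, fall into the convex hull of $\Lambda_\Gamma^*$; taking limits places $[\varphi_\infty]$ in $\overline{\C^\sharp}$.

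The main obstacle is the third step, cocompactness of $\Gamma$ on $\C^\sharp$. My plan is to set up a $\Gamma$-equivariant correspondence between $\C$ and $\C^\sharp$ via supporting hyperplanes to $\overline{\Omega}$ at points of $\Lambda_\Gamma$, respectively to $\overline{\Omega^*}$ at points of $\Lambda_\Gamma^*$. Given a compact fundamental domain $K \subset \C$, the supporting hyperplanes to $\Omega$ along the limit points ``visible'' from a neighborhood of $K$ should project to a compact $K^\sharp \subset \C^\sharp$ whose $\Gamma$-translates cover $\C^\sharp$. The delicate point is to verify that this correspondence is proper, ruling out degenerations in which supporting hyperplanes flatten or collapse as one moves toward $\partial\Omega^\sharp$; I expect this to follow from the absence of unipotents guaranteed by Proposition~\ref{prop:cc-no-unipotent} together with the properness of the Hilbert metric on $\Omega$. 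A cleaner but less self-contained alternative would be to invoke a duality-symmetric characterization of convex cocompactness, e.g.\ through the divergent behavior of singular values (a form of $P_1$-divergence), for which the conclusion on the dual side is automatic.
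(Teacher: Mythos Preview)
The paper does not prove this proposition: it is quoted verbatim from \cite[Th.\,1.17.(A)]{dgk-proj-cc} and used as a black box (notably in the proof of Corollary~\ref{cor:cc-subspace}). So there is no ``paper's own proof'' to compare against here; the actual argument lives in the cited reference and is considerably more involved than what you have sketched.

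As for your outline itself, it is a reasonable plan but not a proof. Steps one and two are fine as setup, and the nonemptiness of $\Lambda_\Gamma^*$ does follow along the lines you indicate. But your step two (the ``largeness'' condition on the dual side) is asserted rather than argued: the claim that every orbital accumulation point in $\partial\Omega^\sharp$ lies in the closed convex hull of $\Lambda_\Gamma^*$ is precisely the content that needs work, and ``a convex-combination argument'' is not one. More seriously, you yourself flag step three (cocompactness of $\Gamma$ on $\C^\sharp$) as the main obstacle and offer only a heuristic about supporting hyperplanes; the correspondence you describe is not a map, let alone a proper one, and invoking Proposition~\ref{prop:cc-no-unipotent} does not by itself rule out the degenerations you worry about. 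Your fallback suggestion of passing through a $P_1$-divergence characterization is closer in spirit to how results of this type are actually established in \cite{dgk-proj-cc}, but that characterization is itself nontrivial and is not available from the material recalled in this paper.
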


\begin{proposition}[{\cite[Prop.\,10.11]{dgk-proj-cc}}] \label{prop:cc-quotient}
Let $\Gamma$ be an infinite discrete subgroup of $\SL^{\pm}(V)$ acting trivially on some linear subspace $V_0$ of~$V$.
The group $\Gamma$ is convex cocompact in $\PP(V)$ if and only if the induced representation $\Gamma \to \SL^{\pm}(V/V_0)$ is injective and its image is convex cocompact in $\PP(V/V_0)$.
\end{proposition}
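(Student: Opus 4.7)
The plan is to prove the two directions of the equivalence separately, exploiting the short exact sequence of $\Gamma$-modules $0 \to V_0 \to V \to V/V_0 \to 0$ in which $\Gamma$ acts trivially on~$V_0$. For the forward direction, injectivity of the induced map $\Gamma \to \SL^{\pm}(V/V_0)$ is automatic: if $\gamma$ lies in the kernel, then $\gamma - \mathrm{id}$ factors as $V \twoheadrightarrow V/V_0 \to V_0 \hookrightarrow V$, so $(\gamma-\mathrm{id})^2 = 0$ and $\gamma$ is unipotent; Proposition~\ref{prop:cc-no-unipotent} then forces $\gamma = \mathrm{id}$.

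For the main content of the forward implication, I would first observe that every proximal element of $\Gamma$ has its attracting fixed point in $\PP(V) \smallsetminus \PP(V_0)$, since $\Gamma$ has all-ones spectrum on~$V_0$. Hence the proximal limit set $\Lambda_\Gamma$ is disjoint from $\PP(V_0)$. Combined with Proposition~\ref{prop:max-inv-conv}, this allows me to position a $\Gamma$-invariant properly convex open $\Omega \subset \PP(V)$ together with its $\Gamma$-cocompact convex core $\Ccore_\Omega(\Gamma)$ so that the closure of the core is separated from $\PP(V_0)$ by a projective hyperplane (by restricting to a suitable maximal invariant subdomain). The linear projection $\pi : V \to V/V_0$ then induces a continuous, open, $\Gamma$-equivariant map from $\PP(V) \smallsetminus \PP(V_0)$ to $\PP(V/V_0)$, sending $\Omega$ to a $\Gamma$-invariant properly convex open $\overline{\Omega}_0 \subset \PP(V/V_0)$ and $\Ccore_\Omega(\Gamma)$ to a $\Gamma$-cocompact closed convex subset of~$\overline{\Omega}_0$ that contains the full orbital limit set of the induced representation~$\bar\rho(\Gamma)$. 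This establishes convex cocompactness of~$\bar\rho(\Gamma)$.

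For the reverse direction, given a $\Gamma$-invariant properly convex open $\overline{\Omega}_0 \subset \PP(V/V_0)$ on which $\bar\rho(\Gamma)$ acts convex cocompactly with cocompact convex core $\overline{\C}_0$, I would build $\Omega \subset \PP(V)$ by thickening the preimage of $\overline{\Omega}_0$ under~$\pi$. The raw preimage $\pi^{-1}(\widetilde{\overline{\Omega}}_0) \subset V$ of a properly convex cone lifting $\overline{\Omega}_0$ is a $\Gamma$-invariant convex open cylinder, but it is not properly convex (being translation-invariant along~$V_0$). I would intersect it with a $\Gamma$-invariant open ``tube'' of bounded extent in the $V_0$-direction above the convex core: in the split case $V = V_0 \oplus W$ as $\Gamma$-modules, this is literally an open join of $\overline{\Omega}_0$ with a small properly convex subset of $\PP(V_0)$, and convex cocompactness in $\PP(V)$ is immediate. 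In general, the tube must be built to respect the extension cocycle $A : \Gamma \to \Hom(V/V_0, V_0)$ measuring the failure of the sequence to split as $\Gamma$-modules.

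The main obstacle is precisely this non-split case: the cocycle $A$ may grow along $\Gamma$, so the naive tube over $\overline{\C}_0$ need not be $\Gamma$-invariant. The technical heart is to construct a $\Gamma$-invariant convex thickening whose $V_0$-extent is large enough to contain all cocycle values $A(\gamma)w$ for $[w]$ projecting into~$\overline{\C}_0$, yet bounded enough to preserve proper convexity. This should be possible because convex cocompactness of $\bar\rho(\Gamma)$ controls $\Gamma$-orbit growth along~$\overline{\C}_0$; combined with the fact that the eventual $\rho(\Gamma) \subset \SL^\pm(V)$ cannot contain nontrivial unipotent elements (ultimately by Proposition~\ref{prop:cc-no-unipotent}), this keeps the cocycle tame on the set where we thicken.
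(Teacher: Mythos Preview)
The paper does not give a proof of this proposition: it is quoted as \cite[Prop.\,11.9]{dgk-proj-cc} and used as a black box (notably in the proof of Corollary~\ref{cor:cc-subspace}). So there is no ``paper's own proof'' to compare your attempt against; I will instead comment on the soundness of your sketch.

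Your forward direction is on the right track. The injectivity argument via Proposition~\ref{prop:cc-no-unipotent} is clean and correct. The projection step, however, is underspecified: you need to justify that the image $\pi(\Omega)$ is \emph{properly} convex (the projection of a properly convex set from a center $\PP(V_0)$ disjoint from its closure is indeed properly convex, but you should say why $\overline{\Omega}\cap\PP(V_0)=\emptyset$; this uses more than $\Lambda_\Gamma\cap\PP(V_0)=\emptyset$), and that the projected cocompact set really contains the \emph{full orbital} limit set of $\bar\rho(\Gamma)$ in $\pi(\Omega)$, not merely the proximal one.

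The reverse direction has a genuine gap. You correctly isolate the difficulty (the non-split extension and the cocycle $A:\Gamma\to\Hom(V/V_0,V_0)$), but the final paragraph does not resolve it. Your appeal to Proposition~\ref{prop:cc-no-unipotent} is circular as written: that proposition requires $\Gamma$ to be (naively) convex cocompact in $\PP(V)$, which is precisely what you are trying to establish. Applying it instead to $\bar\rho(\Gamma)$ in $\PP(V/V_0)$ only rules out unipotents in the quotient, which by itself does not bound the cocycle along orbits in the core. What is actually needed is a quantitative statement: for $[w]$ in (a lift of) $\overline{\C}_0$ and $\gamma\in\Gamma$, the displacement $A(\gamma)\bar w$ is controlled by the contraction/expansion of $\bar\rho(\gamma)$ on $\overline{\Omega}_0$, so that a suitably shaped $\Gamma$-invariant properly convex neighborhood of the lifted core exists. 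This is the substantive construction carried out in \cite{dgk-proj-cc}; your sketch names it but does not perform it.
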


The last two statements imply the following.

\begin{corollary} \label{cor:cc-subspace}
Let $\Gamma$ be an infinite discrete subgroup of $\SL^{\pm}(V)$ preserving a linear subspace $V_1$ of~$V$, such that the induced action on $V/V_1$ is trivial.
The group $\Gamma$ is convex cocompact in $\PP(V)$ if and only if the induced representation $\Gamma \to \SL^{\pm}(V_1)$ is injective and its image is convex cocompact in $\PP(V_1)$.
\end{corollary}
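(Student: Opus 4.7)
The plan is to deduce Corollary~\ref{cor:cc-subspace} from Propositions~\ref{prop:cc-dual} and~\ref{prop:cc-quotient} by passing to the dual representation, since the setup of Corollary~\ref{cor:cc-subspace} (an invariant subspace $V_1$ with trivial quotient action) is exactly dual to the setup of Proposition~\ref{prop:cc-quotient} (trivial action on a subspace $V_0$).

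First I would set up the dual picture. Let $V_0 := V_1^{\perp} = \{\varphi \in V^* : \varphi|_{V_1} = 0\} \subset V^*$. Because $\Gamma$ preserves $V_1$, the dual action of $\Gamma$ on $V^*$ preserves~$V_0$. The canonical identification $V_0 \cong (V/V_1)^*$ intertwines the $\Gamma$-action on $V_0$ with the dual of the $\Gamma$-action on $V/V_1$; since the latter is trivial by hypothesis, the action of $\Gamma$ on $V_0 \subset V^*$ is trivial. Similarly, the canonical identification $V^*/V_0 \cong V_1^*$ intertwines the induced action of $\Gamma$ on $V^*/V_0$ with the dual of the restricted action $\Gamma \to \SL^{\pm}(V_1)$ (note that $\det(\gamma|_{V_1}) = \det(\gamma) = \pm 1$ since $\Gamma$ acts trivially on $V/V_1$, so this restriction does land in $\SL^{\pm}(V_1)$). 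Transpose-inverse being a group isomorphism, the map $\Gamma \to \SL^{\pm}(V_1)$ is injective if and only if $\Gamma \to \SL^{\pm}(V^*/V_0)$ is injective.

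Next I would chain the three equivalences. By Proposition~\ref{prop:cc-dual} applied to the pair $(V,\Gamma)$, the group $\Gamma$ is convex cocompact in $\PP(V)$ if and only if it is convex cocompact in $\PP(V^*)$. Applying Proposition~\ref{prop:cc-quotient} to the pair $(V^*, \Gamma)$ with invariant subspace~$V_0$ on which $\Gamma$ acts trivially, $\Gamma$ is convex cocompact in $\PP(V^*)$ if and only if the induced representation $\Gamma \to \SL^{\pm}(V^*/V_0)$ is injective and its image is convex cocompact in $\PP(V^*/V_0)$. Via the identification $V^*/V_0 \cong V_1^*$ discussed above, this induced representation is the transpose-inverse of $\Gamma \to \SL^{\pm}(V_1)$, so injectivity on one side is equivalent to injectivity on the other. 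Finally, applying Proposition~\ref{prop:cc-dual} a second time, this time to $(V_1, \Gamma)$, the image of $\Gamma$ in $\SL^{\pm}(V_1^*)$ is convex cocompact in $\PP(V_1^*)$ if and only if the image of $\Gamma$ in $\SL^{\pm}(V_1)$ is convex cocompact in $\PP(V_1)$. Composing these three equivalences yields the statement.

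There is no substantive obstacle here; the only care needed is in correctly identifying the dual action (a trivial action on a subspace of~$V^*$ corresponds to a trivial action on a \emph{quotient} of~$V$, not a subspace) and in verifying that the restricted representation has values in $\SL^{\pm}(V_1)$, both of which are straightforward linear algebra.
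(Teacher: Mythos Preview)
Your proof is correct and follows essentially the same approach as the paper: dualize, apply Proposition~\ref{prop:cc-quotient} to the trivial-action subspace $\mathrm{Ann}(V_1)\subset V^*$, then dualize back using Proposition~\ref{prop:cc-dual}. The paper carries this out in coordinates via a complementary subspace and block-triangular matrices, while you use the canonical identifications $\mathrm{Ann}(V_1)\cong (V/V_1)^*$ and $V^*/\mathrm{Ann}(V_1)\cong V_1^*$ directly, but the logic is identical.
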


\begin{proof}
For any subspace $V'$ of~$V$, we denote by $\mathrm{Ann}(V')\subset V^*$ the annihilator of $V'$, \ie the subspace of linear forms $\varphi\in V^*$ that vanish on~$V'$.
The dual ${V'}^*$ identifies with $V^*/\mathrm{Ann}(V')$.

Let $V_2$ be a complementary subspace of $V_1$ in $V$.
In a basis adapted to the decomposition $V=V_1\oplus V_2$, the group $\Gamma$ is expressed as a group of matrices of the form $\big ( \begin{smallmatrix} i(\gamma) & * \\ 0 & \mathrm{Id} \end{smallmatrix} \big )$, where $i : \Gamma\to\SL^{\pm}(V_1)$ is the restricted representation.
In the dual basis, which is adapted to the decomposition $V^* = \mathrm{Ann}(V_2)\oplus\mathrm{Ann}(V_1)$, the dual action of $\Gamma$ on~$V^*$ is given by $\big (\begin{smallmatrix} {}^{\top}\! i(\gamma)^{-1} & 0 \\ * & \mathrm{Id} \end{smallmatrix} \big)$.
In other words, $\Gamma$ acts trivially on the subspace $\mathrm{Ann}(V_1)$ of~$V^*$.
By Proposition~\ref{prop:cc-quotient}, the group $\Gamma$ is convex cocompact in $\PP(V^*)$ if and only if the induced representation $\gamma \mapsto {}^{\top}\! i(\gamma)^{-1}$ of $\Gamma$ is injective and its image is convex cocompact in $\PP(V^*/\mathrm{Ann}(V_1)) \simeq \PP(V_1^*)$.
Dualizing again, by Proposition~\ref{prop:cc-dual}, the group $\Gamma$ is convex cocompact in $\PP(V)$ if and only if the restricted representation $i$ is injective and its image is convex cocompact in $\PP(V_1)$.
\end{proof}

\section{Reminders: Vinberg's theory for Coxeter groups} \label{sec:deform-Tits-repr}

In this section we set up some notation and recall the basics of Vinberg's theory \cite{vin71} of linear reflection groups.

\subsection{Coxeter groups} \label{subsec:Coxeter-groups}

Let $W_S$ be a Coxeter group generated by a finite set of involutions $S = \{s_1, \ldots, s_N\}$, with presentation
\begin{equation} \label{eqn:WS}
W_S = \big\langle s_1,\dots,s_N ~|~ (s_i s_j)^{m_{i,j}}=1,\,\, \forall \ 1\leq i,j\leq N\big\rangle
\end{equation}
where $m_{i,i}=1$ and $m_{i,j} = m_{j,i} \in\{2, 3, \dotsc, \infty\}$ for $i\neq j$.
The \emph{Coxeter diagram} for $W_S$ is a labeled graph $\mathcal{G}_{W_S}$ such that
\begin{enumerate}[label=(\roman*)]
  \item the set of nodes (\ie vertices) of $\mathcal{G}_{W_S}$ is the set~$S$;
  \item two nodes $s_i, s_j \in S$ are connected by an edge $\overline{s_i s_j}$ of $\mathcal{G}_{W_S}$ if and only if $m_{i,j} \in \{ 3, 4, \dotsc, \infty \}$;
  \item the label of the edge $\overline{s_i s_j}$ is $m_{i,j}$.
\end{enumerate}
It is customary to omit the label of the edge $\overline{s_i s_j}$ if $m_{i,j}=3$.

For any subset $S'$ of $S$, the subgroup $W_{S'}$ of $W_S$ generated by $S'$ is the Coxeter group with generating set $S'$ and exponents $m'_{i,j}=m_{i,j}$ for $s_i,s_j \in S'$, see \cite[Chap.\,IV, Th.\,2]{bourbaki_coxeter}.
Such a subgroup $W_{S'}$ is called a \emph{standard subgroup} of~$W_S$.

The connected components of the graph $\mathcal{G}_{W_S}$ are graphs of the form $\mathcal{G}_{W_{S_{\ell}}}$, $\ell\in L$, where the $S_{\ell}$ form a partition of~$S$.
The subsets $S_{\ell}$ are called the \emph{irreducible components} of~$S$.
The group $W_S$ is the direct product of the standard subgroups $W_{S_{\ell}}$ for $\ell\in L$; these subgroups are called the \emph{irreducible factors} of~$W_S$.
The Coxeter group $W_S$ is called irreducible if it has only one irreducible factor, \ie $\mathcal{G}_{W_S}$ is connected.

\subsection{Representing the generators of $W_S$ by reflections in hyperplanes} \label{subsec:refl-basics}

We shall use the following terminology.

\begin{definition} \label{def:weak-compat}
An $N \times N$ real matrix $\Cart = (\Cart_{i,j})_{1\leq i,j\leq N}$ is \emph{weakly compatible} with the Coxeter group $W_S$ if
\begin{equation}\label{eqn:conn-comp-ref}
\left\{ \begin{array}{ll}
\Cart_{i,i} =2 &\mathrm{for} \ \mathrm{all}\ i, \\
\Cart_{i,j}=0\ &\mathrm{for}\ \mathrm{all}\ i\neq j\ \mathrm{with}\ m_{i,j} = 2, \\
\Cart_{i,j} < 0\ &\mathrm{for}\ \mathrm{all}\ i\neq j\ \mathrm{with}\ m_{i,j} \neq 2, \\
\Cart_{i,j}\Cart_{j,i} = 4 \cos^2(\pi/m_{i,j}) \ &\mathrm{for}\ \mathrm{all}\ i\neq j\ \mathrm{with}\ 2 < m_{i,j} < \infty.
\end{array} \right.
\end{equation}
\end{definition}

Consider $N$-tuples $\alpha = (\alpha_1,\dots,\alpha_N) \in {V^*}^N$ of linear forms and $v = (v_1,\dots,v_N) \in V^N$ of vectors.
If the matrix $\Cart = \left(\alpha_i(v_j)\right)_{1\leq i,j\leq N}$ is weakly compatible, then for any~$i$,
\begin{equation} \label{eqn:rho-s-i}
\rho(s_i) := \big(x \longmapsto x - \alpha_i(x) v_i\big)
\end{equation}
is a linear reflection of~$V$ in the hyperplane $\mathrm{Ker}(\alpha_i)$, and \eqref{eqn:rho-s-i} defines a group homomorphism $\rho: W_S \to \GL(V)$, \ie $\rho(s_i s_j)^{m_{i,j}} = \mathrm{Id}$ for all $1 \leq i,j \leq N$ (see \cite[Prop.\,6--7]{vin71}).

\begin{definition} \label{def:Cartan-matrix}
A representation $\rho: W_S \to \GL(V)$ is \emph{generated by weakly compatible reflections} if it is defined by \eqref{eqn:rho-s-i} for some $\alpha = (\alpha_1,\dots,\alpha_N) \in {V^*}^N$ and $v = (v_1,\dots,v_N) \in\nolinebreak V^N$ such that $\Cart = \left(\alpha_i(v_j)\right)_{1\leq i,j\leq N}$ is weakly compatible with~$W_S$.
In this case we say that $\Cart$ is the \emph{Cartan matrix} of $(\alpha,v)$, and a Cartan matrix for~$\rho$. 
\end{definition}

We note that for any~$i$, the pair $(\alpha_i,v_i)\in V^*\times V$ defining the reflection $\rho(s_i)$ in \eqref{eqn:rho-s-i} is not uniquely determined by $\rho(s_i)$.
Indeed, for any $\lambda_i \neq 0$ the pair $(\lambda_i \alpha_i, \lambda_i^{-1} v_i)$ yields the same reflection.
Changing $(\alpha_i,v_i)$ into $(\lambda_i \alpha_i, \lambda_i^{-1} v_i)$ changes the Cartan matrix $\Cart$ into its conjugate $D \Cart D^{-1}$ where $D = \mathrm{Diag}(\lambda_1, \ldots, \lambda_N)$ is diagonal.
Assuming that $\Cart$ is weakly compatible with $W_S$, we note that $D \Cart D^{-1}$ is also weakly compatible with $W_S$ if and only if the diagonal entries of $D$ associated to any irreducible factor of $W_S$ have constant sign.
In that case $D \Cart D^{-1} = |D| \Cart |D|^{-1}$, where $|D|$ denotes the positive diagonal matrix whose entries are the absolute values of those of $D$.

\begin{definition} \label{def:equivalent}
Two $N \times N$ matrices which are weakly compatible with $W_S$ are considered \emph{equivalent} if they differ by conjugation by a positive diagonal matrix.
\end{definition}

While the Cartan matrix $\Cart$ is not uniquely determined by the representation $\rho$, its equivalence class is. 
In particular, for any $1\leq i,j\leq N$, the product $\Cart_{i,j} \Cart_{j,i}$ is uniquely determined by~$\rho$.
The quantity $\Cart_{i,j} \Cart_{j,i}$ varies as an algebraic function, invariant under conjugation, as $\rho$ ranges over the semialgebraic set of representations of $W_S$ to $\GL(V)$ generated by weakly compatible reflections.

\begin{remark} \label{rem:Cartan-matrix}
There exist representations of $W_S$ taking the generators to reflections as in \eqref{eqn:rho-s-i} for which the matrix $\Cart = (\alpha_i(v_j))_{1\leq i,j\leq N}$ is not weakly compatible with~$W_S$: for instance, when $\Cart$ satisfies~\eqref{eqn:conn-comp-ref} with $\pi$ replaced by $2\pi$.
The representations generated by weakly compatible reflections form an open and closed subset of $\Hom(W_S, \GL(V))$, containing the set $\mathrm{Hom}^{\mathrm{ref}}(W_S,\mathrm{GL}(V))$ of Sections~\ref{subsec:intro-Vinberg-theory} and~\ref{subsec:Vinberg-constr} below.
\end{remark}

The following notation and terminology will be used throughout the rest of the paper.

\begin{definition} \label{def:V-v-alpha}
Let $\rho: W_S \to \GL(V)$ be a representation generated by weakly compatible reflections, associated to some $\alpha = (\alpha_1,\dots,\alpha_N) \in {V^*}^N$ and $v = (v_1,\dots,v_N) \in V^N$.
Let $V_v$ be the span of $v_1,\dots,v_N$, and $V_{\alpha}$ the intersection of the kernels of $\alpha_1,\dots,\alpha_N$. 
We say that the representation $\rho$ (or its image $\rho(W_S)$) is \emph{reduced} if $V_{\alpha} = \{0\}$, and \emph{dual-reduced} if $V_v=V$.
\end{definition}

Note that $V_\alpha$ is the subspace of~$V$ of $\rho(W_S)$-fixed vectors, and $V_v$ is another $\rho(W_S)$-invariant subspace of $V$.

\begin{remark} \label{rem:notation_rhos}
In general, a representation $\rho : W_S \to \mathrm{GL}(V)$ generated by weakly compatible reflections induces three representations:
\begin{itemize}
  \item $\rho_v$ on $V_v$ which is a dual-reduced representation,
  \item $\rho^\alpha$ on $V^\alpha:=V/V_\alpha$ which is a reduced representation,
  \item $\rho_v^\alpha$ on $V^\alpha_v := V_v/(V_v \cap V_\alpha)$ which is a reduced and dual-reduced representation.
\end{itemize}
These three representations are still generated by weakly compatible reflections with the same Cartan matrix $\Cart = (\alpha_i(v_j))_{1\leq i,j\leq N}$.
\end{remark}

\subsection{Representations of $W_S$ as a reflection group} \label{subsec:Vinberg-constr}

Let $\rho : W_S\to\GL(V)$ be a representation of $W_S$ generated by weakly compatible reflections (Definition~\ref{def:Cartan-matrix}), associated to some $\alpha = (\alpha_1,\dots,\alpha_N) \in {V^*}^N$ and $v = (v_1,\dots,v_N) \in V^N$; by definition, the Cartan matrix $\Cart = (\alpha_i(v_j))_{1\leq i,j\leq N}$ satisfies~\eqref{eqn:conn-comp-ref}.
The cone
\begin{equation} \label{eqn:Delta-tilde}
\widetilde{\Delta} := \big\{ x \in V ~|~ \alpha_i(x) \leq 0 \quad\forall 1\leq i\leq N\big\}
\end{equation}
is called the closed \emph{fundamental polyhedral cone} in $V$ associated to $(\alpha,v)$.
As mentioned in Section~\ref{subsec:intro-Vinberg-theory}, assuming that
\begin{equation} \label{eqn:not_empt_int}
\text{the convex polyhedral cone } \widetilde{\Delta} \text{ has nonempty interior},
\end{equation}
Vinberg \cite{vin71} proved that this cone has $N$ sides, and that $\rho(\gamma)\cdot \operatorname{Int}\,(\widetilde \Delta) \cap \operatorname{Int}\,(\widetilde \Delta) = \emptyset$ for all $\gamma \in W_S \smallsetminus \{e\}$ if and only if the Cartan matrix $\Cart$ further satisfies
\begin{equation} \label{eqn:refl-group}
\Cart_{i,j}\Cart_{j,i} \geq 4\ \mathrm{for}\ \mathrm{all}\ i\neq j\ \mathrm{with}\ m_{i,j} = \infty.
\end{equation}
In that case, $\rho$ is injective, and the stabilizer of any face of~$\widetilde{\Delta}$ coincides with the standard subgroup generated by the reflections in the hyperplanes $\mathrm{Ker}(\alpha_i)$ containing that face.

\begin{definition} \label{def:compatible}
An $N \times N$ real matrix $\Cart = (\Cart_{i,j})_{1\leq i,j\leq N}$ is \emph{compatible} with the~Cox\-eter group $W_S$ if $\Cart$ is weakly compatible with $W_S$ (Definition~\ref{def:weak-compat}) and satisfies \eqref{eqn:refl-group}.
\end{definition}

A representation $\rho : W_S\to\GL(V)$ generated by reflections in hyperplanes such that the Cartan matrix $\Cart = (\alpha_i(v_j))_{1\leq i,j\leq N}$ is compatible and such that \eqref{eqn:not_empt_int} holds is a \emph{representation of~$W_S$ as a reflection group  in~$V$} as in Definition~\ref{def:refl-group}.
We emphasize that \eqref{eqn:not_empt_int} always holds if $W_S$ is irreducible and nonaffine (see Remark~\ref{rem:neg-type-interpret}).
As in Section~\ref{subsec:intro-Vinberg-theory}, we denote by $\mathrm{Hom}^{\mathrm{ref}}(W_S,\mathrm{GL}(V))$ the set of representations of $W_S$ as a reflection group  in~$V$.

Consider $\rho\in\mathrm{Hom}^{\mathrm{ref}}(W_S,\mathrm{GL}(V))$.
By \cite[Th.\,2]{vin71}, the group $\rho(W_{S})$ acts properly discontinuously on a nonempty convex open cone $\widetilde{\Omega}_{\scriptscriptstyle\mathrm{TV}}$ of~$V$, namely the interior of the union of all $\rho(W_{S})$-translates of the fundamental polyhedral cone~$\widetilde \Delta$.
We shall call this convex cone the \emph{Tits--Vinberg cone}; it is sometimes called simply the \emph{Tits cone}.
It is equal to~$V$ if $W_S$ is finite, and does not contain~$0$ if $W_S$ is infinite and irreducible.

\begin{fact}[{\cite[Th.\,2]{vin71}}] \label{fact:Delta-flat}
The set $\widetilde \Delta^{\flat} := \widetilde{\Delta} \cap \widetilde{\Omega}_{\scriptscriptstyle\mathrm{TV}}$ is equal to $\widetilde \Delta$ minus its faces of infinite stabilizer; it is a fundamental domain for the action of $\rho(W_S)$ on~$\widetilde{\Omega}_{\scriptscriptstyle\mathrm{TV}}$.
The quotient $\mathcal{Q} := \rho(W_S)\backslash\widetilde{\Omega}_{\scriptscriptstyle\mathrm{TV}}$ is an orbifold homeomorphic to $\widetilde \Delta^{\flat}$ with mirrors on the reflection walls.
(In particular, any subset $\mathcal{Q}' \subset \mathcal{Q}$ is homeomorphic to its preimage in $\widetilde{\Delta}^\flat$, which can therefore be seen as a fundamental domain for the action of $\rho(W_S)$ on the full preimage of $\mathcal{Q}'$ in~$\widetilde{\Omega}_{\scriptscriptstyle\mathrm{TV}}$.)
\end{fact}

The cone $\widetilde{\Omega}_{\scriptscriptstyle\mathrm{TV}} \smallsetminus \{0\}$ projects to a nonempty $\rho(W_S)$-invariant open subset $\OhmVin$ of $\PP(V)$, which is convex whenever $W_S$ is infinite and irreducible; we shall call it the \emph{Tits--Vinberg domain}.
The action of $W_S$ on $\OhmVin$ is still properly discontinuous.
A fundamental domain $\Delta^{\flat}$ for the action on $\OhmVin$ is obtained from the projection $\Delta$ of $\widetilde \Delta$ to $\PP(V)$, as before, 
by removing the faces of infinite stabilizer.

\begin{remark} \label{rem:convex_chamber}
In fact, \cite[Th.\,2]{vin71} shows that the union of all $\rho(W_{S})$-translates of~$\widetilde{\Delta}$ (not only of $\widetilde{\Delta}^{\flat}$) is also convex.
\end{remark}

\begin{example}[{see \cite[\S\,2]{vin71} and Figure~\ref{fig:Z2xZ2}}] \label{ex:N=2}
Suppose $N=2$ and $m_{1,2} = \infty$.
Then $W_S \simeq (\ZZ/2\ZZ) * (\ZZ/2\ZZ)$ is of type~$\widetilde{A}_1$ (see Appendix~\ref{appendix:classi_diagram}).
The infinite cyclic subgroup generated by $s_1s_2$ has index two in~$W_S$.
Consider $\rho\in\mathrm{Hom}^{\mathrm{ref}}(W_S,\mathrm{GL}(V))$ with Cartan matrix $\Cart = (\Cart_{i,j})_{1\leq i,j\leq 2}$.

(i) Suppose $t := \Cart_{1,2}\Cart_{2,1} > 4$.
Then $v_1, v_2$ are linearly independent and $V = V_v \oplus V_{\alpha}$.
The polyhedral cone $\widetilde{\Delta}$ is the set
\begin{equation} \label{eqn:Delta-N=2}
\left\{ \lambda_1 v_1 + \lambda_2 v_2 ~\Big|~ \lambda_1,\lambda_2>0 \:\:\mathrm{and}\:\: \frac{2}{|\Cart_{2,1}|} \leq \frac{\lambda_1}{\lambda_2} \leq \frac{|\Cart_{1,2}|}{2} \right\} + V_{\alpha}.
\end{equation}
The element $\rho(s_1s_2)$ acts on $V_{\alpha}$ trivially, and on $V_v$ as the matrix $\big(\begin{smallmatrix} -1+\Cart_{1,2}\Cart_{2,1} & \Cart_{1,2}\\ -\Cart_{2,1} & -1\end{smallmatrix}\big)$ (in the basis $(v_1,v_2)$).
In particular, $\rho(s_1s_2)$ and $\rho(s_1s_2)^{-1}$ are proximal in $\PP(V)$, with attracting fixed points $[x_+],[x_-]\in\PP(V)$ where $x_{\pm} = (t\pm\sqrt{t(t-4)})v_1 -2\Cart_{2,1}v_2$, and
$$\widetilde{\Omega}_{\scriptscriptstyle\mathrm{TV}} = \RR_{>0}\,x_+ + \RR_{>0}\,x_- + V_{\alpha}.$$
Since the induced action of $\rho(W_S)$ on the image of $\widetilde{\Omega}_{\scriptscriptstyle\mathrm{TV}}$ in $\PP(V/V_{\alpha})$ is cocompact and the induced representation $\Gamma \to \SL^{\pm}(V/V_\alpha)$ is injective, Proposition~\ref{prop:cc-quotient} implies that $\rho(W_S)$ acts convex cocompactly on some properly convex open subset
$\Omega$ of $\PP(V)$ (Definition~\ref{def:Lambdao}), with $\Lambdao_\Omega(\rho(W_S)) = \{\,[x_+],[x_-]\,\}$.

Suppose now that $t := \Cart_{1,2}\Cart_{2,1} = 4$.
By considering the Cartan matrix $\big(\begin{smallmatrix} 2 & \Cart_{1,2}\\ 4/\Cart_{1,2} & 2\end{smallmatrix}\big)$, we see that $u := \Cart_{1,2} v_1 - 2  v_2 \in V_v \cap V_{\alpha}$.
We have assumed that \eqref{eqn:not_empt_int} holds, hence $\alpha_1$ and~$\alpha_2$ are linearly independent.
Choose $w \in \mathrm{Ker}(\alpha_2)$ such that $\alpha_1(w) = 1$.

(ii) On the one hand, if $u = 0$, then $V = \mathrm{span}(v_1,w) \oplus V_{\alpha}$; the elements $\rho(s_1)$ and $\rho(s_2)$ act on $V_{\alpha}$ trivially and on $\mathrm{span}(v_1,w)$ as the matrices $(\begin{smallmatrix} -1 & -1\\ ~0 & 1\end{smallmatrix})$ and $(\begin{smallmatrix} -1 & 0\\ ~~0 & 1\end{smallmatrix})$, respectively. 

(iii) On the other hand, if $u \neq 0$, then $V = \mathrm{span}(u,v_1,w) \oplus V'_{\alpha}$ where $V'_{\alpha}$ is any complement of $\mathbb{R} u$ in $V_{\alpha}$; the elements $\rho(s_1)$ and $\rho(s_2)$ act on $V'_{\alpha}$ trivially and on $\mathrm{span}(u,v_1,w)$ as the matrices $\left(\begin{smallmatrix} 1 & ~0 & ~0 \\ 0 & -1 & -1 \\ 0 & ~0 & ~1\end{smallmatrix}\right)$ and $\left(\begin{smallmatrix} 1 & 2/\Cart_{1,2} & 0 \\ 0 & -1 & 0 \\ 0 & ~0 & 1\end{smallmatrix}\right)$, respectively.

In both cases (ii) and~(iii), the element $\rho(s_1 s_2)\in\GL(V)$ is unipotent (with a unique nontrivial Jordan block, of size $2$ and $3$ respectively) and the Tits--Vinberg domain $\OhmVin$ is an affine chart of $\PP(V)$, namely the complement of the projective hyperplane $\PP(V_v + V_{\alpha})$. 
The Tits--Vinberg cone $\widetilde{\Omega}_{\scriptscriptstyle\mathrm{TV}}$ is the connected component of $V \smallsetminus (V_v+V_{\alpha})$ (an open halfspace of~$V$) that contains $-w$.

In all three cases, any $\rho(W_S)$-invariant properly convex open cone of~$V$ is contained in either~$\widetilde{\Omega}_{\scriptscriptstyle\mathrm{TV}}$ or~$-\widetilde{\Omega}_{\scriptscriptstyle\mathrm{TV}}$. 
Indeed, in case (i) the orbit of any point of $\mathbb{R}_{<0}x_- + \mathbb{R}_{>0}x_+ + V_\alpha$ escapes in the directions of both $x_+$ and $-x_+$ (along two branches of hyperbola), causing the closure of its convex hull to contain a full line.
In case~(ii), there is no $\rho(W_S)$-invariant properly convex open cone in~$V$ at all: any orbit in $V\smallsetminus (\mathbb{R} v_1 + V_\alpha)$ escapes in the directions of both $v_1$ and $-v_1$.
In case~(iii), any orbit in $(V_v+V_\alpha)\smallsetminus (\mathbb{R} u +V_\alpha)$ escapes in the directions of both $u$ and $-u$, hence every $\rho(W_S)$-invariant properly convex open cone must stay disjoint from $V_v+V_\alpha=\partial \widetilde{\Omega}_{\scriptscriptstyle\mathrm{TV}}$.
\end{example}

\begin{figure}[h]
\centering
\labellist
\small\hair 2pt
\pinlabel {(i)} [u] at 9 130
\pinlabel {$0$} [u] at 80 119
\pinlabel {$v_1$} [u] at 24 141
\pinlabel {\rotatebox{-68}{$\mathrm{Ker}(\alpha_1)$}} [u] at 61 197
\pinlabel {$v_2$} [u] at 136 140
\pinlabel {\rotatebox{70}{$\mathrm{Ker}(\alpha_2)$}} [u] at 99 197
\pinlabel {$x_+$} [u] at 35 160
\pinlabel {$x_-$} [u] at 130 160
\pinlabel {$\widetilde{\Delta}$} [u] at 80 214
\pinlabel {$\widetilde{\Omega}_{\scriptscriptstyle\mathrm{TV}}$} [u] at 80 230
\pinlabel {(ii)} [u] at 179 130
\pinlabel {$0$} [u] at 261 121
\pinlabel {$v_1$} [u] at 206 123
\pinlabel {\rotatebox{-68}{$\mathrm{Ker}(\alpha_1)$}} [u] at 242 202
\pinlabel {$v_2$} [u] at 316 123
\pinlabel {\rotatebox{70}{$\mathrm{Ker}(\alpha_2)$}} [u] at 280 202
\pinlabel {$\widetilde{\Delta}$} [u] at 261 213
\pinlabel {$\widetilde{\Omega}_{\scriptscriptstyle\mathrm{TV}}$} [u] at 261 230
\pinlabel {(iii)} [u] at 75 12
\pinlabel {\rotatebox{-87}{$\mathbb{P}\mathrm{Ker}(\alpha_1)$}} [u] at 150 40
\pinlabel {\rotatebox{-87}{$\mathbb{P}\mathrm{Ker}(\alpha_2)$}} [u] at 184 40
\pinlabel {$[v_1]$} [u] at 78 60
\pinlabel {$[v_1]$} [u] at 257 100
\pinlabel {$[v_2]$} [u] at 78 100
\pinlabel {$[v_2]$} [u] at 257 60
\pinlabel {$[u]$} [u] at 168 0
\pinlabel {$\Delta$} [u] at 167 96
\pinlabel {$\OhmVin$} [u] at 195 98
\endlabellist
\includegraphics[scale=.9]{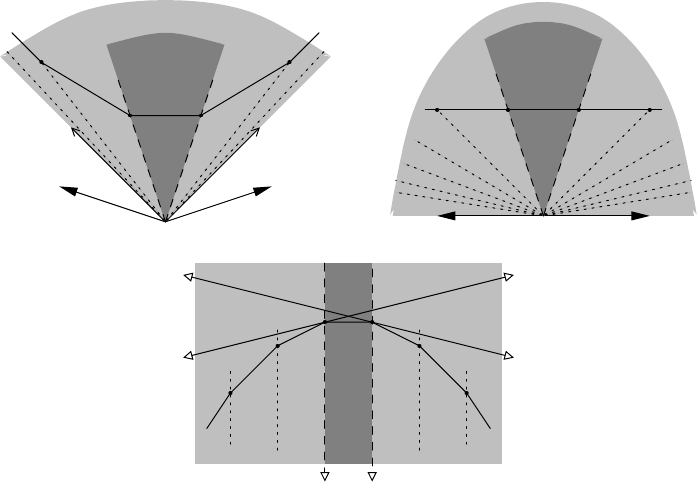}
\caption{Infinite Coxeter groups on two generators as in Example~\ref{ex:N=2}, cases (i)--(ii)--(iii).
The groups are shown acting on $\RR^2$, $\RR^2$ and $\mathbb{P}(\RR^3)$ respectively.
In the third panel, the points $[v_1]$, $[v_2]$ and $[u]$ are at infinity, and $\OhmVin$ is the full affine chart.}
\label{fig:Z2xZ2}
\end{figure}

\begin{remark} \label{rem:Tits-geom}
We shall say that $\rho : W_S\to\GL(V)$ is \emph{symmetrizable} if the Cartan matrix $\Cart$ is equivalent (Definition~\ref{def:equivalent}) to a symmetric matrix; in this case, $\rho$ preserves a (possibly degenerate) symmetric bilinear form on~$V$.
When $n:=\dim V=N:=\#S$, one may obtain a symmetrizable representation $\rho : W_S\to\GL(V)$ of~$W_S$ as a reflection group (which is reduced in the sense of Definition~\ref{def:V-v-alpha}) by choosing any nonnegative numbers $(\lambda_{i,j})_{1\leq i,j\leq N}$, setting $\Cart_{i,j} = \Cart_{j,i}$ to be $-2-\lambda_{i,j}$ if $m_{i,j}=\infty$ and $-2\cos(\pi/m_{i,j})$ otherwise, and taking for $\alpha_i$ the $i$-th element of the dual canonical basis of~$V$, and for $v_i$ the $i$-th column of~$\Cart$.
When all the $\lambda_{i,j}$ vanish, $\Cart$ is the matrix $\mathrm{Cos}(W_S) := (-2\cos(\pi/m_{i,j}))_{1\leq i,j\leq N}$ (with the convention $\pi/\infty=0$) and $\rho$ is the so-called \emph{Tits geometric representation} \cite{tit61}.
\end{remark}

\subsection{Types of Coxeter groups} \label{subsec:Coxeter-type}

By \cite{mv00}, every irreducible Coxeter group $W_S$ is either
\begin{itemize}
  \item \emph{spherical} (\ie finite),
  \item \emph{affine} (\ie infinite and virtually abelian), or
  \item \emph{large} (\ie there exists a surjective homomorphism of a finite-index subgroup of $W_S$ onto a nonabelian free group).
\end{itemize}

Any spherical (\resp affine) irreducible Coxeter group $W_S$ in $N=\#S$ generators acts irreducibly on the sphere of dimension $N-1$ (\resp properly discontinuously and cocompactly on Euclidean space of dimension $N-1$) with a simplex as a fundamental domain (see \cite{classi_cox_poly_coxeter} and the next Section~\ref{subsec:neg-type}).

\subsection{Types of compatible matrices} \label{subsec:neg-type}

We now assume that the Coxeter group $W_S$ is irreducible.
Let $\Cart \in \mathcal{M}_N(\RR)$ be a matrix \emph{compatible} with $W_S$ (Definition~\ref{def:compatible}).
By construction, the matrix $\Cart' := 2\,\mathrm{Id}-\Cart \in \mathcal{M}_N(\RR)$ has nonnegative entries.
By the Perron--Frobenius theorem, $\Cart'$ admits an eigenvector with positive entries corresponding to the highest eigenvalue of~$\Cart'$; since $W_S$ is irreducible, this vector is unique up to scale (see \eg \cite[Th.\,1.5]{sen06}).
It is also an eigenvector for the lowest eigenvalue of~$\Cart$.

\begin{definition} \label{def:type}
The compatible matrix $\Cart$ is of \emph{negative} (\resp \emph{zero}, \resp \emph{positive}) \emph{type} if the lowest eigenvalue of~$\Cart$ is negative (\resp zero, \resp positive).
\end{definition}

The type of~$\Cart$ only depends on its equivalence class (Definition~\ref{def:equivalent}).

\begin{remark}
Vinberg \cite{vin71} uses the following additional terminology: for an irreducible Coxeter group and a representation $\rho:W_S \to \GL(V)$ of $W_S$ as a reflection group, $\rho(W_S)$ is \emph{elliptic} (\resp \emph{parabolic}) if $\rho$ is reduced and $\Cart$ is of positive type (\resp zero type); $\rho(W_S)$ is \emph{hyperbolic} if $\rho$ is reduced, dual-reduced, and preserves a Lorentzian quadratic form on~$V$.
We shall not need this terminology in the current paper.
\end{remark}

\begin{remark} \label{rem:neg-type-interpret}
Suppose $\Cart = (\alpha_i(v_j))_{1\leq i,j\leq N}$ for some $\alpha = (\alpha_1,\dots,\alpha_N) \in {V^*}^N$ and $v = (v_1,\dots,v_N) \in V^N$.
Let $t=(t_1,\dots,t_N)\in (\RR_{>0})^N$ be the Perron--Frobenius eigenvector of $\Cart' = 2\,\mathrm{Id}-\Cart$.
Let $x = \sum_{j=1}^N t_j v_j \in V_v$.
The $i$-th entry of $\Cart t$ is $\alpha_i(x)$.
\begin{enumerate}
  \item\label{item:neg-type-interpret} If $\Cart$ is of negative (\resp positive) type, then $x$ (\resp $-x$) belongs to the interior $\mathrm{Int} (\widetilde \Delta)$ of the fundamental polyhedral cone $\widetilde{\Delta}$ of \eqref{eqn:Delta-tilde}.
  In particular, $\mathrm{Int} (\widetilde \Delta)$ is nonempty, \ie \eqref{eqn:not_empt_int} holds.
  \item If $\Cart$ is of zero type, then $x$ belongs to $V_v\cap V_{\alpha}$.
  Determining whether \eqref{eqn:not_empt_int} holds is more subtle: see \cite[Prop.\,13]{vin71} (and also Remarks \ref{rem:rho-v-alpha-refl-group} and~\ref{rem:JWST} below).
\end{enumerate}
\end{remark}

\begin{fact}[{\cite[Th.\,3]{vin71}}] \label{fact:type-sign-entries}
Let $\Cart \in \mathcal{M}_N(\RR)$ be a matrix compatible with the irreducible Coxeter group~$W_S$, and let $\tau = (\tau_1,\dots,\tau_N) \in \RR^N$.
\begin{enumerate}
  \item\label{item:neg-type-sign-entries} If $\Cart$ is of negative type and both $\tau$ and $\Cart \tau$ have all their entries $\geq 0$, then $\tau=0$.
  \item\label{item:zero-type-sign-entries} If $\Cart$ is of zero type and $\Cart \tau$ has all its entries $\geq 0$, then $\Cart \tau=0$.
  \item\label{item:neg-type-conclusion} If $\tau$ and $-\Cart \tau$ have all their entries $> 0$, then $\Cart$ is of negative type.
  \item\label{item:zero-type-conclusion} If $\tau$ has all its entries $> 0$ and $\Cart \tau = 0$, then $\Cart$ is of zero type.
\end{enumerate}
\end{fact}

\begin{remark} \label{rem:rho-v-alpha-refl-group}
Let $\rho : W_S\to\GL(V)$ be a representation of $W_S$ as a reflection group with Cartan matrix $\Cart$, and let $\rho_v$, $\rho^{\alpha}$, and $\rho_v^{\alpha}$ be the induced representations of Remark~\ref{rem:notation_rhos}.
If $\Cart$ is of negative or positive type, then $\rho_v$, $\rho^{\alpha}$, and $\rho_v^{\alpha}$ are representations of $W_S$ as a reflection group, by Remark~\ref{rem:neg-type-interpret}.\eqref{item:neg-type-interpret}; in particular, these representations are injective.
If $\Cart$ is of zero type, then $\rho^{\alpha}$ is a representation of $W_S$ as a reflection group (see Fact~\ref{fact:types}.\eqref{item:affine_case_zero_type}), but $\rho_v$ and $\rho_v^{\alpha}$ are never representations of~$W_S$ as a reflection group, as \eqref{eqn:not_empt_int} fails by Fact~\ref{fact:type-sign-entries}.\eqref{item:zero-type-sign-entries}.
\end{remark}

As in Section~\ref{section:subspace_of_CC_rep}, recall that a representation of a group into $\GL(V)$ is \emph{semisimple} if it is a direct sum of irreducible representations.
The following important fact will be used several times throughout the paper; we explain how it follows from Vinberg's work in Section~\ref{subsec:proof-fact-types} below.

\begin{fact}[{\cite[Lem.\,13 \& Prop.\,16--21--22--23]{vin71}}] \label{fact:types}
Let $W_S$ be an irreducible Coxeter group and $\Cart \in \mathcal{M}_N(\RR)$ a compatible Cartan matrix for~$W_S$ (Definition~\ref{def:compatible}).
\begin{enumerate}
  \item\label{item:spherical_case} The group $W_S$ is spherical if and only if $\Cart$ is of positive type; in this case $\Cart$ is symmetrizable.
  \item\label{item:affine_case} Suppose $W_S$ is affine.
  \begin{enumerate}
    \item\label{item:affine_case_zero_type} If $\Cart$ is of zero type, then $\Cart$ is symmetrizable and any representation\linebreak $\rho : W_S\to\GL(V)$ of~$W_S$ as a reflection group (Definition~\ref{def:refl-group}) with Cartan matrix $\Cart$ is non-semisimple; the induced representation $\rho^{\alpha} : W_S\to\GL(V^{\alpha})$ of Remark~\ref{rem:notation_rhos} is a representation of $W_S$ as a reflection group whose image acts properly discontinuously and cocompactly on the affine chart $\PP(V^{\alpha}) \smallsetminus \PP(V_v^{\alpha})$ of $\PP(V^{\alpha})$, preserving some Euclidean metric; $\PP(V^{\alpha}) \smallsetminus \PP(V_v^{\alpha})$ is the Tits--Vinberg domain for $\rho^\alpha (W_S)$.
    \item\label{item:affine_case_negative_type} If $\Cart$ is of negative type, then $\det \Cart\neq 0$ and $W_S$ is of type~$\widetilde{A}_{N-1}$ with $N\geq 2$ (see Appendix~\ref{appendix:classi_diagram}); if furthermore $N\geq 3$, then $\Cart$ is not symmetrizable.
  \end{enumerate}
  \item\label{item:large_case} If $W_S$ is large, then $\Cart$ is of negative type.
\end{enumerate}
\end{fact}

\begin{example} \label{ex:N=2bis}
Suppose $N=2$ and $W_S$ is infinite.
Then $m_{1,2} = \infty$ and $W_S$ is affine of type~$\widetilde{A}_1$ as in Example~\ref{ex:N=2}.
Let $\Cart = (\Cart_{i,j})_{1\leq i,j\leq 2}$ be a compatible Cartan matrix for~$W_S$, and let $t := \Cart_{1,2} \Cart_{2,1}$.
If $t>4$ (case~(i) of Example~\ref{ex:N=2}), then $\Cart$ is of negative type and $\det \Cart = 4-t \neq 0$.
If $t=4$ (cases (ii)--(iii) of Example~\ref{ex:N=2}), then $\Cart$ is of zero type; the group $\rho^{\alpha}(W_S)$ acts properly discontinuously and cocompactly on the affine chart of $\PP(V^{\alpha}) = \PP(V/V_{\alpha}) \simeq \PP(\RR^2)$ which is the complement of $\PP((V_v+V_{\alpha})/V_{\alpha})$; this affine chart is the image of the Tits--Vinberg domain $\OhmVin = \PP(V)\smallsetminus\PP(V_v+V_{\alpha})$ by the natural projection $\PP(V)\smallsetminus\PP(V_{\alpha}) \to \PP(V^{\alpha})$.
\end{example}

\begin{remark} \label{rem:open-closed-in-Hom-df}
Let $\Hom^{\mathrm{fd}}(W_S, \GL(V))$ be the space of faithful and discrete representations of $W_S$ into $\GL(V)$ and $\Hom^{\mathrm{wc}}(W_S, \GL(V))$ (\resp $\Hom^{\mathrm{c}}(W_S, \GL(V))$) the space of representations generated by reflections for which the corresponding Cartan matrix $\Cart = (\alpha_i(v_j))_{1\leq i,j\leq N}$ is weakly compatible (\resp compatible) as in Definitions \ref{def:weak-compat} and~\ref{def:Cartan-matrix} (\resp Definition~\ref{def:compatible}).
\begin{enumerate}
  \item\label{item:Hom-ref-fd-wc-c} For any irreducible Coxeter group $W_S$ we have
$$\Hom^{\mathrm{ref}}(W_S, \GL(V)) \subset \Hom^{\mathrm{fd}}(W_S, \GL(V)) \,\cap\, \Hom^{\mathrm{wc}}(W_S, \GL(V)) \subset \Hom^{\mathrm{c}}(W_S, \GL(V)).$$
Indeed, the first inclusion holds by \cite{vin71} as explained in Section~\ref{subsec:Vinberg-constr}.
For the second inclusion, note that if $\rho \in \Hom^{\mathrm{fd}}(W_S, \GL(V)) \cap \Hom^{\mathrm{wc}}(W_S, \GL(V))$, then for any $1\leq i,j\leq N$ with $m_{i,j}=\infty$ we must have $\Cart_{i,j}\Cart_{j,i}\geq 4$: indeed, otherwise $\rho(s_i s_j)$ would be conjugate to a rotation of finite order (contradicting faithfulness) or to a rotation of infinite order (contradicting discreteness).
  \item\label{item:open-closed-in-Hom-df} If the irreducible Coxeter group $W_S$ is not affine, then $\Hom^{\mathrm{ref}}(W_S, \GL(V))$ is open and closed in $\Hom^{\mathrm{fd}}(W_S, \GL(V))$.
Indeed, Remark~\ref{rem:neg-type-interpret}.\eqref{item:neg-type-interpret} and Fact~\ref{fact:types}.\eqref{item:spherical_case}--\eqref{item:large_case} imply that in this case $\Hom^{\mathrm{ref}}(W_S, \GL(V))$ is equal to $\Hom^{\mathrm{c}}(W_S, \GL(V))$, hence to $\Hom^{\mathrm{fd}}(W_S, \GL(V)) \,\cap\, \Hom^{\mathrm{wc}}(W_S, \GL(V))$ by \eqref{item:Hom-ref-fd-wc-c} above; on the other hand, in general $\Hom^{\mathrm{wc}}(W_S, \GL(V))$ is open and closed in $\Hom(W_S, \GL(V))$ as observed in Remark~\ref{rem:Cartan-matrix}.
\end{enumerate}
\end{remark}

By contrast with Remark~\ref{rem:open-closed-in-Hom-df}.\eqref{item:open-closed-in-Hom-df}, if $W_S$ is affine, then $\Hom^{\mathrm{ref}}(W_S, \GL(V))$ is open but not always closed in $\Hom^{\mathrm{fd}}(W_S, \GL(V))$: see Remark~\ref{rem:JWST}.

\subsection{Remarks for nonirreducible Coxeter groups}

Here are partial generalizations of Remarks~\ref{rem:neg-type-interpret}.\eqref{item:neg-type-interpret} and~\ref{rem:rho-v-alpha-refl-group}.

\begin{remark} \label{rem:neg-type-interpret-non-irred}
Let $W_S = W_{S_1} \times \cdots \times W_{S_r}$ be an infinite Coxeter group which is the product of $r \geq 1$ irreducible factors.
Let $\rho: W_S \to \GL(V)$ be a representation generated by weakly compatible reflections, defined by $\alpha = (\alpha_1,\dots,\alpha_N) \in {V^*}^N$ and $v = (v_1,\dots,v_N) \in\nolinebreak V^N$.
Suppose that for each irreducible factor $W_{S_{\ell}}$ of $W_S$, the Cartan submatrix $\Cart_{S_{\ell}} = (\alpha_i(v_j))_{s_i,s_j\in S_{\ell}}$ is of negative type, and consider as in Remark~\ref{rem:neg-type-interpret}.\eqref{item:neg-type-interpret} the vector $x_{\ell} = \sum_{s_j \in S_{\ell}} t_j v_j$ where $(t_j)_{s_j\in S_{\ell}} \in (\RR_{>0})^{S_{\ell}}$ is the Perron--Frobenius vector of $2\,\mathrm{Id}-\Cart_{S_{\ell}}$.
Then $x = x_1+\cdots+ x_r \in \mathrm{Int} (\widetilde{\Delta})$, and so $\rho$ is a representation as a reflection group.
\end{remark}

\begin{remark} \label{rem:rho-v-alpha-refl-group-non-irred}
Let $\rho : W_S\to\GL(V)$ be a representation of the (possibly nonirreducible) Coxeter group $W_S$ as a reflection group and let $\rho_v$, $\rho^{\alpha}$, and $\rho_v^{\alpha}$ be the induced representations of Remark~\ref{rem:notation_rhos}.
If the Cartan submatrix $\Cart_{S_{\ell}}$ is of negative type for each irreducible factor $W_{S_{\ell}}$ of $W_S$, then $\rho_v$, $\rho^{\alpha}$, and $\rho_v^{\alpha}$ are representations of $W_S$ as a reflection group, by Remark~\ref{rem:neg-type-interpret-non-irred}; in particular, these representations are injective.
\end{remark}

\subsection{Affine groups with Cartan matrices of negative type} \label{subsec:AffineGroups}

Let $W_S$ be an irreducible Coxeter group of type $\widetilde{A}_{N-1}$ with $N\geq 2$ (see Appendix~\ref{appendix:classi_diagram}).
We now give more details about representations of $W_S$ as a reflection group with a Cartan matrix of negative type as in Fact~\ref{fact:types}.\eqref{item:affine_case_negative_type}.
The case $N=2$ is described in Example~\ref{ex:N=2}, so we assume $N\geq 3$.
The following is stated in \cite[Lem.\,8]{mv00} or \cite[Thm.\,2.18]{mar17}; we give a short proof for the reader's convenience.

\begin{lemma} \label{lem:out_of_vinberg}
Let $W_S$ be a Coxeter group of type~$\widetilde{A}_{N-1}$ with $N\geq 3$ and $\rho: W_S \to \GL(V)$ a representation of $W_S$ as a reflection group in~$V$, associated to some $\alpha = (\alpha_1,\dots,\alpha_N) \in {V^*}^N$ and $v = (v_1,\dots,v_N) \in V^N$.
Suppose the Cartan matrix $\Cart = (\alpha_i(v_j))_{1\leq i,j\leq N}$ is of negative type and $\rho(W_S)$ is reduced and dual-reduced (Definition~\ref{def:V-v-alpha}).
Then
\begin{enumerate}
  \item\label{item:Atilde-1} $\dim(V)=N$;
  \item\label{item:Atilde-2} $\rho(W_S)$ divides (\ie acts properly discontinuously and cocompactly on) an open simplex of $\PP(V)$, namely $\OhmVin$;
  \item\label{item:Atilde-3} $\OhmVin$ is the unique nonempty $\rho(W_S)$-invariant convex open subset of $\PP(V)$.
\end{enumerate}
\end{lemma}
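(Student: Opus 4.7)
For part~\ref{item:Atilde-1}, dual-reducedness gives $\dim V \leq N$. If $\dim V < N$, a nontrivial linear relation $\sum_j t_j v_j = 0$ among the spanning vectors would, upon applying each $\alpha_i$, produce a nonzero $t \in \RR^N$ with $\Cart\, t = 0$. But negative type for a type-$\widetilde{A}_{N-1}$ Cartan matrix means $a \neq 1$ in the normal form~\eqref{eqn:Cart-affine-neg-type}, so $\det \Cart = 2 - a - a^{-1} \neq 0$, a contradiction. Hence $\dim V = N$, and the families $(v_1, \ldots, v_N)$ and $(\alpha_1, \ldots, \alpha_N)$ are bases of $V$ and $V^*$ respectively.

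For parts~\ref{item:Atilde-2} and~\ref{item:Atilde-3}, I would work in the basis $(e_1, \ldots, e_N)$ of $V$ defined by $\alpha_i(e_j) = -\delta_{ij}$, so that $\widetilde \Delta = \sum_i \RR_{\geq 0}\,e_i$ is a simplicial cone and its projective image $\Delta \subset \PP(V)$ is a closed $(N-1)$-simplex. The next key ingredient is the affine decomposition $W_S \cong T \rtimes W_0$, where $T \cong \ZZ^{N-1}$ is the translation sublattice and $W_0 \cong S_N$ is the finite Weyl group of~$A_{N-1}$. Since $\Cart$ is non-symmetrizable (consequence of $a \neq 1$), a direct eigenvalue computation shows that $\rho(T)$ consists of pairwise-commuting semisimple elements of $\GL(V)$ sharing a common eigenbasis $(p_1, \ldots, p_N) \subset V$, with $N$ pairwise distinct characters; the quotient $W_0$ then permutes the lines $[p_1], \ldots, [p_N]$ transitively, so $\rho$ is irreducible. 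Setting $\Omega_0 := \operatorname{int} \operatorname{conv}([p_1], \ldots, [p_N]) \subset \PP(V)$, the cone $\sum_i \RR_{\geq 0}\, p_i$ is $\rho(W_S)$-invariant, and a sign-computation on the change-of-basis matrix from $(p_i)$ to $(e_i)$ shows it contains $\widetilde \Delta$, giving $\OhmVin \subseteq \Omega_0$. The reverse inclusion $\Omega_0 \subseteq \OhmVin$ follows from cocompactness of the $\rho(T)$-action on $\Omega_0$ (a rank-$(N-1)$ lattice acting by translations on an $(N-1)$-dimensional simplex), so that $W_S$-translates of $\Delta$ cover $\Omega_0$. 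Cocompactness of the full action on $\OhmVin$ is then immediate from Fact~\ref{fact:Delta-flat}: in $\widetilde{A}_{N-1}$ with $N \geq 3$, the only standard subgroup of infinite order is $W_S$ itself, whose stabilized face $\bigcap_i \ker \alpha_i = \{0\}$ does not project to $\PP(V)$, so $\Delta^\flat = \Delta$ is a compact fundamental domain.

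For uniqueness~\ref{item:Atilde-3}, Proposition~\ref{prop:benoist-irred-Gamma} applies thanks to irreducibility, yielding both a minimal and a maximal $\rho(W_S)$-invariant properly convex open subset of $\PP(V)$. The proximal limit set is $\Lambda_{\rho(W_S)} = \{[p_1], \ldots, [p_N]\}$ (attracting fixed points of proximal translations in $\rho(T)$, realized as the eigenline of largest-modulus eigenvalue for generic $\tau \in T$), and dually $\Lambda^* = \{[\phi_1], \ldots, [\phi_N]\}$ consists of the $T$-weight forms whose kernels are precisely the $(N-1)$ facets of $\Omega_0$. Consequently both the minimal (interior of $\operatorname{conv}(\Lambda)$) and the maximal (the component from Proposition~\ref{prop:max-inv-conv}.\eqref{item:Omega-max}) invariant properly convex open subsets equal $\Omega_0 = \OhmVin$; irreducibility rules out any invariant convex open subset failing to be properly convex (since its lift would contain a $\rho(W_S)$-invariant positive-dimensional subspace, forcing it to exhaust $V$). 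The main technical obstacle will be the explicit eigenanalysis of $\rho(T)$ and the verification that the change-of-basis from $(p_i)$ to $(e_i)$ has the sign pattern needed for $\widetilde\Delta \subseteq \sum_i \RR_{\geq 0}\,p_i$; once these sign conditions are secured, the remaining structure (cocompactness, identification of $\Lambda$ and $\Lambda^*$, uniqueness) follows cleanly.
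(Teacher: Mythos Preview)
Your approach for \ref{item:Atilde-1} is essentially the paper's. For \ref{item:Atilde-2}--\ref{item:Atilde-3}, both you and the paper exploit the semidirect product $W_S \cong \ZZ^{N-1} \rtimes \mathfrak{S}_N$ and diagonalize the translation lattice; the paper does this by writing everything in explicit coordinates from the outset (choosing a basis of~$V$ in which $\rho(T)$ consists of diagonal matrices $\mathrm{Diag}(a^{\nu_1},\ldots,a^{\nu_N})$ with $\sum\nu_i = 0$, so that $\OhmVin$ is visibly the projectivized positive orthant and $\Delta$ a compact simplex inside it), while you work more abstractly via an eigenbasis $(p_i)$ and the machinery of $\Lambda,\Lambda^*,\Omega_{\min},\Omega_{\max}$.

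There is, however, a genuine gap in your uniqueness argument. Proposition~\ref{prop:benoist-irred-Gamma} guarantees that $\Omega_{\max}$ contains \emph{all} invariant properly convex opens only under \emph{strong} irreducibility, and $\rho$ is not strongly irreducible here: $W_S$ is virtually abelian, and the finite-index subgroup $\rho(T)$ preserves each line $\RR p_i$. Under mere irreducibility, $\Omega_{\min}$ and $\Omega_{\max}$ are defined relative to a chosen~$\Omega$, and your computation $\Omega_{\min}=\Omega_{\max}=\Omega_0$ only pins down invariant opens lying in the connected component (orthant) of $\PP(V)\smallsetminus\bigcup_i\PP(\mathrm{Ker}\,\phi_i)$ containing $\OhmVin$; it does not exclude an invariant properly convex open sitting in one of the other $2^{N-1}-1$ projective orthants. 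The missing step is exactly what the paper supplies directly: since $W_0\cong\mathfrak{S}_N$ permutes the lines $[p_i]$ as the full symmetric group on $N$ letters, the only sign pattern it fixes (up to global flip) is the all-positive one, so only the positive orthant is $\rho(W_S)$-invariant. You already have this transitivity in hand, so the repair is one sentence; but as written, the appeal to Proposition~\ref{prop:benoist-irred-Gamma} does not close the argument. The paper sidesteps the issue entirely by classifying by hand all convex cones preserved by the diagonal $\ZZ^{N-1}$ (Cartesian products of $\RR_{>0}$, $\RR_{<0}$, and~$\RR$) and then checking $\mathfrak{S}_N$-invariance --- more elementary, and avoiding any dependence on the limit-set machinery.
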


\begin{proof}
\eqref{item:Atilde-1}
By Fact~\ref{fact:types}, we have $\det\Cart\neq 0$, hence the vectors $v_1,\dots,v_N\in V$ are linearly independent.
Since $\rho(W_S)$ is dual-reduced, these vectors span~$V$, hence $\dim V=N$.

\eqref{item:Atilde-2}
Without loss of generality, we may assume that $\Cart = \Cart'$ as in \eqref{eqn:Cart-affine-neg-type} and, since $\det\Cart\neq 0$, that in some appropriate basis of $V\simeq\RR^N$ we have
$$ \left ( \begin{array}{ccc}
&\alpha_1 &\\ \hline &\vdots& \\ \hline &\alpha_N& \end{array} \right )=
\begin{pmatrix} 
1 		& -1 		& 	 	&  	 \\
		& \ddots 	& \ddots	& 	 \\
		&		&\ddots	& -1	 \\
-a^{-1}	&		&		& 1	
\end{pmatrix}
~\text{ and }~
\left ( \!\!\!\begin{array}{c|c|c} && \\ v_1 & \cdots & v_N \\ && \end{array} \!\!\!\right )=
\begin{pmatrix} 
1 	&  		& 		&  -a	 \\
-1	& \ddots 	&		& 	 \\
	&\ddots	&\ddots	& 	 \\
	&		& -1		& 1	
\end{pmatrix}.
$$
The formula $\rho(s_i) = (x \mapsto x - \alpha_i(x) v_i)$ yields
$$\{\rho(s_i)\}_{1\leq i \leq N} = \left \{ 
\begin{pmatrix} 0&1&&&\\1&0&&0&\\&&1&&\\&0&&\ddots&\\&&&&1 \end{pmatrix}, \dots,
\begin{pmatrix} 1&&&&\\&\ddots&&0&\\&&1&&\\&0&&0&1\\&&&1&0 \end{pmatrix}, 
\begin{pmatrix} 0&&&&a\\&1&&0&\\&&\ddots&&\\&0&&1&\\a^{-1}&&&&0 \end{pmatrix} \right \}.$$
The first $N-1$ of these elements generate a copy of the symmetric group $\mathfrak{S}_N$.
We have
\begin{equation} \label{eqn:zigzag}
\rho\big((s_1 s_2 \dotsm s_{N-1}) (s_{N-2} \dotsm s_2 s_1) s_N\big) = \mathrm{Diag}(a^{-1},1,\dots,1,a);
\end{equation}
from this it is easy to see that $\rho(W_S) \simeq \widetilde{A}_{N-1}$ is isomorphic to $\mathfrak{S}_N\ltimes \mathbb{Z}^{N-1}$.
Indeed, the normal subgroup $\mathbb{Z}^{N-1}$ consists of all matrices of the form $\mathrm{Diag}(a^{\nu_1}, \dots, a^{\nu_N})$ with $\nu_i\in \mathbb{Z}$ and $\nu_1+\dots+\nu_N=0$; we get a system of generators for this $\mathbb{Z}^{N-1}$ by conjugating~\eqref{eqn:zigzag} under $\big \langle \rho(s_1), \dots, \rho(s_{N-1}) \big \rangle \simeq \mathfrak{S}_N$.

If $a<1$ (\resp $a>1$), then the convex cone $\widetilde{\Delta}=\bigcap_{i=1}^N \{\alpha_i \leq 0 \}$ is the $\RR_{\geq 0}$-span (\resp $\RR_{\leq 0}$-span) of the column vectors $(a,\dots,a,1,\dots,1)$.
In either case, $\Delta$ is a closed (compact) projective simplex contained in the projectivized positive orthant $\mathbb{P} \left( \RR_{>0}^N \right)$.
The Tits--Vinberg domain $\OhmVin$, which is by definition (Section~\ref{subsec:Vinberg-constr}) the interior of the union of the $\rho(W_S)$-translates of~$\Delta$, is equal to $\mathbb{P} \left( \RR_{>0}^N \right)$.
Thus $\rho(W_S)$ divides $\OhmVin$.

\eqref{item:Atilde-3}
The properly convex open cones of $\RR^N$ preserved by the $\ZZ^{N-1}$ factor of $\rho(W_S) \simeq \mathfrak{S}_N\ltimes \mathbb{Z}^{N-1}$ (acting as diagonal matrices) are precisely the orthants. 
The convex (not necessarily properly convex) cones preserved by $\ZZ^{N-1}$ are the Cartesian products of $N$ factors each equal to $\RR_{>0}$, $\RR_{<0}$, or $\RR$.
The only nontrivial such product which is $\mathfrak{S}_N$-invariant after projectivization is $ \mathbb{P} \left( \RR_{>0}^N \right)$.
\end{proof}

\subsection{Strong irreducibility} \label{subsec:strong-irred}

The following proposition will be used several times below.
For reflection groups (Definition~\ref{def:refl-group}), the first two items were proved in \cite[Prop.\,19]{vin71} and \cite[Cor.\,to\,Prop.\,19]{vin71}, and the third one  in \cite[Th.\,2.18]{mar17}.

\begin{proposition} \label{prop:rho-irred}
Let $W_S$ be a Coxeter group in $N$ generators as in~\eqref{eqn:WS}, and assume that it is irreducible.
Let $\rho : W_S\to\GL(V)$ be a representation of $W_S$ generated by weakly compatible reflections (Definition~\ref{def:Cartan-matrix}), associated to some $\alpha = (\alpha_1,\dots,\alpha_N) \in {V^*}^N$ and $v = (v_1,\dots,v_N) \in V^N$.
Let $V_v$ and~$V_\alpha$ be as in Definition~\ref{def:V-v-alpha}.
Then
\begin{enumerate}
  \item\label{item:inv-subspace} a linear subspace $V'$ of~$V$ is $\rho(W_S)$-invariant if and only if $V_v\subset V'$ or $V'\subset V_\alpha$;
  \item\label{item:irred-reduced} the representation $\rho$ is irreducible if and only if $V_v = V$ and~$V_\alpha=\{0\}$, \ie if and only if $\rho(W_S)$ is reduced and dual-reduced (Definition~\ref{def:V-v-alpha});
  \item\label{item:irred-large} if $\rho$ is irreducible and $W_S$ is large, then $\rho$ is actually strongly irreducible.
\end{enumerate}
\end{proposition}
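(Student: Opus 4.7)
The plan is to read off invariant subspaces directly from the reflection formula for (1) and (2), and then combine convex-projective dynamics with a socle analysis to promote ordinary to strong irreducibility in (3).

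For part (1), observe that a subspace $V' \subset V$ is $\rho(s_i)$-stable if and only if $\alpha_i(V')\,v_i \subset V'$; since $\alpha_i(v_i) = 2 \neq 0$, this forces the mutually exclusive dichotomy $v_i \in V'$ or $V' \subset \ker \alpha_i$. I would then partition $\{1, \dots, N\}$ into $A = \{i : v_i \in V'\}$ and $B = \{i : V' \subset \ker \alpha_i\}$. If both were nonempty then, by irreducibility of $W_S$ (connectedness of its Coxeter diagram), one could pick $i \in A$ and $j \in B$ with $m_{i,j} \geq 3$, and the third line of \eqref{eqn:conn-comp-ref} would give $\Cart_{j,i} = \alpha_j(v_i) \neq 0$, contradicting $v_i \in V' \subset \ker \alpha_j$. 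Hence $V' \subset V_\alpha$ or $V_v \subset V'$, and the converse is immediate.

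For part (2), both $V_v$ and $V_\alpha$ are themselves $\rho(W_S)$-invariant (from $\rho(s_j) v_i = v_i - \alpha_j(v_i) v_j \in V_v$ and the triviality of $\rho(s_j)$ on $V_\alpha$), and they satisfy $V_v \neq \{0\}$ (each $v_i$ is nonzero, since $\alpha_i(v_i) = 2$) and $V_\alpha \neq V$ (each $\alpha_i$ is nonzero). Irreducibility of $\rho$ therefore forces $V_v = V$ and $V_\alpha = \{0\}$; the converse follows from part (1) since any invariant $V'$ would have to contain $V_v = V$ or be contained in $V_\alpha = \{0\}$.

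For part (3), the task is to upgrade irreducibility of $\rho$ to irreducibility of $\rho|_{W'}$ for every finite-index subgroup $W' \leq W_S$, and the main obstacle will be to rule out a nontrivial splitting of $V$ as a $\rho(W')$-module. My plan combines a dynamical input with a socle analysis. First, since $W_S$ is large, Fact~\ref{fact:types} forces the Cartan matrix $\Cart$ to be of negative type, so $\rho(W_S)$ acts properly discontinuously on the nonempty properly convex open Vinberg domain $\OhmVin \subset \PP(V)$; a standard convex-projective argument then produces a proximal element $\gamma \in \rho(W_S)$, and any positive power of a proximal element is proximal with the same attracting fixed line, so some $\gamma^k$ lies in $\rho(W')$ and is still proximal. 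Replacing $W'$ by its normal core in $W_S$, we may assume $W' \vartriangleleft W_S$. The $W'$-socle of $V$ is nonzero (by finite-dimensionality) and $\rho(W_S)$-invariant (conjugation by $W_S$ preserves $\rho(W')$-irreducibility), hence equals $V$ by (2); so $\rho|_{W'}$ is semisimple, with isotypic decomposition $V = U_1 \oplus \cdots \oplus U_m$ transitively permuted by $\rho(W_S)/\rho(W')$. To exclude $m \geq 2$, I would mimic the argument of part (1): each reflection $\rho(s_i)$ must either fix every $U_j$ setwise (forcing $v_i$ into a single $U_{j(i)}$) or swap a pair $U_j \leftrightarrow U_k$ (forcing $v_i \in U_j \oplus U_k$ and $\alpha_i$ to vanish on the remaining $U_\ell$), and connectedness of the Coxeter diagram should then collapse this combinatorial data to $m = 1$. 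Finally, if the unique isotypic component were $W^{\oplus n}$ with $n \geq 2$, the top eigenvalue of the proximal element $\gamma^k \in \rho(W')$ would appear with multiplicity $\geq n$, contradicting proximality; hence $n = 1$ and $\rho|_{W'}$ is irreducible.
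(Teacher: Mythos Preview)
Your arguments for parts (1) and (2) are correct and essentially identical to the paper's.

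For part (3), your approach differs from the paper's and has a genuine gap. The dynamical input---producing a proximal element in $\rho(W')$ via the Vinberg domain---presupposes that $\rho$ is a representation \emph{as a reflection group}, i.e.\ that the Cartan matrix is compatible (Definition~\ref{def:compatible}), not merely weakly compatible as in the hypothesis. Without condition~\eqref{eqn:refl-group} there is no Vinberg theory: the representation need not be discrete or faithful, Fact~\ref{fact:types} does not apply, and there is no invariant properly convex open set on which to run the ``standard convex-projective argument'' for proximality. Your $m=1$ step is also only sketched; it can be completed (if some $\rho(s_i)$ swaps two isotypic components then the $(-1)$-eigenspace count forces all $U_j$ to be lines, whence $\rho(W')$ is simultaneously diagonalizable and abelian, contradicting largeness provided $\rho$ is injective---but injectivity again requires compatibility), so this needs to be written out rather than asserted.

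The paper's proof of (3) is more elementary and uses no convex-projective dynamics. One takes a finite $\rho(W_S)$-invariant family $F$ of nontrivial subspaces, closed under intersection, and argues in two steps. First, $F$ contains no line: otherwise the $\rho(W_S)$-orbit of that line contains a basis of $V$ simultaneously diagonalizing the finite-index subgroup of $\rho(W_S)$ that fixes each member of $F$ individually, making $W_S$ virtually abelian. Second, for $U\in F$ of minimal dimension $\geq 2$, each reflection $\rho(s_i)$ must fix $U$: since $\rho(s_i)$ is the identity on the hyperplane $\ker\alpha_i$, if $\rho(s_i)\cdot U\neq U$ then $\rho(s_i)\cdot U\cap U$ would be a member of $F$ of strictly smaller positive dimension. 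Hence $U$ is $\rho(W_S)$-invariant, contradicting irreducibility. The only ingredients are that each generator acts as a reflection and that $W_S$ is large; this is what lets the argument go through under mere weak compatibility.
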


By \emph{strongly irreducible} we mean that the restriction of~$\rho$ to any finite-index subgroup of~$W_S$ is irreducible; equivalently, the group $\rho(W_S)$ does not preserve a finite union of nontrivial subspaces of~$V$.

\begin{proof}
\eqref{item:inv-subspace} If $V_v\subset V'$, then $V'$ is invariant under each $\rho(s_i) = (x\mapsto x-\alpha_i(x)v_i)$, hence under $\rho(W_S)$; if $V'\subset V_{\alpha}$, then $V'$ is pointwise fixed by each $\rho(s_i)$, hence by $\rho(W_S)$.
Conversely, suppose $V'$ is $\rho(W_S)$-invariant.
Let $S'$ be the set of generators $s_j$ of~$W_S$ such that $v_j\in V'$, and $S''$ its complement in~$S$.
For any $s_i\in S''$ and $v'\in V'$ we have $\rho(s_i)(v') = v' - \alpha_i(v') v_i \in V'$, hence $\alpha_i(v')=0$.
In particular, for any $s_i\in S''$ and $s_j\in S'$ we have $\alpha_i(v_j)=0$, hence $m_{i,j}=2$ by weak compatibility of~$\Cart$.
Since $W_S$ is irreducible, we have $S'=S$, in which case $V'\supset V_v$, or $S''=S$, in which case $V'\subset V_\alpha$.

\eqref{item:irred-reduced} Note that $V_v$ and $V_\alpha$ are invariant subspaces of~$V$ with $V_v\neq\{0\}$ and $V_{\alpha}\neq V$.
Therefore, if $\rho$ is irreducible, then $V_v = V$ and $V_\alpha = \{0\}$.
Conversely, if $V_v = V$ and $V_\alpha = \{0\}$, then \eqref{item:inv-subspace} implies that any invariant subspace is trivial, hence $\rho$ is irreducible.

\eqref{item:irred-large} Suppose by contradiction that $W_S$ is large and $\rho$ is irreducible but not strongly irreducible: this means that there is a finite collection $F$ of nontrivial subspaces of~$V$ which is preserved by the action of~$\rho$.
We may assume that $F$ is closed under intersection (excluding the trivial subspace).

We first claim that $F$ does not contain any one-dimensional subspace~$U$.
Indeed, if $U \in F$ is one-dimensional, then for any nonzero $u \in U$, the set $\rho(W_S)\cdot u$ must span $V$ since $\rho$ is irreducible, hence this set contains a basis $\mathcal{B}$ of~$V$.
Consider the set of elements of $\rho(W_S)$ whose action preserves each individual subspace of~$F$; it is a finite-index subgroup $H$ of $\rho(W_S)$.
The basis $\mathcal{B}$ is a simultaneous eigenbasis for all elements of~$H$, hence $H$ is abelian, and so $W_S$ is virtually abelian.
This contradicts the assumption that $W_S$ is a large irreducible Coxeter group.

Next, let $U$ be a subspace in $F$ of minimal dimension.
We have $\dim U\geq 2$ by the previous claim.
For each $i$, the subspace $\rho(s_i)\cdot U$ also belongs to~$F$.
Since $\rho(s_i)$ is a reflection, if we had $\rho(s_i)\cdot U \neq U$, then $\rho(s_i)\cdot U \cap U$ would be a subspace in $F$ of dimension $\dim U -1$, nontrivial since $\dim U\geq 2$, contradicting the minimality of~$U$.
Thus $\rho(s_i)\cdot U = U$ for all~$i$, and so $\rho(W_S)\cdot U = U$, contradicting the irreducibility of~$\rho$.
\end{proof}

Here is an analogue of Proposition~\ref{prop:rho-irred}.\eqref{item:inv-subspace} for Coxeter groups that are not necessarily irreducible.

\begin{proposition} \label{prop:rho-irred-non-irred-version}
Let $W_S$ be a Coxeter group in $N$ generators as in \eqref{eqn:WS}, and let $\rho : W_S\to\GL(V)$ be a representation of $W_S$ generated by weakly compatible reflections (Definition~\ref{def:Cartan-matrix}), associated to some $\alpha = (\alpha_1,\dots,\alpha_N) \in {V^*}^N$ and $v = (v_1,\dots,v_N) \in V^N$.
For any $\rho(W_S)$-invariant subspace $V'$ of~$V$, there exists a decomposition $S = T \sqcup U$ such that $W_S = W_T \times W_{U}$ and
$$\mathrm{span} \{ v_i \,|\, s_i \in T\} =: (V_v)_T  \subset V' \subset (V_\alpha)_U := \bigcap_{s_j \in U} \mathrm{Ker}(\alpha_j) .$$
\end{proposition}

\begin{proof}
Write $W_S$ as a product of irreducible factors: $W_S = W_{S_1} \times \cdots \times W_{S_r}$ where $S = S_1 \sqcup \cdots \sqcup S_r$.
By Proposition~\ref{prop:rho-irred}.\eqref{item:inv-subspace}, for each $1\leq\ell\leq r$, either $V' \supset (V_v)_{S_{\ell}}$ or $V' \subset (V_\alpha)_{S_{\ell}}$, and both cannot happen at once since $v_i \in (V_v)_{S_\ell}\smallsetminus (V_{\alpha})_{S_\ell}$ for all $s_i\in S_\ell$.
We can take for $T$ the union of those $S_{\ell}$ such that $V' \supset (V_v)_{S_{\ell}}$ and for $U$ the union of those $S_{\ell}$ such that $V' \subset (V_\alpha)_{S_{\ell}}$.
\end{proof}

The following consequence of Proposition~\ref{prop:rho-irred-non-irred-version} will be used in Sections~\ref{subsec:Vinberg-max}, \ref{subsec:rho-red}, \ref{subsec:large-enough}, and~\ref{subsec:naive-cc->not-IC}.

\begin{corollary} \label{cor:C-meets-P(Vv)}
Let $W_S$ be a (not necessarily irreducible) Coxeter group in $N$ generators, and $\rho: W_S \to \GL(V)$ a representation of $W_S$ as a reflection group  in~$V$, associated to some $\alpha = (\alpha_1,\dots,\alpha_N) \in {V^*}^N$ and $v = (v_1,\dots,v_N) \in V^N$.
Suppose that for each irreducible component $S_{\ell}$ of~$S$, the corresponding Cartan submatrix $\Cart_{S_{\ell}} = (\alpha_i(v_j))_{s_i,s_j\in S_{\ell}}$ is of negative type.
Then
\begin{enumerate}
  \item\label{item:prox-lim-set-nonempty} the proximal limit set $\Lambda_{\rho(W_S)}$ of $\rho(W_S)$ is nonempty and contained in $\PP(V_v)$;
  \item\label{item:C-meets-P(Vv)} for any nonempty $\rho(W_S)$-invariant closed properly convex subset $\C$ of $\OhmVin$, the set $\C \cap \PP(V_v)$ has nonempty interior in $\PP(V_v)$.
\end{enumerate}
\end{corollary}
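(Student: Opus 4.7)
The plan is to reduce both parts to the induced representation $\rho_v^{\alpha}: W_S \to \GL(V_v^{\alpha})$ on $V_v^{\alpha} = V_v/(V_v\cap V_{\alpha})$, which by Proposition~\ref{prop:rho-irred}(2) is irreducible and by Remark~\ref{rem:rho-v-alpha-refl-group} is itself a representation of $W_S$ as a reflection group with the same Cartan matrix $\Cart$ (of negative type); its Vinberg domain $\OhmVin^{v,\alpha}$ is properly convex in $\PP(V_v^{\alpha})$. The key structural observation is that $\rho(W_S)$ acts trivially on both $V_v\cap V_{\alpha}\subset V_{\alpha}$ and on $V/V_v$, so all the ``dynamics'' of $\rho$ is concentrated in $\rho_v^{\alpha}$.

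For (1), by Fact~\ref{fact:types} the group $W_S$ is either large or of type $\widetilde{A}_{N-1}$. In the first case Proposition~\ref{prop:rho-irred}(3) makes $\rho_v^{\alpha}$ strongly irreducible, and Proposition~\ref{prop:benoist-irred-Gamma} produces a proximal element in $\rho_v^{\alpha}(W_S)$; in the second, the explicit diagonal element $\mathrm{Diag}(a^{-1},1,\ldots,1,a)$ with $a\neq 1$ from the proof of Lemma~\ref{lem:out_of_vinberg} is proximal. Pick $\gamma_0\in W_S$ with $\rho_v^{\alpha}(\gamma_0)$ proximal of top eigenvalue $\lambda$ of modulus $>1$. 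The spectrum of $\rho(\gamma_0)$ on $V$ decomposes along the invariant filtration $\{0\}\subset V_v\cap V_{\alpha}\subset V_v\subset V$ into that of $\rho_v^{\alpha}(\gamma_0)$ together with ones from the trivially acted on $V_v\cap V_{\alpha}$ and $V/V_v$, so $\lambda$ remains the unique top eigenvalue of $\rho(\gamma_0)$ on $V$, with multiplicity one and eigenvector in $V_v$. Hence $\rho(\gamma_0)$ is proximal on $V$ with attracting fixed point in $\PP(V_v)$; the same eigenvalue argument, applied to any proximal element of $\rho(W_S)$, shows $\Lambda_{\rho(W_S)}\subset\PP(V_v)$.

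For (2), let $\C$ be as in the statement and let $\Pi\subset\PP(V)$ be its projective span. By Proposition~\ref{prop:rho-irred}(1) the $\rho(W_S)$-invariant subspace $V_{\Pi}$ contains $V_v$ or is contained in $V_{\alpha}$; the second is excluded since it would force $\rho(W_S)$ to fix $\C$ pointwise, contradicting proper discontinuity on $\OhmVin$. So $\PP(V_v)\subset\Pi$, and the interior $\C^{\circ}$ of $\C$ in $\Pi$ is a $\rho(W_S)$-invariant properly convex open subset of $\Pi$. Since the proximal element from~(1) is still proximal when restricted to $V_{\Pi}$ (with the same attracting fixed point), Proposition~\ref{prop:max-inv-conv}(1) applied inside $\Pi$ gives $\Lambda_{\rho(W_S)}\subset\overline{\rho(W_S)\cdot y}\subset\overline{\C}$ for any $y\in\C^{\circ}$. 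Working in an affine chart of $\PP(V)$ containing $\overline{\C}$ whose intersection with $\PP(V_v)$ is an affine chart of $\PP(V_v)$, this yields $\mathrm{Conv}(\Lambda_{\rho(W_S)})\subset\overline{\C}\cap\PP(V_v)$; moreover the linear span of $\Lambda_{\rho(W_S)}$ is $\rho(W_S)$-invariant in $V_v$ and, not being contained in $V_{\alpha}$ (whose vectors have eigenvalue $1$), must equal $V_v$ by Proposition~\ref{prop:rho-irred}(1), so $\mathrm{Conv}(\Lambda_{\rho(W_S)})$ has nonempty interior in $\PP(V_v)$.

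The remaining and main obstacle is to show this interior lies in $\OhmVin$. For this I factor through the projection $p_v:\PP(V_v)\smallsetminus\PP(V_v\cap V_{\alpha})\to\PP(V_v^{\alpha})$. Since each $\alpha_i$ vanishes on $V_{\alpha}$, the cone $\widetilde{\Omega}_{\mathrm{Vin}}\cap V_v$ is translation-invariant under $V_v\cap V_{\alpha}$, whence $\OhmVin\cap\PP(V_v)=p_v^{-1}(\OhmVin^{v,\alpha})$. The eigenvalue analysis of (1) shows that $p_v$ carries $\Lambda_{\rho(W_S)}$ bijectively to $\Lambda_{\rho_v^{\alpha}(W_S)}$, hence $p_v$ carries $\mathrm{int}(\mathrm{Conv}(\Lambda_{\rho(W_S)}))$ onto $\mathrm{int}(\mathrm{Conv}(\Lambda_{\rho_v^{\alpha}(W_S)}))$, which by Proposition~\ref{prop:benoist-irred-Gamma} applied to the irreducible $\rho_v^{\alpha}$ is the minimal $\rho_v^{\alpha}(W_S)$-invariant convex open in $\PP(V_v^{\alpha})$ and is thus contained in $\OhmVin^{v,\alpha}$. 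Lifting back, $\mathrm{int}(\mathrm{Conv}(\Lambda_{\rho(W_S)}))\subset p_v^{-1}(\OhmVin^{v,\alpha})=\OhmVin\cap\PP(V_v)$. Since $\C$ is closed in $\OhmVin$ we have $\overline{\C}\cap\OhmVin=\C$, yielding the desired nonempty inclusion $\mathrm{int}(\mathrm{Conv}(\Lambda_{\rho(W_S)}))\subset \C\cap\PP(V_v)$.
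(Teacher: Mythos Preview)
Your argument is broadly correct but substantially more elaborate than the paper's, and part~(2) has a gap.

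For (1), the case split is unnecessary: Proposition~\ref{prop:benoist-irred-Gamma} applies directly to the irreducible $\rho_v^\alpha$ (which preserves its properly convex Vinberg domain). For the containment $\Lambda_{\rho(W_S)}\subset\PP(V_v)$, the paper bypasses your eigenvalue analysis entirely: by Remark~\ref{rem:neg-type-interpret}\eqref{item:neg-type-interpret} there exists $y\in\OhmVin\cap\PP(V_v)$, and since $V_v$ is $\rho(W_S)$-invariant one gets $\Lambda_{\rho(W_S)}\subset\overline{\rho(W_S)\cdot y}\subset\PP(V_v)$ via Proposition~\ref{prop:max-inv-conv}\eqref{item:Lambda-prox-in-boundary}.

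For (2), the paper avoids $p_v$ and $V_v^\alpha$ altogether. Once $\Lambda_{\rho(W_S)}\subset\overline{\C}\cap\PP(V_v)$ and $\Lambda_{\rho(W_S)}$ spans $V_v$ (Proposition~\ref{prop:rho-irred}\eqref{item:inv-subspace}), one also has $\mathrm{Conv}(\Lambda_{\rho(W_S)})\subset\overline{\OhmVin}\cap\PP(V_v)$. Since $\OhmVin\cap\PP(V_v)\neq\emptyset$ (again Remark~\ref{rem:neg-type-interpret}), elementary convexity shows that the interior of $\mathrm{Conv}(\Lambda_{\rho(W_S)})$ in $\PP(V_v)$ lies in $\OhmVin\cap\PP(V_v)$; hence this nonempty set lies in $\overline{\C}\cap\OhmVin=\C$. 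No quotient, no $\Omega_{\min}$.

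Your route via $p_v$ has a gap when $V_v\cap V_\alpha\neq\{0\}$ (which occurs whenever $\det\Cart=0$, and this does happen for large $W_S$): you have not checked that $\Lambda_{\rho(W_S)}$, or even the interior of its convex hull, avoids the center of projection $\PP(V_v\cap V_\alpha)$, so the assertion that ``$p_v$ carries $\mathrm{int}(\mathrm{Conv}(\Lambda_{\rho(W_S)}))$ onto $\ldots$'' is unjustified as written. (The word ``bijectively'' is also too strong: distinct attracting fixed points in $\PP(V_v)$ can have the same image in $\PP(V_v^\alpha)$.) This is repairable --- for instance, the interior of $\mathrm{Conv}(\Lambda_{\rho(W_S)})$, being contained in the properly convex $\overline{\C}$, is itself properly convex open in $\PP(V_v)$, so the infinite discrete group $\rho_v(W_S)$ acts on it properly discontinuously and cannot fix a point of $\PP(V_\alpha)$ lying inside it --- but the paper's direct appeal to Remark~\ref{rem:neg-type-interpret} sidesteps the issue entirely.
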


\begin{proof}
We first assume that $W_S$ is irreducible.

\eqref{item:prox-lim-set-nonempty} 
Since the group $\rho(W_S)$ is generated by hyperplane reflections whose $(-1)$-eigen\-spaces lie in~$V_v$, we have $\rho(\gamma)\cdot x - x \in V_v$ for all $x\in V$ and $\gamma\in W_S$, and so the proximal limit set $\Lambda_{\rho(W_S)}$ of $\rho(W_S)$ is contained in $\PP(V_v)\subset\PP(V)$.
Let $\rho_v^{\alpha} : W_S\to\GL(V_v^{\alpha})$ be the representation induced by~$\rho$, as in Remark~\ref{rem:notation_rhos}.
It is easy to check that for any $\gamma \in W_S$, the element $\rho(\gamma)\in\GL(V)$ is proximal in $\PP(V)$ if and only if the element $\rho_v^\alpha(\gamma)\in\GL(V_v^{\alpha})$ is proximal in $\PP(V_v^\alpha)$ (see \eg \eqref{eqn:ninezeros} below).
Thus we only need to check that $\rho_v^\alpha(W_S)$ contains a proximal element in $\PP(V_v^\alpha)$.
This is the case by Proposition~\ref{prop:benoist-irred-Gamma}, given that $\rho_v^\alpha(W_S)$ acts irreducibly on $\PP(V_v^\alpha)$ by Proposition~\ref{prop:rho-irred}.\eqref{item:irred-reduced}.

\eqref{item:C-meets-P(Vv)} Let $\C$ be a nonempty $\rho(W_S)$-invariant closed properly convex subset of $\OhmVin$ and let $\overline{\C}$ be its closure in $\PP(V)$.
We claim that
\begin{equation} \label{eqn:Lambda-subset-C-bar-V-v}
\Lambda_{\rho(W_S)} \subset \overline{\C}\cap\PP(V_v).
\end{equation}
Indeed, if $\OhmVin \subset \PP(V)$ is properly convex, then \eqref{eqn:Lambda-subset-C-bar-V-v} follows from Proposition~\ref{prop:max-inv-conv}.\eqref{item:Lambda-prox-in-boundary} and \eqref{item:prox-lim-set-nonempty} above.
In general, $\OhmVin$ might not be properly convex, but we note that the relative interior of~$\C$ is a properly convex open subset of $\PP(V_{\C})$, where $V_{\C}$ is the span of $\C$ in~$V$.
Since $\C \subset \OhmVin$ and $\OhmVin \cap \PP(V_\alpha) = \emptyset$, the $\rho(W_S)$-invariant subspace $V_{\C}$ is not contained in~$V_\alpha$, hence it contains~$V_v$ by Proposition~\ref{prop:rho-irred}.\eqref{item:inv-subspace}.
By \eqref{item:prox-lim-set-nonempty} above, $\rho(W_S)$ contains elements that are proximal in $\PP(V_{\C})$, and $\Lambda_{\rho(W_S)} \subset \PP(V_v)$; applying Proposition~\ref{prop:max-inv-conv}.\eqref{item:Lambda-prox-in-boundary} to the relative interior of~$\C$, we obtain \eqref{eqn:Lambda-subset-C-bar-V-v}.
The inclusion \eqref{eqn:Lambda-subset-C-bar-V-v} implies that the convex hull of $\Lambda_{\rho(W_S)}$ in~$\OhmVin$ is contained in $\C\cap\PP(V_v)$.
This convex hull has nonempty interior in $\PP(V_v)$ because $\OhmVin \cap \PP(V_v) \neq \emptyset$ (Remark~\ref{rem:neg-type-interpret}.\eqref{item:neg-type-interpret}) and the projective span of $\Lambda_{\rho(W_S)}$ is the whole of $\PP(V_v)$, by Proposition~\ref{prop:rho-irred}.\eqref{item:inv-subspace}.

We now assume that $W_S = W_{S_1} \times \dots \times W_{S_r}$ is a product of $r$ irreducible factors.

\eqref{item:prox-lim-set-nonempty} The proximal limit set $\Lambda_{\rho(W_S)}$ of $\rho(W_S)$ is contained in $\PP(V_v)\subset\PP(V)$ by the same argument as in the irreducible case.
It is nonempty because it contains the proximal limit set $\Lambda_{\rho(W_{S_{\ell}})}$ of $\rho(W_{S_{\ell}})$ for each $1\leq\ell\leq r$, and the latter is nonempty by the irreducible case treated above.

\eqref{item:C-meets-P(Vv)} For any $1\leq\ell\leq r$, the restriction of $\rho$ to $W_{S_{\ell}}$ is a representation of $W_{S_{\ell}}$ as a reflection group.
Let $\Omega_{\scriptscriptstyle\mathrm{TV},{\ell}} \subset \PP(V)$ be the corresponding Tits--Vinberg domain, and $\Delta_{\ell}^\flat \subset \Omega_{\scriptscriptstyle\mathrm{TV},\ell}$ the corresponding fundamental polytope as in Section~\ref{subsec:Vinberg-constr}.
Let $\C$ be a nonempty $\rho(W_S)$-invariant closed properly convex subset of $\OhmVin$ and let $\overline{\C}$ be its closure in $\PP(V)$.
The irreducible case proved above implies that $\overline{\C} \cap \PP((V_v)_{S_{\ell}})$ has nonempty interior in $\PP((V_v)_{S_{\ell}})$ for all $1\leq\ell\leq r$.
In particular, the interior $U_{\ell}$ of $\overline{\C} \cap \PP((V_v)_{S_\ell}) \cap \Delta_\ell^\flat$ in $\PP((V_v)_{S_\ell})$ is nonempty.
Lift $U_1, \ldots, U_r$ to convex cones $\widetilde U_1, \ldots, \widetilde U_r$ of~$V$ contained in~$\widetilde{\Omega}_{\scriptscriptstyle\mathrm{TV}}$.
The sum $\widetilde U_1 + \cdots + \widetilde U_r$ is open in $V_v$.
It projects down to a convex open subset of $\PP(V_v)$ which is contained in $\Delta^\flat \cap \overline{\C}$, and in fact contained in~$\C$ since $\C$ is closed in $\OhmVin$. 
\end{proof}

\subsection{Block triangular decomposition}

We now assume that the Coxeter group $W_S$ is irreducible. Let $\rho : W_S \to \GL(V)$ be a representation of $W_S$ generated by weakly compatible reflections (Definition~\ref{def:Cartan-matrix}), associated to some $\alpha \in {V^*}^N$ and $v \in V^N$, with Cartan matrix~$\Cart$.
Choose a complementary subspace $U$ of $V_{\alpha} \cap V_v$ in $V_\alpha$, a complementary subspace $U'$ of $V_{\alpha} \cap V_v$ in $V_v$, and a complementary subspace $U''$ of $V_{\alpha}+V_v$ in~$V$:
$$
V= \lefteqn{\underbrace{\phantom{U \oplus (V_{\alpha} \cap V_v)}}_{V_\alpha}} U \oplus \overbrace{(V_{\alpha} \cap V_v) \oplus U'}^{V_v} \oplus U''.
$$
By the definition \eqref{eqn:rho-s-i} of the $\rho(s_i)$, in a basis adapted to this decomposition of $V$,
the elements of $\rho(W_S)$ are matrices of the form
\begin{equation} \label{eqn:ninezeros}
\rho(\gamma) =
\begin{tikzpicture}[baseline={([yshift=-.5ex]current bounding box.center)}]
\matrix [matrix of math nodes,left delimiter=(,right delimiter=)] (m)
        {
 \mathrm{Id} & 0 & 0 & 0 \\
 0 & \mathrm{Id} & * & * \\
 0 & 0 & \rho_v^\alpha(\gamma)  & *\\
  0 & 0 & 0 & \mathrm{Id} \\
};  
    \draw[color=blue] (m-3-2.south west) rectangle (m-2-4.north west);
    \draw[color=blue,implies-](m-2-4.north west) -- +(1.2,0.3)  node [pos=1.4] {\hspace{3cm}$\rho_v(\gamma)\:\:$ (dual-reduced)};
    \draw[color=red] (m-4-2.south east) rectangle (m-3-4.north east);
   \draw[color=red,implies-](m-3-4.east) -- +(1.0,0.2)  node [pos=1.5] {\hspace{2cm}$\rho^\alpha(\gamma)\:\:$ (reduced)};
    \end{tikzpicture}
\end{equation}
where $\rho_v,\rho^\alpha,\rho_v^\alpha$ are the induced representations from Remark~\ref{rem:notation_rhos}, and we implicitly identify $V^{\alpha}=V/V_\alpha$ with $U' \oplus U''$ and $V_v^{\alpha}=V_v/(V_\alpha\cap V_v)$ with~$U'$.
(The zeros in~\eqref{eqn:ninezeros} come from the definition of $V_\alpha$ in columns 1 and~2, and of $V_v$ in rows 1 and~4.)
Further, $\dim (U') = \mathrm{rank}(\Cart$) by \cite[Prop.\,15]{vin71} and $\rho_v^\alpha$ is irreducible by Proposition~\ref{prop:rho-irred}.\eqref{item:irred-reduced}.

\begin{lemma}\label{lemma:semisimple}
The representation $\rho$ is semisimple if and only if $V=V_{\alpha}\oplus V_v$, \ie\linebreak $(U,V_{\alpha}\cap V_v,U',U'') = (V_{\alpha},\{0\},V_v,\{0\})$, in which case $\rho$ has the form $\begin{pmatrix} \mathrm{Id} & 0\\ 0 & \rho_v^{\alpha}\end{pmatrix}$.
\end{lemma}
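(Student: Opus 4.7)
The plan is to handle the two implications separately, using the block triangular decomposition just displayed together with Proposition~\ref{prop:rho-irred}.

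For the ``if'' direction, I would simply specialize the displayed $4 \times 4$ block form. The assumption $V = V_\alpha \oplus V_v$ forces $V_\alpha \cap V_v = \{0\}$ and $V_\alpha + V_v = V$, so both $U''' = \{0\}$ and the middle block $V_\alpha \cap V_v$ vanishes. All the off-diagonal $*$ entries then disappear, leaving $\rho = \bigl(\begin{smallmatrix} \mathrm{Id}_{V_\alpha} & 0 \\ 0 & \rho_v^\alpha \end{smallmatrix}\bigr)$ on $V_\alpha \oplus V_v^\alpha$. Since $\rho_v^\alpha$ is irreducible (by the parenthetical remark after~\eqref{eqn:ninezeros}, citing Proposition~\ref{prop:rho-irred}.\eqref{item:irred-reduced}), $\rho$ decomposes as a direct sum of irreducibles and is therefore semisimple.

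For the ``only if'' direction, suppose $\rho$ is semisimple and write $V = \bigoplus_{k} W_k$ with each $W_k$ irreducible. By Proposition~\ref{prop:rho-irred}.\eqref{item:inv-subspace}, each $W_k$ either contains $V_v$ or is contained in~$V_\alpha$. Since $\alpha_i(v_i) = 2 \neq 0$, we have $v_i \neq 0$ and $v_i \notin V_\alpha$, so $V_v \neq \{0\}$ and not all $W_k$ lie in $V_\alpha$. For any $W_k$ containing~$V_v$, irreducibility of $W_k$ and invariance of $V_v$ force $V_v = W_k$; this happens for exactly one index $k_0$, so $V = V_v \oplus W$ with $W := \bigoplus_{k \neq k_0} W_k \subset V_\alpha$.

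The key step, and the place where I expect the only real subtlety, is ruling out $V_v \cap V_\alpha \neq \{0\}$. Any vector $x \in V_v \cap V_\alpha$ satisfies $\alpha_i(x) = 0$ for every~$i$, so $\rho(s_i) x = x$; thus $V_v \cap V_\alpha$ is a subspace of $V_v$ pointwise fixed by all of $\rho(W_S)$. But $V_v = W_{k_0}$ is irreducible, and the relation $\rho(s_i) v_i = v_i - \alpha_i(v_i) v_i = -v_i \neq v_i$ shows that $\rho(W_S)$ does not act trivially on $V_v$; hence its fixed subspace in $V_v$ is~$\{0\}$, giving $V_v \cap V_\alpha = \{0\}$. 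Given this, for any $x \in V_\alpha$ the decomposition $x = v + w$ with $v \in V_v$ and $w \in W \subset V_\alpha$ yields $v = x - w \in V_v \cap V_\alpha = \{0\}$, so $V_\alpha \subset W$ and hence $V_\alpha = W$. Therefore $V = V_v \oplus V_\alpha$, and the matrix form from the first paragraph applies.
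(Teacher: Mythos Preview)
Your proof is correct and follows essentially the same approach as the paper, with Proposition~\ref{prop:rho-irred}.\eqref{item:inv-subspace} as the key tool. The paper's version is a bit more direct: rather than decomposing $V$ fully into irreducibles, it just uses that semisimplicity gives $\rho(W_S)$-invariant complements to $V_\alpha$ and to $V_v$, and applies Proposition~\ref{prop:rho-irred}.\eqref{item:inv-subspace} to those two complements to get $V = V_\alpha \oplus V_v$ immediately, without the separate check that $V_v \cap V_\alpha = \{0\}$.
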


\begin{proof}
If $\rho$ is semisimple, then $V_{\alpha}$ (\resp $V_v$) admits a $\rho(W_S)$-invariant complementary subspace in~$V$, which must contain~$V_v$ (\resp be contained in $V_{\alpha}$) by Proposition~\ref{prop:rho-irred}.\eqref{item:inv-subspace}; we deduce $V=V_{\alpha}\oplus V_v$.
Conversely, if $V=V_{\alpha}\oplus V_v$, then $\rho$ has the given matrix form, hence $\rho$ is semisimple since $\rho_v^{\alpha}$ is irreducible.
\end{proof}

\begin{remark} \label{rem:JWST}
If $W_S$ is an affine irreducible Coxeter group in $N\geq 2$ generators as in \eqref{eqn:WS}, then the analogue of Remark~\ref{rem:open-closed-in-Hom-df}.\eqref{item:open-closed-in-Hom-df} fails: the set $\Hom^{\mathrm{ref}}(W_S, \GL(V))$ is not closed in $\Hom^{\mathrm{fd}}(W_S, \GL(V))$ when $\dim(V)>N$.
Indeed, consider the Tits geometric representation $\rho : W_S\to\GL(\RR^N)$ of Remark~\ref{rem:Tits-geom}, with Cartan matrix $\Cart = \mathrm{Cos}(W_S) = (-2\cos(\pi/m_{i,j}))_{1\leq i,j\leq N}$ (with the convention $\pi/\infty=0$), and associated $(\alpha_1,\dots,\alpha_N)$ (which is the canonical basis of $(\RR^N)^*$) and $(v_1,\dots,v_N)$ (such that $v_i \in \mathbb R^N$ is the $i$-th column of $\Cart$ in the canonical basis of~$\RR^N$).
By construction, $\rho$ is reduced (Definition~\ref{def:V-v-alpha}) and symmetrizable, and if $N=2$ then one readily sees that $\Cart$ is of zero type.
By Fact~\ref{fact:types}.\eqref{item:affine_case}, the $\rho(W_S)$-invariant subspace $V' := \mathrm{span}(v_1,\dots,v_N)$ of~$\RR^N$ is a hyperplane, and $\rho(W_S)$ acts properly discontinuously and cocompactly on the affine chart $\PP(\RR^N) \smallsetminus \PP(V')$, preserving some Euclidean metric (hence $\rho(W_S)$ is a crystallographic group).
Note that the restriction $\rho'$ of $\rho$ to $V'$ is not injective.
In fact, it has finite image since $\rho'$ is the linear part of the action of $\rho(W_S)$ on the affine chart $\PP(\RR^N) \smallsetminus \PP(V')$.
For $1\leq i\leq N$, let $\alpha'_i \in {V'}^*$ be the restriction of $\alpha_i$ to~$V'$.
For any $c,d\in\RR$, we define a representation $\rho^{c,d} : W_S \rightarrow \mathrm{GL}\underbrace{(\RR\oplus V' \oplus \RR)}_{=:V}$ by the data: 
\vspace{-3mm}
$$ \alpha_i^{c,d}:=(0,\alpha'_i,-d)\text{ and }v_i^{c,d}:=\begin{pmatrix}-c \\ v_i \\ 0 \end{pmatrix}, 
	~\text{ so that }
	\rho^{c,d}(s_i)= 
	\begin{tikzpicture}[baseline={([yshift=-0.5ex]current bounding box.center)}]
		\matrix [matrix of math nodes,left delimiter=(,right delimiter=)] (m)
		{1 & c \alpha'_i & -cd  \\  0 & \mathrm{Id}-v_i \alpha'_i & d v_i  \\ 0 & 0 & 1 \\ };  
		\draw[color=red] (m-3-1.south east) rectangle (m-2-3.north east);
		\draw[color=blue] (m-1-1.north west) rectangle (m-2-2.south east);
	\end{tikzpicture}$$
for all $1\leq i\leq N$.
For $d\neq 0$, the representation in the bottom (red) box identifies with $(\rho^{c,d})^\alpha$, and is conjugate to~$\rho$.
For $c \neq 0$, the representation in the upper (blue) box identifies with $(\rho^{c,d})_v$ and is conjugate to the dual representation $\rho^*$. 
Define $V_\alpha^{c,d} := \bigcap_{1\leq i\leq N} \mathrm{Ker}(\alpha_i^{c,d})$ and $V_v^{c,d} := \mathrm{span}_{1\leq i\leq N}(v_i^{c,d})$. Then by Fact~\ref{fact:types}.\eqref{item:affine_case_zero_type}, 
$$\begin{array}{lcccl} 
		\rho^{c,d} \in \Hom^{\mathrm{ref}}(W_S, \GL(V)) & \Longleftrightarrow & V_\alpha^{c,d}+V_v^{c,d} \neq V & \Longleftrightarrow & d\neq 0.
		\end{array} $$
Applying Fact~\ref{fact:types}.\eqref{item:affine_case_zero_type} to the dual representation $\left(\rho^{c,d}\right)^*$, we have that
$$\begin{array}{lcccl} 
		\left(\rho^{c,d}\right)^* \in \Hom^{\mathrm{ref}}(W_S, \GL(V)) &  \Longleftrightarrow & 
		V_\alpha^{c,d}\cap V_v^{c,d} \neq \{0\} & \Longleftrightarrow & c \neq 0.
\end{array} $$
Observing that $\rho^{0,0}$ is not injective, we have that 
$$\begin{array}{lcc} 
		\rho^{c,d} \in \Hom^{\mathrm{fd}}(W_S, \GL(V)) & \Longleftrightarrow & (c,d)\neq (0,0).
\end{array} $$
Thus, for $d\neq 0$ we have $\rho^{1,d} \in \Hom^{\mathrm{ref}}(W_S, \GL(V))$, and as $d\to 0$ the limit lies in $\Hom^{\mathrm{fd}}(W_S, \GL(V)) \smallsetminus \Hom^{\mathrm{ref}}(W_S, \GL(V))$, which proves the result.
	\\ \indent
When $W_S$ is of type~$\widetilde{A}_1$, the representation $\rho^{1,1}$ corresponds to Example~\ref{ex:N=2}.(iii); in the affine chart of Figure~\ref{fig:Z2xZ2}.(iii), the affine action of $\rho^{1,d}(W_S)$ is a scaling, by a factor~$d$, of the affine action of $\rho^{1,1}(W_S)$.
	\\ \indent
In general, $\rho^{1,1}(W_S)$ can also be realized by letting $W_S$ act properly discontinuously and cocompactly by reflections on a horosphere of the hyperbolic space~$\mathbb{H}^N$, and lifting to reflections of the Minkowski space~$\RR^{N,1}$.
\end{remark}

\subsection{Parametrizing characters by equivalence classes of Cartan matrices} \label{sec:decomp_rep}

Suppose our Coxeter group $W_S$ in $N$ generators is infinite and irreducible.
Let $\Cart$ be an $N \times N$ real matrix which is weakly compatible with~$W_S$ (Definition~\ref{def:weak-compat}).

If $\mathrm{rank}(\Cart) = \dim(V)$, then we may directly construct $\alpha = (\alpha_1,\dots,\alpha_N) \in {V^*}^N$ and $v = (v_1,\dots,v_N) \in V^N$ with $ (\alpha_i(v_j))_{i,j=1}^N = \Cart$ as follows.
Choose $\mathrm{rank}(\Cart)$ linearly independent rows of $\Cart$ giving a basis of the row space of $\Cart$; the subset of $\alpha$ corresponding to those rows must form a basis $\mathcal B^*$ of~$V^*$.
Each remaining row is a linear combination of the chosen row basis; the corresponding element of $\alpha$ must be the corresponding linear combination of $\mathcal B^*$.
This uniquely determines $\alpha$, up to the choice of the basis $\mathcal B^*$.
The vectors of~$v$ are then determined by their coordinates with respect to $\mathcal B^*$ which are the entries of $\Cart$.
Hence, up to the action of $\GL(V)$, the Cartan matrix $\Cart$ uniquely determines $\alpha$ and~$v$.
The corresponding representation $\rho: W_S \to \mathrm{GL}(V)$ generated by weakly compatible reflections is uniquely determined, up to conjugacy, by the equivalence class of~$\Cart$; it is irreducible by Proposition~\ref{prop:rho-irred}.\eqref{item:irred-reduced}.

If $\mathrm{rank}(\Cart) < \dim(V)$, then splitting $V$ into a trivial summand plus a subspace of dimension $\mathrm{rank}(\Cart)$ and applying the same process yields a unique conjugacy class of semisimple representations. 
Hence we get the following fact.

\begin{fact} \label{fact:semisimplification}
For any $N \times N$ real matrix $\Cart$ of rank $\leq\dim(V)$ which is weakly compatible with~$W_S$ (Definition~\ref{def:weak-compat}), there is a unique conjugacy class of semisimple representations $\rho : W_S\to\GL(V)$ with Cartan matrix~$\Cart$.
\end{fact}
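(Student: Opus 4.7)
The plan is to prove existence and uniqueness separately, in each case reducing to the irreducible (\ie reduced and dual-reduced) case and invoking the unnamed semisimplicity lemma stated just above the statement.

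For existence, I would first pick a direct sum decomposition $V = V_1 \oplus V_0$ with $\dim V_1 = \mathrm{rank}(\Cart)$. On $V_1$, I construct a reduced, dual-reduced representation with Cartan matrix $\Cart$ following the recipe sketched in the paragraph preceding the statement: choose a subset $I \subset \{1,\dots,N\}$ of size $\mathrm{rank}(\Cart)$ such that the rows $(\Cart_{i,k})_{k}$ with $i\in I$ form a basis of the row space; declare $\{\alpha_i\}_{i\in I}$ a basis of $V_1^*$; extend to $\alpha_j$ for $j\notin I$ as the unique linear combination of $\{\alpha_i\}_{i\in I}$ dictated by the corresponding row relation; and then let $v_j$ be the vector with coordinates $(\Cart_{i,j})_{i\in I}$ in the basis of $V_1$ dual to $\{\alpha_i\}_{i\in I}$. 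A direct check gives $\alpha_i(v_j) = \Cart_{i,j}$. Extending trivially to $V_0$ and applying~\eqref{eqn:rho-s-i} produces a representation $\rho$ generated by weakly compatible reflections; by Proposition~\ref{prop:rho-irred}.\eqref{item:irred-reduced} the piece on $V_1$ is irreducible, so $\rho$ is semisimple.

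For uniqueness, suppose $\rho : W_S\to\GL(V)$ is a semisimple representation associated to some $(\alpha,v)$ with Cartan matrix $\Cart$. By the semisimplicity lemma just above the statement, $V=V_\alpha\oplus V_v$, and $\rho$ has block form $\begin{pmatrix}\mathrm{Id} & 0\\ 0 & \rho_v^\alpha\end{pmatrix}$ with $\rho_v^\alpha$ irreducible on $V_v^\alpha\cong V_v$. Since $\dim V_v^\alpha=\mathrm{rank}(\Cart)$ by \cite[Prop.\,15]{vin71}, the splitting has the same dimension profile as in the existence step, so after a $\GL(V)$-conjugation I may assume $V_v=V_1$ and $V_\alpha=V_0$. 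It then remains to show that a reduced, dual-reduced representation on $V_1$ with Cartan matrix $\Cart$ is unique up to $\GL(V_1)$-conjugation. Here dual-reducedness gives that $\{v_k\}_{k=1}^N$ spans $V_1$, so the row rank of $\Cart$ equals the rank of the family $\{\alpha_i\}_{i=1}^N$ in $V_1^*$; hence $\{\alpha_i\}_{i\in I}$ is a basis of $V_1^*$ for any row basis $I$. The $\alpha_j$ with $j\notin I$ are then forced by the row relations of $\Cart$ (the relation holds on the spanning set $\{v_k\}$ and hence on all of $V_1$), and each $v_j$ is in turn forced by its coordinates $(\Cart_{i,j})_{i\in I}$ in the basis of $V_1$ dual to $\{\alpha_i\}_{i\in I}$.

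The proof is essentially bookkeeping rather than a substantive argument, so there is no real obstacle; the only subtle point is to check that the choices of the subset $I$ and of an identification of $\{\alpha_i\}_{i\in I}$ with a fixed basis of $V_1^*$ all differ by elements of $\GL(V_1)$, so that the resulting conjugacy class of $\rho$ is intrinsic to the matrix $\Cart$ and not to the auxiliary choices.
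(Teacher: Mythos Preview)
Your proposal is correct and follows essentially the same approach as the paper: the paper's justification of this fact is precisely the constructive paragraph preceding it (choose a row basis $I$, force the $\alpha_i$ and then the $v_j$, and split off a trivial summand when $\mathrm{rank}(\Cart)<\dim V$), together with the unnamed semisimplicity lemma $V=V_\alpha\oplus V_v$ and Proposition~\ref{prop:rho-irred}.\eqref{item:irred-reduced}. Your write-up simply makes the uniqueness half more explicit than the paper does, but there is no substantive difference in method.
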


Further, the constructive correspondence described above implies the following para\-metrization statement, which will be used in Section~\ref{sec:deformation}.

\begin{fact} \label{fact:parameterization}
The map assigning to a conjugacy class of semisimple representations the equivalence class of its Cartan matrices is a homeomorphism from the open and closed subset of $\chi(W_S, \GL(V))$ consisting of representations generated by weakly compatible reflections to the space of equivalence classes of $N \times N$ matrices of rank $\leq\dim(V)$ that are weakly compatible with~$W_S$. 
\end{fact}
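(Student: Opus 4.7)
The plan is to check that the assignment $[\rho] \mapsto [\Cart(\rho)]$ is a well-defined bijection and then verify continuity in both directions. Well-definedness is immediate from the discussion in Section~\ref{subsec:refl-basics}: conjugation by $g \in \GL(V)$ sends $(\alpha,v)$ to $(\alpha \circ g^{-1},\, g\cdot v)$ which produces the same numerical Cartan matrix, while the only remaining freedom in representing each reflection as in \eqref{eqn:rho-s-i}---namely $(\alpha_i,v_i) \mapsto (\lambda_i\alpha_i,\lambda_i^{-1}v_i)$ with $\lambda_i>0$---acts on $\Cart$ precisely by positive-diagonal conjugation. Bijectivity of the induced map on equivalence classes is then the content of Fact~\ref{fact:semisimplification}.

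For continuity of the forward map, I use that semisimple representations have closed $\GL(V)$-orbits, so a convergent sequence of characters lifts, after conjugation, to a convergent sequence of representations $\rho_n \to \rho$ in $\Hom(W_S,\GL(V))$. Each reflection $\rho_n(s_i)$ then converges to $\rho(s_i)$; choosing a local continuous normalization (for instance fixing $v_i$ to be a specified unit vector on the $(-1)$-eigenline of the reflection, which is a continuous assignment on the space of linear reflections) makes the pairs $(\alpha_i,v_i)$ continuous in $\rho$, hence so are the entries $\alpha_i(v_j)$. Passing to the quotient by positive-diagonal conjugation yields continuity of $[\Cart_n] \to [\Cart]$.

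For continuity of the inverse, fix a class $[\Cart_0]$ with $r_0 := \mathrm{rank}(\Cart_0) \leq \dim V$. Near $\Cart_0$ the positive-diagonal action on the semialgebraic set of weakly compatible matrices is free and proper (the off-diagonal signs are pinned down by \eqref{eqn:conn-comp-ref}), so a continuous local section exists and I may work with genuine matrix representatives $\Cart$ close to $\Cart_0$. Running the explicit construction recalled at the start of Section~\ref{sec:decomp_rep} on a fixed set of $r_0$ linearly independent rows of $\Cart_0$ (still independent for nearby $\Cart$) produces continuously varying $(\alpha,v)$, and hence a continuously varying semisimple $\rho$ obtained by taking the resulting nontrivial $r_0$-dimensional piece together with a trivial summand of complementary dimension. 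Its conjugacy class thus depends continuously on $[\Cart]$.

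The main obstacle is the behavior at matrices where $\mathrm{rank}(\Cart)$ is not locally constant: for $[\Cart_0]$ on a lower-rank stratum, nearby classes can have strictly larger rank and the preceding local model is tailored to $r_0$. To close this gap I stratify the target by rank, verify continuity separately on each stratum as above, and then check continuity across strata by appealing to the cycle invariants $\Cart_{i_1 i_2}\Cart_{i_2 i_3}\cdots\Cart_{i_k i_1}$, which are continuous functions on both sides and separate positive-diagonal equivalence classes; alternatively, since the forward map is a continuous bijection between Hausdorff quotients of semialgebraic sets by proper group actions, standard invariance-of-domain arguments for semialgebraic maps yield continuity of the inverse once bijectivity and continuity of the forward map are known.
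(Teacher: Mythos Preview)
The paper does not give a detailed proof of this fact: it simply states that ``the constructive correspondence described above implies'' the parametrization, treating both directions of the homeomorphism as immediate from the explicit construction in Section~\ref{sec:decomp_rep}. Your write-up is therefore considerably more detailed than the paper's own treatment, and your arguments for well-definedness, bijectivity (via Fact~\ref{fact:semisimplification}), forward continuity, and inverse continuity on constant-rank strata are all fine and in the spirit of what the paper has in mind.

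The genuine weakness is your handling of inverse continuity across rank strata. Neither of the two arguments you offer is a proof. The observation that cycle products $\Cart_{i_1 i_2}\cdots\Cart_{i_k i_1}$ are continuous invariants separating diagonal-conjugacy classes is correct, but it does not by itself say anything about continuity of the inverse map into $\chi(W_S,\GL(V))$. And there is no ``standard invariance-of-domain argument for semialgebraic maps'' of the kind you invoke: a continuous semialgebraic bijection between Hausdorff semialgebraic sets need not be a homeomorphism, and the spaces here are stratified with varying local dimension, so classical invariance of domain does not apply.

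A clean way to close the gap is to bypass the local row-selection entirely. For any word $\gamma=s_{i_1}\cdots s_{i_k}$, write $\rho(\gamma)=\prod_\ell(\mathrm{Id}-v_{i_\ell}\alpha_{i_\ell})$ and expand the trace: one obtains $\dim V$ plus a universal polynomial in the entries $\Cart_{i,j}=\alpha_i(v_j)$ (a signed sum of cycle products), independent of the particular choice of $(\alpha,v)$. Thus all trace functions $\mathrm{tr}\,\rho(\gamma)$ are continuous (indeed polynomial) functions of~$\Cart$, and since these trace functions determine the point of $\chi(W_S,\GL(V))$ and give it its topology, the assignment $[\Cart]\mapsto[\rho]$ is continuous everywhere, with no case distinction on rank. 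Alternatively, you can run your row-selection construction for a convergent sequence $\Cart_n\to\Cart_0$ on a fixed subspace of dimension equal to the generic rank of the $\Cart_n$; the limit representation may fail to be semisimple, but its image in $\chi(W_S,\GL(V))$ (i.e.\ its semisimplification) has Cartan matrix $\Cart_0$ by the block-triangular description~\eqref{eqn:ninezeros}, which again gives the desired convergence of characters.
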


Note that if $\mathrm{rank}(\Cart) < \dim(V)$, then there are also non-semisimple conjugacy classes associated to $\Cart$.
Invariants for these are more subtle to describe, see \cite[Prop.\,15]{vin71}.

\subsection{Proof of Fact~\ref{fact:types}} \label{subsec:proof-fact-types}

We first recall a useful fact.

\begin{fact}[{\cite[Prop.\,21--22--23]{vin71}}] \label{fact:Cosine-matrix}
Let $W_S$ be an irreducible Coxeter group with presentation \eqref{eqn:WS}.
Then the $N\times N$ real matrix $\mathrm{Cos}(W_S) := (-2\cos(\pi/m_{i,j}))_{1\leq i,j\leq N}$ (with the convention $\pi/\infty=0$) is a Cartan matrix compatible with~$W_S$ (Definition~\ref{def:compatible}), and $W_S$ is spherical (\resp affine, \resp large) if and only if $\mathrm{Cos}(W_S)$ is of positive (\resp zero, \resp negative) type.
\end{fact}

Recall the Tits geometric representation of Remark~\ref{rem:Tits-geom} has Cartan matrix $\mathrm{Cos}(W_S)$.

\begin{proof}[Proof of Fact~\ref{fact:types}]
\eqref{item:spherical_case} By \cite[Prop.\,22]{vin71}, the group $W_S$ is spherical if and only if $\Cart$ is of positive type; in this case $\Cart$ is symmetrizable by \cite[Lem.\,13]{vin71}.

\eqref{item:affine_case_zero_type} Suppose $W_S$ is affine and $\Cart$ is of zero type.
Then $\Cart$ is symmetrizable by \cite[Lem.\,13]{vin71}.
Since condition \eqref{eqn:not_empt_int} holds for $\widetilde{\Delta}$ in~$V$, it also holds for its image in $V^{\alpha} = V/V_{\alpha}$, and so $\rho^{\alpha}$ is still a representation of $W_S$ as a reflection group with Cartan matrix $\Cart$, by Remark~\ref{rem:notation_rhos}.
The representation $\rho^{\alpha}$ is now reduced.
By \cite[Prop.\,23]{vin71}, there is a hyperplane $V'$ of~$V^{\alpha}$ such that the group $\rho^{\alpha}(W_S)$ acts properly discontinuously and cocompactly on the affine chart $\PP(V^{\alpha})\smallsetminus\PP(V')$ of $\PP(V^{\alpha})$, preserving some Euclidean metric.
In particular, $\rho^{\alpha}$ is non-semisimple.
This implies that $\rho$ is non-semisimple (see \eqref{eqn:ninezeros} and Lemma~\ref{lemma:semisimple}).
Note that the hyperplane $V'$ of~$V^{\alpha}$ must be $\rho^{\alpha}(W_S)$-invariant; therefore it contains $V_v^{\alpha}$ by Proposition~\ref{prop:rho-irred}.\eqref{item:inv-subspace}.
We must in fact have $V'=V_v^{\alpha}$ because the dimension of~$V_v^{\alpha}$ is at least the rank of the matrix $\Cart$, which is $(\dim V^{\alpha})-1$ by \cite[Prop.\,23]{vin71}.

\eqref{item:affine_case_negative_type} Suppose $W_S$ is affine and $\Cart$ is of negative type.
The case $N=2$ is treated in Example~\ref{ex:N=2bis}.
We now assume $N \geq 3$.
If $W_S$ were not of type~$\widetilde{A}_{N-1}$, then by \cite[Prop.\,21.(2)]{vin71} the Cartan matrix $\Cart$ would be equivalent to $\mathrm{Cos}(W_S)$, hence would be of zero type by Fact~\ref{fact:Cosine-matrix}: impossible.
Therefore $W_S$ is of type~$\widetilde{A}_{N-1}$, \ie $m_{i,i+1}=3$ for all $1\leq i\leq N-1$ and $m_{1,N}=3$ and $m_{i,j}=0$ for all other pairs $(i,j)$ with $i\neq j$.
Conjugating by appropriate positive diagonal matrices, we see that there exists $a>0$ such that the compatible matrix $\Cart$ belongs to the same equivalence class (Definition~\ref{def:equivalent}) as the matrices  
\begin{equation} \label{eqn:Cart-affine-neg-type}
\Cart' = \begin{pmatrix} 
2     & -1    &       &     & -a    \\
 -1     & 2     &\ddots    &  ~0  &     \\
    &\ddots &\ddots &\ddots &   \\
    & 0~  &\ddots &\ddots &-1   \\
-a^{-1} &   &   & -1  & 2   
 \end{pmatrix}
 \quad \textrm{and} \quad
\Cart'' = \begin{pmatrix} 
2     & -b^{-1}    &       &     & -b    \\
 -b     & 2     &\ddots    &  ~0  &     \\
    &\ddots &\ddots &\ddots &   \\
    & 0~  &\ddots &\ddots &-b^{-1}   \\
-b^{-1} &   &   & -b  & 2   
 \end{pmatrix},
\end{equation}
where $b := \sqrt[N]{a}$ (see also \cite[Prop.\,16]{vin71}).
We have $\det\Cart = \det\Cart' = \det\Cart'' = 2-a-a^{-1}$.
Note that if $\tau \in \RR^N$ has all its entries $1$, then $\Cart'' \tau$ has all its entries $2-b-b^{-1}$.
Since $\Cart''$ is of negative type, we must have $2-b-b^{-1}\neq 0$ by Fact~\ref{fact:type-sign-entries}.\eqref{item:zero-type-conclusion}, hence $b\neq 1$, hence $\det\Cart \neq 0$.

\eqref{item:large_case} We argue by contraposition.
Suppose that $\Cart$ is not of negative type.
By \cite[Prop.\,21.(1)]{vin71}, the Cartan matrix $\Cart$ is equivalent to $\mathrm{Cos}(W_S)$.
Fact~\ref{fact:Cosine-matrix} then implies that $W_S$ is spherical or affine, \ie not large.
\end{proof}

\section{Maximality of the Tits--Vinberg domain} \label{sec:Omega-Vin-max}

Recall the notions of reduced and dual-reduced representations from Definition~\ref{def:V-v-alpha}.
The goal of this section is to establish the following maximality properties of the Tits--Vinberg domain for irreducible Coxeter groups, as well as some analogues for nonirreducible Coxeter groups (Propositions \ref{prop:maximal-non-irred} and~\ref{prop:nonirred_ohmweird}).

\begin{proposition} \label{prop:maximal}
Let $W_S$ be an irreducible Coxeter group in $N\geq 2$ generators and $\rho: W_S \to \GL(V)$ a representation of $W_S$ as a reflection group in~$V$ (Definition~\ref{def:refl-group}) with a Cartan matrix $\Cart=(\alpha_i(v_j))_{1\leq i,j\leq N}$ of negative type (Definition~\ref{def:type}).
Then
\begin{enumerate}
  \item\label{item:max-1} if $\rho(W_S)$ is reduced, then $\OhmVin$ is properly convex and it is maximal with respect to inclusion among all $\rho(W_S)$-invariant convex open subsets of $\PP(V)$;
   \item\label{item:max-2} if $\rho(W_S)$ is reduced and dual-reduced and $N\geq 3$, then $\OhmVin$ is the unique maximal nonempty $\rho(W_S)$-invariant convex open subset of $\PP(V)$;
  \item\label{item:max-3} in general, if $N\geq 3$, then any $\rho(W_S)$-invariant properly convex open subset of $\PP(V)$ is contained in the Tits--Vinberg domain $\OhmVin$.
\end{enumerate}
\end{proposition}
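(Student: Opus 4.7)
The strategy is to prove part (2) first, then bootstrap parts (1) and (3). In part (2), $\rho$ is reduced and dual-reduced, hence irreducible by Proposition~\ref{prop:rho-irred}. Since $\Cart$ is of negative type and $W_S$ is infinite irreducible, Fact~\ref{fact:types} forces $W_S$ to be either affine of type $\widetilde{A}_{N-1}$, in which case Lemma~\ref{lem:out_of_vinberg} directly yields uniqueness of the nonempty invariant convex open subset, or large, in which case Proposition~\ref{prop:rho-irred} upgrades irreducibility to strong irreducibility. In the large case, Proposition~\ref{prop:benoist-irred-Gamma} produces a unique maximal invariant properly convex open set $\Omega_{\max}$, and the task reduces to identifying $\OhmVin=\Omega_{\max}$. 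I would first verify that $\OhmVin$ is itself properly convex: strong irreducibility combined with preservation of the nonempty open convex cone $\widetilde\Omega_{\mathrm{Vin}}$ rules out a projective subspace in $\overline{\OhmVin}$, giving the inclusion $\OhmVin\subseteq\Omega_{\max}$. For the reverse, I would apply Proposition~\ref{prop:max-inv-conv} and argue that $\partial\OhmVin$ is contained in the union $\bigcup_{[\varphi]\in\Lambda^*_{\rho(W_S)}}\PP\mathrm{Ker}(\varphi)$ of dual-proximal hyperplanes, which characterizes $\Omega_{\max}$ as the connected component of the complement containing $\OhmVin$, forcing equality.

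For part (1), assume $\rho$ is reduced. The induced representation $\rho_v\colon W_S\to\GL(V_v)$ from Remark~\ref{rem:notation_rhos} is then reduced and dual-reduced with the same Cartan matrix of negative type, so part (2) (and, when $N=2$, the direct computation in Example~\ref{ex:N=2}) applies to $\rho_v$ and yields that $\OhmVin(\rho_v)\subset\PP(V_v)$ is properly convex and maximal. Proper convexity of $\OhmVin\subset\PP(V)$ itself follows from $V_\alpha=\{0\}$, which guarantees the $\alpha_i$ span $V^*$ and hence that $\widetilde\Delta$ is a properly convex cone. For maximality, I would use the block form~\eqref{eqn:ninezeros} in an adapted basis $V=V_v\oplus U'''$, where $\rho(\gamma)=\bigl(\begin{smallmatrix}\rho_v(\gamma)&\ast\\ 0&\mathrm{Id}\end{smallmatrix}\bigr)$, and argue that any $\rho(W_S)$-invariant convex open subset strictly containing $\OhmVin$ would, upon intersection with $\PP(V_v)$, produce a $\rho_v(W_S)$-invariant convex open strictly containing $\OhmVin(\rho_v)$, contradicting maximality from part (2). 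Corollary~\ref{cor:C-meets-P(Vv)} guarantees that the relevant intersection is nonempty.

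For part (3), given an arbitrary $\rho(W_S)$-invariant properly convex open subset $\Omega\subset\PP(V)$, Corollary~\ref{cor:C-meets-P(Vv)} supplies proximal elements, so Proposition~\ref{prop:max-inv-conv} produces a maximal invariant properly convex open $\Omega_{\max}(\Omega)\supseteq\Omega$. I would combine part~(2) applied to the fully reduced quotient $\rho_v^\alpha$ on $V_v^\alpha$ with the block decomposition~\eqref{eqn:ninezeros} and Corollary~\ref{cor:C-meets-P(Vv)} to transport the maximality statement through the quotient $V\to V_v^\alpha$ and conclude $\Omega_{\max}(\Omega)\subseteq\OhmVin$. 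The principal obstacle throughout is the identification carried out in part~(2), large case, that $\partial\OhmVin$ lies in the union of dual-proximal hyperplanes; this requires careful control over accumulations of the $\rho(W_S)$-orbit of the walls $\PP\mathrm{Ker}(\alpha_i)$, most likely via the Tits cone structure recalled in Fact~\ref{fact:Delta-flat}, together with matching these accumulation hyperplanes with fixed hyperplanes of proximal elements coming from Corollary~\ref{cor:C-meets-P(Vv)}.
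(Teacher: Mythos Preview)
Your overall plan reverses the paper's logical order, and the reversal creates a genuine gap. The paper proves part~(1) first, by a direct Hilbert-metric argument that does not go through~(2): given a point $y\in\Omega'\smallsetminus\OhmVin$ and $z$ in the interior of $\Delta^\flat$, the segment $[z,y']$ (where $y'\in\partial\OhmVin\cap\Omega'$) descends to a billiard trajectory in $\Delta^\flat$ whose accumulation points must lie in $\Delta\smallsetminus\Delta^\flat$, hence in $\partial\Omega'$; this forces $d_{\Omega'}(z,y')=\infty$, a contradiction. Part~(2) then follows from part~(1) combined with Proposition~\ref{prop:benoist-irred-Gamma} in the large case (and Lemma~\ref{lem:out_of_vinberg} in the affine case).

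Your attempt to prove~(2) directly runs into the step you flag as ``the principal obstacle'': showing that $\partial\OhmVin$ lies in $\bigcup_{[\varphi]\in\Lambda^*}\PP\mathrm{Ker}(\varphi)$. But this is tantamount to showing that $\OhmVin$ is already the full connected component $\Omega_{\max}$ of the complement, i.e.\ that $\OhmVin$ is maximal --- precisely the content of~(1). You do not supply an argument here, and the suggestion about accumulations of wall-orbits and matching them with proximal hyperplanes is not a proof. Your bootstrapping of~(1) from~(2) also has a gap: from $\Omega'\supsetneq\OhmVin$ you cannot immediately conclude $\Omega'\cap\PP(V_v)\supsetneq\OhmVin\cap\PP(V_v)$, since $\Omega'$ could extend only in the complementary $U'''$ direction. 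The billiard argument avoids both issues by working directly in~$V$.
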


\begin{remark} \label{rem:2_gen}
In the setting of Proposition~\ref{prop:maximal}, if $N=2$ and $W_S$ is infinite (\ie $W_S$ is of type $\widetilde{A}_1$ as in Example~\ref{ex:N=2}.(i)), and if $\rho(W_S)$ is reduced and dual-reduced, then there are exactly two nonempty $\rho(W_S)$-invariant properly convex open subsets of $\PP(V)$, namely $\OhmVin$ (which contains $\PP(\mathrm{Ker}(\alpha_1))$ and $\PP(\mathrm{Ker}(\alpha_2)$)) and $\PP(V)\smallsetminus\overline{\OhmVin}$ (which contains $[v_1]$ and~$[v_2]$).
\end{remark}

\subsection{The reduced case} \label{subsec:Vinberg-max-reduced}

We first prove Proposition~\ref{prop:maximal}.\eqref{item:max-1}--\eqref{item:max-2}.
If $\rho$ is defined by $\alpha = (\alpha_1,\dots,\alpha_N) \in {V^*}^N$ and $v = (v_1,\dots,v_N) \in V^N$, we denote by $V_v$ the span of $v_1,\dots,v_N$ and by $V_{\alpha}$ the intersection of the kernels of $\alpha_1,\dots,\alpha_N$, as in Definition~\ref{def:V-v-alpha}.

\begin{proof}[Proof of Proposition~\ref{prop:maximal}.\eqref{item:max-1}]
Let $\Omega$ be a $\rho(W_S)$-invariant convex open subset of $\PP(V)$ containing~$\OhmVin$.

We first show that if $\rho(W_S)$ is reduced (\ie $V_\alpha=\{0\}$), then $\Omega$ is properly convex.
We will actually prove the contrapositive.
Assume $\Omega$ is not properly convex. 
The largest linear subspace $V'$ of~$V$ contained in the closure of a convex lift of $\Omega$ to $V$ defines a nontrivial invariant projective subspace $\PP(V')$ which is disjoint from $\Omega$ (and from any affine chart containing~$\Omega$).
Proposition~\ref{prop:rho-irred}.\eqref{item:inv-subspace} implies that either $V_\alpha \supset V'$ or $V_v \subset V'$.
If $V_v \subset V'$, then $\PP(V_v) \subset \PP(V) \smallsetminus \Omega \subset \PP(V) \smallsetminus \OhmVin$, which contradicts the fact $\PP(V_v) \cap \OhmVin \neq \emptyset$ (Remark~\ref{rem:neg-type-interpret}.\eqref{item:neg-type-interpret}).
So $V_\alpha \supset V' \neq \{0\}$, showing that $\rho(W_S)$ is not reduced. 

\begin{figure}
\definecolor{qqwuqq}{rgb}{0,0.39,0}
\definecolor{uququq}{rgb}{0.25,0.25,0.25}
\begin{tikzpicture}[scale=1,line cap=round,line join=round,>=triangle 45,x=1.0cm,y=1.0cm]
\clip(-4.35,-3.97) rectangle (4.62,3.83);
\draw [rotate around={0:(0,0)}] (0,0) ellipse (4cm and 3.46cm);
\draw [shift={(-0.07,-9.6)},dash pattern=on 1pt off 1pt]  plot[domain=1.27:1.86,variable=\t]({1*11.86*cos(\t r)+0*11.86*sin(\t r)},{0*11.86*cos(\t r)+1*11.86*sin(\t r)});
\draw [shift={(-0.01,9.59)},dash pattern=on 1pt off 1pt]  plot[domain=4.42:5.01,variable=\t]({1*11.89*cos(\t r)+0*11.89*sin(\t r)},{0*11.89*cos(\t r)+1*11.89*sin(\t r)});
\draw (4,0)-- (-1.41,1.63);
\draw (-1.41,1.63)-- (-0.54,-0.35);
\draw (-0.54,-0.35)-- (4,0);
\draw (0,0)-- (0,3.25);
\draw [dash pattern=on 1pt off 1pt on 2pt off 4pt] (0,1.21)-- (-1.08,0.88);
\draw [dash pattern=on 1pt off 1pt on 2pt off 4pt] (-1.08,0.88)-- (0.95,0.92);
\draw [dash pattern=on 1pt off 1pt on 2pt off 4pt] (0.95,0.92)-- (1.62,-0.19);
\draw [dash pattern=on 1pt off 1pt on 2pt off 4pt] (1.62,-0.19)-- (2.19,0.55);
\draw [dash pattern=on 1pt off 1pt on 2pt off 4pt] (2.19,0.55)-- (2.86,-0.09);
\draw [dash pattern=on 1pt off 1pt on 2pt off 4pt] (2.86,-0.09)-- (2.99,0.3);
\draw [dash pattern=on 1pt off 1pt on 2pt off 4pt] (3.37,-0.05)-- (2.99,0.3);
\draw [dash pattern=on 1pt off 1pt on 2pt off 4pt] (3.37,-0.05)-- (3.56,0.13);
\draw [dash pattern=on 1pt off 1pt on 2pt off 4pt] (3.56,0.13)-- (3.72,-0.02);
\draw (1.7,2.77) node[anchor=north west] {$\Omega$};
\draw (1.7,1.94) node[anchor=north west] {$\OhmVin$};
\draw (0.2,0.85) node[anchor=north west] {$\Delta^{\flat}$};
\draw [dash pattern=on 1pt off 1pt on 2pt off 4pt] (3.72,-0.02)-- (3.8,0.06);
\draw [dash pattern=on 1pt off 1pt on 2pt off 4pt] (3.8,0.06)-- (3.86,-0.01);
\draw [dash pattern=on 1pt off 1pt on 2pt off 4pt] (3.86,-0.01)-- (3.91,0.03);
\draw [dash pattern=on 1pt off 1pt on 2pt off 4pt] (3.91,0.03)-- (3.93,-0.01);
\draw [dash pattern=on 1pt off 1pt on 2pt off 4pt] (3.93,-0.01)-- (3.97,0.01);
\draw [dash pattern=on 1pt off 1pt on 2pt off 4pt] (3.97,0.01)-- (3.98,0);
\draw (0.02,0.2) node[anchor=north west] {$z$};
\draw (4.09,0.3) node[anchor=north west] {$y''$};
\draw (0.1,2.7) node[anchor=north west] {$y'$};
\draw (0.07,3.4) node[anchor=north west] {$y$};
\begin{scriptsize}
\fill [color=black] (0,3.25) circle (1.5pt);
\draw [color=uququq] (0,2.27)-- ++(-2.0pt,-2.0pt) -- ++(4.0pt,4.0pt) ++(-4.0pt,0) -- ++(4.0pt,-4.0pt);
\fill [color=uququq] (0,0) circle (1.5pt);
\draw [color=uququq] (4,0)-- ++(-2.0pt,-2.0pt) -- ++(4.0pt,4.0pt) ++(-4.0pt,0) -- ++(4.0pt,-4.0pt);
\end{scriptsize}
\end{tikzpicture}
\caption{Illustration for the proof of Proposition~\ref{prop:maximal}.\eqref{item:max-1}}
\label{fig:omega_max_is_omega_vin}
\end{figure}

Assume then that $\rho(W_S)$ is reduced, so that $\Omega$ is properly convex. We now check that $\Omega = \OhmVin$.
Suppose by contradiction that there is a point $y \in \Omega \smallsetminus \OhmVin$; Figure~\ref{fig:omega_max_is_omega_vin} illustrates the proof.
Let $z$ lie in the interior of the fundamental domain $\Delta^{\flat} = \Delta \cap \OhmVin$ for the action of $\rho(W_S)$ on $\OhmVin$.
Recall (Fact~\ref{fact:Delta-flat}) that $\Delta^{\flat}$ is equal to $\Delta$ minus the union of all faces with infinite stabilizer and the quotient $\rho(W_S) \backslash \OhmVin$ is an orbifold homeomorphic to $\Delta^{\flat}$, with mirrors on the reflection walls.
The intersection of the segment $[z,y]$ with $\OhmVin$ is a half-open segment $[z,y')$ where $y' \in \partial \OhmVin \cap \Omega$.
It has finite length in the Hilbert metric~$d_{\Omega}$ (see Section~\ref{subsec:prop-conv-proj}).
The image of $[z,y')$ in the quotient may be viewed as a billiard path $q: [z,y') \to \Delta^{\flat}$ (where the reflection associated to a mirror wall is used to determine how the trajectory reflects off that wall). 
Any point of $\Delta^{\flat}$ is the center of a $d_{\Omega}$-ball contained in $\OhmVin$, hence disjoint from $\rho(W_S)\cdot y'$; it follows that $q(t)$ can only accumulate on points of $\Delta\smallsetminus \Delta^{\flat}$ as $t\rightarrow y'$.
Any such accumulation point $y''$ belongs to a face of $\Delta$ with infinite stabilizer, hence $y''\in \partial \Omega$ since $\rho(W_S)$ acts properly discontinuously on $\Omega$.
It follows that $d_{\Omega}(z,q(t))$ goes to infinity as $t \to y'$.
By the triangle inequality, we have $d_{\Omega}(z,y') > d_{\Omega}(z,q(t))$, contradicting the fact that $d_{\Omega}(z,y')$ is finite.
\end{proof}

\begin{proof}[Proof of Proposition~\ref{prop:maximal}.\eqref{item:max-2}]
Suppose $\rho(W_S)$ is reduced and dual-reduced and $N\geq 3$.
By Proposition~\ref{prop:rho-irred}.\eqref{item:irred-reduced}, the representation $\rho$ is irreducible, hence any $\rho(W_S)$-invariant convex open subset $\Omega$ of $\PP(V)$ has to be properly convex.
Indeed, the largest linear subspace of~$V$ contained in the closure of a convex lift of $\Omega$ to $V$ is $\rho(W_S)$-invariant.

Suppose $W_S$ is affine.
Since $\rho$ is irreducible (hence semisimple), Fact~\ref{fact:types} implies that $W_S$ is of type $\widetilde{A}_{N-1}$ with $N=\dim(V)$.
Since $N\geq 3$, Lemma~\ref{lem:out_of_vinberg} applies: the set $\OhmVin$ is the unique $\rho(W_S)$-invariant convex open subset of $\PP(V)$.

Suppose $W_S$ is large.
By Proposition~\ref{prop:rho-irred}.\eqref{item:irred-large}, the representation $\rho$ is strongly irreducible.
By Proposition~\ref{prop:benoist-irred-Gamma}, there is a unique maximal $\rho(W_S)$-invariant properly convex open set $\Omega_{\max}$ containing all other invariant properly convex open sets; this must be the Tits--Vinberg domain $\Omega_{\max} = \OhmVin$ by Proposition~\ref{prop:maximal}.\eqref{item:max-1}.
\end{proof}

\begin{remark} \label{rem:Vinberg-not-reduced}
Suppose we are in the setting of Proposition~\ref{prop:maximal}, namely $W_S$ is irreducible and the Cartan matrix $\Cart$ is of negative type.
\begin{enumerate}
  \item\label{item:OhmVin-prop-conv-or-not} If $\rho(W_S)$ is reduced, then $\OhmVin$ is properly convex by Proposition~\ref{prop:maximal}.\eqref{item:max-1}.
  On the other hand, if $\rho(W_S)$ is not reduced, then $\OhmVin$ is not properly convex, for it contains the projective subspace $\PP(V_{\alpha})$ in its boundary.
  \item\label{item:inv-conv-not-in-OhmVin} If $\rho(W_S)$ is not dual-reduced, then there is a $\rho(W_S)$-invariant convex open subset of $\PP(V)$ which is not contained in $\OhmVin$, namely the complement $\mathcal{U}$ in $\PP(V)$ of any projective hyperplane containing $\PP(V_v)$.
  Indeed, $\mathcal{U}$ is $\rho(W_S)$-invariant by Proposition~\ref{prop:rho-irred}.\eqref{item:inv-subspace}, and $\mathcal{U}\not\subset\OhmVin$ because $\mathcal{U}$ is an affine chart, $\OhmVin$ is convex, and $\OhmVin \cap \PP(V_v) \neq \emptyset$ by Remark~\ref{rem:neg-type-interpret}.
  See Figure~\ref{fig:reduced}.
\end{enumerate}
\end{remark}

\begin{figure}[h]
\centering
\labellist
\small\hair 2pt
\pinlabel {$\Delta$} [u] at 105 115
\pinlabel {$\Delta$} [u] at 105 50
\pinlabel {$\PP(V_v)$} [u] at 230 178
\pinlabel {${}^{\Delta \cap \PP(V_v)}$} [u] at 105 180
\endlabellist
\includegraphics[scale=.85]{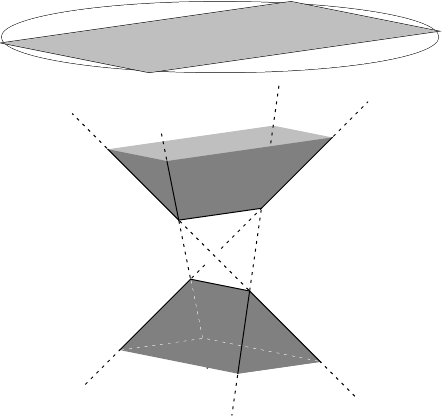}
\caption{Illustration of Remark~\ref{rem:Vinberg-not-reduced}.\eqref{item:inv-conv-not-in-OhmVin} for $N=4$, where $V_v$ is a hyperplane in $V=\RR^4$ and the affine reflections preserving the chart $\PP(V)\smallsetminus \PP(V_v) \simeq \RR^3$ are chosen with linear parts in $\OO(2,1)$. As above, $\Delta$ is the fundamental polytope for~$\rho$, and $\OhmVin$ is the interior of $\bigcup_{\gamma\in W_S} \rho(\gamma)\cdot \Delta$.
Here $\OhmVin$ intersects the chart in two connected components, each of which is a \emph{domain of dependence} as in \cite{mes90} (see also \cite[Ex.\,11.13]{dgk-proj-cc}).}
\label{fig:reduced}
\end{figure}

\subsection{The general case in Proposition~\ref{prop:maximal}} \label{subsec:Vinberg-max}

For any representation $\rho' : W_S\to\GL(V')$ of our Coxeter group $W_S$, we write $\Lambda_{\rho'}$ for the proximal limit set $\Lambda_{\rho'(W_S)}$ of $\rho'(W_S)$ in $\PP(V')$, as in Definition~\ref{def:prox-lim-set}, and $\Lambda_{\rho'}^*:=\Lambda_{{\rho'}^*(W_S)}$ for the proximal limit set of ${\rho'}^*(W_S)$ in~$\PP({V'}^*)$, where ${\rho'}^* : W_S\to\GL({V'}^*)$ is the dual representation.

\begin{proof}[Proof of Proposition~\ref{prop:maximal}.\eqref{item:max-3}]
By Corollary~\ref{cor:C-meets-P(Vv)}.\eqref{item:prox-lim-set-nonempty}, there exists $\gamma\in W_S$ such that $\rho(\gamma)$ is proximal in $\PP(V)$; then $\rho^{*}(\gamma^{-1})$ is proximal in $\PP(V^*)$, and so $\Lambda_{\rho}^*\neq\emptyset$.
By Proposition~\ref{prop:max-inv-conv}.\eqref{item:Omega-max}, it is sufficient to prove the following two claims:
\begin{enumerate}[label=(\roman*)]
  \item\label{item:Vinberg-conn-comp} the Tits--Vinberg domain $\OhmVin$ is a full connected component of
$$\mathcal{U} := \PP(V) \smallsetminus \bigcup_{[\varphi]\in\Lambda_{\rho}^*} \PP(\mathrm{Ker}(\varphi)) ;$$
  \item\label{item:only-one-conn-comp} the set $\mathcal{U}$ admits only one $\rho(W_S)$-invariant connected component.
\end{enumerate}

We first prove \ref{item:Vinberg-conn-comp}.
Consider $V^{\alpha} = V/V_{\alpha}$ and the representation $\rho^{\alpha} : W_S\to\GL(V^{\alpha})$ induced by~$\rho$, as in Remark~\ref{rem:notation_rhos}: since $\Cart$ is assumed of negative type, $\rho^{\alpha}$ is a representation of $W_S$ as a reflection group (Remark~\ref{rem:rho-v-alpha-refl-group}).
By Propositions~\ref{prop:max-inv-conv}.\eqref{item:Omega-max} and~\ref{prop:maximal}.\eqref{item:max-1}, the corresponding Tits--Vinberg domain $\OhmVin^\alpha \subset \PP(V^\alpha)$ is a full connected component of
\begin{equation} \label{eqn:A'}
\mathcal{U}^\alpha := \PP(V^\alpha) \smallsetminus \bigcup_{[\varphi]\in \Lambda_{\rho^{\alpha}}^*} \PP(\mathrm{Ker}(\varphi)).
\end{equation}
Let $\pi : \PP(V) \smallsetminus \PP(V_{\alpha}) \to \PP(V^\alpha)$ be the natural projection.
On the one hand, the fundamental reflection polytope $\Delta^\alpha \subset \PP(V^\alpha)$ for $\rho^{\alpha}(W_S)$ satisfies $\pi^{-1}(\Delta^\alpha) = \Delta \smallsetminus \PP(V_\alpha)$, hence $\OhmVin = \pi^{-1}(\OhmVin^\alpha)$.
On the other hand, the connected components of~$\mathcal{U}$ are exactly the preimages under~$\pi$ of the connected components of~$\mathcal{U}^\alpha$.
Indeed, the dual $(V^{\alpha})^*$ of~$V^\alpha$ identifies with the annihilator $(V^*)_{\alpha} \subset V^*$ of $V_{\alpha}$, \ie the set of linear forms $\varphi\in V^*$ that vanish on~$V_{\alpha}$, which is also the span of the $\alpha_i$ in~$V^*$.
The dual action of $\rho(W_S)$ on $V^*$ exchanges the roles of the $v_i$ and the~$\alpha_i$: namely, $\rho(s_i)$ acts on~$V^*$ as a reflection in the hyperplane defined by~$v_i$, with $(-1)$-eigenvector~$\alpha_i$.
Therefore the proximal limit set $\Lambda_{\rho}^*$ of $\rho^*(W_S)$ in $\PP(V^*)$ is contained in the subspace $\PP((V^*)_{\alpha}) = \PP((V^{\alpha})^*)$, and equal to the proximal limit set $\Lambda^*_{\rho^\alpha}$ of $(\rho^{\alpha})^*(W_S)$ in that subspace.
This implies that the connected components of~$\mathcal{U}$ are exactly the preimages under~$\pi$ of the connected components of~$\mathcal{U}^\alpha$, and completes the proof of~\ref{item:Vinberg-conn-comp}.

We now prove \ref{item:only-one-conn-comp}, assuming $N\geq 3$.
By the proof of \ref{item:Vinberg-conn-comp}, it is sufficient to check that the set $\mathcal{U}^\alpha$ of \eqref{eqn:A'} has a unique $\rho(W_S)$-invariant connected component, for the projection $\pi : \PP(V) \smallsetminus\nolinebreak \PP(V_{\alpha}) \to \PP(V^\alpha)$ is $\rho(W_S)$-equivariant.
Assume by contradiction that there exists a $\rho(W_S)$-invariant connected component $\Omega'$ of $\mathcal{U}^\alpha$ different from the Tits--Vinberg domain $\OhmVin^\alpha$.
Intersecting $\OhmVin^\alpha$ with $\PP(V_v^\alpha)$ yields the Tits--Vinberg domain for $\rho_v^{\alpha}(W_S)$ in $\PP(V_v^\alpha)$.
Since $\rho_v^\alpha(W_S)$ is both reduced and dual reduced, $\OhmVin^\alpha \cap \PP(V_v^\alpha)$ is the unique maximal nonempty $\rho_v^\alpha(W_S)$-invariant convex open subset of $\PP(V_v^\alpha)$ by Proposition~\ref{prop:maximal}.\eqref{item:max-2}.
Now, it may be that $\Omega'$ does not intersect $\PP(V_v^\alpha)$.
However, $\overline{\Omega'}$ contains the proximal limit set $\Lambda_{\rho^\alpha}$, hence $\overline{\Omega'} \cap \PP(V_v^\alpha)$ is a closed $\rho_v^{\alpha}(W_S)$-invariant convex subset of $\PP(V_v^\alpha)$ whose interior, necessarily nonempty by Corollary~\ref{cor:C-meets-P(Vv)}.\eqref{item:C-meets-P(Vv)}, is contained in $\OhmVin^\alpha \cap \PP(V_v^\alpha)$.
Since $\OhmVin^\alpha$ is open, we find that $\OhmVin^\alpha$ and $\Omega'$ must overlap, hence they are equal.
\end{proof}

\subsection{Invariant cones for irreducible~$W_S$}

Whereas Proposition~\ref{prop:maximal} assumed a Cartan matrix of negative type and concerned invariant properly convex sets in the projective space $\PP(V)$, the following does not make any assumptions about the Cartan matrix, and concerns invariant properly convex cones in the vector space~$V$.

\begin{lemma}\label{lem:maximal-W-product}
Let $W_S$ be an irreducible Coxeter group and $\rho: W_S \to \GL(V)$ a representation of $W_S$ as a reflection group in~$V$ (Definition~\ref{def:refl-group}), associated to some $\alpha = (\alpha_1,\dots,\alpha_N) \in {V^*}^N$ and $v = (v_1,\dots,v_N) \in V^N$.
Then any $\rho(W_S)$-invariant properly convex open cone $\widetilde{\Omega}$ of~$V$ is contained in the Tits--Vinberg cone $\widetilde{\Omega}_{\scriptscriptstyle\mathrm{TV}}$ or its opposite $-\widetilde{\Omega}_{\scriptscriptstyle\mathrm{TV}}$.
\end{lemma}

(Recall from Section~\ref{subsec:Vinberg-constr} that $\widetilde{\Omega}_{\scriptscriptstyle\mathrm{TV}}$ and $-\widetilde{\Omega}_{\scriptscriptstyle\mathrm{TV}}$ are both $\rho(W_S)$-invariant convex open cones of~$V$, on which $\rho(W_S)$ acts properly discontinuously.)

Note that when $N=2$ and $W_S$ is infinite, there may exist a $\rho(W_S)$-invariant properly convex open subset of $\PP(V)$ which is not contained in~$\OhmVin$ (see Remark~\ref{rem:2_gen}); however, its preimage in~$V$ is the union of two properly convex cones of~$V$ which are \emph{not} $\rho(W_S)$-invariant (the two cones are switched by $\rho(W_S)$).

\begin{proof}
If $W_S$ is finite, then $\widetilde{\Omega}_{\scriptscriptstyle\mathrm{TV}} = V$ by \cite[Lem.\,10]{vin71}, and so the lemma is obviously true.
We now assume that $W_S$ is infinite.
Then the Cartan matrix $\Cart=(\alpha_i(v_j))_{1\leq i,j\leq N}$ is of negative or zero type (Fact~\ref{fact:types}).

If $N=2$, then we are in the setting of Example~\ref{ex:N=2} and $\widetilde \Omega$ is contained in $\widetilde{\Omega}_{\scriptscriptstyle\mathrm{TV}}$ or its opposite $-\widetilde{\Omega}_{\scriptscriptstyle\mathrm{TV}}$.
So we now assume $N \geq 3$.

Suppose $\Cart$ is of negative type.
The projection $\Omega \subset \PP(V)$ of $\widetilde{\Omega} \subset V$ is a $\rho(W_S)$-invariant properly convex open subset of $\PP(V)$.
By Proposition~\ref{prop:maximal}.\eqref{item:max-3} we have $\Omega \subset \OhmVin$, and so $\widetilde{\Omega}$ is contained in one of the two $\rho(W_S)$-invariant cones $\widetilde{\Omega}_{\scriptscriptstyle\mathrm{TV}}$ or $-\widetilde{\Omega}_{\scriptscriptstyle\mathrm{TV}}$.

Suppose $\Cart$ is of zero type.
First, observe that $\widetilde{\Omega} \cap V_\alpha =\emptyset$, since the action of $\rho(W_S)$ on $\widetilde{\Omega}$ is properly discontinuous and the action on $V_\alpha$ is trivial.
Hence, $\widetilde{\Omega}$ projects to a (not necessarily properly) convex open cone ${\widetilde{\Omega}}^{\alpha}$ in $V^\alpha = V/V_\alpha$, which does not contain~$0$.
The induced representation $\rho^{\alpha} : W_S\to V^{\alpha}$ is still a representation as a reflection group (Fact~\ref{fact:types}.\eqref{item:affine_case_zero_type}), with a corresponding Tits--Vinberg cone $\widetilde{\Omega}_{\scriptscriptstyle\mathrm{TV}}^{\alpha} \subset V^{\alpha}$.
Note that $\Delta + V_{\alpha} = \Delta$; therefore $\widetilde{\Omega}_{\scriptscriptstyle\mathrm{TV}}$ is the full preimage of $\widetilde{\Omega}_{\scriptscriptstyle\mathrm{TV}}^{\alpha}$ under the natural projection $V\to V^{\alpha}$.
Therefore it is sufficient to check that ${\widetilde{\Omega}}^{\alpha}$ is equal to $\widetilde{\Omega}_{\scriptscriptstyle\mathrm{TV}}^{\alpha}$ or $-\widetilde{\Omega}_{\scriptscriptstyle\mathrm{TV}}^{\alpha}$.
By Fact~\ref{fact:types}.\eqref{item:affine_case_zero_type}, the set $V^\alpha_v$ is a $\rho^\alpha(W_S)$-invariant hyperplane in $V^\alpha$ and $\widetilde{\Omega}_{\scriptscriptstyle\mathrm{TV}}^{\alpha}$ is a connected component of $V^{\alpha} \smallsetminus V_v^{\alpha}$.
Any affine translate $w + V^\alpha_v$, for $w \in V^\alpha \smallsetminus V^\alpha_v$, is invariant under $\rho^\alpha(W_S)$, with a compact fundamental domain.
Since ${\widetilde{\Omega}}^{\alpha}$ is open, it contains some $w \in V^\alpha \smallsetminus V^\alpha_v$.
Since the action on $w + V_v^\alpha$ is cocompact, the $\rho^\alpha(W_S)$-invariant convex cone ${\widetilde{\Omega}}^{\alpha}$ contains all of $w + V^\alpha_v$, hence it contains the entire halfspace on the $w$ side of $V^\alpha_v$, which is $\widetilde{\Omega}_{\scriptscriptstyle\mathrm{TV}}^{\alpha}$ or $-\widetilde{\Omega}_{\scriptscriptstyle\mathrm{TV}}^{\alpha}$.
In fact ${\widetilde{\Omega}}^{\alpha}$ is equal to this halfspace, for if it contained a point $w \in V^\alpha \smallsetminus V^\alpha_v$ in the other halfspace, the same argument would show that it contains this full other halfspace, hence the whole of $V^\alpha$ by convexity: contradiction.
Thus ${\widetilde{\Omega}}^{\alpha}$ is equal to $\widetilde{\Omega}_{\scriptscriptstyle\mathrm{TV}}^{\alpha}$ or $-\widetilde{\Omega}_{\scriptscriptstyle\mathrm{TV}}^{\alpha}$, and so $\widetilde{\Omega}$ is contained in $\widetilde{\Omega}_{\scriptscriptstyle\mathrm{TV}}$ or $-\widetilde{\Omega}_{\scriptscriptstyle\mathrm{TV}}$.
\end{proof}

\subsection{Tits--Vinberg domains for nonirreducible Coxeter groups}

Let $W_S = W_{S_1} \times \cdots \times W_{S_r}$ be a Coxeter group which is the product of $r \geq 1$ irreducible factors.
Let $\alpha = (\alpha_1,\dots,\alpha_N) \in {V^*}^N$ and $v = (v_1,\dots,v_N) \in V^N$.
Then the matrix $\Cart = (\alpha_i(v_j))_{1\leq i,j\leq N}$ is compatible with $W_S$ if and only if the Cartan submatrix $\Cart_{S_{\ell}} := (\alpha_i(v_j))_{s_i,s_j\in S_{\ell}}$ is compatible with $W_{S_{\ell}}$ for all $1\leq\ell\leq r$.
The fundamental polyhedral cone $\widetilde \Delta$ for $W_S$, defined as the intersection of the nonpositive halfspaces for $\alpha_1, \ldots, \alpha_N$, is equal to the intersection $\widetilde \Delta = \bigcap_{\ell=1}^r \widetilde \Delta_{\ell}$, where for each $1 \leq \ell \leq r$, the set $\widetilde \Delta_{\ell}$ is the fundamental polyhedral cone for the factor $W_{S_{\ell}}$, defined as the intersection of the nonpositive halfspaces for $\{\alpha_i \mid s_i \in S_\ell \}$. 
Note that for each $\ell \neq \ell'$, the cone $\widetilde \Delta_{\ell}$ is preserved by $\rho(W_{S_{\ell'}})$.
This implies the following.

\begin{fact} \label{fact:Vinberg-cones}
Let $W_S = W_{S_1} \times \cdots \times W_{S_r}$ be a Coxeter group in $N$ generators which is the product of $r\geq 1$ irreducible factors, and let $\rho : W_S\to\GL(V)$ be a representation of $W_S$ as a reflection group, associated to some $\alpha = (\alpha_1,\dots,\alpha_N) \in {V^*}^N$ and $v = (v_1,\dots,v_N) \in\nolinebreak V^N$.
Then for each irreducible factor $W_{S_\ell}$, the restriction $\rho|_{W_{S_{\ell}}} : W_{S_{\ell}}\to\GL(V)$ is a representation of $W_{S_{\ell}}$ as a reflection group, associated to $(\alpha_i)_{s_i\in S_{\ell}}$ and $(v_i)_{s_i\in S_{\ell}}$.
The Tits--Vinberg cone $\widetilde{\Omega}_{\scriptscriptstyle\mathrm{TV}} = \mathrm{Int}\Big( \bigcup_{\gamma \in W_{S}} \rho(\gamma)\cdot\widetilde \Delta \Big)$ for $\rho(W_S)$ is the intersection of the Tits--Vinberg cones $\widetilde{\Omega}_{\scriptscriptstyle\mathrm{TV},\ell} = \mathrm{Int}\left( \bigcup_{\gamma \in W_{S_{\ell}}} \rho(\gamma)\cdot\widetilde \Delta_{\ell} \right)$ for the $\rho(W_{S_{\ell}})$:
$$\widetilde{\Omega}_{\scriptscriptstyle\mathrm{TV}} = \bigcap_{\ell=1}^r \widetilde{\Omega}_{\scriptscriptstyle\mathrm{TV},\ell}.$$
\end{fact}

Here is an analogue of Proposition~\ref{prop:maximal}.\eqref{item:max-1} when $W_S$ is not necessarily irreducible.

\begin{proposition} \label{prop:maximal-non-irred}
Let $W_S = W_{S_1} \times \cdots \times W_{S_r}$ be an infinite Coxeter group in $N$ generators which is the product of $r \geq 1$ irreducible factors, and let $\rho : W_S\to\GL(V)$ be a representation of~$W_S$ as a reflection group, associated to some $\alpha = (\alpha_1,\dots,\alpha_N) \in {V^*}^N$ and $v = (v_1,\dots,v_N) \in V^N$.
Suppose that the Cartan submatrix $\Cart_{S_{\ell}} = (\alpha_i(v_j))_{s_i,s_j\in S_{\ell}}$ is of negative type for each irreducible factor $W_{S_{\ell}}$.
If $\rho$ is reduced, then the Tits--Vinberg cone $\widetilde{\Omega}_{{\scriptscriptstyle\mathrm{TV}}}$, and hence the Tits--Vinberg domain $\OhmVin = \PP(\widetilde{\Omega}_{\scriptscriptstyle\mathrm{TV}})$, is properly convex.
\end{proposition}

\begin{proof}
By contraposition, suppose that $\widetilde{\Omega}_{\scriptscriptstyle\mathrm{TV}}$ is not properly convex.
By Fact~\ref{fact:Vinberg-cones}, it is contained in the Tits--Vinberg cone $\widetilde{\Omega}_{\scriptscriptstyle\mathrm{TV},\ell}$ for $\rho(W_{S_{\ell}})$ for all~$\ell$.
Therefore the maximal linear subspace $V'$ of~$V$ included in the closure of $\widetilde{\Omega}_{\scriptscriptstyle\mathrm{TV}}$ is contained in the maximal linear subspace $V'_{\ell}$ of~$V$ included in the closure of $\widetilde{\Omega}_{\scriptscriptstyle\mathrm{TV},\ell}$ for all~$\ell$.
The proof of Proposition~\ref{prop:maximal}.\eqref{item:max-1} implies that $V'_{\ell} \subset (V_{\alpha})_{S_{\ell}} := \bigcap_{s_j \in S_\ell} \mathrm{Ker}(\alpha_j)$ for all~$\ell$.
Hence $\{0\} \neq V' \subset \bigcap_{\ell=1}^r  (V_{\alpha})_{S_{\ell}} = V_\alpha$, and so $\rho$ is not reduced.
\end{proof}

As discussed in Section~\ref{subsec:refl-basics}, for any $1\leq i\leq N$, the pair $(\alpha_i,v_i)\in V^*\times V$ defining the reflection $\rho(s_i)$ in \eqref{eqn:rho-s-i} is not uniquely determined by $\rho(s_i)$.
Indeed, for any $\lambda_i \neq 0$ the pair $(\lambda_i \alpha_i, \lambda_i^{-1} v_i)$ yields the same reflection.
Changing $(\alpha_i,v_i)$ into $(\lambda_i \alpha_i, \lambda_i^{-1} v_i)$ changes the Cartan matrix $\Cart$ into its conjugate $D \Cart D^{-1}$ where $D = \mathrm{Diag}(\lambda_1, \ldots, \lambda_N)$ is a diagonal matrix with nonzero entries. 

If $W_S$ is irreducible, then it is easy to see that $D \Cart D^{-1}$ is still a compatible matrix if and only if $\lambda_1, \ldots, \lambda_N$ all have the same sign.
Assume that \eqref{eqn:not_empt_int} holds, \ie the fundamental polyhedral cone $\widetilde \Delta$ has nonempty interior. 
Then choosing $\lambda_1, \ldots, \lambda_N$  all positive does not change the definition of $\widetilde \Delta$, and the Tits--Vinberg cone $\widetilde{\Omega}_{\scriptscriptstyle\mathrm{TV}}$ remains unchanged.
If $\lambda_1, \ldots, \lambda_N$ are all negative, then $\widetilde \Delta$ becomes $- \widetilde \Delta$, and the Tits--Vinberg cone $\widetilde{\Omega}_{\scriptscriptstyle\mathrm{TV}}$ becomes $- \widetilde{\Omega}_{\scriptscriptstyle\mathrm{TV}}$.
In either case, the projections $\Delta$ and $\OhmVin$ to $\PP(V)$ are unchanged.
Hence it makes sense to refer to \emph{the} Tits--Vinberg domain $\OhmVin$ of $\rho(W_S)$ in $\PP(V)$.

Assume now that $W_S = W_{S_1}\times \dotsm \times W_{S_{r}}$ is the product of $r \geq 2$ irreducible factors. 
In order to preserve compatibility of the Cartan matrix, we must choose the signs of the $\lambda_i$ corresponding to any one of the factors $W_{S_{\ell}}$ to be consistent, but the choice of sign may be different for different factors.
A nontrivial such blockwise sign change will not change the representation $\rho$, but will nontrivially change the definition of $\widetilde \Delta$; we note that $\widetilde{\Delta}$ could actually become empty for some sign choices (see Remark~\ref{rem:neg-type-interpret-non-irred} for a case where all $2^r$ sign choices work).
The result is up to $2^r$ different fundamental polyhedral cones $\widetilde \Delta$, hence up to $2^r$ different Tits--Vinberg cones $\widetilde{\Omega}_{\scriptscriptstyle\mathrm{TV}}$.
Projecting to $\PP(V)$, this gives up to $2^{r-1}$ different fundamental polyhedra $\Delta$ and up to $2^{r-1}$ different Tits--Vinberg domains $\OhmVin$ for the representation $\rho$ of $W_S$ as a reflection group, depending on the choice of $\alpha = (\alpha_1,\dots,\alpha_N) \in {V^*}^N$ and $v = (v_1,\dots,v_N) \in V^N$ defining~$\rho$.

The following is an immediate consequence of Lemma~\ref{lem:maximal-W-product} and Fact~\ref{fact:Vinberg-cones}.

\begin{proposition} \label{prop:maximal-W-product}
Let $W_S = W_{S_1} \times \cdots \times W_{S_r}$ be a Coxeter group which is the product of $r\geq 1$ irreducible factors and let $\rho: W_S \to \GL(V)$ be a representation of $W_S$ as a reflection group.
Then any $\rho(W_S)$-invariant properly convex open cone $\widetilde \Omega$ of~$V$ is contained in one of the (at most) $2^r$ Tits--Vinberg cones corresponding to the (at most) $2^r$ possible choices of $\alpha = (\alpha_1,\dots,\alpha_N) \in {V^*}^N$ and $v = (v_1,\dots,v_N) \in V^N$ defining~$\rho$.
\end{proposition}

On the other hand, there can exist an invariant properly convex open subset of $\PP(V)$ that lifts to a convex cone of~$V$ which is not invariant, but the following proposition (analogous to Proposition~\ref{prop:maximal}.\eqref{item:max-3}) shows that this only happens in a degenerate situation.

\begin{proposition} \label{prop:nonirred_ohmweird}
Let $W_S = W_{S_1} \times \cdots \times W_{S_r}$ be a Coxeter group which is the product of $r\geq 1$ irreducible factors and let $\rho: W_S \to \GL(V)$ be a representation of $W_S$ as a reflection group.
Suppose there exists a $\rho(W_S)$-invariant properly convex open subset $\Omega$ of $\PP(V)$ which is not contained in one of the (at most) $2^{r-1}$ Tits--Vinberg domains corresponding to the (at most) $2^{r-1}$ possible choices of $\alpha = (\alpha_1,\dots,\alpha_N) \in {V^*}^N$ and $v = (v_1,\dots,v_N) \in V^N$ defining~$\rho$.
Then, up to renumbering, $W_{S_1}$ is of type $\widetilde{A}_1$, the Cartan submatrix $\Cart_{S_1} = (\alpha_i(v_j))_{s_i,s_j\in S_1}$ is of negative type, and $W_{S_{\ell}}$ is spherical for all $\ell\geq 2$.
\end{proposition}

\subsection{Proof of Proposition~\ref{prop:nonirred_ohmweird}}

We first establish the following.

\begin{lemma}\label{lem:nonirred_ohmweird}
Let $W_S$ be a Coxeter group and $\rho: W_S \to \GL(V)$ a representation of $W_S$ as a reflection group.
Let $\Omega$ be a nonempty $\rho(W_S)$-invariant convex open subset of $\PP(V)$.
Then
\begin{enumerate}
  \item\label{item:vi-or-keralphai} for every $s_i \in S$, either $\mathrm{Ker}(\alpha_i) \cap \Omega \neq \varnothing$ or $[v_i] \in \Omega$, and both cannot simultaneously happen;
  \item\label{item:nonirred_ohmweird_inside} if $\Omega$ is properly convex, then, setting $S' :=\{ s_i \in S \, |\, [v_i] \in \Omega \}$, we have
  \begin{enumerate}
    \item\label{item:nonirred_ohmweird_inside-1} $m_{i,j} = \infty$ and $\Cart_{i,j} \Cart_{j,i} > 4$ for all $s_i\neq s_j$ in~$S'$,
    \item\label{item:nonirred_ohmweird_inside-2} $m_{i,j} = 2$ for all $s_i \in S'$ and $s_j \in S\smallsetminus S'$.
\end{enumerate}
\end{enumerate}
\end{lemma}

\begin{proof}
\eqref{item:vi-or-keralphai} Fix $i \in \{1, \ldots, N\}$.
Consider any point $x \in \Omega$ with $x \neq [v_i]$.
The projective line $\ell$ through $x$ and $[v_i]$ is $\rho(s_i)$-invariant and $\Omega \cap \ell$ is a $\rho(s_i)$-invariant properly convex subset of~$\ell$.
The action of $\rho(s_i)$ fixes the points $\mathrm{Ker}(\alpha_i) \cap \ell$ and $[v_i]$ and switches the two intervals in between them.
It follows that $\Omega \cap \ell$ contains exactly one of $\mathrm{Ker}(\alpha_i) \cap \ell$ and~$[v_i]$.

\eqref{item:nonirred_ohmweird_inside} Suppose $\Omega$ is properly convex.
If $N=1$ (\ie $\# W_S=2$), then there is nothing to prove.
So we assume $N \geq 2$.
Consider $s_i \in S'$ and some other $s_j \in S$.
Let $W_{i,j}$ be the subgroup of $W_S$ generated by $s_i$ and $s_j$, and $V_{i,j}$ be the linear subspace of~$V$ spanned by $v_i$ and $v_j$.

We first observe that $\dim V_{i,j}=2$.
Indeed, suppose by contradiction that $[v_i] = [v_j]$.
Any projective line $\ell$ of $\PP(V)$ passing through $[v_i] = [v_j]$ is globally preserved by $\rho(W_{i,j})$.
The intersection $\ell \cap \Omega$ of such a line with $\Omega$ is a properly convex subset of $\ell$, \ie an interval, which is invariant under $\rho(W_{i,j})$.
The element $\rho(s_i s_j)$ fixes each endpoint of this interval, and it also fixes $[v_i]$ (which is not an endpoint since it belongs to~$\Omega$ by assumption); therefore $\rho(s_i s_j)$ fixes pointwise the whole line~$\ell$.
This holds for any projective line $\ell$ containing $[v_i]$, hence $\rho(s_i s_j)$ acts trivially on $\PP(V)$.
But $\rho$ is faithful and the subgroup generated by $s_i s_j$ has index two in $W_{i,j}$, hence $W_{i,j}$ has order at most two: contradiction.
Thus $[v_i] \neq [v_j]$ and $\dim V_{i,j} = 2$.

Consider the action of $\rho(W_{i,j})$ on the projective line $\PP(V_{i,j})$.
Again, the intersection $\Omega \cap \PP(V_{i,j})$ is a nontrivial open interval and both generators $\rho(s_i)$ and $\rho(s_j)$ act nontrivially on this interval, switching its endpoints.
These two endpoints are fixed by $\rho(s_i s_j)$, hence correspond to two eigenlines of $\rho(s_i s_j)$, associated to eigenvalues $\lambda$ and~$ \lambda^{-1}$ for some $\lambda \neq 0$.
If\ $\lambda^2 = 1$, then $\rho((s_i s_j)^2)$ acts trivially on $V_{i,j}$, hence $\alpha_j(v_i) = \alpha_i(v_j)= 0$ and $m_{i,j} = 2$; in this case $\mathrm{Ker}(\alpha_j) \cap \Omega \neq \varnothing$, and so $[v_j] \notin \Omega$ by~\eqref{item:vi-or-keralphai}, which means $s_j \notin S'$. 
If $\lambda^2 \neq 1$, then $m_{i,j} = \infty$; in this case $\Cart_{i,j} = \alpha_i(v_j) \, \alpha_j(v_i) > 4$ and $[v_j] \in \Omega \cap \PP(V_{i,j})$ by Remark~\ref{rem:2_gen}, hence $s_j \in S'$.
\end{proof}

\begin{proof}[Proof of Proposition~\ref{prop:nonirred_ohmweird}]
Let $\Omega$ be a $\rho(W_S)$-invariant properly convex open subset of $\PP(V)$. 
Let  $S' := \{ s_i \in S \, |\, [v_i] \in \Omega \}$.
We will use the following simple observation: let $\widetilde{\Omega}$ be a convex cone of~$V$ lifting $\Omega$.
Then any element of $\rho(W_S)$ either preserves $\widetilde{\Omega}$, or switches $\widetilde{\Omega}$ and $-\widetilde{\Omega}$.
More precisely, for a generator $s_i \in S$, the cone $\widetilde{\Omega}$ is preserved by $\rho(s_i)$ if and only if $s_i \notin S'$.
If $S' = \emptyset$, then $\widetilde{\Omega}$ is $\rho(W_S)$-invariant and so by Proposition~\ref{prop:maximal-W-product}, the set $\Omega$ is contained in one of the (at most) $2^{r-1}$ Tits--Vinberg domains corresponding to the (at most) $2^{r-1}$ possible choices of $\alpha = (\alpha_1,\dots,\alpha_N) \in {V^*}^N$ and $v = (v_1,\dots,v_N) \in V^N$ defining~$\rho$. 

Assume then that $\Omega$ is not contained in any of these (at most) $2^{r-1}$ Tits--Vinberg domains for $\rho(W_S)$, and hence that $S' \neq \emptyset$. 

By Lemma~\ref{lem:nonirred_ohmweird}.\eqref{item:nonirred_ohmweird_inside-1}, the Coxeter group $W_{S'}$ is irreducible.
In particular, the Tits--Vinberg domain $\Omega_{\scriptscriptstyle\mathrm{TV},S'}$ for $\rho(W_{S'})$ is uniquely defined, independently of the choice of $\alpha = (\alpha_1,\dots,\alpha_N) \in {V^*}^N$ and $v = (v_1,\dots,v_N) \in V^N$.
Suppose by contradiction that $\# S' \geqslant 3$.
Then the fact that $m_{i,j}=\infty$ for all $s_i\neq s_j$ in~$S'$ implies that $W_{S'}$ is large, hence the Cartan matrix $\Cart=(\alpha_i(v_j))_{1\leq i,j\leq N}$ is of negative type by Fact~\ref{fact:types} and $\Omega_{\scriptscriptstyle\mathrm{TV},S'}$ is convex.
By Proposition~\ref{prop:maximal}.\eqref{item:max-3} the $\rho(W_{S'})$-invariant properly convex open set $\Omega$ is contained in $\Omega_{\scriptscriptstyle\mathrm{TV},S'}$, hence $[v_i] \in \Omega_{\scriptscriptstyle\mathrm{TV},S'}$ for all $s_i\in S'$ by definition of~$S'$.
On the other hand, we have $\mathrm{Ker}(\alpha_i) \cap \Omega_{\scriptscriptstyle\mathrm{TV},S'} \neq \emptyset$ for all $s_i\in S'$ by definition of $\Omega_{\scriptscriptstyle\mathrm{TV},S'} = \mathrm{Int}(\rho(W_{S'})\cdot\Delta_{S'})$: contradiction with Lemma~\ref{lem:nonirred_ohmweird}.\eqref{item:vi-or-keralphai}.
Thus $\# S'\leq 2$.

By Lemma~\ref{lem:nonirred_ohmweird}.\eqref{item:nonirred_ohmweird_inside-2}, we have $W_S  = W_{S'} \times W_{S''}$ where $S'' := S\smallsetminus S'$, and for every $s_i \in S'$, the point $[v_i] \in \Omega$ is fixed by $\rho(W_{S''})$.
Since $\Omega$ is properly convex, the action of $\rho(W_{S''})$ on $\Omega$ is properly discontinuous (see Section~\ref{subsec:prop-conv-proj}), the group $W_{S''}$ is finite (possibly trivial).
If $\# S' = 1$, then $W_S = W_{S'} \times W_{S''}$ is finite, hence by \cite[Lem.\,10]{vin71} there is only one possible Tits--Vinberg domain for $\rho(W_S)$, namely $\OhmVin = \PP(V)$: this is not possible since we have assumed that $\Omega$ is not contained in $\OhmVin$.
Therefore $\# S' = 2$.
By Lemma~\ref{lem:nonirred_ohmweird}.\eqref{item:nonirred_ohmweird_inside-1}, the Coxeter group $W_{S'}$ is of type $\widetilde{A}_1$ and the Cartan submatrix $\Cart_{S'}$ is of negative type.
\end{proof}

\section{The minimal invariant convex subset of the Tits--Vinberg domain} \label{sec:Omega-min}

In this section we consider an irreducible Coxeter group $W_S$ and a representation $\rho : W_S\to\GL(V)$ of~$W_S$ as a reflection group, associated to some $\alpha = (\alpha_1,\dots,\alpha_N) \in {V^*}^N$ and $v = (v_1,\dots,v_N) \in V^N$, such that the Cartan matrix $\Cart=(\alpha_i(v_j))_{1\leq i,j\leq N}$ is of negative type (Definition~\ref{def:type}) and $\rho(W_S)$ is reduced and dual-reduced (Definition~\ref{def:V-v-alpha}).
By Proposition~\ref{prop:rho-irred}, such a representation $\rho$ is irreducible.
By Proposition~\ref{prop:maximal}, the Tits--Vinberg domain $\OhmVin$ is properly convex and contains all other $\rho(W_S)$-invariant convex open subsets of $\PP(V)$.
By Proposition~\ref{prop:benoist-irred-Gamma}, there is a unique smallest nonempty $\Gamma$-invariant convex open subset $\Omega_{\min}$ of $\OhmVin$.
The goal of this section is to describe a fundamental domain for the action of $\rho(W_S)$ on $\Omega_{\min}$, in terms of $(\alpha,v)$ (Lemma~\ref{lem:OmegaMin} and Theorem~\ref{thm:minimal_convex}).

\subsection{Reflections in the dual projective space} \label{subsec:refl-dual}

Recall that the dual action $\rho^*$ of $W_S$ on $V^*$ exchanges the roles of the $v_i$ and the~$\alpha_i$: namely, $\rho^*(s_i)$ is a reflection in the hyperplane defined by~$v_i$, with $(-1)$-eigenvector~$\alpha_i$.
The Cartan matrix for $\rho^*(W_S)$ is the transpose of the Cartan matrix $\Cart = (\alpha_i(v_j))_{1\leq i,j\leq N}$ for $\rho(W_S)$.
Similarly to Section~\ref{subsec:Vinberg-constr}, we define a closed fundamental polyhedral cone $\widetilde{\mathcal{D}} \subset V^*$ for $\rho^*(W_S)$, cut out by the kernels of $v_1, \ldots, v_N$ seen as linear forms on~$V^*$.
The group $\rho^*(W_{S})$ acts properly discontinuously on a nonempty convex open cone $\widetilde{\mathcal{O}}_{\scriptscriptstyle\mathrm{TV}}$ of~$V^*$, namely the interior of the union of all $\rho^*(W_{S})$-translates of~$\widetilde{\mathcal{D}}$.
The image $\mathcal{O}_{\scriptscriptstyle\mathrm{TV}}\subset\PP(V^*)$ of $\widetilde{\mathcal{O}}_{\scriptscriptstyle\mathrm{TV}}$ is the Tits--Vinberg domain of~$\rho^*$; by Proposition~\ref{prop:maximal}, it is again properly convex and contains all other $\rho^*(W_S)$-invariant convex subsets of $\PP(V^*)$.
Since duality between properly convex sets reverses inclusion, the minimal $\rho(W_S)$-invariant properly convex subset $\Omega_{\min}$ of $\PP(V)$ is the dual of the maximal $\rho^*(W_S)$-invariant properly convex subset $\mathcal{O}_{\scriptscriptstyle\mathrm{TV}}$ of $\PP(V^*)$, \ie $\Omega_{\min} = \mathcal{O}_{\scriptscriptstyle\mathrm{TV}}^*$.

We define a dualization operation $\overline{*}$ for closed sets: if $\mathcal{F}$ is a closed convex cone in~$V$, then $\mathcal{F}^{\overline{*}}$ is the closed convex cone in~$V^*$ defined by
$$\mathcal{F}^{\overline{*}} := \big \{ \varphi \in V^*~|~ \varphi(x) \leq 0 \quad \forall x\in\mathcal{F} \big \}.$$
Note that $\mathcal{F}^{\overline{*}\,\overline{*}} = \mathcal{F}$.
For $\widetilde{\mathcal{D}}, \widetilde{\mathcal{O}}_{\scriptscriptstyle\mathrm{TV}} \subset V^*$ as above, we have
\begin{equation} \label{eqn:dual-corners}
\widetilde{\mathcal{D}}^{\overline{*}} =  \sum_{j=1}^N \RR_{\geq 0} \, v_j \subset V
\end{equation}
and ${\overline{\widetilde{\mathcal{O}}_{\scriptscriptstyle\mathrm{TV}}}}^{\overline{*}} \subset V$ is a lift $\overline{\widetilde{\Omega}_{\min}}$ of $\overline{\Omega_{\min}}$.

\begin{lemma} \label{lem:OmegaMin}
If $W_S$ is an irreducible Coxeter group and $\rho : W_S\to\GL(V)$ a representation of~$W_S$ as a reflection group with a Cartan matrix of negative type, such that $\rho(W_S)$ is reduced and dual-reduced, then
$$\overline{\Omega_{\min}} = \! \bigcap_{\gamma \in W_S} \! \rho(\gamma) \cdot \mathcal{D}^{\overline{*}},$$
where $\mathcal{D}^{\overline{*}} \subset \PP(V)$ is the image of $\widetilde{\mathcal{D}}^{\overline{*}}$.
\end{lemma}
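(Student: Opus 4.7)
The plan is to exploit the duality $\overline{\widetilde{\Omega}_{\min}} = \overline{\widetilde{\mathcal{O}}_{\mathrm{Vin}}}^{\,\overline{*}}$ noted in Section~\ref{subsec:refl-dual}, and unfold the right-hand side using the compatibility of the dualization operation $\overline{*}$ with unions and with the $W_S$-action. Projectivizing at the end yields the stated formula.

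First I would show that $\overline{\widetilde{\mathcal{O}}_{\mathrm{Vin}}} = \overline{U}$, where $U := \bigcup_{\gamma \in W_S} \rho^*(\gamma)\cdot \widetilde{\mathcal{D}}$. By construction $\widetilde{\mathcal{O}}_{\mathrm{Vin}}$ is the interior of~$U$. By Remark~\ref{rem:convex_chamber} applied to the dual representation $\rho^*$ (whose Cartan matrix ${}^{\mathrm{t}}\Cart$ is again of negative type, since it has the same determinant and the same zero-type submatrices), the set $U$ is convex; moreover it has nonempty interior because $\widetilde{\mathcal{D}}$~does, by Remark~\ref{rem:neg-type-interpret}.\eqref{item:neg-type-interpret} applied to~$\rho^*$. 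For any convex set in a finite-dimensional vector space with nonempty interior, the closure coincides with the closure of the interior, so $\overline{\widetilde{\mathcal{O}}_{\mathrm{Vin}}} = \overline{U}$.

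Next, the half-space condition ``$\varphi(x) \leq 0$'' is closed, so $\overline{A}^{\,\overline{*}} = A^{\,\overline{*}}$ for any $A \subset V^*$; and $\overline{*}$ turns unions into intersections. Combining these gives
\begin{equation*}
\overline{\widetilde{\mathcal{O}}_{\mathrm{Vin}}}^{\,\overline{*}} \;=\; U^{\,\overline{*}} \;=\; \bigcap_{\gamma \in W_S} \bigl(\rho^*(\gamma)\cdot \widetilde{\mathcal{D}}\bigr)^{\overline{*}}.
\end{equation*}
A direct computation from the definitions shows that $\overline{*}$ is equivariant for the natural actions, namely $(\rho^*(\gamma)\cdot \widetilde{\mathcal{D}})^{\overline{*}} = \rho(\gamma)\cdot \widetilde{\mathcal{D}}^{\,\overline{*}}$: indeed, writing $\varphi = \rho^*(\gamma)\psi$ with $\psi \in \widetilde{\mathcal{D}}$, one has $\varphi(x) \leq 0 \Leftrightarrow \psi(\rho(\gamma)^{-1}x) \leq 0$ for all~$x\in V$. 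Projectivizing the resulting identity $\overline{\widetilde{\Omega}_{\min}} = \bigcap_{\gamma} \rho(\gamma)\cdot \widetilde{\mathcal{D}}^{\,\overline{*}}$ yields the lemma.

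The only nontrivial ingredient is the equality $\overline{\widetilde{\mathcal{O}}_{\mathrm{Vin}}} = \overline{U}$, which in turn reduces to the convexity of~$U$ (the dual version of Remark~\ref{rem:convex_chamber}) together with nonemptiness of interior of~$\widetilde{\mathcal{D}}$; once that is in hand, all remaining steps are formal manipulations with the dualization operator~$\overline{*}$.
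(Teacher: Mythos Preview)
Your proof is correct and follows essentially the same route as the paper: both start from the duality $\overline{\widetilde{\Omega}_{\min}} = \overline{\widetilde{\mathcal{O}}_{\mathrm{Vin}}}^{\,\overline{*}}$ established in Section~\ref{subsec:refl-dual}, identify $\overline{\widetilde{\mathcal{O}}_{\mathrm{Vin}}}$ with the closure of $\bigcup_{\gamma}\rho^*(\gamma)\cdot\widetilde{\mathcal{D}}$ via convexity of the union (Remark~\ref{rem:convex_chamber}), and then dualize term by term. The only cosmetic difference is that the paper packages the last step as the Sandgren identity $\bigl(\bigcap_i C_i\bigr)^{\overline{*}} = \overline{\mathrm{Conv}}\bigl(\bigcup_i C_i^{\overline{*}}\bigr)$ and double-dualizes, whereas you use directly that $\overline{*}$ ignores closures and sends unions to intersections---arguably a slightly more transparent unpacking of the same fact. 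One small quibble: your justification that ${}^{\mathrm{t}}\Cart$ is of negative type (``same determinant and same zero-type submatrices'') is not quite the right reason; the clean argument is simply that a matrix and its transpose have the same eigenvalues, so the lowest eigenvalue is unchanged.
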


\begin{proof}
It is a well-known fact (see \eg \cite[Th.\,2]{san54}) that if $\{\widetilde{C}_i\}_{i \in I}$ is a family of closed convex cones of~$V$, then $( \bigcap_{i \in I} \widetilde{C}_i)^{\overline{*}}$ is the closed convex hull $\overline{\mathrm{Conv}}\,( \bigcup_{i\in I} \widetilde{C}_i^{\overline{*}})$ of 
the union of their duals.
This implies
$$\overline{\widetilde{\Omega}_{\min}}^{\overline{*}} \,=\, \overline{\widetilde{\mathcal{O}}_{\scriptscriptstyle\mathrm{TV}}} \,=\, \overline{\mathrm{Conv}}\, \Big( \!\bigcup_{\gamma \in W_S} \! \rho(\gamma) \cdot \widetilde{\mathcal{D}} \Big) \,=\, \Big( \!\bigcap_{\gamma \in W_S} \! \rho(\gamma) \cdot \widetilde{\mathcal{D}}^{\overline{*}} \Big)^{\overline{*}},$$
where for the second equality we use the fact that $\overline{\widetilde{\mathcal{O}}_{\scriptscriptstyle\mathrm{TV}}} = \overline{\bigcup_{\gamma \in W_S} \! \rho(\gamma) \cdot \widetilde{\mathcal{D}}}$ is convex.
We then apply $\mathcal{F}^{\overline{*}\,\overline{*}} = \mathcal{F}$ to the closed convex cones $\mathcal{F} = \overline{\widetilde{\Omega}_{\min}}$ and $\mathcal{F} = \bigcap_{\gamma \in W_S} \! \rho(\gamma) \cdot \widetilde{\mathcal{D}}^{\overline{*}}$.
\end{proof}

\subsection{Pruning the fundamental polytope}

Consider the following subsets of the fundamental polyhedral cone $\widetilde \Delta$:
\begin{equation} \label{eqn:Sigma-tilde}
\widetilde{\Sigma} := \widetilde{\Delta} \cap \widetilde{\mathcal{D}}^{\overline{*}} = \Bigg\{ x = \sum_{j=1}^N t_j v_j \in V ~\Big|~ \alpha_i(x) \leq 0 \ \text{ and }\ t_j \geq 0 \quad\forall 1\leq i,j \leq N \Bigg\} \subset \overline{\widetilde{\Omega}_{\scriptscriptstyle\mathrm{TV}} }
\end{equation}
and
\begin{equation} \label{eqn:Sigma-tilde-flat}
\widetilde{\Sigma}^{\flat} := \widetilde{\Delta} \cap \operatorname{Int} \big( \widetilde{\mathcal{D}}^{\overline{*}} \big) = \Bigg\{ x = \sum_{j=1}^N t_j v_j \in V ~\Big|~ \alpha_i(x) \leq 0 \ \text{ and }\ t_j > 0 \quad\forall 1\leq i,j \leq N \Bigg\} \subset \widetilde{\Sigma}.
\end{equation}
Note that $\widetilde{\Sigma}^{\flat}$ (hence $\widetilde{\Sigma}$) has nonempty interior by Remark~\ref{rem:neg-type-interpret}.
Let $\Sigma$ and~$\Sigma^{\flat}$ be the respective images of $\widetilde{\Sigma}$ and $\widetilde{\Sigma}^{\flat}$ in $\PP(V)$.
See Figures~\ref{fig:Truncate} and~\ref{fig:Core} for an illustration in some particular cases.
Here is the main result of this section.
\begin{figure}[h]
\centering
\labellist
\small\hair 2pt
\pinlabel {${}_{t_1=0}$} [u] at 35 151
\pinlabel {\rotatebox{90}{${}_{t_3=0}$}} [u] at 15 130
\pinlabel {$\Sigma$} [u] at 60 100
\pinlabel {$\OhmVin$} [u] at 120 55
\pinlabel {${}^\Delta$} [u] at 79 153
\pinlabel {$\mathrm{Ker}(\alpha_1)$} [u] at 50 78
\pinlabel {\rotatebox{-90}{$\mathrm{Ker}(\alpha_3)$}} [u] at 88 119
\pinlabel {\rotatebox{45}{$\mathrm{Ker}(\alpha_2)$}} [u] at 40 129
\endlabellist
\includegraphics[scale=1.1]{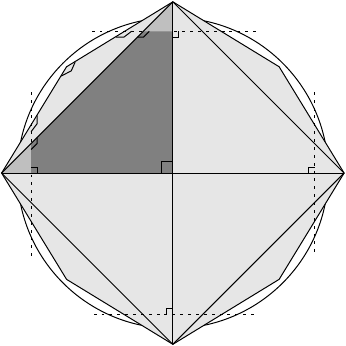}
\caption{The sets $\Delta$, $\Sigma$, and $\OhmVin$ for $W_S:=\langle s_1, s_2, s_3 \,|\, s_i^2=1=(s_1 s_3)^2\rangle$ acting on $V=\RR^3$ as a reflection group, preserving a copy of~$\HH^2$ in $\PP(V)$.
Here $(t_1,t_2,t_3)\mapsto t_1v_1+t_2v_2+t_3v_3$ gives coordinates on~$V$.
The set $\Delta$ (light gray) is a triangle bounded by the hyperplanes $\mathrm{Ker}(\alpha_i)$, and $\Sigma$ (dark gray) is the truncation of $\Delta$ by the hyperplanes $\{t_i=0\}$ (note that $\{t_2=0\}$ is at infinity).
The set $\OhmVin$ is the interior of the $W_S$-orbit of~$\Delta$ (here approximated by 8 iterates), and contains~$\Sigma$.}
\label{fig:Truncate}
\end{figure}
\begin{figure}[h]
\centering
\labellist
\small\hair 2pt
\pinlabel {${}^1$} [u] at 140 100 
\pinlabel {${}^2$} [u] at 305 170 
\pinlabel {${}^3$} [u] at 420 60 
\pinlabel {${}^4$} [u] at 690 145 
\pinlabel {${}^5$} [u] at 840 60 
\endlabellist
\hspace*{-4.0cm} \raisebox{2.0cm}{\includegraphics[width=5cm]{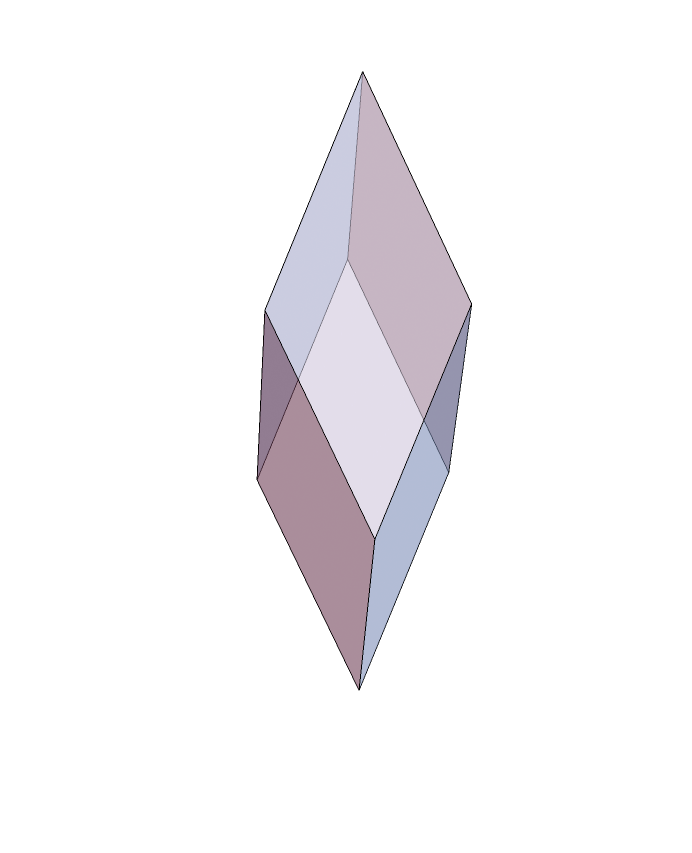}}
\hspace*{-3.5cm} \raisebox{1.5cm}{\includegraphics[width=5.5cm]{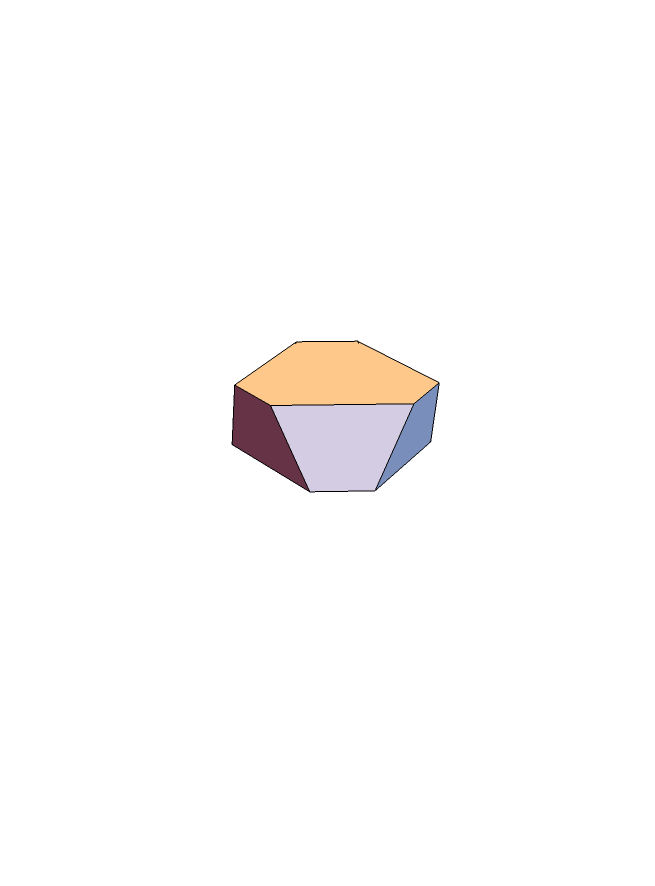}}
\hspace*{-3.9cm} \raisebox{0.6cm}{\includegraphics[width=7cm]{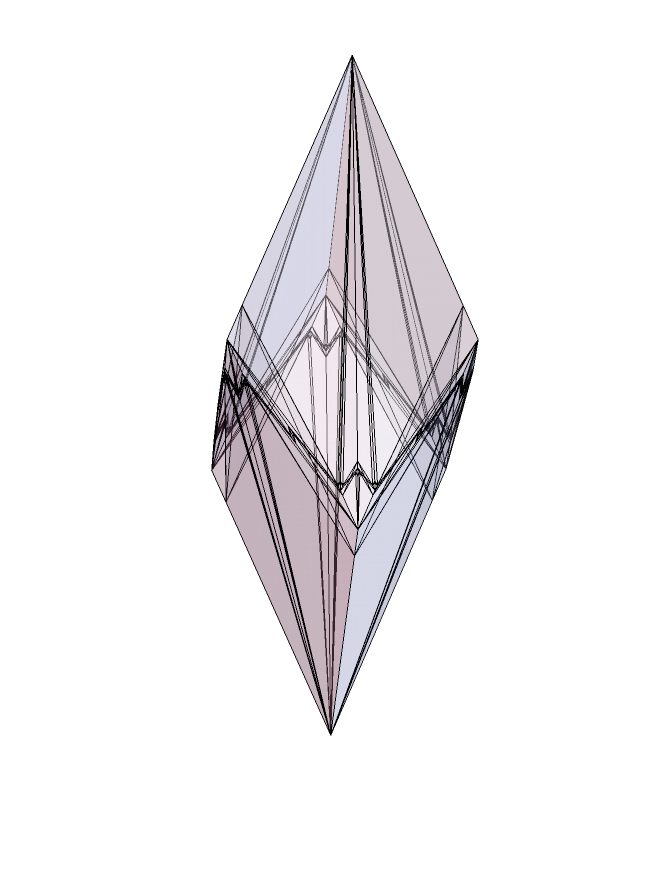}}
\hspace*{-5.4cm} \raisebox{0.1cm}{\includegraphics[width=9.5cm]{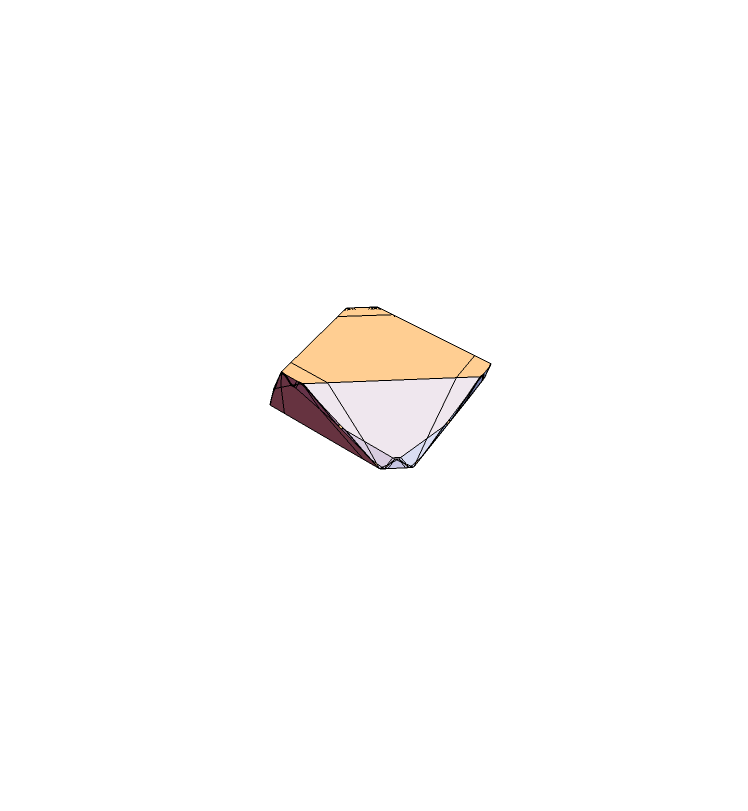}}
\hspace*{-6.6cm} \raisebox{0.0cm}{\includegraphics[width=9.8cm]{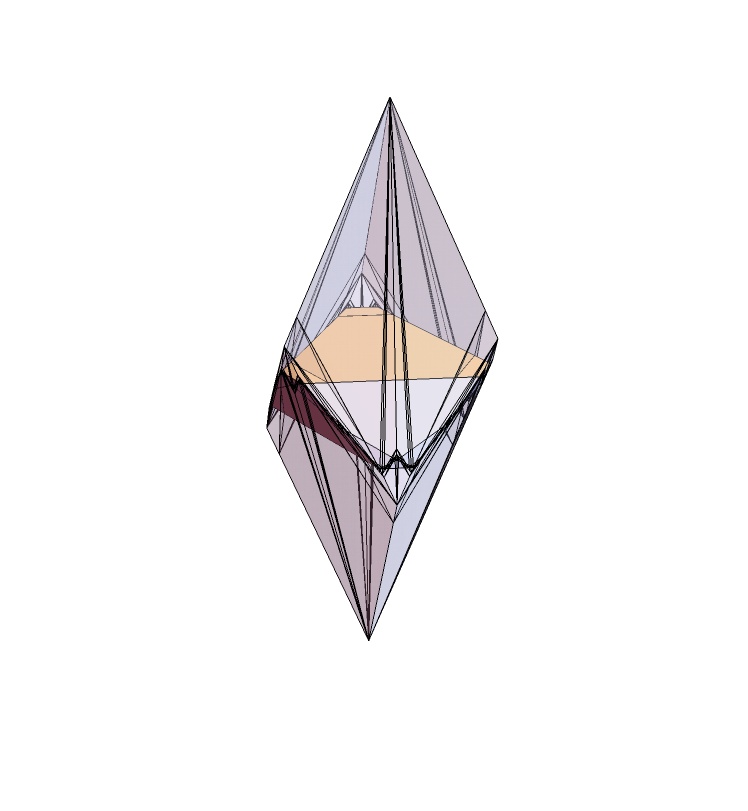}}
\hspace*{-5cm} \vspace{-2cm}
\caption{We consider a right-angled hexagon group $W_S$ and a representation $\rho : W_S\to\GL(4,\RR)$ of $W_S$ as a reflection group. Panels 1 and~2 show the sets $\Delta$ and $\Sigma$ in an affine chart of $\PP(\RR^4)$. Panels 3 and~4 show  the orbits $\bigcup_{\gamma \in W_S} \rho(\gamma) \cdot \Delta$ and $\bigcup_{\gamma \in W_S} \rho(\gamma) \cdot \Sigma$, whose respective interiors are $\OhmVin$ and~$\Omega_{\min}$. Panel 5 shows their superposition.}
\label{fig:Core}
\end{figure}

\begin{theorem} \label{thm:minimal_convex}
If $W_S$ is an irreducible Coxeter group and $\rho : W_S\to\GL(V)$ a representation of~$W_S$ as a reflection group with a Cartan matrix $\Cart = (\Cart_{i,j})_{1\leq i,j\leq N}$ of negative type, such that $\rho(W_S)$ is reduced and dual-reduced, then
$$\Omega_{\min} = \bigcup_{\gamma\in W_S} \rho(\gamma) \cdot \Sigma^{\flat}$$
and $\Sigma^{\flat}$ is a fundamental domain for the action of $\rho(W_S)$ on~$\Omega_{\min}$; it is homeomorphic to the quotient $\rho(W_S)\backslash \Omega_{\min}$.
\end{theorem}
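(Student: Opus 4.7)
The plan is to work with the cone lifts $\widetilde{\Sigma},\widetilde{\Sigma}^\flat \subset V$ and to exploit Lemma~\ref{lem:OmegaMin}, which gives $\overline{\widetilde{\Omega}_{\min}} = \bigcap_{\gamma\in W_S} \rho(\gamma)\cdot\widetilde{\mathcal{D}}^{\overline{*}}$. I will split the argument into four claims:
(a) $\widetilde{\Sigma}^\flat \subset \widetilde{\Delta}^\flat$;
(b) $\rho(\gamma)\cdot\widetilde{\Sigma} \subset \widetilde{\mathcal{D}}^{\overline{*}}$ for every $\gamma \in W_S$, equivalently $\widetilde{\Sigma}\subset\overline{\widetilde{\Omega}_{\min}}$;
(c) $\rho(W_S)\cdot\widetilde{\Sigma}^\flat \subset \widetilde{\Omega}_{\min}$;
(d) $\widetilde{\Omega}_{\min} \subset \rho(W_S)\cdot\widetilde{\Sigma}^\flat$.
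Once these are in hand, the fundamental-domain statement follows immediately: the translates $\rho(\gamma)\cdot\widetilde{\Delta}^\flat$ are pairwise disjoint by Fact~\ref{fact:Delta-flat}, and (a) passes this disjointness to $\widetilde{\Sigma}^\flat$.

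For claim (b) I will invoke the classical Coxeter telescoping identity: for $x\in\widetilde{\Delta}$ and a reduced word $\gamma = s_{i_1}\cdots s_{i_n}$, setting $x_k := \rho(s_{i_{k+1}}\cdots s_{i_n})\,x$, iterating $\rho(s_{i_k})(z)=z-\alpha_{i_k}(z)v_{i_k}$ gives
$$\rho(\gamma)\,x - x \;=\; \sum_{k=1}^n \bigl(-\alpha_{i_k}(x_k)\bigr)\,v_{i_k}.$$
Each coefficient is nonnegative because reducedness forces $\alpha_{i_k}\circ\rho(s_{i_{k+1}}\cdots s_{i_n})$ to be a nonnegative linear combination of the $\alpha_j$'s (the standard root-system consequence of the exchange condition, proved by induction on length), hence nonpositive on $\widetilde{\Delta}$. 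For $x\in\widetilde{\Sigma}=\widetilde{\Delta}\cap\widetilde{\mathcal{D}}^{\overline{*}}$, adding this nonneg combination of $v_j$'s to $x$ keeps $\rho(\gamma)x$ in $\widetilde{\mathcal{D}}^{\overline{*}}$, proving (b).

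For (a) I will argue by contradiction. Suppose $y = \sum_j t_j v_j\in\widetilde{\Delta}\cap\mathrm{Int}(\widetilde{\mathcal{D}}^{\overline{*}})$ with all $t_j>0$ has infinite stabilizer $W_{S'}$, so $\alpha_i(y)=0$ for $i\in S'$. Restricting $\Cart\,\mathbf{t}$ to $S'$ yields $\Cart_{S'}\mathbf{t}_{S'} = -\Cart_{S',S\setminus S'}\mathbf{t}_{S\setminus S'}\geq 0$ componentwise. If $S'=S$, then $\Cart\,\mathbf{t}=0$ and Fact~\ref{fact:type-sign-entries}.\eqref{item:neg-type-sign-entries} (applicable because $\Cart$ has negative type) forces $\mathbf{t}=0$, a contradiction. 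If $S'\subsetneq S$, I decompose $S'$ into irreducible components $T$: Fact~\ref{fact:type-sign-entries} rules out $\Cart_T$ of negative type, and when $W_T$ is infinite, Fact~\ref{fact:types} combined with Fact~\ref{fact:type-sign-entries}.\eqref{item:zero-type-sign-entries} forces $\Cart_T$ of zero type with $\Cart_T\mathbf{t}_T=0$; the residual relation then forces $\Cart_{ij}=0$ for every $i\in T,\, j\notin S'$, i.e.\ $T$ is disconnected from $S\setminus S'$ in the Coxeter diagram. Being also disconnected from the other components of $S'$, $T$ must be a full connected component of the diagram of $W_S$, contradicting irreducibility. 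Hence every component of $S'$ is spherical, contradicting $W_{S'}$ infinite.

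For (c), I will show $\rho(W_S)\cdot\widetilde{\Sigma}^\flat$ is open in $\widetilde{\Omega}_{\mathrm{Vin}}$: by (a) any $y\in\widetilde{\Sigma}^\flat$ lies in $\widetilde{\Delta}^\flat$ with finite stabilizer $W_{S''}$, and a small $\widetilde{\Omega}_{\mathrm{Vin}}$-neighborhood of $y$ decomposes as $\bigcup_{\eta\in W_{S''}}\rho(\eta)\cdot U$ for an open $U\subset\widetilde{\Delta}$; shrinking $U$ into the open set $\mathrm{Int}(\widetilde{\mathcal{D}}^{\overline{*}})$ places the whole neighborhood in $\rho(W_S)\cdot\widetilde{\Sigma}^\flat$. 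Combined with the fact that $\rho(W_S)\cdot\widetilde{\Sigma}^\flat\subset\overline{\widetilde{\Omega}_{\min}}$ by (b) and invariance, the general principle that an open subset of a closed set lies in its interior gives (c). For (d), given $x\in\widetilde{\Omega}_{\min}$, Fact~\ref{fact:Delta-flat} writes $x=\rho(\gamma)y$ with $y\in\widetilde{\Delta}^\flat$; by invariance $y\in\widetilde{\Omega}_{\min}\subset\widetilde{\mathcal{D}}^{\overline{*}}$ (take $\eta=e$ in Lemma~\ref{lem:OmegaMin}), and openness of $\widetilde{\Omega}_{\min}$ inside the closed set $\widetilde{\mathcal{D}}^{\overline{*}}$ forces $y\in\mathrm{Int}(\widetilde{\mathcal{D}}^{\overline{*}})$, hence $y\in\widetilde{\Sigma}^\flat$. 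The main obstacle will be (a): balancing the negative-type hypothesis on the full Cartan matrix with the possible zero type of proper Cartan submatrices (parabolic corners), via Vinberg's structural dichotomies and the irreducibility of $W_S$, to exclude such parabolic vertices of $\widetilde{\Delta}$ from $\mathrm{Int}(\widetilde{\mathcal{D}}^{\overline{*}})$.
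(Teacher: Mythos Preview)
Your proposal is correct and follows essentially the same route as the paper. Your claims (a), (b), (c) are precisely the paper's Lemma~\ref{lem:Sigma-flat-in-OhmVin} (first half), Lemma~\ref{lem:transl-Sigma-flat}, and Lemma~\ref{lem:Sigma-flat-in-OhmVin} (second half) respectively, proved by the same techniques (the irreducibility/type dichotomy via Fact~\ref{fact:type-sign-entries} for (a), and the telescoping identity plus Lemma~\ref{lem:nonnegativity} for (b), though your telescoping peels from the right rather than the left). Your argument for (d) is slightly more direct than the paper's: rather than passing through $\Sigma$ and taking interiors, you observe immediately that $\widetilde{\Omega}_{\min}$, being open and contained in $\widetilde{\mathcal{D}}^{\overline{*}}$, already lies in $\mathrm{Int}(\widetilde{\mathcal{D}}^{\overline{*}})$ --- a small simplification, but the overall architecture is the same.
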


Theorem~\ref{thm:minimal_convex} is a consequence of Lemma~\ref{lem:OmegaMin} and of the following two lemmas.

\begin{lemma} \label{lem:Sigma-flat-in-OhmVin}
In the setting of Theorem~\ref{thm:minimal_convex}, the set $\Sigma^{\flat}$ is contained in the set $\Delta^\flat$ of Fact~\ref{fact:Delta-flat} and $\bigcup_{\gamma\in W_S}\rho(\gamma)\cdot \Sigma^{\flat}$ is the interior of $\bigcup_{\gamma\in W_S}\rho(\gamma)\cdot \Sigma$.
\end{lemma}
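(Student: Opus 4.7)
The plan is to handle the two assertions separately: first the inclusion $\Sigma^{\flat} \subset \Delta^{\flat}$, then the topological identification of the orbit unions.

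For the first assertion, I would use the characterization of $\Delta^{\flat}$ from Fact~\ref{fact:Delta-flat} as $\Delta$ minus its infinite-stabilizer faces, and show that if $x = \sum_{j} t_j v_j$ with all $t_j > 0$ and $\alpha_i(x) \leq 0$, then the face containing $x$ in its relative interior has finite stabilizer. Setting $T := \{i : \alpha_i(x) = 0\}$, this stabilizer is $W_T$; for each irreducible component $T'$ of $T$, the vector $\tau := (t_j)_{j \in T'}$ is entrywise positive, and the identity $\alpha_i(x) = 0$ for $i \in T'$ rewrites as $(\Cart_{T'}\tau)_i = -\sum_{j \notin T} t_j \Cart_{i,j} \geq 0$ (the terms with $j \in T \setminus T'$ vanish since $m_{i,j} = 2$ across distinct irreducible components, and the remaining terms are $\geq 0$ because $\Cart_{i,j} \leq 0$ for $i \neq j$). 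Fact~\ref{fact:type-sign-entries} then rules out $\Cart_{T'}$ being of negative type (which would force $\tau = 0$) and rules out zero type (which would force $\Cart_{i,j} = 0$ for all $i \in T'$ and $j \notin T'$, disconnecting $T'$ from $S \setminus T'$ in the Coxeter diagram; irreducibility of $W_S$ then demands $T' = S$, making $\Cart$ itself of zero type, contrary to hypothesis). Hence $\Cart_{T'}$ is of positive type, so $W_{T'}$ is spherical by Fact~\ref{fact:types}, and $W_T$ is finite.

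For the openness direction of (b), I would fix $y \in \rho(\gamma_0) \Sigma^{\flat}$, set $z := \rho(\gamma_0^{-1}) y \in \Int(\mathcal{D}^{\overline{*}})$ and $T := \{i : \alpha_i(z) = 0\}$ (with $W_T$ finite by the first part). The chambers $\rho(\gamma_0 w) \Delta$, $w \in W_T$, tile a neighborhood of $y$ in $\OhmVin$; since each $w \in W_T$ fixes $z$ pointwise, $\rho(\gamma_0 w)^{-1} y = z \in \Int(\mathcal{D}^{\overline{*}})$, which gives $y \in \Int(\rho(\gamma_0 w) \cdot \mathcal{D}^{\overline{*}})$. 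A sufficiently small open ball $B$ around $y$ then sits in the finite intersection $\bigcap_{w \in W_T} \Int(\rho(\gamma_0 w) \cdot \mathcal{D}^{\overline{*}})$, so each $B \cap \rho(\gamma_0 w) \Delta$ is contained in $\rho(\gamma_0 w) \Sigma^{\flat}$, and these pieces together cover $B$.

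For the reverse inclusion, take $y$ in the topological interior of $\bigcup_\gamma \rho(\gamma) \Sigma$. The containment $\bigcup_\gamma \rho(\gamma) \Sigma \subset \bigcup_\gamma \rho(\gamma) \Delta$ and the definition of $\OhmVin$ give $y \in \OhmVin$, and I write $y = \rho(\gamma_0) z$ with $z \in \Delta^{\flat}$ unique and $T$ as before. What remains is to establish $z \in \Int(\mathcal{D}^{\overline{*}})$: arguing by contradiction, I pick a supporting $\psi \in \widetilde{\mathcal{D}} \setminus \{0\}$ with $\psi(z) = 0$ and aim to produce a point $z' \in \Int(\widetilde{\Delta})$ arbitrarily close to $z$ with $\psi(z') > 0$. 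For such $z'$, the point $y' := \rho(\gamma_0) z'$ lies in the interior of the single chamber $\rho(\gamma_0) \Delta$, meets no other chamber, and satisfies $y' \notin \rho(\gamma_0) \mathcal{D}^{\overline{*}}$, contradicting the interiority of $y$. The main obstacle is producing $z'$, i.e., ruling out the possibility that $\widetilde{\Delta}$ is locally contained in $\widetilde{\mathcal{D}}^{\overline{*}}$ at $z$. I would handle this by a polar-cone computation: if $\psi \leq 0$ on $\widetilde{\Delta}$ near $z$, then $\psi \leq 0$ on the tangent cone $\{\delta : \alpha_i(\delta) \leq 0, \, i \in T\}$ of $\widetilde{\Delta}$ at $z$, so $\psi = \sum_{i \in T} c_i \alpha_i$ with $c_i \geq 0$; the conditions $\psi(v_j) \leq 0$ for $j \in T$ then read $\Cart_T^{\mathrm{T}} \vec{c} \leq 0$ entrywise. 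By the first part, $W_T$ is spherical, so by Fact~\ref{fact:types} the submatrix $\Cart_T$ is symmetrizable and of positive type; multiplying by a positive diagonal matrix to symmetrize and applying positive-definiteness forces $\vec{c} = 0$, contradicting $\psi \neq 0$.
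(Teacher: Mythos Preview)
Your proof is correct. For the first assertion, your argument is essentially identical to the paper's: both use the splitting of $\alpha_i(x) = 0$ into the $T'$-part and the rest, invoke Fact~\ref{fact:type-sign-entries} to rule out negative and zero type for $\Cart_{T'}$, and use irreducibility of $W_S$ to derive the contradiction in the zero-type case.

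For the second assertion, the two arguments diverge in style. The paper records a general principle: for any $R \subset \Delta^{\flat}$, the interior of $\bigcup_\gamma \rho(\gamma)\cdot R$ equals $\bigcup_\gamma \rho(\gamma)\cdot R'$, where $R'$ is the interior of $R$ in the subspace topology of $\Delta^{\flat}$; it then asserts that $\Sigma^{\flat}$ is this relative interior of $\Sigma \cap \Delta^{\flat}$ and concludes. Your argument is more hands-on: you directly build the neighborhood in the ``open'' direction and, in the reverse direction, carry out an explicit polar-cone computation to show that $\widetilde{\Delta}$ cannot be locally contained in $\widetilde{\mathcal{D}}^{\overline{*}}$ at a boundary point $z$. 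In effect, your polar-cone step supplies a proof of the relative-interior claim that the paper states without justification, so your version is more self-contained at the cost of being longer.

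One small point: when you write ``pick a supporting $\psi \in \widetilde{\mathcal{D}} \setminus \{0\}$ with $\psi(z) = 0$'', you are implicitly using $z \in \mathcal{D}^{\overline{*}}$. This does hold---since $y \in \bigcup_\gamma \rho(\gamma)\cdot\Sigma$ and $y \in \OhmVin$, the point $\rho(\gamma_1^{-1})y \in \Sigma$ lies in $\Delta \cap \OhmVin = \Delta^{\flat}$, and uniqueness of the $\Delta^{\flat}$-representative forces $z \in \Sigma$---but it would be worth saying so.
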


\begin{lemma} \label{lem:transl-Sigma-flat}
In the setting of Theorem~\ref{thm:minimal_convex}, we have $\bigcup_{\gamma\in W_S}\rho(\gamma)\cdot \Sigma \subset \bigcap_{\gamma\in W_S} \rho(\gamma) \cdot \mathcal{D}^{\overline{*}}$.
\end{lemma}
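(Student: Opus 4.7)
First I would observe that the two-sided containment $\bigcup_{\gamma}\rho(\gamma)\cdot\Sigma \subset \bigcap_{\gamma}\rho(\gamma)\cdot\mathcal{D}^{\overline{*}}$ is, via the substitution $\gamma \mapsto \gamma_2^{-1}\gamma_1$, formally equivalent to the single statement
$$(\star) \qquad \rho(\gamma)\cdot\widetilde{\Sigma} \ \subset\ \widetilde{\mathcal{D}}^{\overline{*}} \ =\ \sum_{j=1}^N \RR_{\geq 0}\, v_j \qquad \text{for every } \gamma \in W_S.$$
Working with these non-negative cone lifts in $V$ is unambiguous, since both $\widetilde{\Sigma}$ and $\widetilde{\mathcal{D}}^{\overline{*}}$ sit in the positive cone spanned by the $v_j$ and $\rho(\gamma)$ is linear; the desired projective containment follows immediately.

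I would then prove $(\star)$ by induction on the Coxeter word length $\ell(\gamma)$. The case $\gamma = e$ is immediate from $\widetilde{\Sigma} \subset \widetilde{\mathcal{D}}^{\overline{*}}$. For $\ell(\gamma) \geq 1$, fix a reduced expression $\gamma = s_i \gamma'$; by the inductive hypothesis applied to $\gamma'$, the vector $y := \rho(\gamma')\cdot x$ can be written as $y = \sum_j u_j v_j$ with $u_j \geq 0$. The identity
$$\rho(\gamma)\cdot x \ =\ \rho(s_i)\cdot y \ =\ \sum_{j \neq i} u_j\, v_j + \bigl(u_i - \alpha_i(y)\bigr)\,v_i$$
reduces the inductive step to showing $\alpha_i(y) \leq 0$, for then $u_i - \alpha_i(y) \geq u_i \geq 0$ and $\rho(\gamma)\cdot x \in \widetilde{\mathcal{D}}^{\overline{*}}$.

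The heart of the argument would be this sign inequality. Rewriting $\alpha_i(y) = (\rho^*(\gamma'^{-1})\alpha_i)(x)$, I would invoke Vinberg's root-positivity theorem (\cite[Th.\,2 and \S\,5]{vin71}): for any $w \in W_S$ and $s_j \in S$ with $\ell(w s_j) > \ell(w)$, the form $\rho^*(w)\alpha_j$ equals $\sum_k c_k \alpha_k$ with all $c_k \geq 0$ (a \emph{positive root}). Reducedness of $\gamma = s_i \gamma'$ yields $\ell(\gamma'^{-1} s_i) = \ell(s_i \gamma') > \ell(\gamma') = \ell(\gamma'^{-1})$, so the theorem applies with $w = \gamma'^{-1}$ and $s_j = s_i$. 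Pairing with $x \in \widetilde{\Sigma} \subset \widetilde{\Delta}$, where $\alpha_k(x) \leq 0$ for every $k$, one obtains $\alpha_i(y) = \sum_k c_k\, \alpha_k(x) \leq 0$, closing the induction.

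The main conceptual obstacle is Vinberg's positivity statement itself: one must confirm that the positive/negative root dichotomy (classical for the Tits geometric representation) holds in the full generality of a reflection-group representation $\rho$ with Cartan matrix of negative type. This, however, is a direct consequence of the free tiling of $\widetilde{\Omega}_{\mathrm{Vin}}$ by the chambers $\rho(\gamma)\widetilde{\Delta}^\circ$ established in \cite{vin71}. Once it is in hand, the length induction above is essentially mechanical.
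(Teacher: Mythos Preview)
Your proof is correct and follows essentially the same route as the paper: both reduce to showing $\rho(\gamma)\cdot\widetilde{\Sigma}\subset\widetilde{\mathcal{D}}^{\overline{*}}$ and prove this by induction on word length, the key input being the root-positivity statement (the paper's Lemma~\ref{lem:nonnegativity}). The only cosmetic differences are that the paper phrases the induction via the slightly stronger characterization~\eqref{eqn:describe-Delta} of $\widetilde{\Delta}$ and peels off the \emph{last} letter of a reduced word (using the $v$-version of positivity, Lemma~\ref{lem:nonnegativity}\eqref{item:nonneg-span-3}), whereas you peel off the \emph{first} letter and use the dual $\alpha$-version.
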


\begin{proof}[Proof of Theorem~\ref{thm:minimal_convex} assuming Lemmas~\ref{lem:Sigma-flat-in-OhmVin} and~\ref{lem:transl-Sigma-flat}]
It follows from Lemma~\ref{lem:OmegaMin} that $\overline{\Omega_\mathrm{min}} \cap \Delta \subset \mathcal{D}^{\overline{*}} \cap \Delta = \Sigma$, hence $\overline{\Omega_\mathrm{min}} \cap \bigcup_{\gamma\in W_S}\rho(\gamma)\cdot \Delta \subset \bigcup_{\gamma\in W_S}\rho(\gamma)\cdot \Sigma$, and so $\overline{\Omega_\mathrm{min}} \cap \OhmVin \subset \bigcup_{\gamma\in W_S}\rho(\gamma)\cdot \Sigma$.
Taking interiors and applying Lemma~\ref{lem:Sigma-flat-in-OhmVin}, we find $\Omega_{\min} \subset \bigcup_{\gamma\in W_S}\rho(\gamma)\cdot  \Sigma^{\flat}$.

By Lemmas~\ref{lem:OmegaMin} and~\ref{lem:transl-Sigma-flat}, we have $\bigcup_{\gamma\in W_S}\rho(\gamma)\cdot \Sigma \subset \overline{\Omega_{\min}}$.
By taking interiors and applying Lemma~\ref{lem:Sigma-flat-in-OhmVin}, we obtain $\bigcup_{\gamma\in W_S}\rho(\gamma)\cdot \Sigma^{\flat} \subset \Omega_{\min}$.
\end{proof}

\subsection{Proof of Lemma~\ref{lem:Sigma-flat-in-OhmVin}} \label{subsec:proof-Sigma-flat-in-OhmVin}

By Fact~\ref{fact:Delta-flat}, the set $\widetilde \Delta^{\flat}$ is equal to $\widetilde \Delta$ minus its faces of infinite stabilizer.
Let us check that any point of $\Sigma^\flat = \Delta \cap \operatorname{Int}\big({\mathcal D}^{\overline{*}}\big)$ has finite stabilizer in $W_S$; equivalently, we can work in the vector space~$V$ and check that any point of $\widetilde \Sigma^\flat$ has finite stabilizer in $W_S$.
Recall that the stabilizer of any point $x \in \Delta$ is the standard subgroup $W_{S_x}$ where $S_x = \{s_i \in S \mid \alpha_i(x) = 0\}$ \cite[Th.\,2.(5)]{vin71}.

Suppose by contradiction that $x \in \widetilde \Sigma^\flat$ has infinite stabilizer.
Let $W_{T}$ be an irreducible standard subgroup of $W_{S_x}$ which is still infinite.
By Fact~\ref{fact:types}, the Cartan submatrix $\Cart_{T}:=(\Cart_{i,j})_{s_i,s_j\in T}$ is of negative or zero type. Since $x \in \widetilde \Sigma^\flat = \widetilde{\Delta} \cap \operatorname{Int}\big (\widetilde{\mathcal{D}}^{\overline{*}}\big)$, we can write $x = \sum_{j=1}^N t_j v_j$ where $\alpha_i(x)\leq 0$ and $t_j > 0$ for all $1\leq i,j\leq N$.
For any $s_i\in T$, we have
\begin{align}\label{eqn:split-up}
0 = \alpha_i(x)  &= \sum_{ s_j \in T } \Cart_{i,j} t_j +   \sum_{ s_j \in S \smallsetminus T } \Cart_{i,j} t_j.
\end{align}
The terms of the second sum are all nonpositive, hence the first sum is nonnegative.
By Fact~\ref{fact:type-sign-entries}.\eqref{item:neg-type-sign-entries}, the Cartan submatrix $\Cart_{T}$ cannot be of negative type, hence it is of zero type.
By Fact~\ref{fact:type-sign-entries}.\eqref{item:zero-type-sign-entries}, the first sum of~\eqref{eqn:split-up} must then be zero.
This implies that the second sum is zero, and so every term of the second sum is zero.
However, since $W_S$ is irreducible, there exist $s_i \in T$ and $s_j \in S \smallsetminus T$ such that $\Cart_{i,j} < 0$.
Then $\Cart_{i,j} t_j = 0$ implies $t_j = 0$, contradicting the fact that $t_j > 0$.
This completes the proof that $\Sigma^\flat \subset \Delta^\flat$.

Let us now check that $\bigcup_{\gamma\in W_S}\rho(\gamma)\cdot \Sigma^\flat$ is the interior of $\bigcup_{\gamma\in W_S}\rho(\gamma)\cdot \Sigma$.
Observe that for any $x \in \Delta^\flat$, an open neighborhood of $x$ in $\OhmVin$ is given by $\bigcup_{\gamma\in W_{S_x}} \rho(\gamma)\cdot U$ where $U$ is a relatively open neighborhood of $x$ in $\Delta^\flat$ which touches only the reflection walls containing~$x$, and $\rho(W_{S_x})$ is the (finite) subgroup of $\rho(W_S)$ generated by the reflections along walls containing~$x$.
It follows that for any subset $R$ of $\Delta^\flat$, the interior of $\bigcup_{\gamma\in W_S}\rho(\gamma)\cdot R$ is $\bigcup_{\gamma\in W_S}\rho(\gamma)\cdot R'$ where $R'$ is the relative interior of $R$ in $\Delta^\flat$. 
Since $\Sigma^\flat$ is the relative interior of $\Sigma \cap \Delta^\flat$ in $\Delta^\flat$, we obtain that $\bigcup_{\gamma\in W_S}\rho(\gamma)\cdot \Sigma^\flat$ is the interior of $\big(\bigcup_{\gamma\in W_S}\rho(\gamma)\cdot \Sigma\big) \cap \OhmVin$, hence of $\bigcup_{\gamma\in W_S}\rho(\gamma)\cdot \Sigma$.

\subsection{Proof of Lemma~\ref{lem:transl-Sigma-flat}}

It is enough to establish that
\begin{equation} \label{eqn:describe-Delta}
\widetilde{\Delta} = \big\{ x \in V ~|~ \rho(\gamma)x - x \in \widetilde{\mathcal{D}}^{\overline{*}} \quad\forall \gamma \in W_{S} \big\}.
\end{equation}
Indeed, for any $x \in \widetilde{\Sigma} = \widetilde{\Delta} \cap \widetilde{\mathcal{D}}^{\overline{*}}$ and any $\gamma\in W_S$ we then have $\rho(\gamma)x \in \widetilde{\mathcal{D}}^{\overline{*}} + x \subset \widetilde{\mathcal{D}}^{\overline{*}}$, hence $\bigcup_{\gamma\in W_S}\rho(\gamma)\cdot \Sigma \subset \mathcal{D}^{\overline{*}}$, and so $\bigcup_{\gamma\in W_S}\rho(\gamma)\cdot \Sigma \subset \bigcap_{\gamma\in W_S} \rho(\gamma) \cdot \mathcal{D}^{\overline{*}}$.

We now establish \eqref{eqn:describe-Delta}, following \cite[Prop.\,3.12]{kac90}.

We say that $\gamma = s_{i_1} \dotsm s_{i_m}$ is a \emph{reduced expression} for $\gamma \in W_S$ if $m$ is minimal among all possible expressions of $\gamma$ as a product of the $s_i$.
The integer $m$ is called the \emph{length} of $\gamma$ and is denoted by $\ell(\gamma)$.
Since all the relations of $W_S$ have even length, we have $|\ell(\gamma s_i) - \ell(\gamma)| = 1 $ for all $\gamma \in W_S$ and $s_i \in S$.
 
\begin{lemma}[{see \cite[Lem.\,3.11]{kac90}}] \label{lem:nonnegativity}
Let $W_S$ be a Coxeter group and $\rho: W_S \to \GL(V)$ a representation of $W_S$ as a reflection group in~$V$, associated to some $\alpha = (\alpha_1,\dots,\alpha_N) \in {V^*}^N$ and $v = (v_1,\dots,v_N) \in V^N$.
For any $\gamma \in W_S$ and $s_i \in S$, the following are equivalent:
\begin{enumerate}
  \item\label{item:nonneg-span-1} $\ell(\gamma s_i) > \ell(\gamma)$;
  \item\label{item:nonneg-span-2} $\rho^{*}(\gamma) \alpha_i$ belongs to the nonnegative span $\widetilde{\Delta}^{\overline{*}}$ of $\alpha_1, \ldots, \alpha_N$ in $V^*$;
  \item\label{item:nonneg-span-3} $\rho(\gamma) v_i$ belongs to the nonnegative span $\widetilde{\mathcal{D}}^{\overline{*}}$ of $v_1, \ldots, v_N$ in $V$.
\end{enumerate}
In particular, if $\gamma = s_{i_1} \dotsm s_{i_m}$ is a reduced expression for $\gamma \in W_S$, then
$$\rho (s_{i_1} \dotsm s_{i_{m-1}})(v_{i_m}) \in \widetilde{\mathcal{D}}^{\overline{*}}.$$
\end{lemma}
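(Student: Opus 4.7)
The plan is to proceed by induction on $m := \ell(\gamma)$, following the classical argument of Kac (Proposition 3.12 of \cite{kac90}). First, the equivalence \ref{item:nonneg-span-1}~$\Leftrightarrow$~\ref{item:nonneg-span-2} reduces to \ref{item:nonneg-span-1}~$\Leftrightarrow$~\ref{item:nonneg-span-3} by duality: as noted in Section~\ref{subsec:refl-dual}, the dual representation $\rho^*$ on $V^*$ is again a representation of $W_S$ as a reflection group, with the roles of $v_i$ and $\alpha_i$ exchanged and with the transposed Cartan matrix. Hence \ref{item:nonneg-span-1}~$\Leftrightarrow$~\ref{item:nonneg-span-2} for $\rho$ is just \ref{item:nonneg-span-1}~$\Leftrightarrow$~\ref{item:nonneg-span-3} applied to $\rho^*$. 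It therefore suffices to prove \ref{item:nonneg-span-1}~$\Leftrightarrow$~\ref{item:nonneg-span-3}.

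I would establish simultaneously by induction on $m$ the two implications
(A) $\ell(\gamma s_i) > \ell(\gamma) \Rightarrow \rho(\gamma) v_i \in \widetilde{\mathcal{D}}^*$ and
(B) $\ell(\gamma s_i) < \ell(\gamma) \Rightarrow \rho(\gamma) v_i \in -\widetilde{\mathcal{D}}^*$.
The base case $m=0$ is immediate since $\rho(e) v_i = v_i \in \widetilde{\mathcal{D}}^*$. For the inductive step, fix a reduced expression $\gamma = \gamma' s_j$ with $\ell(\gamma') = m-1$ and compute
\[ \rho(\gamma) v_i \,=\, \rho(\gamma')\bigl(v_i - \Cart_{j,i} v_j\bigr), \]
where $-\Cart_{j,i} \geq 0$ by compatibility of $\Cart$ with~$W_S$. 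If $j = i$ this equals $-\rho(\gamma') v_i$; since $\ell(\gamma' s_i) = m > m-1 = \ell(\gamma')$, the inductive hypothesis (A) yields $\rho(\gamma') v_i \in \widetilde{\mathcal{D}}^*$, proving~(B). If $j \neq i$, then $\rho(\gamma') v_j \in \widetilde{\mathcal{D}}^*$ by (A) applied to $(\gamma',s_j)$, since $\ell(\gamma' s_j) = m > \ell(\gamma')$; whether (A) or (B) then holds for $(\gamma, s_i)$ is governed by the sign of $\ell(\gamma' s_i) - \ell(\gamma')$, combined with the $\rho(\gamma') v_i$ term.

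The main obstacle is the sub-case $j \neq i$ with $\ell(\gamma' s_i) < \ell(\gamma')$, where the two terms in the expression for $\rho(\gamma) v_i$ enter with opposite signs. Here one invokes the Exchange Condition for Coxeter groups: there exists a reduced expression of $\gamma'$ ending in $s_i$, say $\gamma' = \gamma'' s_i$ with $\ell(\gamma'') = m-2$, so $\gamma s_i = \gamma'' s_i s_j s_i$. A careful analysis in the dihedral subgroup $\langle s_i, s_j \rangle$, combined with the inductive hypothesis applied to $\gamma''$ (whose length is strictly smaller), pins down the sign with which each $\rho(\gamma'') v_k$ contributes to $\rho(\gamma) v_i$, and thereby closes the induction in this last case. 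Finally, the ``in particular'' consequence is an immediate application of (A) to the pair $(s_{i_1} \cdots s_{i_{m-1}},\, s_{i_m})$ coming from a reduced expression of $\gamma$, since the product has length $m$ strictly greater than the prefix length $m-1$.
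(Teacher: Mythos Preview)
Your approach is correct: it is Kac's classical induction on $\ell(\gamma)$, reducing via the Exchange Condition to a rank-two computation in the dihedral subgroup $\langle s_i,s_j\rangle$. The paper, however, takes a different and much shorter route. It observes that $\rho^*(\gamma)\alpha_i \in \widetilde{\Delta}^*$ is equivalent to $\alpha_i(\rho(\gamma^{-1})\widetilde{\Delta}) \leq 0$, i.e.\ to the chambers $\widetilde{\Delta}^\circ$ and $\rho(\gamma^{-1})\widetilde{\Delta}^\circ$ lying on the same side of the wall $\mathrm{Ker}(\alpha_i)$ inside $\widetilde{\Omega}_{\mathrm{Vin}}$; a standard wall-crossing fact for the Tits cone, imported from \cite[Lem.\,D.2.2]{dav08}, then identifies this with $\ell(s_i\gamma^{-1}) > \ell(\gamma^{-1})$. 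Your argument is more self-contained (it needs only the Exchange Condition and an explicit dihedral calculation, not the Vinberg tiling), while the paper's is shorter by outsourcing the combinatorics of walls and galleries to Davis. One small caveat on your sketch: in the sub-case $j\neq i$, the passage from the sign of $\ell(\gamma' s_i)-\ell(\gamma')$ to that of $\ell(\gamma s_i)-\ell(\gamma)$ is not immediate from a single application of Exchange; in Kac's argument this link is made by passing to the \emph{minimal} coset representative of $\gamma$ modulo $\langle s_i,s_j\rangle$, for which the inductive hypothesis applies to both $v_i$ and $v_j$ simultaneously, and the rank-two computation then finishes the job.
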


\begin{proof}[Proof of Lemma~\ref{lem:nonnegativity}]
We give a proof of \eqref{item:nonneg-span-1}~$\Leftrightarrow$~\eqref{item:nonneg-span-2}; the proof of \eqref{item:nonneg-span-1}~$\Leftrightarrow$~\eqref{item:nonneg-span-3} is similar using dual objects.
Recall that $
\widetilde{\Delta} = \{ x \in V \mid \alpha_i(x) \leq 0,\; \forall 1\leq i\leq N \}$.
The linear form $\alpha \in V^*$ lies in $\widetilde{\Delta}^{\overline{*}}$ if and only if $\alpha(\widetilde{\Delta}) \leq 0$.
Hence, $\rho^*(\gamma)\alpha_i \in \widetilde{\Delta}^{\overline{*}}$ if and only if $\alpha_i(\rho(\gamma^{-1})\widetilde{\Delta}) \leq 0$.
Moreover (see \cite[Lem.\,D.2.2]{dav08}), given $s_i \in S$ and $\gamma \in W_S$, the cones $\operatorname{Int}\,(\widetilde{\Delta})$ and $\rho(\gamma) \operatorname{Int}\,(\widetilde{\Delta})$ lie on opposite sides of the hyperplane $\mathrm{Ker}(\alpha_i) \cap \widetilde{\Omega}_{\scriptscriptstyle\mathrm{TV}}$ of $\widetilde{\Omega}_{\scriptscriptstyle\mathrm{TV}}$ fixed by $\rho(s_i)$ if and only if $\ell(\gamma) > \ell(s_i \gamma)$.
Hence, $\alpha_i(\rho(\gamma^{-1})\widetilde{\Delta}) \leq 0$ if and only if $\ell(s_i \gamma^{-1}) > \ell(\gamma^{-1})$, completing the proof.
\end{proof}

\begin{proof}[Proof of \eqref{eqn:describe-Delta}]
The inclusion $\supset$ is obvious, by~\eqref{eqn:dual-corners} and the definitions \eqref{eqn:rho-s-i} of $\rho(s_i)$ and \eqref{eqn:Delta-tilde} of~$\widetilde{\Delta}$.
We prove the reverse inclusion by induction on $ m = \ell(\gamma)$. For $m = 1$, it is the definition of $\widetilde{\Delta}$.
If $m  > 1$, let $\gamma = s_{i_1} \dotsm s_{i_m}$.
We have
$$\rho(\gamma)x - x = \left(\rho(s_{i_1} \dotsm s_{i_{m-1}})\,x - x\right) + \rho(s_{i_1} \dotsm s_{i_{m-1}})\left(\rho(s_{i_m})\,x - x \right) $$
and we apply the inductive assumption to the first summand and Lemma~\ref{lem:nonnegativity} to the second summand, using the equality $\rho(s_{i_m})x - x = -\alpha_{i_m}(x)\,  v_{i_m}$ from~\eqref{eqn:rho-s-i}.
\end{proof}

\subsection{The fundamental domain $\Sigma \cap \Delta^{\flat}$ of the closed convex core}

As in Section~\ref{subsec:proof-Sigma-flat-in-OhmVin}, the stabilizer of $x$ in~$W_S$ is the standard subgroup $W_{S_x}$ where
$$S_x := \big\{ s_i \in S ~|~ \alpha_i(x)= 0\big\}.$$
Given a decomposition $x = \sum_{j=1}^N t_j v_j \in V$ such that $t := (t_j)_{j=1}^N \in \RR_{\geq 0}^N$, we introduce 
$$
S^{+,t} := \big\{s_j\in S \,|\, t_j > 0\big\}, \quad
S_{x}^{0,t} := \{s_j\in S_x \,|\, t_j = 0\}, \quad \textrm{and} \quad
S_{x}^{+,t} := \{s_j\in S_x \,|\, t_j > 0\} . 
$$
Recall (Fact~\ref{fact:Delta-flat}) that a point $x\in \Delta$ belongs to $\partial \OhmVin$ if and only if $W_{S_x}$ is infinite.

Consider the following two conditions (negating conditions~\ref{item:no-Z2} of Section~\ref{subsec:intro-main} and~\ref{item:not-zero-type} of Theorem~\ref{thm:main}):
\begin{enumerate}
  \item[\namedlabel{item:exists-Z2}{\texttt{(IC)}}] there exist disjoint subsets $S',S''$ of~$S$ such that $W_{S'}$ and $W_{S''}$ are both infinite and commute;
  \item[\namedlabel{item:exists-zero-type}{\texttt{(ZT)}}] there exists an irreducible standard subgroup $W_{S'}$ of $W_S$ with $\emptyset\neq S'\subset S$ such that the Cartan submatrix $\Cart_{S'}$ is of zero type.
\end{enumerate}
The goal of this section is to establish the following.
We refer to Section~\ref{subsec:Coxeter-groups} for the notion of irreducible components of a Coxeter group.

\begin{proposition} \label{prop:boundary_sigma}
In the setting of Theorem~\ref{thm:minimal_convex}, we have $\Sigma \cap \partial \OhmVin \neq \emptyset$ if and only if \ref{item:exists-Z2} or \ref{item:exists-zero-type} holds.
More precisely,
\begin{enumerate}
  \item\label{item:boundary_of_sigma1} suppose that there exists $[x] \in \Sigma \cap \partial \OhmVin$ and write $x = \sum_{j=1}^N t_j v_j$ with $t \in \RR_{\geq 0}^N$; then either $W_{S^{+,t}}$ and $W_{S_x^{0,t}}$ are both infinite and commute (hence \ref{item:exists-Z2} holds), or $S^{+,t}_x$ has an irreducible component $S'$ such that the Cartan submatrix $\Cart_{S'}$ is of zero type (hence \ref{item:exists-zero-type} holds);
  \item\label{item:boundary_of_sigma2} conversely, suppose that \ref{item:exists-Z2} or \ref{item:exists-zero-type} holds with $W_{S'}$ irreducible, and let $t' = (t_j)_{s_j \in S'}$ be a Perron--Frobenius eigenvector of $2\,\mathrm{Id} - \Cart_{S'}$ as in Section~\ref{subsec:neg-type}; then $x' := \sum_{s_j \in S'} t_j v_j \in V$ satisfies $[x'] \in \Sigma \cap \partial \OhmVin$.
\end{enumerate}
\end{proposition}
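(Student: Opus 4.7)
The plan is to prove parts~\eqref{item:boundary_of_sigma1} and~\eqref{item:boundary_of_sigma2} separately, each by direct bookkeeping with the Cartan matrix entries together with Fact~\ref{fact:type-sign-entries} and the Perron--Frobenius theorem.

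For part~\eqref{item:boundary_of_sigma1}, I would start with $[x] \in \Sigma \cap \partial \OhmVin$ and, for each $s_i \in S_x$, split the identity $\alpha_i(x) = 0$ as
\[
0 \;=\; \underbrace{\sum_{s_j \in S_x^{+,t}} \Cart_{i,j}\, t_j}_{A_i} \;+\; \underbrace{\sum_{s_j \in S^{+,t}\setminus S_x} \Cart_{i,j}\, t_j}_{B_i}.
\]
Since $s_j \notin S_x \ni s_i$ forces $i \neq j$, every summand in $B_i$ satisfies $\Cart_{i,j} \leq 0$ and $t_j > 0$, so $B_i \leq 0$ and hence $A_i \geq 0$. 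When furthermore $s_i \in S_x^{0,t}$ we have $s_i \notin S^{+,t}$, so every off-diagonal entry in $A_i$ is nonpositive too, forcing $A_i = B_i = 0$ and each summand to vanish individually. This shows that every element of $S_x^{0,t}$ commutes with every element of $S^{+,t}$, and in particular $W_{S_x} \simeq W_{S_x^{+,t}} \times W_{S_x^{0,t}}$.

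Next I analyze $\Cart_{W_T}$ for each irreducible component $T$ of $S_x^{+,t}$. Cross-component entries vanish, so for $s_i \in T$ one has $(\Cart_{W_T}\, t^T)_i = A_i \geq 0$ while $t^T > 0$; Fact~\ref{fact:type-sign-entries}.\eqref{item:neg-type-sign-entries} excludes negative type. If some such component is of zero type, then \ref{item:exists-zero-type} holds and we are done. Otherwise every component of $S_x^{+,t}$ is spherical, so $W_{S_x^{+,t}}$ is finite; combined with $W_{S_x}$ infinite this forces $W_{S_x^{0,t}}$ infinite. To obtain \ref{item:exists-Z2} it remains to show $W_{S^{+,t}}$ is infinite: for any irreducible component $S''$ of $S^{+,t}$ and $s_i \in S''$,
\[
(\Cart_{W_{S''}}\, t^{S''})_i \;=\; \alpha_i(x) \;\leq\; 0, \qquad t^{S''} > 0,
\]
so the nonnegative irreducible matrix $2\,\mathrm{Id} - \Cart_{W_{S''}}$ has Perron--Frobenius eigenvalue $\geq 2$, hence $\Cart_{W_{S''}}$ is not of positive type and $W_{S''}$ is infinite by Fact~\ref{fact:types}. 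Thus $W_{S^{+,t}}$ is infinite and the disjoint commuting pair $(S^{+,t}, S_x^{0,t})$ realizes \ref{item:exists-Z2}.

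For part~\eqref{item:boundary_of_sigma2}, I set $x := \sum_{s_j \in S'} t_j v_j$ with $t > 0$ the Perron--Frobenius eigenvector of $2\,\mathrm{Id} - \Cart_{W_{S'}}$, taking $S'$ in case \ref{item:exists-Z2} to be an infinite irreducible component of one of the two commuting standard subgroups (say contained in $T_1$); then the other one $T_2$ is disjoint from $S'$ and every $s \in T_2$ commutes with every $s' \in S'$, giving $\Cart_{s,s'} = 0$. For $s_i \in S'$, $\alpha_i(x) = (\Cart_{W_{S'}}\, t)_i$ is zero in case \ref{item:exists-zero-type} and strictly negative in the remaining case (where $W_{S'}$ is necessarily of negative type, and the latter inequality also ensures $x \neq 0$); for $s_i \notin S'$, nonpositivity of $\Cart_{i,j}$ with $s_j \in S'$ and $t_j > 0$ give $\alpha_i(x) \leq 0$. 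Hence $x \in \widetilde{\Delta} \cap \widetilde{\mathcal{D}}^{\overline{*}}$ and $[x] \in \Sigma$. Finally $S_x$ contains $S'$ in case \ref{item:exists-zero-type} and $T_2$ in case \ref{item:exists-Z2} (as $\alpha_s(x) = 0$ for all $s \in T_2$), so $W_{S_x}$ is infinite and $[x] \in \partial \OhmVin$ by Fact~\ref{fact:Delta-flat}.

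The main technical subtlety is the case split in part~\eqref{item:boundary_of_sigma1} and the appeal to Perron--Frobenius to show $W_{S^{+,t}}$ is infinite even when $W_{S_x^{+,t}}$ happens to be spherical; a minor secondary point in part~\eqref{item:boundary_of_sigma2} is verifying $x \neq 0$ in the zero-type subcase, which follows from the nontriviality of the Perron--Frobenius eigenvector applied to the subrepresentation governed by $\Cart_{W_{S'}}$, using Remark~\ref{rem:neg-type-interpret}.\eqref{item:neg-type-interpret}.
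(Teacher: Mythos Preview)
Your proof is correct and follows essentially the same approach as the paper. The paper packages the three preliminary observations (that $S_x^{0,t} \perp S^{+,t}$, that no irreducible component of $S^{+,t}$ has positive-type Cartan submatrix, and that no irreducible component of $S_x^{+,t}$ has negative-type Cartan submatrix) into a separate Lemma~\ref{lem:to_bounda_o_min}, whereas you derive them inline; and for the second of these the paper uses a short quadratic-form argument (pairing $t^{S''}$ against $\Cart_{W_{S''}} t^{S''}$ and invoking positive definiteness), while you use a Collatz--Wielandt bound on the Perron eigenvalue of $2\,\mathrm{Id} - \Cart_{W_{S''}}$. Both are valid. One small imprecision: your justification of $x \neq 0$ in the zero-type subcase via Remark~\ref{rem:neg-type-interpret}.\eqref{item:neg-type-interpret} is not quite the right citation, since that item concerns negative or positive type; the clean argument is that $S' \subsetneq S$ (because $\Cart_{W_S}$ is of negative type while $\Cart_{W_{S'}}$ is not), so by irreducibility of $W_S$ there is some $s_i \notin S'$ adjacent to some $s_j \in S'$, giving $\alpha_i(x) \leq \Cart_{i,j} t_j < 0$.
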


{We note that if \ref{item:exists-Z2} or \ref{item:exists-zero-type} holds, then up to replacing $S'$ by a smaller subset we may always assume that $W_{S'}$ is irreducible as in~\eqref{item:boundary_of_sigma2}.}

As in Section~\ref{subsec:intro-main}, for any $T \subset S$ we set $T^\perp := \{ s_i \in S\smallsetminus T \, |\, m_{i,j}=2\ \forall s_j \in T \}$.
For two subsets $T_1,T_2 \subset S$, we write $T_1 \perp T_2$ when $T_1 \subset T_2^\perp$ (\ie $m_{i,j}=2$ for all $s_i \in T_1$ and $s_j \in T_2$).
Our proof of Proposition~\ref{prop:boundary_sigma} relies on the following lemma.

\begin{lemma}\label{lem:to_bounda_o_min}
In the setting of Theorem~\ref{thm:minimal_convex}, if $x = \sum_{j=1}^N t_j v_j$ satisfies $t \in \RR_{\geq 0}^N$ and $[x] \in \Sigma$, then
\begin{enumerate}
  \item\label{item:perp} $S^{+,t} \perp S_x^{0,t}$;
  \item\label{item:no_plus} $S^{+,t}$ has no irreducible component $S'$ such that the Cartan submatrix $\Cart_{S'}$ is of positive type;
  \item\label{item:no_minus} $S_x^{+,t}$ has no irreducible component $S'$ such that the Cartan submatrix $\Cart_{S'}$ is of negative type.
\end{enumerate}
In particular, $W_{S^{+,t}}$ is infinite and $W_{S_x} = W_{S_x^{0,t}} \times W_{S_x^{+,t}}$.
\end{lemma}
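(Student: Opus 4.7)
The plan is to attack (1), (3), (2) in that order, each time expanding the linear identity $\alpha_i(x) = \sum_k \Cart_{i,k}\, t_k$ and exploiting the sign pattern $\Cart_{i,k}\leq 0$ for $k\neq i$ together with $t_k\geq 0$ and the Coxeter-theoretic dictionary $m_{i,k}=2\Leftrightarrow\Cart_{i,k}=0$. The recurring trick is that a sum of nonpositive reals equalling zero forces each term to vanish, combined with the fact that distinct irreducible components of a subset $T\subset S$ are, by definition, orthogonal in the Coxeter sense (equivalently, their entries in $\Cart$ vanish).

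For (1), I would fix $s_i \in S_x^{0,t}$ and $s_j \in S^{+,t}$ (then $i\neq j$ since $t_i=0<t_j$) and expand
\begin{equation*}
0 \,=\, \alpha_i(x) \,=\, 2\,t_i \,+\, \sum_{k\neq i}\Cart_{i,k}\,t_k \,=\, \sum_{k\neq i}\Cart_{i,k}\,t_k.
\end{equation*}
Every summand is nonpositive, so all vanish; in particular $\Cart_{i,j}\,t_j=0$ with $t_j>0$ forces $\Cart_{i,j}=0$, i.e.\ $m_{i,j}=2$. For (3), I would take an irreducible component $S'$ of $S_x^{+,t}$ and set $\tau':=(t_k)_{s_k\in S'}>0$. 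For $s_i\in S'$, $\alpha_i(x)=0$ splits into a contribution from $S'$, one from the other irreducible components of $S_x^{+,t}$ (zero by orthogonality), and one from $S\smallsetminus S_x^{+,t}$ (nonpositive), so
\begin{equation*}
0 \,=\, (\Cart_{W_{S'}}\tau')_i \,+\, \!\!\!\sum_{s_k\notin S_x^{+,t}}\!\!\!\Cart_{i,k}\,t_k \,\leq\, (\Cart_{W_{S'}}\tau')_i.
\end{equation*}
Thus $\Cart_{W_{S'}}\tau'\geq 0$ entrywise with $\tau'>0$; were $\Cart_{W_{S'}}$ of negative type, Fact~\ref{fact:type-sign-entries}.\eqref{item:neg-type-sign-entries} would give $\tau'=0$, contradiction.

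Part (2) is what I expect to be the main obstacle, since Fact~\ref{fact:type-sign-entries} has no positive-type analogue. Here $S'$ is an irreducible component of $S^{+,t}$ and $\tau':=(t_k)_{s_k\in S'}>0$. Orthogonality of distinct irreducible components of $S^{+,t}$ kills the middle sum and $t_k=0$ off $S^{+,t}$ kills the third, so the same expansion collapses to $\alpha_i(x)=(\Cart_{W_{S'}}\tau')_i$ for every $s_i\in S'$; since $[x]\in\Sigma$ gives $\alpha_i(x)\leq 0$, we obtain $\Cart_{W_{S'}}\tau'\leq 0$ entrywise. To rule out the positive type I would invoke Perron--Frobenius on the nonnegative irreducible matrix $2\,\mathrm{Id}-\Cart_{W_{S'}}$ (irreducibility coming from the connectedness of the diagram of $S'$), producing a strictly positive left eigenvector $\tau_0$ for the bottom eigenvalue $\lambda_{\min}$ of $\Cart_{W_{S'}}$; pairing yields
\begin{equation*}
\lambda_{\min}\,(\tau_0\cdot\tau') \,=\, \tau_0^{\mathrm{T}}\,\Cart_{W_{S'}}\,\tau' \,\leq\, 0,
\end{equation*}
and $\tau_0\cdot\tau'>0$ forces $\lambda_{\min}\leq 0$, so $\Cart_{W_{S'}}$ is not of positive type.

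Finally, the ``in particular'' assertion: $x\neq 0$ since $[x]\in\Sigma\subset\PP(V)$, so $S^{+,t}\neq\emptyset$; by (2) every irreducible component $S'$ of $S^{+,t}$ has $\Cart_{W_{S'}}$ not of positive type, hence $W_{S'}$ is not spherical by Fact~\ref{fact:types} and is infinite, so $W_{S^{+,t}}$ is infinite. The splitting $W_{S_x}=W_{S_x^{0,t}}\times W_{S_x^{+,t}}$ is then immediate from $S_x=S_x^{0,t}\sqcup S_x^{+,t}$ together with $S_x^{+,t}\subset S^{+,t}\perp S_x^{0,t}$, which is just (1).
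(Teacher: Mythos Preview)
Your proof is correct and for parts \eqref{item:perp}, \eqref{item:no_minus}, and the concluding sentence it coincides with the paper's argument essentially verbatim (the paper's two-way split in \eqref{item:no_minus} is marginally tidier than your three-way split, but nothing changes).

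The one genuine difference is in part \eqref{item:no_plus}. Both you and the paper first reduce to $\Cart_{W_{S'}}\tau'\leq 0$ entrywise with $\tau'>0$, using the component orthogonality inside $S^{+,t}$ and $t_k=0$ outside it. At that point the paper invokes Fact~\ref{fact:types} to replace $\Cart_{W_{S'}}$ by an equivalent \emph{symmetric} matrix (positive type being symmetrizable), which is then positive definite, and derives the contradiction from $0\geq {\tau'}^{\mathrm T}\Cart_{W_{S'}}\tau'>0$. You instead pair against a positive \emph{left} Perron--Frobenius eigenvector of $\Cart_{W_{S'}}$ to force $\lambda_{\min}\leq 0$ directly. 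Your route is slightly more self-contained (it avoids the symmetrizability statement from Fact~\ref{fact:types}), while the paper's route is a touch more concrete; both are short and clean.
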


\begin{proof}[Proof of Lemma~\ref{lem:to_bounda_o_min}]
By definition, $t_j\geq 0$ and $\alpha_i(x)\leq 0$ for all $1\leq i,j\leq N$. 

\eqref{item:perp}
Let us check that any element of~$S_x^{0,t}$ commutes with any element of
$S^{+,t}$. For any $s_i \in S_x$ we have by definition
\begin{equation} \label{eqn:Sx-def}
0 = \alpha_i(x) = \sum_{j=1}^{ N } t_j \alpha_i(v_j) = \sum_{s_j \in S^{+,t}} t_j \Cart_{i,j}.
\end{equation}
If $s_i\in S_x^{0,t}$, then $s_i\notin S^{+,t}$, hence $\Cart_{i,j}\leq 0$ for all $s_j \in S^{+,t}$; thus each term of the right-hand sum in \eqref{eqn:Sx-def} is nonpositive, hence must be zero.
Thus for any $s_i\in S_x^{0,t}$ and $s_j\in S^{+,t}$ we have $\Cart_{i,j} = 0$, which means that $s_i$ and~$s_j$ commute.

\eqref{item:no_plus}
Suppose by contradiction that $S^{+,t}$ has an irreducible component $S'$ such that the Cartan submatrix $\Cart_{S'}$ is of positive type.
By Fact~\ref{fact:types}, we may assume that $\Cart_{S'}$ is symmetric, and positive definite.
Then
\begin{equation*}
\begin{array}{lll}
0 &\geq  \sum_{s_i\in S'} t_i \alpha_i(x) & \text{since $t_i\geq 0$ and $\alpha_i(x)\leq 0$ for all $s_i\in S$}\\
& =  \sum_{s_i\in S'} t_i \sum_{s_j \in S} t_j \, \alpha_i(v_j)\\
& =  \sum_{s_i\in S'} t_i \sum_{s_j \in S^{+,t}}  t_j \, \alpha_i(v_j) & \text{since $t_j = 0$ whenever $s_j \in S \smallsetminus S^{+,t}$}\\
&=  \sum_{s_i\in S'} \; \sum_{s_j \in S'} t_i t_j \, \alpha_i(v_j) & \text{since $S'$ is an irreducible component of $S^{+,t}$}\\
& =  {}^{\top}\!{t'} \Cart_{S'} t' & \text{where $t':=(t_i)_{s_i\in S'}$}.
\end{array}
\end{equation*}

 This contradicts the fact that the symmetric matrix $\Cart_{S'}$ is positive definite.

\eqref{item:no_minus}
Let $S'$ be an irreducible component of $S^{+,t}_x$.
For any $s_i\in S'$,
$$0 = \alpha_i(x) = \sum_{s_j\in S'} t_j \Cart_{i,j} + \sum_{s_j\in S\smallsetminus S'} t_j \Cart_{i,j} \, ;$$
since $t_j \Cart_{i,j} \leq 0$ for all $s_j\in S\smallsetminus S'$, we have $\sum_{s_j\in S'} t_j \Cart_{i,j} \geq 0$.
Thus the vector $t' = (t_j)_{s_j\in S'}$ has only positive entries and $\Cart_{S'} t' = (\sum_{s_j\in S'} t_j \Cart_{i,j})_{s_i\in S'}$ has only nonnegative entries.
By Fact~\ref{fact:type-sign-entries}.\eqref{item:neg-type-sign-entries}, the Cartan submatrix $\Cart_{S'}$ is not of negative type.
\end{proof}

\begin{proof}[Proof of Proposition~\ref{prop:boundary_sigma}]
\eqref{item:boundary_of_sigma1}
Since $[x] \in \partial \OhmVin$, the group $W_{S_x}$ is infinite by Fact~\ref{fact:Delta-flat}, hence $W_{S_x^{0,t}}$ is infinite or $W_{S_x^{+,t}}$ is infinite by Lemma~\ref{lem:to_bounda_o_min}.
If $W_{S_x^{0,t}}$ is infinite, then $W_{S^{+,t}}$ and $W_{S_x^{0,t}}$ are both infinite and commute by Lemma~\ref{lem:to_bounda_o_min}.\eqref{item:perp}--\eqref{item:no_plus}.
Otherwise $W_{S_x^{+,t}}$ is infinite, and then $S_x^{+,t}$ has an irreducible component $S'$ such that the Cartan submatrix $\Cart_{S'}$ is of zero type, since negative type is excluded by Lemma~\ref{lem:to_bounda_o_min}.\eqref{item:no_minus}.

\eqref{item:boundary_of_sigma2}
Suppose \ref{item:exists-Z2} holds, \ie there exist disjoint subsets $S',S'' \subset S$ such that $W_{S'}$ and $W_{S''}$ are both infinite and commute, and suppose $W_{S'}$ is irreducible.
Since $W_{S'}$ is infinite, the Cartan submatrix $\Cart_{S'}$ is not of positive type.
By definition (see Section~\ref{subsec:neg-type}), the Perron--Frobenius eigenvector $t' = (t_j)_{s_j \in S'}$ is a positive vector and $\Cart_{S'} t'$ is a zero or negative vector.
For $s_i \in S''$ we have $\alpha_i(x') = 0$; for $s_i \not \in S' \cup S''$ we have $\alpha_i(x') \leqslant 0$; and for $s_i \in S'$ we have
$$\alpha_i(x')  = \sum_{s_j \in S'} t_j \alpha_i(v_j) = \left(\textrm{the $i$-th entry of } \Cart_{S'} t' \right) \leqslant 0.$$
Therefore $S'' \subset S_{x'}$ and $[x'] \in \Sigma$.
Since $W_{S''}$ is infinite, Fact~\ref{fact:Delta-flat} implies $[x'] \in \partial \OhmVin$.

Suppose \ref{item:exists-zero-type} holds, \ie there exists an irreducible standard subgroup $W_{S'}$ with $S' \subset S$ such that $\Cart_{S'}$ is of zero type.
As above, $t' = (t_j)_{s_j \in S'}$ is a positive vector and $\Cart_{S'} t' = 0$.
For $s_i \not \in S'$ we have $\alpha_i(x') \leqslant 0$, and for $s_i \in S'$ we have
$$\alpha_i(x')  = \sum_{s_j \in S'} t_j \alpha_i(v_j) = \left(\textrm{the $i$-th entry of } \Cart_{S'} t'\right)  = 0.$$
Therefore $S' \subset S_{x'}$ and $[x'] \in \Sigma$.
Since $W_{S'}$ is infinite, Fact~\ref{fact:Delta-flat} implies $[x'] \in \partial \OhmVin$.
\end{proof}

\section{Proof of Theorem~\ref{thm:main}} \label{sec:main_thm}

The implication \ref{item:cc-cox}~$\Rightarrow$~\ref{item:naive-cc-cox} of Theorem~\ref{thm:main} is immediate from the definitions.
The equivalence \ref{item:not-zero-type}~$\Leftrightarrow$~\ref{item:not-zero-type-explicit} is contained in Fact~\ref{fact:types}.
We now establish the other implications of Theorem~\ref{thm:main}.

\subsection{The affine case}

When $W_S$ is irreducible and affine, Theorem~\ref{thm:main} is contained in the following.

\begin{proposition} \label{prop:affine}
Let $W_S$ be an affine irreducible Coxeter group and $\rho : W_S\to\GL(V)$ a representation of~$W_S$ as a reflection group with Cartan matrix $\Cart$.
Then the following are equivalent:
\begin{enumerate}
  \item\label{item:affine-1} $\rho(W_S)$ is convex cocompact in $\PP(V)$;
  \item\label{item:affine-1bis} $\rho(W_S)$ is naively convex cocompact in $\PP(V)$;
  \item\label{item:affine-2} $\rho(W_S)$ does not contain any unipotent element;
  \item\label{item:affine-3} the Cartan matrix $\Cart$ is \emph{not} of zero type;
  \item\label{item:affine-3bis} $\det\Cart \neq 0$;
  \item\label{item:affine-4} $W_S$ is of type $\widetilde{A}_{N-1}$ where $N\geq 2$ (see Table~\ref{table:affi_diag}) and the Cartan matrix $\Cart$ is of negative type.
\end{enumerate}
\end{proposition}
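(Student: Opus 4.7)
The proof will establish the cycle
\ref{item:affine-1}~$\Rightarrow$~\ref{item:affine-1bis}~$\Rightarrow$~\ref{item:affine-2}~$\Rightarrow$~\ref{item:affine-3}~$\Rightarrow$~\ref{item:affine-4}~$\Rightarrow$~\ref{item:affine-1},
together with \ref{item:affine-3}~$\Leftrightarrow$~\ref{item:affine-3bis}. The last two equivalences are immediate from Fact~\ref{fact:types}: a compatible Cartan matrix for an affine irreducible Coxeter group is either of zero type (hence $\det\Cart=0$) or of negative type (hence $\det\Cart\neq 0$ and $W_S$ is of type $\widetilde{A}_{N-1}$ for some $N\geq 2$). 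The implication \ref{item:affine-1}~$\Rightarrow$~\ref{item:affine-1bis} holds by definition, and \ref{item:affine-1bis}~$\Rightarrow$~\ref{item:affine-2} is exactly Proposition~\ref{prop:cc-no-unipotent}.

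For \ref{item:affine-2}~$\Rightarrow$~\ref{item:affine-3}, I would argue the contrapositive. Assume $\Cart$ is of zero type. The block decomposition \eqref{eqn:ninezeros} implies that $\rho(\gamma)$ is unipotent if and only if the reduced induced representation $\rho^\alpha(\gamma)$ on $V^\alpha=V/V_\alpha$ is unipotent. By Fact~\ref{fact:types}, $\rho^\alpha(W_S)$ acts properly discontinuously and cocompactly on some affine chart of $\PP(V^\alpha)$ preserving a Euclidean metric, so it is a Euclidean crystallographic group. Its translation subgroup is nontrivial (since $W_S$ is infinite), and any nontrivial translation is a unipotent element of $\GL(V^\alpha)$, producing the required unipotent in $\rho(W_S)$.

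The main implication is \ref{item:affine-4}~$\Rightarrow$~\ref{item:affine-1}, which I propose to handle by reducing to the reduced and dual-reduced setting and invoking either Lemma~\ref{lem:out_of_vinberg} or Example~\ref{ex:N=2}. Since $\rho(W_S)$ acts trivially on $V/V_v$, Corollary~\ref{cor:cc-subspace} reduces convex cocompactness of $\rho(W_S)$ in $\PP(V)$ to that of $\rho_v(W_S)$ in $\PP(V_v)$, provided $\rho_v$ is injective. Then since $\rho_v(W_S)$ acts trivially on $V_v\cap V_\alpha$, Proposition~\ref{prop:cc-quotient} further reduces to convex cocompactness of $\rho_v^\alpha(W_S)$ in $\PP(V_v^\alpha)$, provided $\rho_v^\alpha$ is injective. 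Both injectivity conditions are granted by Remark~\ref{rem:rho-v-alpha-refl-group}: when $\Cart$ is of negative type, $\rho_v$, $\rho^\alpha$, and $\rho_v^\alpha$ are all reflection group representations of $W_S=\widetilde{A}_{N-1}$ with the same Cartan matrix $\Cart$, hence injective. Finally I would treat the two cases separately: for $N=2$, Example~\ref{ex:N=2}(i) exhibits $\rho_v^\alpha(W_S)$ as convex cocompact on a properly convex open subset of $\PP(V_v^\alpha)$; for $N\geq 3$, Lemma~\ref{lem:out_of_vinberg} shows that $\rho_v^\alpha(W_S)$ divides the open simplex $\OhmVin\subset\PP(V_v^\alpha)$, which is in particular convex cocompact (take $\C=\OhmVin$ in Definition~\ref{def:cc-group}.\ref{item:def-naiv-cc}).

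The only delicate point in this plan is verifying that the two successive reductions to $\rho_v$ and then to $\rho_v^\alpha$ really preserve injectivity; this is where the negative-type hypothesis is crucial, since the parallel reduction to $\rho_v^\alpha$ fails in the zero-type case, as explicitly noted in Remark~\ref{rem:rho-v-alpha-refl-group}.
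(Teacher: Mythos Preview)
Your proof is correct and follows essentially the same route as the paper. The only cosmetic differences are: (i) the paper observes at the outset of \ref{item:affine-4}~$\Rightarrow$~\ref{item:affine-1} that $\det\Cart\neq 0$ forces $V_v\cap V_\alpha=\{0\}$, so that $\rho_v=\rho_v^\alpha$ and your second reduction step is vacuous; and (ii) the paper packages the reduction from $\rho$ to $\rho_v^\alpha$ into a standalone lemma (Lemma~\ref{lem:V-alpha-v}) rather than invoking Corollary~\ref{cor:cc-subspace} and Proposition~\ref{prop:cc-quotient} directly, but that lemma is proved from exactly those two ingredients.
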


In this case, if $\rho(W_S)$ is reduced and dual-reduced, then $N=\dim(V)$ and $\OhmVin$ is a simplex in $\PP(V)$, divided by $\rho(W_S)$: see Example~\ref{ex:N=2} if $N=2$ and Lemma~\ref{lem:out_of_vinberg} if $N\geq 3$.

\begin{proof}
The implication \eqref{item:affine-1}~$\Rightarrow$~\eqref{item:affine-1bis} is immediate from the definitions.
For \eqref{item:affine-1bis}~$\Rightarrow$~\eqref{item:affine-2}, see Proposition~\ref{prop:cc-no-unipotent}.
For \eqref{item:affine-3}~$\Rightarrow$~\eqref{item:affine-3bis}~$\Rightarrow$~\eqref{item:affine-4}, see Fact~\ref{fact:types}.

For \eqref{item:affine-2}~$\Rightarrow$~\eqref{item:affine-3}, note that $\rho^\alpha(W_S) \subset \GL(V^\alpha)$ is reduced, where $\rho^\alpha: W_S\to\GL(V^\alpha)$  is the representation of $W_S$ as a reflection group in $V^\alpha$ induced by~$\rho$ (see Remarks~\ref{rem:notation_rhos} and~\ref{rem:rho-v-alpha-refl-group}).
If $\Cart$ is of zero type, then by Fact~\ref{fact:types}, the group $\rho^\alpha (W_{S})$ acts properly discontinuously and cocompactly on some affine chart of $\PP(V^{\alpha})$, preserving some Euclidean metric; in particular, $\rho^\alpha (W_{S})$ contains a translation of this affine chart, \ie a unipotent element of $\GL(V^\alpha )$.
Since the action of $\rho(W_S)$ on~$V_{\alpha}$ is trivial, $\rho(W_S)$ contains a unipotent element of $\GL(V)$.

Let us check \eqref{item:affine-4}~$\Rightarrow$~\eqref{item:affine-1}.
Suppose $W_S$ is of type $\widetilde{A}_{N-1}$ and $\Cart$ is of negative type.
By Fact~\ref{fact:types}, we have $\det\Cart\neq 0$, hence $V_v\cap V_{\alpha}=\{0\}$.
Let $\rho_v : W_S\to \GL(V_v) = \GL(V_v^\alpha)$ be the representation of $W_S$ as a reflection group induced by~$\rho$, as in Remarks~\ref{rem:notation_rhos} and~\ref{rem:rho-v-alpha-refl-group}; it is reduced and dual-reduced.
Let $\OhmVin^{v} \subset \PP(V_v)$ be the corresponding Tits--Vinberg domain.
By Lemma~\ref{lem:out_of_vinberg}, the group $\rho_v (W_S)$ divides $\OhmVin^{v}$, hence it is convex cocompact in $\PP(V_v)$.
Therefore $\rho(W_S)$ is convex cocompact in $\PP(V)$ by Corollary~\ref{cor:cc-subspace}.
\end{proof}

\subsection{Preliminary reductions} \label{subsec:rho-red}

We now consider an infinite Coxeter group $W_S$ which is not necessarily irreducible.
For any representation $\rho : W_S\to\GL(V)$ of $W_S$ as a reflection group, associated to some $\alpha \in {V^*}^N$ and $v \in V^N$, we denote by $\rho^{\alpha} : W_S\to\GL(V^{\alpha})$ and $\rho_v^\alpha : W_S\to\GL(V_v^\alpha)$ the induced representations of $W_S$ as in Remark~\ref{rem:notation_rhos}; they have the same Cartan matrix $\Cart$ as~$\rho$.
If the Cartan matrix associated to each irreducible factor of $W_S$ is of negative type, then $\rho^{\alpha}$ and~$\rho_v^{\alpha}$ are still representations of $W_S$ as a reflection group (Remark~\ref{rem:rho-v-alpha-refl-group-non-irred}), hence injective.

\begin{lemma} \label{lem:V-alpha-v}
Let $W_S$ be an infinite (not necessarily irreducible) Coxeter group in $N$ generators, and let $\rho : W_S\to\GL(V)$ be a representation of $W_S$ as a reflection group, associated to some $\alpha \in {V^*}^N$ and $v \in V^N$.
Suppose that for each irreducible factor $W_{S_{\ell}}$, the Cartan submatrix $\Cart_{S_{\ell}} = (\alpha_i(v_j))_{s_i,s_j\in S_{\ell}}$ is of negative type.
Then
\begin{enumerate}
  \item\label{item:V-alpha-v-cc} $\rho(W_S)$ is convex cocompact (\resp strongly convex cocompact) in $\PP(V)$ if and only if $\rho_v^\alpha (W_S)$ is convex cocompact (\resp strongly convex cocompact) in $\PP(V_v^\alpha)$;
  \item\label{item:V-alpha-v-naive-cc} if $\rho(W_S)$ is naively convex cocompact in $\mathbb{P} (V)$, then $\rho_v(W_S)$, $\rho^{\alpha}(W_S)$, and $\rho_v^{\alpha}(W_S)$ are naively convex cocompact in $\PP(V_v)$, $\PP(V^{\alpha})$, and $\PP(V_v^{\alpha})$, respectively.
\end{enumerate}
\end{lemma}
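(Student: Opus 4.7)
I will prove both parts by passing through the intermediate representation $\rho^\alpha: W_S \to \GL(V^\alpha)$, obtained by quotienting out the trivial action on $V_\alpha$, and then descending further to $\rho_v^\alpha: W_S \to \GL(V_v^\alpha)$ via restriction to an invariant subspace with trivial quotient action. Since $\Cart$ is assumed of negative type, both $\rho^\alpha$ and $\rho_v^\alpha$ remain representations of $W_S$ as reflection groups (Remark~\ref{rem:rho-v-alpha-refl-group}), in particular faithful.

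\textbf{Part (1).} I would first observe that $\rho(W_S)$ acts trivially on $V_\alpha = \bigcap_i \ker(\alpha_i)$: indeed $\rho(s_i)(x) = x - \alpha_i(x) v_i = x$ for $x \in V_\alpha$. Applying Proposition~\ref{prop:cc-quotient} with $V_0 = V_\alpha$ gives that $\rho(W_S)$ is convex cocompact in $\PP(V)$ iff $\rho^\alpha(W_S)$ is convex cocompact in $\PP(V^\alpha)$. Next, $V_v^\alpha$ is a $\rho^\alpha(W_S)$-invariant subspace of $V^\alpha$, and the induced action on the quotient $V^\alpha/V_v^\alpha$ is trivial (because all reflection vectors $\overline{v_i}$ defining $\rho^\alpha$ lie in $V_v^\alpha$), so Corollary~\ref{cor:cc-subspace} gives that $\rho^\alpha(W_S)$ is convex cocompact in $\PP(V^\alpha)$ iff $\rho_v^\alpha(W_S)$ is convex cocompact in $\PP(V_v^\alpha)$. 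Concatenating the two equivalences yields the claim for convex cocompactness. The strongly convex cocompact equivalence then follows at once from Proposition~\ref{prop:cc-hyp-strong-cc}, since word hyperbolicity is intrinsic to $W_S$ (all the relevant representations being faithful).

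\textbf{Part (2).} Starting from data $(\Omega, \C)$ witnessing naive convex cocompactness of $\rho(W_S)$ in $\PP(V)$, I would construct corresponding data $(\Omega'', \C'')$ for $\rho_v^\alpha(W_S)$ in $\PP(V_v^\alpha)$ in two steps. First, since every point of $\PP(V_\alpha)$ is fixed by the infinite group $\rho(W_S)$, proper discontinuity forces $\Omega \cap \PP(V_\alpha) = \emptyset$, so the linear projection $\pi: V \to V^\alpha$ induces a well-defined equivariant map on $\Omega$ and $\C$. I would set $\Omega^\alpha := \pi(\Omega)$ and $\C^\alpha := \pi(\C)$. The key observation is that a lift $\widetilde\Omega \subset V$ of $\Omega$ to a pointed open convex cone with $\widetilde\Omega \cap V_\alpha = \{0\}$ projects to a pointed open convex cone: if $\pi(x) = -\pi(y)$ for $x,y \in \widetilde\Omega$, then $x + y \in \widetilde\Omega \cap V_\alpha = \{0\}$, contradicting pointedness of $\widetilde\Omega$. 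Hence $\Omega^\alpha$ is properly convex and open, $\C^\alpha$ is closed convex and $\rho^\alpha(W_S)$-invariant, and cocompactness on $\C^\alpha$ is inherited by pushing forward a fundamental domain. At the second step, I would set $\Omega'' := \Omega^\alpha \cap \PP(V_v^\alpha)$ and $\C'' := \C^\alpha \cap \PP(V_v^\alpha)$. When $N \geq 3$, Proposition~\ref{prop:maximal}.\eqref{item:max-3} gives $\Omega^\alpha \subset \OhmVin^\alpha$, hence $\C^\alpha \subset \OhmVin^\alpha$, so Corollary~\ref{cor:C-meets-P(Vv)}.\eqref{item:C-meets-P(Vv)} yields $\C'' \neq \emptyset$. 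Cocompactness of $\rho_v^\alpha(W_S)$ on $\C''$ then follows from the block-triangular form~\eqref{eqn:ninezeros}: the trivial action on the complement of $V_v^\alpha$ in $V^\alpha$ means that projecting a fundamental domain for $\rho^\alpha$ on $\C^\alpha$ to the $V_v^\alpha$-factor produces one for $\rho_v^\alpha$ on $\C''$. The remaining case $N = 2$ is handled by Proposition~\ref{prop:cc-no-unipotent}, which, combined with the negative-type assumption, puts us in case~(i) of Example~\ref{ex:N=2}, where $\rho_v^\alpha(W_S)$ is explicitly convex cocompact.

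\textbf{Main obstacle.} The subtle point will be the second descent in part (2), from $V^\alpha$ to $V_v^\alpha$: there is no off-the-shelf naive-cc analog of Corollary~\ref{cor:cc-subspace} that transfers data along an invariant subspace with trivial quotient action. Overcoming this requires combining maximality of the Vinberg domain (to locate $\C^\alpha$ inside $\OhmVin^\alpha$) with Corollary~\ref{cor:C-meets-P(Vv)} (to ensure $\C^\alpha$ meets $\PP(V_v^\alpha)$). The negative-type hypothesis on $\Cart$ is essential in both, as well as in ensuring that $\rho^\alpha$ and $\rho_v^\alpha$ are themselves representations of $W_S$ as reflection groups.
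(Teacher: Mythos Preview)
Your proof is correct, and part~(1) is identical to the paper's. Part~(2) differs from the paper only in the order of the two reductions: you project to $V^\alpha$ first and then intersect with $\PP(V_v^\alpha)$, whereas the paper first intersects $\C$ with $\PP(V_v)$ (using Corollary~\ref{cor:C-meets-P(Vv)} applied to $\rho$ itself) to get naive convex cocompactness of $\rho_v$, and then applies the projection argument to $\rho_v$. Both routes work; yours is slightly more direct, while the paper's has the side benefit of establishing naive convex cocompactness for both intermediate representations $\rho_v$ and $\rho^\alpha$.

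One point in your part~(2) is worded incorrectly, though the conclusion is right. You write that cocompactness of $\rho_v^\alpha(W_S)$ on $\C'' = \C^\alpha \cap \PP(V_v^\alpha)$ follows by ``projecting a fundamental domain for $\rho^\alpha$ on $\C^\alpha$ to the $V_v^\alpha$-factor''. But there is no equivariant projection $V^\alpha \to V_v^\alpha$: the block-triangular form~\eqref{eqn:ninezeros} only says the action on the \emph{quotient} $V^\alpha/V_v^\alpha$ is trivial, not that $V^\alpha$ splits equivariantly. The correct (and simpler) justification is that $\C''$ is a closed $\rho^\alpha(W_S)$-invariant subset of $\C^\alpha$, so $\C''/\rho^\alpha(W_S)$ is closed in the compact space $\C^\alpha/\rho^\alpha(W_S)$, hence compact. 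Also, in the $N=2$ case you invoke Proposition~\ref{prop:cc-no-unipotent} unnecessarily: the negative-type hypothesis on $\Cart$ alone already forces $\Cart_{1,2}\Cart_{2,1}>4$, placing you in case~(i) of Example~\ref{ex:N=2}.
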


\begin{proof}
\eqref{item:V-alpha-v-cc} We refer to the matrices~\eqref{eqn:ninezeros}: by Proposition~\ref{prop:cc-quotient}, the group $\rho(W_S)$ is convex cocompact in $\PP(V)$ if and only if $\rho^\alpha(W_S)$ is convex cocompact in $\PP(V^\alpha)$ which, by Corollary~\ref{cor:cc-subspace}, happens if and only if $\rho^\alpha_v(W_S)$ is convex cocompact in $\PP(V_v^\alpha)$.
The similar statement on strong convex cocompactness follows by Proposition~\ref{prop:cc-hyp-strong-cc}.

\eqref{item:V-alpha-v-naive-cc} Let $\Omega$ be a $\rho(W_S)$-invariant properly convex open subset of $\PP(V)$ and $\C \subset \Omega$ a nonempty $\rho(W_S)$-invariant closed convex subset which has compact quotient by $\rho(W_S)$.
Then, by Proposition~\ref{prop:nonirred_ohmweird}, $\Omega$ is contained in one of the finitely many Tits--Vinberg domains for $\rho$ unless  $W_S$ is affine of type $\widetilde{A}_1$, in which case we conclude using Proposition~\ref{prop:affine}.
So we may assume $\Omega$ is contained in one of the finitely many Tits--Vinberg domains $\OhmVin$.

We first note that $\rho_v(W_S)$ is naively convex cocompact in $\mathbb{P} (V_v)$.
Indeed, the set $\C \cap\nolinebreak \PP(V_v)$ is nonempty by Corollary~\ref{cor:C-meets-P(Vv)}.\eqref{item:C-meets-P(Vv)}.
It is a $\rho_v (W_S)$-invariant closed subset of the $\rho_v (W_S)$-invariant properly convex open subset $\Omega \cap \mathbb{P} (V_v)$ of $\mathbb{P} (V_v)$, and the action of $\rho_v (W_S)$ on it is cocompact since it is a closed subset of~$\C$ and the action of $\rho(W_S)$ on~$\C$ is cocompact.

We next check that $\rho^\alpha(W_S)$ is naively convex cocompact in $\mathbb{P} (V^\alpha)$.
Every point of $\mathbb{P} (V_\alpha)$ has infinite stabilizer (all of $W_S$), hence $\mathbb{P} (V_\alpha) \cap \OhmVin = \emptyset$ by Fact~\ref{fact:Delta-flat}.
Let $\pi : \PP(V) \smallsetminus \PP(V_{\alpha}) \to \PP(V^\alpha)$ be the natural projection.
The projection $\pi(\C)$ is a $\rho^\alpha (W_S)$-invariant closed convex subset of the $\rho^\alpha (W_S)$-invariant convex open subset $\pi(\Omega)$ of $\mathbb{P} (V^\alpha)$.
Since each irreducible block of the Cartan matrix is of negative type, $\OhmVin^\alpha = \pi (\OhmVin)$ is properly convex by Proposition~\ref{prop:maximal-non-irred}.
From $\Omega \subset \OhmVin$, we get that $\pi(\Omega)$ is properly convex.
The image $\Delta^\alpha \subset \PP(V^\alpha)$ of the fundamental polyhedral cone $\widetilde{\Delta}^{\alpha}$ for $\rho^{\alpha}(W_S)$ satisfies $\pi^{-1}(\Delta^\alpha) = \Delta \smallsetminus \PP(V_\alpha)$. In particular, $\pi(\C) \cap \Delta^\alpha = \pi(\C \cap \Delta)$ is a compact fundamental domain for the action of $\rho^\alpha (W_S)$ on $\pi(\C)$.

Since $\rho_v (W_S)$ is naively convex cocompact in $\mathbb{P} (V_v)$ and since $(V_v)_\alpha = V_v \cap V_\alpha$ implies $(V_v)^\alpha = V_v^\alpha$, we get that the group $\rho^\alpha_v (W_S)$ is naively convex cocompact in $\mathbb{P} (V^\alpha_v)$ by applying the previous reasoning to $\rho_v(W_S)$.
\end{proof}

\subsection{A convex subset $\C$ of $\OhmVin$ with compact quotient by $\rho(W_S)$, for large~$W_S$} \label{subsec:cox-construct-C}

We now establish the equivalence \ref{item:naive-cc-cox}~$\Leftrightarrow$~\ref{item:not-zero-type} of Theorem~\ref{thm:main} in the case that the Coxeter group $W_S$ is irreducible and large and that $V_{\alpha}=\{0\}$ and $V_v=V$, \ie $\rho(W_S)$ is reduced and dual-reduced.

In this case, by Proposition~\ref{prop:maximal}, the Tits--Vinberg domain $\OhmVin$ is properly convex and contains all other $\rho(W_S)$-invariant open convex subsets of $\PP(V)$.
Let $\Omega_{\min}$ be the smallest nonempty $\rho(W_S)$-invariant convex open subset of $\OhmVin$, as given by Proposition~\ref{prop:benoist-irred-Gamma}.
By Theorem~\ref{thm:minimal_convex} and Lemma~\ref{lem:Sigma-flat-in-OhmVin}, the set $\Omega_{\min} = \rho(W_S)\cdot\Sigma^{\flat}$ is the interior of $\rho(W_S)\cdot\Sigma$, where $\Sigma^{\flat}\subset\Sigma\subset\PP(V)$ are the projections of the sets $\widetilde{\Sigma}^{\flat}\subset\widetilde{\Sigma}\subset V$ of \eqref{eqn:Sigma-tilde}--\eqref{eqn:Sigma-tilde-flat}, obtained by pruning $\widetilde{\Delta}$. 
Consider the following nonempty $\rho(W_S)$-invariant closed convex subset of~$\OhmVin$:
\begin{equation} \label{eqn:C-cox}
\C := \overline{\Omega_{\min}} \cap \OhmVin.
\end{equation}

\begin{proposition} \label{prop:sigma_is_inside}
Let $W_S$ be a large irreducible Coxeter group and $\rho : W_S\to\GL(V)$ a representation of $W_S$ as a reflection group such that $\rho(W_S)$ is reduced and dual-reduced.
Then the following are equivalent:
\begin{enumerate}
  \item\label{item:sigma_bounded} $\Sigma \subset \OhmVin$;
  \item\label{item:C_has_compact_quotient} the set $\C$ of \eqref{eqn:C-cox} has compact quotient by $\rho(W_S)$;
  \item\label{item:naive_cc} $\rho(W_S)$ is naively convex compact in $\mathbb{P}(V)$;
  \item\label{item:a_and_3} the following conditions both hold:
  \begin{enumerate}
    \item[\ref{item:no-Z2}] there do not exist disjoint subsets $S',S''$ of~$S$ such that $W_{S'}$ and $W_{S''}$ are both infinite and commute;
    \item[\ref{item:not-zero-type}] for any irreducible standard subgroup $W_{S'}$ of~$W_S$ with $\emptyset\neq S'\subset S$, the Cartan submatrix $\Cart_{S'}:=(\Cart_{i,j})_{s_i,s_j\in S'}$ is \emph{not} of zero type.
  \end{enumerate}
\end{enumerate}
\end{proposition}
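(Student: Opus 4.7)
My plan is to prove the chain $(1) \Leftrightarrow (4) \Leftrightarrow (2) \Rightarrow (3) \Rightarrow (4)$, the last implication being the main content. The equivalence $(1) \Leftrightarrow (4)$ is immediate from Proposition~\ref{prop:boundary_sigma}: by Lemma~\ref{lem:Sigma-flat-in-OhmVin}, the relative interior $\Sigma^\flat$ always lies in $\OhmVin$ and is dense in $\Sigma$, so $\Sigma \subset \OhmVin$ is equivalent to $\Sigma \cap \partial\OhmVin = \emptyset$, which by Proposition~\ref{prop:boundary_sigma} holds iff neither \ref{item:exists-Z2} nor \ref{item:exists-zero-type} is satisfied. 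For $(1) \Leftrightarrow (2)$, I would first establish the identity $\overline{\Omega_{\min}} \cap \Delta = \Sigma$: the inclusion $\subset$ follows from Lemma~\ref{lem:OmegaMin} (keeping only the factor $\rho(e) \cdot \mathcal{D}^{\overline{*}} = \mathcal{D}^{\overline{*}}$ gives $\overline{\Omega_{\min}} \subset \mathcal{D}^{\overline{*}}$, hence $\overline{\Omega_{\min}} \cap \Delta \subset \Sigma$), while the reverse inclusion is the $\gamma = e$ case of Lemma~\ref{lem:transl-Sigma-flat}. This yields $\C \cap \Delta = \Sigma \cap \OhmVin$.

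A short proper-discontinuity argument then shows that $\C/\rho(W_S)$ is compact if and only if $\C \cap \Delta$ is closed in (hence compact as a subset of) the compact polytope $\Delta$: any sequence in $\C \cap \Delta$ escaping to $\partial\OhmVin$ would project to a convergent sequence in the compact quotient, and proper discontinuity of $\rho(W_S)$ on $\OhmVin$ would then yield the contradictory conclusion that the sequence actually converges inside $\OhmVin$. Since $\Sigma^\flat$ is dense in $\Sigma$ and contained in $\OhmVin$, the open subset $\Sigma \cap \OhmVin$ of $\Sigma$ is closed in $\Sigma$ precisely when it equals $\Sigma$, i.e., when $\Sigma \subset \OhmVin$, proving $(1) \Leftrightarrow (2)$. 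The implication $(2) \Rightarrow (3)$ is then trivial: by Proposition~\ref{prop:maximal}, $\OhmVin$ is properly convex under our reducedness hypothesis, and $\C \subset \OhmVin$ is a nonempty $\rho(W_S)$-invariant closed convex subset with compact quotient, which is the definition of naive convex cocompactness.

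The substantive step is $(3) \Rightarrow (4)$, proved by contraposition. If \ref{item:exists-zero-type} holds, an irreducible standard subgroup $W_{S'}$ has Cartan submatrix of zero type and is thus affine by Fact~\ref{fact:types}; the induced representation factors through a Euclidean reflection group on a quotient space, whose nontrivial translation subgroup consists of unipotent elements of $\rho(W_S) \subset \GL(V)$, contradicting Proposition~\ref{prop:cc-no-unipotent}. The \ref{item:exists-Z2} case, assuming \ref{item:exists-zero-type} fails, is the genuine difficulty: selecting infinite-order elements $\gamma' \in W_{T'}, \gamma'' \in W_{T''}$ in irreducible infinite components with negative-type Cartan submatrices, one checks via $T' \perp T''$ that each $v_j$ with $s_j \in T''$ lies in $\bigcap_{s_i \in T'} \mathrm{Ker}(\alpha_i)$ and vice versa, so $\rho(\gamma')$ and $\rho(\gamma'')$ commute and act proximally on transverse invariant subspaces, generating a discrete $\ZZ^2$. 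Producing a clean contradiction with naive convex cocompactness from this abelian-of-rank-two structure is the principal obstacle; the cleanest route I envisage is to exploit the explicit boundary point $x \in \Sigma \cap \partial\OhmVin$ furnished by Proposition~\ref{prop:boundary_sigma}.\eqref{item:boundary_of_sigma2}, whose infinite stabilizer $W_{S_x}$ contains the commuting pair $W_{S'} \times W_{S''}$, and to show that in any candidate $(\Omega, \C')$ realizing naive convex cocompactness, the point $x$ (or its orbit) furnishes a parabolic-type accumulation inside $\overline{\C'} \cap \overline{\OhmVin}$ incompatible with cocompactness of $\C'$.
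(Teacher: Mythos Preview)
Your arguments for $(1)\Leftrightarrow(4)$, $(1)\Leftrightarrow(2)$, and $(2)\Rightarrow(3)$ are correct and essentially match the paper's. Your identity $\overline{\Omega_{\min}}\cap\Delta=\Sigma$ is exactly right (the paper uses the slightly weaker $\C\cap\Delta\subset\Sigma$ for one direction and $\Sigma^\flat=\Delta\cap\Omega_{\min}$ for the other, but these are equivalent formulations).

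The genuine gap is in your $(3)\Rightarrow(4)$. Your \ref{item:exists-zero-type} case via unipotents is fine (and is how the paper handles the affine situation in Proposition~\ref{prop:affine}), but your \ref{item:exists-Z2} case is, as you yourself concede, only a sketch. Producing a contradiction to naive convex cocompactness from a $\ZZ^2$ subgroup acting ``proximally on transverse invariant subspaces'' is not straightforward: naive convex cocompactness places no hyperbolicity constraint on the group, and the boundary point $x\in\Sigma\cap\partial\OhmVin$ from Proposition~\ref{prop:boundary_sigma} lies in $\partial\OhmVin$, which need not be the boundary of whatever $\Omega$ witnesses naive convex cocompactness. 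Turning this into a ``parabolic-type accumulation'' argument would require substantial additional work.

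The paper avoids this entirely by proving $(3)\Rightarrow(2)$ directly, in one line. Since $W_S$ is large and $\rho(W_S)$ is reduced and dual-reduced, $\rho$ is irreducible (Proposition~\ref{prop:rho-irred}), so by Proposition~\ref{prop:benoist-irred-Gamma} there is a \emph{smallest} nonempty invariant convex open set $\Omega_{\min}$. Hence any nonempty $\rho(W_S)$-invariant closed convex subset $\C'$ of any invariant properly convex open $\Omega$ must contain $\Omega_{\min}$, and therefore contains $\C=\overline{\Omega_{\min}}\cap\OhmVin$ (using also $\Omega\subset\OhmVin$ from Proposition~\ref{prop:maximal}). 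If $\C'$ has compact quotient, so does its closed subset~$\C$. This minimality argument is the key idea you are missing; it replaces your entire contrapositive case analysis.
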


The implication \eqref{item:a_and_3}~$\Rightarrow$\eqref{item:sigma_bounded} of Proposition~\ref{prop:sigma_is_inside} was established by the first three authors \cite[Lem.\,8.9]{dgk-ccHpq} when $W_S$ is right-angled and $\rho$ preserves a symmetric bilinear form, and by the last two authors \cite[Lem.\,4.8]{lm18} in general.

\begin{proof}
We first check the implication \eqref{item:sigma_bounded}~$\Rightarrow$~\eqref{item:C_has_compact_quotient}.
Using the notation of Section~\ref{subsec:refl-dual}, we have $\overline{\Omega_{\min}} \subset \mathcal{D}^{\overline{*}}$ by Lemma~\ref{lem:OmegaMin}, hence $\C\cap \Delta \subset \mathcal{D}^{\overline{*}}\cap \Delta=\Sigma$.
Therefore, taking $\rho(W_S)$-orbits, the set $\C$ is contained in $\C_{\Sigma} := \bigcup_{\gamma\in W_S} \rho(\gamma)\cdot \Sigma$.
The set $\Sigma$ is compact.
If $\Sigma\subset\OhmVin$, then the action of $\rho(W_S)$ on $\C_{\Sigma}\subset\OhmVin$ is properly discontinuous and cocompact, and the same holds for~$\C$ since $\C$ is a closed subset of~$\C_{\Sigma}$.

We now check \eqref{item:C_has_compact_quotient}~$\Rightarrow$~\eqref{item:sigma_bounded}.
By Fact~\ref{fact:Delta-flat}, the convex set $\Delta^\flat \cap \C$ is homeomorphic to the quotient $\rho(W_S) \backslash \C$.
Therefore, assuming~\eqref{item:C_has_compact_quotient}, it is a compact subset of $\mathbb{P}(V)$.
Since $\Sigma^\flat \subset \Delta^\flat \cap \overline{\Omega_{\min}} = \Delta^\flat \cap \C$, the closure $\overline{\Sigma^\flat}$ of $\Sigma^\flat$ in $\PP(V)$ is included in $\Delta^\flat \cap \overline{\Omega_{\min}}$, hence also in $\OhmVin$.
But $\Sigma = \overline{\Sigma^\flat}$, hence $\Sigma \subset \OhmVin$.

The implication \eqref{item:C_has_compact_quotient}~$\Rightarrow$~\eqref{item:naive_cc} holds by Definition~\ref{def:cc-group} of naive convex cocompactness.
The implication \eqref{item:naive_cc}~$\Rightarrow$~\eqref{item:C_has_compact_quotient} holds because any nonempty $\rho(W_S)$-invariant closed convex subset $\C'$ has to contain $\Omega_{\min}$, hence $\C$ by Proposition~\ref{prop:benoist-irred-Gamma}.
The equivalence \eqref{item:sigma_bounded}~$\Leftrightarrow$~\eqref{item:a_and_3} is contained in Proposition~\ref{prop:boundary_sigma}.
\end{proof}

\subsection{Cocompact convex sets are large enough}\label{subsec:large-enough}

We now establish the implication \ref{item:naive-cc-cox}~$\Rightarrow$~\ref{item:cc-cox} of Theorem~\ref{thm:main} in the case that the Coxeter group $W_S$ is irreducible and large and $\rho(W_S)$ is reduced and dual-reduced.
We use the following terminology.

\begin{definition}[{\cite[Def.\,1.19]{dgk-proj-cc}}]
Let $\C$ be a properly convex subset of $\PP(V)$ with nonempty interior.
The \emph{ideal boundary} of~$\C$ is $\partiali \C:=\overline{\C}\smallsetminus \C$.
The \emph{nonideal boundary} of~$\C$ is $\partialn \C:=\C\smallsetminus \operatorname{Int}\,(\C)$.
\end{definition}

The implication \ref{item:naive-cc-cox}~$\Rightarrow$~\ref{item:cc-cox} is contained in the following.

\begin{proposition} \label{prop:naive-cc-implies-cc}
Let $W_S$ be a large irreducible Coxeter group and $\rho : W_S\to\GL(V)$ a representation of $W_S$ as a reflection group such that $\rho(W_S)$ is reduced and dual-reduced.
Suppose there is a $\rho(W_S)$-invariant properly convex open subset $\Omega$ of $\PP(V)$ and a nonempty $\rho(W_S)$-invariant closed convex subset $\C$ of~$\Omega$ which has compact quotient by $\rho(W_S)$.
Then $\Lambdao_{\Omega}(\rho(W_S)) \subset \partiali \C$, hence $\rho(W_S)$ is convex cocompact in $\PP(V)$.
\end{proposition}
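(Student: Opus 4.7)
The conclusion ``hence $\rho(W_S)$ is convex cocompact in $\PP(V)$'' is immediate from $\Lambdao_\Omega(\rho(W_S))\subset\partiali \C$: the convex core $\Ccore_\Omega(\rho(W_S))$, being the convex hull in $\Omega$ of a subset of $\overline{\C}$, is contained in the convex closed set $\overline{\C}\cap\Omega=\C$ and so has compact quotient under $\rho(W_S)$. The real task is therefore to establish $\Lambdao_\Omega(\rho(W_S))\subset\partiali \C$, which I will split into two steps.

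For the proximal limit set $\Lambda_{\rho(W_S)}$, Proposition~\ref{prop:rho-irred}.\eqref{item:irred-large} ensures that $\rho$ is strongly irreducible (since $W_S$ is large and $\rho(W_S)$ is reduced and dual-reduced), so Proposition~\ref{prop:benoist-irred-Gamma} provides a unique smallest nonempty $\rho(W_S)$-invariant convex open set $\Omega_{\min}\subset\PP(V)$ whose closure is the closed convex hull of~$\Lambda_{\rho(W_S)}$. The projective span of $\C$ is $\rho(W_S)$-invariant, hence equals $\PP(V)$ by irreducibility, so $\C$ has nonempty interior $\C^\circ$. As $\C^\circ$ is a nonempty $\rho(W_S)$-invariant convex open subset of $\PP(V)$, minimality gives $\Omega_{\min}\subset\C^\circ$, and therefore $\Lambda_{\rho(W_S)}\subset\overline{\Omega_{\min}}\subset\overline{\C}$, \ie $\Lambda_{\rho(W_S)}\subset\partiali \C$.

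For the full orbital limit set, fix $y\in\Omega$ and a sequence $(\gamma_n)$ in $W_S$ with $y_n:=\rho(\gamma_n)\cdot y\to y_\infty\in\partial\Omega$. Pick $z\in\C$ and set $z_n:=\rho(\gamma_n)\cdot z$; the key input is that $d_\Omega(y_n,z_n)=d_\Omega(y,z)=:D$ is constant. By proper discontinuity of the $\rho(W_S)$-action on $\Omega$, after extraction $z_n\to z_\infty\in\overline{\C}\cap\partial\Omega=\partiali \C$. A standard cross-ratio calculation for the Hilbert metric then shows that the projective segment $[z_\infty,y_\infty]$ is contained in $\partial\Omega$: either $y_\infty=z_\infty$ (in which case we are done), or $z_\infty$ and $y_\infty$ lie in a common proper boundary face of~$\overline\Omega$.

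The main obstacle is to promote this to $y_\infty\in\overline{\C}$ rather than merely $y_\infty$ sharing a boundary face with~$z_\infty$: \emph{a priori} $\overline{\C}$ could be strictly convex at $z_\infty$ even when $\partial\Omega$ is not. Here the $\rho(W_S)$-cocompactness of~$\C$ enters essentially, via a shadow-type argument: a compact fundamental domain $K\subset\C$ is translated by a sequence $(\gamma_n')$ of $W_S$-elements tracking the segments $[y_n,z_n]$ at bounded Hilbert distance, so that the images $\rho(\gamma_n')\cdot K\subset\C$ accumulate at $y_\infty$. This produces a sequence of points of $\C$ converging to $y_\infty$, forcing $y_\infty\in\overline{\C}$; combined with $y_\infty\in\partial\Omega$, we conclude $y_\infty\in\partiali \C$, as required.
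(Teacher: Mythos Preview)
Your steps 1--3 are fine, and your observation that $\Omega_{\min}\subset\C^\circ$ (hence $\Lambda_{\rho(W_S)}\subset\partiali\C$) is correct and useful. The gap is entirely in step~4.

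The ``shadow-type argument'' does not work as stated. All you know is that $d_\Omega(y_n,z_n)=D$ is bounded; any translate $\rho(\gamma_n')\cdot K\subset\C$ that ``tracks'' the segment $[y_n,z_n]$ lies within Hilbert distance $D+\mathrm{diam}(K)$ of $z_n$, and therefore its accumulation points lie in the same closed face of $\partial\Omega$ as $z_\infty$. There is no mechanism in your argument forcing any such accumulation point to be $y_\infty$ rather than $z_\infty$ or some other point of that face. Equivalently, you are implicitly asserting that $\partiali\C$ and $\partiali\C_D$ coincide (where $\C_D$ is the closed $D$-neighbourhood of $\C$), but this is precisely what can fail for general naively convex cocompact groups: the paper notes in the introduction that naive convex cocompactness is \emph{not} stable under deformation and is strictly weaker than convex cocompactness, so no argument that ignores the reflection structure can succeed here.

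The paper's proof is entirely different and uses the reflection-group structure in an essential way. Assuming $\Lambdao_\Omega(\rho(W_S))\not\subset\partiali\C$, it passes to the convex hull $\C_0$ of $\partiali\C$, takes a uniform $t$-neighbourhood $\C_t$, and finds a point $y\in\partialn\mathcal{E}_s$ (with $\mathcal{E}_s$ the convex hull of $\partiali\C_s$) realizing the maximal distance to $\C_0$. A supporting hyperplane $H_y$ at $y$ is then shown to be invariant under the subgroup $\Gamma'$ generated by those reflections of $\rho(W_S)$ whose walls separate $\C':=H_y\cap\mathcal{E}_s$; this is where reflections enter. One obtains a lower-dimensional pair $(\C',\Omega')$ with $\Gamma'$ acting cocompactly on $\C'$ but $\C'$ disjoint from $\C_0$, and a dimension-induction (together with Fact~\ref{fact:vey} in the bottom case) yields the contradiction. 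To repair your argument you would need to import this reflection/hyperplane mechanism or an equivalent structural input.
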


\begin{proof}
By Proposition~\ref{prop:maximal}.\eqref{item:max-3}, the set $\Omega$ is contained in the Tits--Vinberg domain~$\OhmVin$, and so we may assume $\Omega=\OhmVin$.
Suppose by contradiction that $\Lambdao_{\OhmVin}(\rho(W_S)) \not\subset \partiali \C$.
Let $\C_0$ be the convex hull of $\partiali\C$ in~$\C$ and, for $t > 0$, let $\C_t$ be the closed uniform $t$-neighborhood of $\C_0$ in $\OhmVin$ with respect to the Hilbert metric $d_{\OhmVin}$.
The set $\C_t$ is properly convex \cite[(18.9)]{bus55}; it is $\rho(W_S)$-invariant and has compact quotient by $\rho(W_S)$, for the set $\C_t$ is the union of the $\rho(W_S)$-translates of the closed uniform $t$-neighborhood of a compact fundamental domain of $\C_0$.

Since $\Lambdao_{\OhmVin}(\rho(W_S)) \not\subset \partiali \C$, there exists $y_0 \in \OhmVin$ whose $\rho(W_S)$-orbit admits an accumulation point $\zeta \in \partial \OhmVin \smallsetminus \partiali \C$; necessarily $y_0\notin\C$.
Let $s = d_{\OhmVin}(y_0, \C_0) > 0$.
Let $\mathcal{E}_s$ be the convex hull of $\partiali \C_s$ in $\OhmVin$; it is a closed $\rho(W_S)$-invariant subset of $\C_s$, hence it has compact quotient by $\rho(W_S)$.
The set $\partiali \mathcal{E}_s = \partiali \C_s$ contains $\zeta$, hence is strictly larger than $\partiali \C$.
Therefore $\mathcal{E}_s$ is strictly larger than~$\C_0$ and $\partialn \mathcal{E}_s$ contains a point not in~$\C_0$.
Since $\mathcal{E}_s$ has compact quotient by $\rho(W_S)$, there is a point $y \in \partialn \mathcal{E}_s$ achieving maximum distance $0 < t \leq s$ to~$\C_0$.
By maximality of~$t$, we have $\mathcal{E}_s \subset \C_t$.
Let $H_y$ be a hyperplane supporting $\C_t$ (and therefore also $\mathcal{E}_s$) at~$y$.
The intersection $\C' := H_y \cap \mathcal{E}_s$ is a nonempty convex set which is the convex hull of some subset of $\partiali \mathcal{E}_s$.
Observe that $\C'$ is contained in $\partialn \C_t$ and is therefore disjoint from~$\C_0$.

\begin{figure}[ht!]
\centering
\labellist
\small\hair 2pt
\pinlabel {$\mathcal{H}(r)$} at 345 450
\pinlabel {$H$} at 80 290
\pinlabel {$z$} at 290 320
\pinlabel {$x'$} at 183 297
\pinlabel {$x$} at 403 340
\pinlabel {$\C'$} at 475 400
\pinlabel {$\OhmVin$} at 407 100
\pinlabel {$\mathcal{E}_s$} at 70 200
\endlabellist
\includegraphics[scale=0.34]{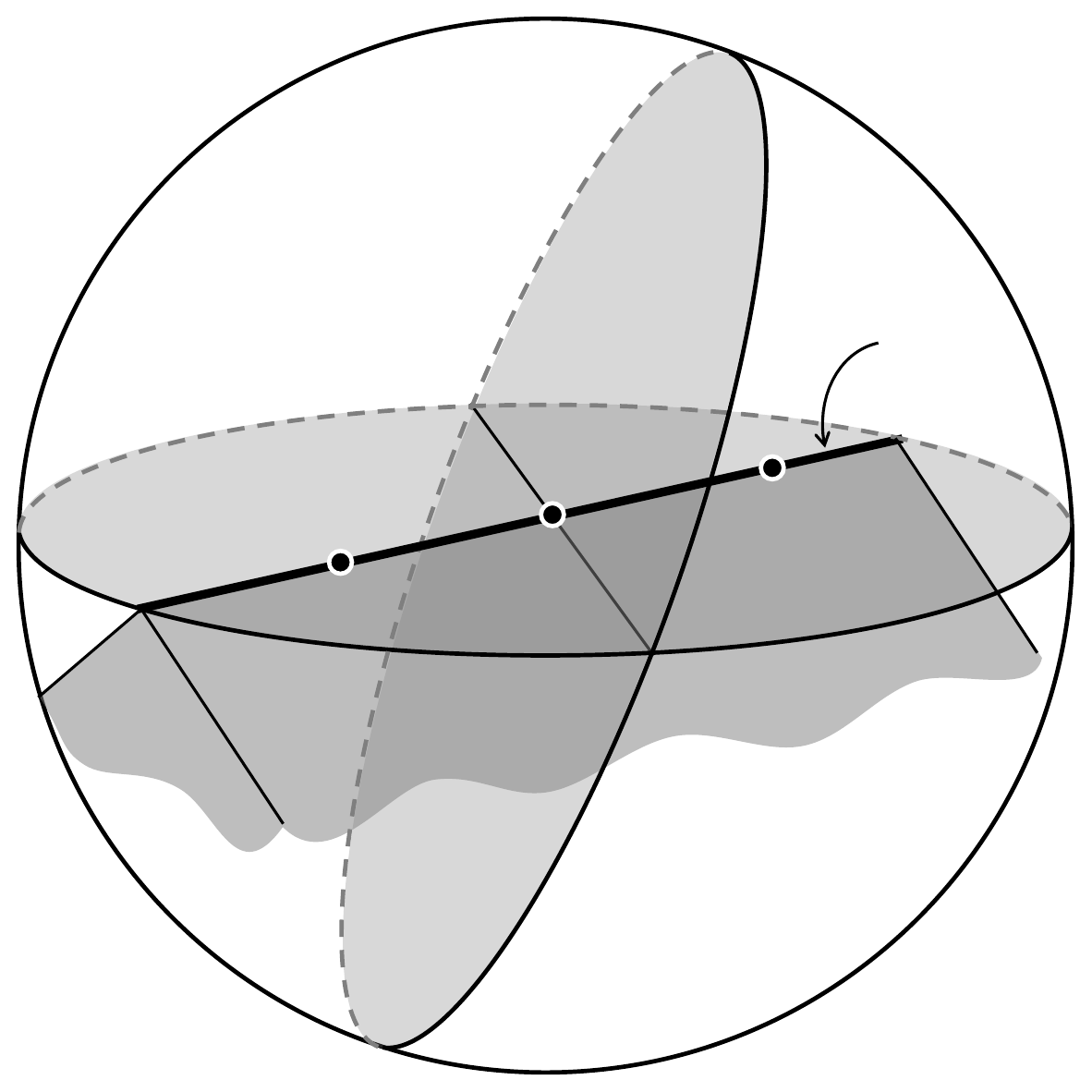}
\caption{Illustration for the proof of Proposition~\ref{prop:naive-cc-implies-cc}}
\label{fig:Hr-C'}
\end{figure}

We claim that any hyperplane $H$ supporting $\C_t$ along $\C'$ is invariant under the group $\Gamma'$ generated by all the reflections $\rho(r)$, $r\in W_S$, whose fixed hyperplane $\mathcal{H}(r)$ separates $\C'$ into two connected components. 
Indeed, consider such a reflection~$r$.
First, we note that $H \cap \mathcal{H}(r) = (\rho(r)\cdot H) \cap \mathcal{H}(r)$.
Second, we show that $\rho(r) \cdot H$ also contains~$\C'$.
Consider any point $x \in \C' \smallsetminus \mathcal{H}(r)$ and choose a second point $x' \in \C' \smallsetminus \mathcal{H}(r)$ on the opposite side of $\mathcal{H}(r)$ (see Figure~\ref{fig:Hr-C'}).
The segment $[x,x']$ crosses $\mathcal{H}(r)$ at some point $z \in (x,x')$.
Since $\rho(r)$ fixes~$z$, and $H$ is a supporting hyperplane to $\C_t \supset \rho(r)\cdot [x,x']$ at $z$, it follows that $\rho(r)\cdot [x,x'] \subset H$, in particular $\rho(r)\cdot x \in H$, hence $x \in \rho(r)\cdot H$.
Since $H$ is spanned by $H \cap \mathcal{H}(r)$ and $\C'$, we deduce that $\rho(r)\cdot H = H$.
Therefore $H$ is invariant under the group $\Gamma'$ generated by the set of such reflections, as claimed.

In particular, the convex sets $\C' = H_y \cap \mathcal{E}_s$ and $\Omega' := H_y \cap \OhmVin$ of $H_y$ are invariant under $\Gamma'$.
Moreover, since the action of $\rho(W_S)$ on $\mathcal{E}_s$ is cocompact, the intersection of $\C'$ with the tiling of $\OhmVin$ decomposes $\C'$ into compact polytopes, hence the action of $\Gamma'$ on~$\C'$ is also cocompact.
In particular, since $\C'$ is noncompact, $\Gamma'$ must be infinite.

The above construction finds a proper subspace $\PP(V'):=\PP(\mathrm{span}(\C'))$, an infinite subgroup $\Gamma'$ of $\rho(W_S)$ generated by reflections and preserving $\PP(V')$, and a nonempty closed convex subset $\C' \subset \Omega' := \PP(V') \cap \OhmVin$ such that
\begin{itemize}
  \item $\C'$ is the convex hull of a closed $\Gamma'$-invariant subset of $\partial \Omega'$, and
  \item the action of $\Gamma'$ on $\C'$ is properly discontinuous and cocompact, but
  \item $\C'$ is disjoint from $\C_0$.
\end{itemize}

To find a contradiction, consider $\PP(V')$, $\Gamma' < \rho(W_S)$, and $\C' \subset \Omega' = \PP(V') \cap \OhmVin$ satisfying the above and so that the dimension of $\PP(V')$ is minimal.
There are two cases to consider: (i) $\C' = \Omega'$ and (ii) $\C' \neq \Omega'$.
Note that $\dim(V') \geqslant 2$ (by proper discontinuity), and that $\dim(V') = 2$ implies~(i).

In case~(i), the group $\Gamma'$ acts on $\Omega'$ cocompactly.
If $\Lambda_{\Gamma'}$ denotes the proximal limit set of $\Gamma'$ in $\PP(V')$, then the convex hull $\mathrm{Conv}(\Lambda_{\Gamma'})$ of $\Lambda_{\Gamma'}$ in $\Omega'$ is equal to $\Omega' = \C'$ by Fact~\ref{fact:vey}.
Therefore $\C' = \mathrm{Conv}(\Lambda_{\Gamma'}) \subset \mathrm{Conv}(\Lambda_{\rho(W_S)}) \subset \C_0$, where $\mathrm{Conv}(\Lambda_{\rho(W_S)})$ is the convex hull of $\Lambda_{\rho(W_S)}$ in $\OhmVin$, contradicting the fact that $\C'$ is disjoint from $\C_0$.

In case~(ii), there exists a point $x \in \partialn \C' \subset \Omega'$ and a supporting hyperplane $H_x$ of $\C'$ at $x$ (in $\PP(V')$).
Then $\PP(V'') := \PP(V') \cap H_x$, $\C'' := H_x \cap \C'$, and $\Omega'' := H_x \cap \Omega'$ are invariant under the group $\Gamma''$ generated by all the reflections $\rho(r)$ whose fixed hyperplane $\mathcal{H}(r)$ separates $\C''$ into two connected components and the $\Gamma''$-action on $\C''$ is cocompact.
However, $\PP(V'') = \PP(V') \cap H_x$ has dimension one less than that of $\PP(V')$, contradicting minimality.
\end{proof}

\subsection{Proof of Theorem~\ref{thm:main}}

Since $W_S$ is infinite and satisfies \ref{item:no-Z2}, we can write $W_S = W_{S'} \times W_{S''}$ for $S = S'\sqcup S''$ where $W_{S'}$ is infinite and irreducible and $W_{S''}$ is finite (possibly trivial).
By Proposition~\ref{prop:finite-index} and Fact~\ref{fact:types}.\eqref{item:spherical_case}, it is sufficient to check Theorem~\ref{thm:main} for $W_{S'}$.
So we now assume that $S = S'$, \ie the infinite Coxeter group $W_S$ is irreducible.

When $W_S$ is affine, Theorem~\ref{thm:main} is contained in Proposition~\ref{prop:affine}.
We now assume that $W_S$ is large, satisfying \ref{item:no-Z2} and~\ref{item:aff-only-Ak}.
Let $\rho_v^\alpha : W_S\to\GL(V^\alpha_v)$ be the representation of $W_S$ as a reflection group induced by~$\rho$ as in Remarks~\ref{rem:notation_rhos} and~\ref{rem:rho-v-alpha-refl-group}; the group $\rho_v^{\alpha}(W_S)$ is reduced and dual-reduced.
The equivalences \ref{item:naive-cc-cox}~$\Leftrightarrow$~\ref{item:cc-cox}~$\Leftrightarrow$~\ref{item:not-zero-type}~$\Leftrightarrow$~\ref{item:not-zero-type-explicit} then follow from Lemma~\ref{lem:V-alpha-v}, Propositions ~\ref{prop:sigma_is_inside} and~\ref{prop:naive-cc-implies-cc}, and Fact~\ref{fact:types}, as in the following diagram:

\begin{center}
\begin{tikzcd}
\rho(W_S) \ref{item:naive-cc-cox}\text{in } \PP(V) 
	\arrow[Leftarrow, d,"\text{by definition}" left] 
	\arrow[rrr, Rightarrow,"\text{Lemma~\ref{lem:V-alpha-v}.\eqref{item:V-alpha-v-naive-cc}}" below] &  &  &
\rho_v^\alpha(W_S) \ref{item:naive-cc-cox}\text{in } \PP(V_v^\alpha) 
	\arrow[Rightarrow, d,  "\text{Proposition~\ref{prop:naive-cc-implies-cc}}" left] 
	\arrow[Leftrightarrow, rrr,  "\text{Proposition~\ref{prop:sigma_is_inside}}" below]  &  &  &
\ref{item:not-zero-type} 	
	\arrow[Leftrightarrow, d,  "\text{Fact~\ref{fact:types}}" left]
	\\
\rho (W_S) \ref{item:cc-cox} \text{in } \PP(V) 
	\arrow[Leftrightarrow, rrr,  "\text{Lemma~\ref{lem:V-alpha-v}.\eqref{item:V-alpha-v-cc}}" above] & & &  
\rho_v^\alpha(W_S) \ref{item:cc-cox} \text{in }\PP(V_v^\alpha) & & &
\ref{item:not-zero-type-explicit}.
\end{tikzcd}
\end{center}

\section{Proof of Theorem~\ref{thm:naive-cc-exists} and consequences} \label{sec:mainproof-easy}

We now prove~Theorem~\ref{thm:naive-cc-exists} and its consequences Corollaries \ref{cor:rel-hyp}, \ref{cor:cc-standard-subgroup}, \ref{cor:main2}, and~\ref{cor:main-racg}.

\subsection{Naive convex cocompactness implies only one infinite irreducible factor} \label{subsec:naive-cc->not-IC}

We first make the following observation.

\begin{proposition} \label{prop:W-prod-not-cc}
Let $W_S$ be an infinite Coxeter group and $\rho : W_S\to\GL(V)$ a representation of $W_S$ as a reflection group.
If $\rho(W_S)$ is naively convex cocompact in $\PP(V)$, then the Coxeter group $W_S$ has only one infinite irreducible factor.
\end{proposition}

\begin{proof}
Suppose that $\rho(W_S)$ is naively convex cocompact in $\PP(V)$: there exist a $\rho(W_S)$-invariant properly convex open subset $\Omega$ of $\PP (V)$ and a nonempty $\rho(W_S)$-invariant closed convex subset $\C$ of~$\Omega$ such that $\rho(W_S) \backslash \C$ is compact.

For each infinite irreducible factor $W_{S_{\ell}}$ of~$W_S$, the corresponding Cartan submatrix is of negative type, otherwise $\rho(W_{S_\ell})$ would contain a unipotent element by Fact~\ref{fact:types}.\eqref{item:large_case} and Proposition~\ref{prop:affine}, contradicting Proposition~\ref{prop:cc-no-unipotent}.
By Lemma~\ref{lem:V-alpha-v}.\eqref{item:V-alpha-v-naive-cc}, up to replacing $\rho$ by $\rho^{\alpha}$, we may assume $V_{\alpha} := \bigcap_{s\in S} \mathrm{Ker}(\alpha_s) = \{0\}$.

Suppose by contradiction that $W_S$ has more than one infinite irreducible factors: we can write $W_S = W_{S'}\times W_{S''}$ for $S=S'\sqcup S''$ where $W_{S'}$ and $W_{S''}$ are both infinite and $W_{S'}$ is irreducible.
Note that
$$\mathrm{span} \big\{ v_i \,|\, s_i \in S'\big\} \cap \bigcap_{s_i\in S'} \mathrm{Ker}(\alpha_i) = \{0\}.$$
Indeed, $\mathrm{span}\{ v_i \,|\, s_i \in S'\} \subset \bigcap_{s_j\in S''} \mathrm{Ker}(\alpha_j)$ and $V_{\alpha} = \{0\}$ by assumption.
Thus $(\rho|_{W_{S'}})_v$ is reduced and dual-reduced.
It is still a representation as a reflection group by Remark~\ref{rem:rho-v-alpha-refl-group}.

Since $W_{S'}$ and $W_{S''}$ are infinite, Proposition~\ref{prop:nonirred_ohmweird} shows that $\Omega$ is included in one of the finitely many Tits--Vinberg domains $\OhmVin$.
Note that $\OhmVin \subset \Omega_{\scriptscriptstyle\mathrm{TV}, S'}$ by Fact~\ref{fact:Vinberg-cones}.
By Corollary~\ref{cor:C-meets-P(Vv)}.\eqref{item:C-meets-P(Vv)}, the set $\C \cap \PP(\mathrm{span}\{ v_i \,|\, s_i \in S'\})$ has nonempty interior in $\PP(\mathrm{span}\{ v_i \,|\, s_i \in S'\})$.
By Theorem~\ref{thm:minimal_convex}, the closure $\overline{\C}$ of $\C$ in $\PP(V)$ contains
$$\Sigma_{S'} := \PP\Big(\Big\{ x = \sum_{s_i\in S'} t_i v_i \in V ~\Big|~ \alpha_i(x) \leq 0 \ \text{ and }\ t_i \geq 0 \quad\forall s_i\in S' \Big\} \Big).$$
Let $t' = (t_i)_{s_i \in S'}$ be a Perron--Frobenius eigenvector of $2\,\mathrm{Id} - \Cart_{S'}$ and $x' := \sum_{s_i \in S'} t_i v_i$, so that $[x'] \in \Sigma_{S'}$. 
Then, arguing as in the proof of Proposition~\ref{prop:boundary_sigma}.\eqref{item:boundary_of_sigma2}, we have that $[x'] \in \Sigma_{S'} \cap \partial \OhmVin$. Hence, $[x'] \in \partiali\C \cap \Sigma_{S'} \subset \partiali\C \cap \Delta$, which shows that $\C \cap \Delta$ is noncompact.
Since $\C \cap \Delta$ is homeomorphic to the quotient $\rho(W_S)\backslash \C$ (Fact~\ref{fact:Delta-flat}), we obtain a contradiction.
\end{proof}

\subsection{Proof of Theorem~\ref{thm:naive-cc-exists}.}

Suppose there exist a finite-dimensional real vector space $V$ and a representation $\rho\in\Hom^{\mathrm{ref}}(W_S,\GL(V))$ such that $\rho(W_S)$ is naively convex cocompact in $\PP(V)$.
By Proposition~\ref{prop:W-prod-not-cc}, we can write $W_S = W_{S'} \times W_{S''}$ for $S = S'\sqcup S''$ where $W_{S'}$ is infinite and irreducible and $W_{S''}$ is finite (possibly trivial).
It is sufficient to check conditions \ref{item:no-Z2} and~\ref{item:aff-only-Ak} for~$W_{S'}$, so we now assume $S = S'$, \ie $W_S$ is irreducible.
By Lemma~\ref{lem:V-alpha-v}.\eqref{item:V-alpha-v-naive-cc}, we may assume that $\rho(W_S)$ is reduced and dual-reduced.
By Propositions~\ref{prop:affine} and~\ref{prop:sigma_is_inside}, condition~\ref{item:no-Z2} holds.
Moreover, condition~\ref{item:aff-only-Ak} also holds because for any affine irreducible Coxeter group which is \emph{not} of type~$\widetilde{A}_k$, the corresponding Cartan matrix is of zero type by Fact~\ref{fact:types}.

Conversely, suppose $W_S$ is any Coxeter group which satisfies the conditions \ref{item:no-Z2} and~\ref{item:aff-only-Ak}.
Let $N=\# S$ be the number of generators of~$W_S$.
Consider the matrix $\mathrm{Cos}(W_S) = (-2\cos(\pi/m_{i,j}))_{1\leq i,j\leq N}$, and modify it into a matrix $\Cart \in \mathcal{M}_N(\RR)$ in the following way:
\begin{itemize}
  \item for each pair $(i,j)$ with $m_{i,j}=\infty$, replace the entry $-2\cos(\pi/m_{i,j}) = -2$ by some negative number $<-2$;
  \item let $P$ be the set of pairs $(i,j)$ with $i<j$ and $m_{i,j}=3$.
  For each $(i,j)\in P$, choose a number $t_{i,j}>1$, in such a way that for any disjoint subsets $P'$ and $P''$ of $P$, we have
  $$\prod_{(i,j)\in P'} t_{i,j} \times \prod_{(i,j)\in P''} t_{i,j}^{-1}\neq 1.$$
  Then, for any pair $(i,j)\in P$, multiply the $(i,j)$-entry $-2\cos(\pi/m_{i,j}) = -1$ by $t_{i,j}$ and the $(j,i)$-entry $-2\cos(\pi/m_{j,i}) = -1$ by $t_{i,j}^{-1}$ to obtain the matrix~$\Cart$.
\end{itemize}
By construction of~$\Cart$, all Cartan submatrices of~$\Cart$ corresponding to subgroups $W_{S'}$ of type~$\widetilde{A}_k$ are nonsymmetrizable, hence $\Cart$ does not have any Cartan submatrix of zero type.
By \cite[Cor.\,1]{vin71}, there exists a representation $\rho\in\Hom^{\mathrm{ref}}(W_S,\GL(\RR^N))$ with Cartan matrix~$\Cart$.
By Theorem~\ref{thm:main}, the group $\rho(W_S)$ is convex cocompact in $\PP(\RR^N)$.
This completes the proof of Theorem~\ref{thm:naive-cc-exists}.

\subsection{Consequences of Theorems~\ref{thm:naive-cc-exists} and~\ref{thm:main}}

We now prove Corollaries \ref{cor:rel-hyp}, \ref{cor:cc-standard-subgroup}, \ref{cor:main2}, and~\ref{cor:main-racg}.

\begin{proof}[Proof of Corollary~\ref{cor:rel-hyp}]
We first check the implication \eqref{item:naive-cc-exists-again}~$\Rightarrow$~\eqref{item:relatively-hyperbolic}.
Suppose that there exist $V$ and $\rho\in\Hom^{\mathrm{ref}}(W_S,\GL(V))$ such that $\rho(W_S)$ is naively convex cocompact in $\PP(V)$.
By Theorem~\ref{thm:naive-cc-exists}, the Coxeter group $W_S$ satisfies conditions \ref{item:no-Z2} and~\ref{item:aff-only-Ak}.

The relative hyperbolicity of $W_S$ follows from a theorem of Caprace \cite[Cor.\,D]{cap09}: the Coxeter group $W_S$ is relatively hyperbolic with respect to a collection $\mathcal{P}$ of virtually abelian subgroups of rank at least $2$ if and only if for any disjoint subsets $S', S''$ of~$S$ with $W_{S'}$ and $W_{S''}$ both infinite and commuting, the subgroup $W_{S' \cup S''}$ is virtually abelian.
Here this criterion is vacuously satisfied due to~\ref{item:no-Z2}. 

By~\cite[Th.\,B]{cap09} and \cite{cap15}, every $P \in \mathcal{P}$ may be chosen to be one of finitely many standard subgroups $W_{T_i}$, $1\leq i \leq\ell$.
It only remains to explain why the $W_{T_i}$ are of the form claimed.

Fix $1\leq i \leq\ell$.
The Coxeter group $W_{T_i}$ is a product of irreducible standard Coxeter groups.
Since $W_{T_i}$ is virtually abelian, each irreducible factor is either affine (hence infinite) or spherical (hence finite), see Section~\ref{subsec:Coxeter-type}.
Condition~\ref{item:no-Z2} implies that $W_{T_i}$ has exactly one affine irreducible factor $W_U$, which must be of type~$\widetilde{A}_k$ for some $k\geq 2$ by condition~\ref{item:aff-only-Ak}.
We have $W_{T_i} = W_U \times W_{U^\perp}$ where $W_{U^\perp}$ is the standard subgroup of~$W_S$ generated by $U^\perp := \{ s \in S \, |\, m_{u,s}=2\ \forall u \in U\}$, and as seen above $W_{U^\perp}$ is a product of spherical irreducible Coxeter subgroups.

We now check the converse implication \eqref{item:relatively-hyperbolic}~$\Rightarrow$~\eqref{item:naive-cc-exists-again}.
For this we use the following result of Caprace, \cite[Th.\,A]{cap09} and \cite{cap15}: if $W_S$ is relatively hyperbolic with respect to a collection $\mathcal{P}$ of standard subgroups $W_{T}$, and if we set $\mathcal{T} := \{ T \subset S \mid W_T \in \mathcal{P} \}$, then:
\begin{itemize}
\item for any disjoint subsets $S'$, $S''$ of~$S$ such that $W_S'$ and $W_{S''}$ are both infinite and commute, there exists $T \in \mathcal{T}$ such that $S' \cup S''\subset T$;
\item for any subset $S'$ of~$S$ with $\# S' \geq 3$ such that $W_{S'}$ is irreducible and affine, there exists $T \in \mathcal{T}$ such that $S' \subset T$. 
\end{itemize}
This result implies that if $W_S$ is relatively hyperbolic with respect to a collection of virtually abelian subgroups which are the standard subgroups of~$W_S$ of the form $W_U \times W_{U^\perp}$ with $W_U$  of type~$\widetilde{A}_k$ for some $k\geq 2$ and $W_{U^\perp}$ finite, then conditions \ref{item:no-Z2} and~\ref{item:aff-only-Ak} are satisfied.
Theorem~\ref{thm:naive-cc-exists} then implies that there exist $V$ and $\rho\in\Hom^{\mathrm{ref}}(W_S,\GL(V))$ such that $\rho(W_S)$ is convex cocompact in $\PP(V)$.
\end{proof}

\begin{proof}[Proof of Corollary~\ref{cor:cc-standard-subgroup}]
Suppose $\rho(W_S)$ is convex cocompact in $\PP(V)$ and let $W_{S'}$ be an infinite standard subgroup  of $W_S$.
By Theorem~\ref{thm:main}, for any $\emptyset\neq S''\subset S$, the Cartan submatrix $\Cart_{S''}:=(\Cart_{i,j})_{s_i,s_j\in S''}$ is \emph{not} of zero type.
In particular, this holds for any $\emptyset\neq S''\subset S'$.
Therefore, if $W_{S'}$ is an irreducible Coxeter group, then Theorem~\ref{thm:main} yields that $\rho(W_{S'})$ is convex cocompact in $\PP(V)$.

In general, $W_{S'}$ may not be irreducible, but since $W_S$ satisfies condition~\ref{item:no-Z2} of Theorem~\ref{thm:naive-cc-exists} we know that $W_{S'}$ has a finite-index subgroup $W_{S''}$ which is standard and irreducible.
Then $\rho(W_{S''})$ is convex cocompact in $\PP(V)$, and so $\rho(W_{S'})$ is convex cocompact in $\PP(V)$ (see Proposition \ref{prop:finite-index}).
\end{proof}

\begin{proof}[Proof of Corollary~\ref{cor:main2}]
If $\rho(W_S)$ is strongly convex cocompact in $\PP(V)$, then $W_S$ is word hyperbolic by Theorem~\ref{prop:cc-hyp-strong-cc}, and $\Cart_{i,j} \Cart_{j,i} > 4$ for all $i\neq j$ with $m_{i,j} = \infty$ by Theorem~\ref{thm:main}.

Conversely, suppose $W_S$ is word hyperbolic and $\Cart_{i,j}\Cart_{j,i} > 4$ for all $i\neq j$ with $m_{i,j}=\infty$.
Since $W_S$ is word hyperbolic, it does not contain any subgroup isomorphic to~$\ZZ^2$, and so conditions \ref{item:no-Z2} and~\ref{item:aff-only-Ak} of Theorem~\ref{thm:naive-cc-exists} both hold (the latter vacuously).
Condition~\ref{item:not-zero-type-explicit} of Theorem~\ref{thm:main} is also satisfied because for any $i\neq j$ with $m_{i,j} = \infty$, the Cartan submatrix $\big(\begin{smallmatrix} 2 & \Cart_{i,j}\\ \Cart_{j,i} & 2\end{smallmatrix}\big)$ is \emph{not} of zero type if and only if $\Cart_{i,j} \Cart_{j,i} > 4$.
Thus $\rho(W_S)$ is convex cocompact in $\PP(V)$ by Theorem~\ref{thm:main}, and strongly convex cocompact in $\PP(V)$ by Theorem~\ref{prop:cc-hyp-strong-cc}.
\end{proof}

\begin{proof}[Proof of Corollary~\ref{cor:main-racg}]
Corollary~\ref{cor:main2} gives the equivalence \ref{item:word-hyp-+}~$\Leftrightarrow$~\ref{item:strong-cc-cox}, and by definition we have \ref{item:strong-cc-cox}~$\Rightarrow$~\ref{item:cc-cox}~$\Rightarrow$~\ref{item:naive-cc-cox}.
We now check the implication\linebreak \ref{item:naive-cc-cox}~$\Rightarrow$~\ref{item:strong-cc-cox}.
By Theorem~\ref{thm:naive-cc-exists}, if $\rho$ satisfies~\ref{item:naive-cc-cox}, then $W_S$ must satisfy \ref{item:no-Z2} and~\ref{item:aff-only-Ak}.
In this case, $\rho$ actually satisfies~\ref{item:cc-cox} by Theorem~\ref{thm:main}.
By Theorem~\ref{prop:cc-hyp-strong-cc}, in order to get~\ref{item:strong-cc-cox} it is sufficient to check that $W_S$ is word hyperbolic. 
By Moussong's criterion \cite{mou87}, recalled in Remark~\ref{rem:moussong}, we only need to check that $W_S$ satisfies~\ref{item:no-Z2} (already done) and~\ref{item:non_aff}.
But \ref{item:non_aff} is the conjunction of~\ref{item:aff-only-Ak} and of our assumption that $W_S$ contains \emph{no} standard subgroup of type $\widetilde{A}_k$ for $k\geq 2$.
\end{proof}

\begin{remark}
In \cite[\S\,8]{dgk-ccHpq}, the first three authors proved Corollary~\ref{cor:main-racg} in the special case that $n=N$ and that the Cartan matrix $\Cart = (\Cart_{i,j})_{1\leq i,j\leq N}$ for $\rho$ is symmetric and defines a nondegenerate quadratic form on~$V$ as in Remark~\ref{rem:Tits-geom}. 
This approach was used by the last two authors \cite{lm18} to construct the first examples of discrete subgroups of $\OO(p,2)$ which are strongly convex cocompact in $\PP(\RR^{p+2})$ and whose (proximal) limit set is homeomorphic to the $(p-1)$-dimensional sphere, but which are not quasi-isometric to lattices of $\OO(p,1)$ as abstract groups.
\end{remark}

\section{The deformation space of convex cocompact representations} \label{sec:deformation}

We now prove Corollaries~\ref{coro:nra_cc-int-refl},~\ref{coro:cc-int-refl}, and~\ref{cor:hyp-CG-Anosov}.

\subsection{Convex cocompactness and the interior of $\chi^{\mathrm{ref}}(W_S,\mathrm{GL}(V))$} \label{subsec:cc_character}

\begin{proof}[Proof of Corollary~\ref{coro:nra_cc-int-refl}]
By Remark~\ref{rem:Cartan-matrix}, the space of characters of $W_S$ defined by data $(\alpha,v)$ satisfying~\eqref{eqn:conn-comp-ref} is an open subset of $\chi(W_S, \GL(V))$ containing $\chi^{\mathrm{ref}}(W_S,\mathrm{GL}(V))$.
By Fact~\ref{fact:parameterization}, the map assigning the conjugacy class of Cartan matrix to a conjugacy class of semisimple representations is a homeomorphism from the space consisting of representations defined by data $(\alpha,v)$ satisfying~\eqref{eqn:conn-comp-ref} to the space of $N \times N$ matrices of rank at most $\dim(V)$ that are weakly compatible with~$W_S$ (Definition~\ref{def:weak-compat}), considered up to conjugation by positive diagonal matrices.

The reflection characters $\chi^{\mathrm{ref}}(W_S, \GL(V))$ correspond to the subset defined by~\eqref{eqn:refl-group}, namely that $\Cart_{i,j}\Cart_{i,j} \geq 4$ whenever $m_{i,j} = \infty$, and \eqref{eqn:not_empt_int}, namely that the cone $\widetilde{\Delta} =\linebreak \{ x \in\nolinebreak V \mid \alpha_i(x) \leq 0\ \forall 1\leq i\leq N\}$ has nonempty interior. Note that the validity of \eqref{eqn:not_empt_int} is almost automatic since $W_S$ is word hyperbolic.
Indeed, if $N \geqslant 3$ then $W_S$ is large, hence the Cartan matrix $\Cart$ is of negative type by Fact~\ref{fact:types}, and so \eqref{eqn:not_empt_int} holds as explained in Remark~\ref{rem:neg-type-interpret}.\eqref{item:neg-type-interpret}.
If $N=2$, then \eqref{eqn:not_empt_int} may fail only when $\Cart$ is of zero type, which is equivalent to $\Cart_{1,2}\Cart_{2,1} = 4$. 

In any case, the reflection characters which are convex cocompact in $\mathbb{P}(V)$ are an open subset of $\chi(W_S, \GL(V))$ which, by Corollary~\ref{cor:main2} and Proposition~\ref{prop:cc-hyp-strong-cc}, corresponds to the subset of Cartan matrices compatible with $W_S$ defined by the strict inequalities $\Cart_{i,j} \Cart_{j,i} >4$ whenever $m_{i,j}= \infty$.
Let us now check that they are precisely the interior of $\chi^{\mathrm{ref}}(W_S, \GL(V))$ if $\dim(V) \geq N$.

Consider a semisimple representation $\rho: W_S \to \GL(V)$ of~$W_S$ as a reflection group which is not convex cocompact in $\PP(V)$.
Then the associated Cartan matrix $\Cart$ satisfies $\Cart_{i,j} \Cart_{j,i} = 4$ for some $i,j$ with $m_{i,j} = \infty$. 
Deforming the entry $\Cart_{i,j}$ to become less negative gives a matrix $\Cart$ remaining within the space of weakly compatible matrices, for which $\Cart_{i,j} \Cart_{j,i}$ becomes smaller than $4$.
However, under the assumption $\dim(V) \geq N$, such a small deformation of the Cartan matrix corresponds to a small deformation in $\chi(W_S, \GL(V))$ which is outside of $\chi^{\mathrm{ref}}(W_S, \GL(V))$. 
This shows that the character of $\rho$ is not in the interior of $\chi^{\mathrm{ref}}(W_S, \GL(V))$. 
\end{proof}

\subsection{Convex cocompactness and the interior of $\Hom^{\mathrm{ref}}(W_S,\mathrm{GL}(V))$} \label{subsec:cc-int}

As discussed in the introduction, it is a much more subtle problem to determine the interior of the space $\Hom^{\mathrm{ref}}(W_S,\mathrm{GL}(V))$ which includes many nonsemisimple representations.

\begin{proof}[Proof of Corollary~\ref{coro:cc-int-refl}]
Since convex cocompactness in $\PP(V)$ is an open condition \cite[Th.\,1.16.(D)]{dgk-proj-cc}, the set of representations $\rho \in \Hom^{\mathrm{ref}}(W_S,\GL(V))$ for which $\rho(W_S)$ is convex cocompact in $\PP(V)$ is included in the interior of $\Hom^{\mathrm{ref}}(W_S,\GL(V))$ in\linebreak $\Hom(W_S,\GL(V))$.
Let us prove the reverse inclusion.

Suppose $\rho \in \Hom^{\mathrm{ref}}(W_S,\GL(V))$ has Cartan matrix $\Cart$ satisfying $\Cart_{i,j}\Cart_{j,i}=4$ for some pair $(i,j)$ such that $m_{i,j} = \infty$, and let us find conditions for the existence of a small deformation of $\rho$ outside $\Hom^{\mathrm{ref}}(W_S,\mathrm{GL}(V))$.
The pair $(i,j)$ being fixed, define
\begin{equation} \label{eqn:4subspaces}
\begin{array}{ll}
V_i := \underset{m_{i,k}<\infty}{\mathrm{span}} (v_k) \subset V, & 
W_i^\star := \mathrm{Ann} (V_{i}) = \underset{m_{i,k}<\infty}\bigcap \mathrm{Ann}(v_k) \subset V^*, \\
V^\star_j := \underset{m_{k,j}<\infty}{\mathrm{span}}(\alpha_k) \subset V^*, &
W_j := \mathrm{Ann}(V^\star_{j}) = \underset{m_{k,j}<\infty}\bigcap \mathrm{Ker}(\alpha_k) \subset V.
\end{array}
\end{equation}
If there exists $(\beta,w) \in W_i^\star\times W_j$ such that $\beta(w) = 1$, then for any $t\in\RR$ the elements $\alpha_i^t := \alpha_i + t\beta$ and $v_j^t := v_j + t w$ satisfy $\alpha_i^t(v_k) = \alpha_i(v_k)$ for all $k$ with $m_{i,k} \neq \infty$, and $\alpha_k(v_j^t) = \alpha_k(v_j)$ for all $k$ with $m_{k,j} \neq \infty$; but $\alpha_i^t(v_j^t) = \alpha_i(v_j) + t(\lambda + t)$ where $\lambda=\alpha_i(w)+\beta(v_j)$. 
For sufficiently small $t \neq 0$ such that $\lambda t \geqslant 0$, we have $\alpha_i^t(v_j^t) > \alpha_i(v_j)$.
Replacing $(\alpha_i,v_j) $ with $(\alpha_i^t,v_j^t)$ thus yields a representation $\rho_t$ whose Cartan matrix $\Cart^t$ satisfies $\Cart^t_{i,j} = \alpha_i^t(v_j^t) > \Cart_{i,j}$ and $\Cart^t_{j,i} = \Cart_{j,i}$,~hence $\Cart^t_{i,j}\Cart^t_{j,i} < \Cart_{i,j} \Cart_{j,i}=4$.
Thus $\rho_t \notin \Hom^{\mathrm{ref}}(W_S, \GL(V))$ for such~$t$, but $\rho_t \underset{t\rightarrow 0}\rightarrow \rho$.

Assuming that $n\geqslant 2N-2$, or that $W_S$ is right-angled and $n\geqslant N$, we will show that there must exist a pair $(\beta,w) \in W_i^\star\times W_j$ such that $\beta(w) = 1$.
Indeed, suppose not.
This means that 
\begin{equation} \label{eqn:subspace}
W_i^\star \subset \mathrm{Ann}(W_j) = V_j^\star .
\end{equation}

Firstly, we assume that $n \geqslant 2N-2$.
We have $\dim(V_j^\star) \leqslant N-1$ and $\dim(W_i^\star) \geqslant\linebreak n-(N-1) \geqslant N-1$.
Note also that $\alpha_j \in V_j^\star$ and $\alpha_j \notin W_i^\star$ because $\Cart_{j,i} \neq 0$.
Therefore $W_i^\star$ is a strict subspace of $V_j^\star$, which implies $\dim W_i^\star \leqslant N-2$: contradiction.

Secondly, we assume that $W$ is right-angled and $n \geqslant N$.
Let $R_\ell = \{k \,|\,   m_{\ell, k}  = 2 \}$ and $R_{i,j} = R_i\cap R_j$.
Let $r_\ell = \# R_{\ell}$ and $r_{i,j} = \# R_{i,j}$.
Then
\begin{equation} \label{eqn:dimensions}
\left \{ \begin{array}{rcl}
\dim V^\star_j & \leq & 1 + r_j ,\\
\dim W_i^\star & \geq & n-1- r_i ~\text{ (since $\dim V_i \leq 1 + r_i$). }
\end{array} \right .
\end{equation}
By the pigeonhole principle, since the sets $R_i$ and $R_j$ are disjoint from $\{i,j\}$ in $\{ 1,\dots,N\}$, we have
\begin{equation} \label{eqn:rij}
r_{i,j} \geq r_i + r_j - (N-2).
\end{equation}
Since $W$ is word hyperbolic, for any $k,\ell \in R_{i,j}$ the generators $s_k, s_\ell$ must commute (otherwise $s_k s_\ell$ and $s_i s_j$ generate a copy of $\ZZ^2$).
Hence, the pairing $V^\star_j \times V_i \to \RR$ has rank $> r_{i,j}$, because its restriction to indices $(R_{i,j}\sqcup \{j\})\times(R_{i,j}\sqcup \{i\})$ has matrix $\mathrm{Diag}(2,\dots, 2, \Cart_{j,i})$.
But the pairing $W_i^\star\times V_i \rightarrow \RR$ is zero by \eqref{eqn:4subspaces}, and so the inclusion \eqref{eqn:subspace} forces $\dim V_j^\star -\dim W_i^\star > r_{i,j}$.
Using \eqref{eqn:dimensions} and \eqref{eqn:rij}, we obtain $n< N$: contradiction.
\end{proof}
 
\begin{remark}
Consider the map $\underline{\Cart} : V^N \times (V^*)^N \to \mathcal{M}_N(\RR)$ sending $((v_k)_{k=1}^N,(\alpha_k)_{k=1}^N)$ to the matrix $(\alpha_i(v_j))_{i,j=1}^N$.
In the case that $v_1,\dots,v_N$ are linearly independent in~$V$ (\resp $\alpha_1,\dots,\alpha_N$ are linearly independent in~$V^*$), the entries of the matrix $\underline{\Cart}((v_k)_{k=1}^N,(\alpha_k)_{k=1}^N)$ may each be perturbed independently by perturbing the linear forms $(\alpha_k)_{k=1}^N$ (\resp by perturbing the vectors $(v_k)_{k=1}^N$); thus $\underline{\Cart}$ is an open map near $((v_k)_{k=1}^N,(\alpha_k)_{k=1}^N)$ and the conclusion of Corollary~\ref{coro:cc-int-refl} is immediate.
However, $\underline{\Cart}$ fails to be an open map near some inputs $((v_k)_{k=1}^N,(\alpha_k)_{k=1}^N)$ for which both $(v_k)_{k=1}^N$ and $(\alpha_k)_{k=1}^N$ are linearly dependent.
In some degenerate cases outside of the context of reflection groups, this may happen even for $\dim V$ as large as $2N - 2$.
This explains why the proof above of Corollary~\ref{coro:cc-int-refl} is needed.
\end{remark}
 
\begin{remark}
In the case that $n :=\dim V < N$, it is a priori possible that the conclusion of Corollary~\ref{coro:cc-int-refl} could fail. 
Specifically, we cannot rule out that some product $\Cart_{i,j}\Cart_{j,i}$ is constant equal to~$4$ on an irreducible component of $\Hom^{\mathrm{ref}}(W_S, \GL(V))$ with nonempty interior in $\Hom(W_S,\GL(V))$ nor that $\Cart_{i,j}\Cart_{j,i}$ achieves the value $4$ as a local minimum in the interior of $\Hom^{\mathrm{ref}}(W_S,\GL(V))$.
We are not aware of an example demonstrating such behavior where $W_S$ is word hyperbolic and right-angled, but there exist examples where $W_S$ is right-angled (see Example~\ref{ex:right-angled}) or word hyperbolic (see Example~\ref{ex:word-hyperbolic}).
\end{remark}

\subsection{The Anosov condition and the interior of $\Hom^{\mathrm{ref}}(W_S,\mathrm{GL}(V))$} \label{subsec:proof-Ano-cc-Coxeter}

Here is a consequence of Fact~\ref{fact:Anosov}.

\begin{lemma} \label{lem:Ano-cc-Coxeter}
Let $W_S$ be an infinite, word hyperbolic, irreducible Coxeter group and let $\rho : W_S\to\GL(V)$ be a representation of $W_S$ as a reflection group.
Then $\rho$ is $P_1$-Anosov if and only if $\rho(W_S)$ is strongly convex cocompact in $\PP(V)$.
\end{lemma}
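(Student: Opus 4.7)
The plan is to handle each implication separately, combining Fact~\ref{fact:Anosov} (the bridge between strong convex cocompactness and $P_1$-Anosov) with the Cartan matrix characterization of strong convex cocompactness given by Corollary~\ref{cor:main2}. The $(\Leftarrow)$ direction is immediate: by Definition~\ref{def:cc-group}.\ref{item:def-strong-cc}, strong convex cocompactness of $\rho(W_S)$ provides in particular a nonempty properly convex open $\Omega \subset \PP(V)$ on which $\rho(W_S)$ acts properly discontinuously; Vinberg's theorem ensures $\rho$ is faithful so that $\rho(W_S) \simeq W_S$ is word hyperbolic, and Fact~\ref{fact:Anosov} then yields that $\rho$ is $P_1$-Anosov.

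For $(\Rightarrow)$, I would exploit the fact that a $P_1$-Anosov representation of a word hyperbolic group maps every infinite-order element to an element that is proximal in $\PP(V)$, and therefore in particular not unipotent. Fix any pair $i\neq j$ with $m_{i,j}=\infty$; then $s_i s_j \in W_S$ has infinite order. Since $\rho(s_k)(x)=x-\alpha_k(x)v_k$, one sees that $\rho(s_is_j)$ acts trivially modulo $\mathrm{span}(v_i,v_j)+V_\alpha$, and its action on a suitable $2$-dimensional quotient identifies with the matrix from Example~\ref{ex:N=2}, whose characteristic polynomial is $\lambda^2-(t-2)\lambda+1$ with $t:=\Cart_{i,j}\Cart_{j,i}$. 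In the critical case $t=4$ this polynomial equals $(\lambda-1)^2$, and the explicit analysis of cases~(ii) and~(iii) of Example~\ref{ex:N=2} shows that $\rho(s_is_j)$ is a nontrivial unipotent element of $\GL(V)$, contradicting the $P_1$-Anosov hypothesis. Since Vinberg's general inequality $\Cart_{i,j}\Cart_{j,i}\geq 4$ always holds in this setting, I therefore conclude $\Cart_{i,j}\Cart_{j,i}>4$ for every $i\neq j$ with $m_{i,j}=\infty$.

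With these strict inequalities established, and the word hyperbolicity of $W_S$ in hand, the implication \ref{item:word-hyp-+}$\Rightarrow$\ref{item:strong-cc-cox} of Corollary~\ref{cor:main2} immediately gives that $\rho(W_S)$ is strongly convex cocompact in $\PP(V)$, completing the argument. The main obstacle is the standard but nontrivial input that a $P_1$-Anosov representation cannot send an infinite-order element to a unipotent matrix; this would be cited from the Anosov literature (for instance \cite{lab06,gw12,klp17,bps19}) rather than re-derived here. Every other step is either a direct invocation of an already-proved result or the routine linear algebra already carried out in Example~\ref{ex:N=2}.
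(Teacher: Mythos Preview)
Your proof is correct, but the $(\Rightarrow)$ direction takes a different route from the paper's. The paper does not argue elementwise via unipotents; instead it passes to the induced representation $\rho_v^\alpha$ on $V_v^\alpha$ (Remark~\ref{rem:notation_rhos}), observes that $\rho_v^\alpha$ is still $P_1$-Anosov (citing \cite[Prop.\,4.1]{gw12} together with the block form~\eqref{eqn:ninezeros}), and then applies Fact~\ref{fact:Anosov} directly to $\rho_v^\alpha$, which is legitimate because $\rho_v^\alpha(W_S)$ is reduced and therefore preserves the properly convex Vinberg domain (Proposition~\ref{prop:maximal}.\eqref{item:max-1}). Convex cocompactness of $\rho_v^\alpha(W_S)$ is then transferred back to $\rho(W_S)$ via Lemma~\ref{lem:V-alpha-v}.\eqref{item:V-alpha-v-cc}.

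Both arguments ultimately need an Anosov input beyond Fact~\ref{fact:Anosov}: yours is the proximality (hence non-unipotency) of the image of every infinite-order element, while the paper's is the stability of the $P_1$-Anosov condition under passing to the subquotient $\rho_v^\alpha$. Your input is arguably the more primitive one---it is essentially built into every definition of Anosov---and it lets you feed the strict inequalities $\Cart_{i,j}\Cart_{j,i}>4$ straight into Corollary~\ref{cor:main2} without touching the reduction machinery of Section~\ref{subsec:rho-red}. The paper's route, on the other hand, avoids any elementwise computation and keeps the argument at the level of invariant convex domains, which is consistent with how the rest of the paper is organized. One small point worth tightening in your write-up: the phrase ``a suitable $2$-dimensional quotient'' is slightly imprecise in case~(ii) of Example~\ref{ex:N=2}, where $v_i$ and $v_j$ are dependent; it is cleaner to say that $\rho(s_is_j)$ acts trivially on $V/\mathrm{span}(v_i,v_j)$ and then invoke the full case analysis of Example~\ref{ex:N=2} (applied to the standard subgroup $\langle s_i,s_j\rangle$, whose fundamental cone has nonempty interior since $\alpha_i,\alpha_j$ must be linearly independent by~\eqref{eqn:not_empt_int}) to conclude unipotency when $\Cart_{i,j}\Cart_{j,i}=4$.
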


Corollary~\ref{cor:hyp-CG-Anosov} follows directly from Corollaries~\ref{cor:main2} and~\ref{coro:cc-int-refl}, and Lemma~\ref{lem:Ano-cc-Coxeter}.

\begin{proof}[Proof of Lemma~\ref{lem:Ano-cc-Coxeter}]
If $\rho(W_S)$ is strongly convex cocompact in $\PP(V)$, then $\rho$ is $P_1$-Anosov by Fact~\ref{fact:Anosov}. 
Conversely, suppose $\rho$ is $P_1$-Anosov.
We cannot immediately apply Fact~\ref{fact:Anosov} because $\rho(W_S)$ might not preserve a \emph{properly} convex open subset of $\PP(V)$.
However, consider the induced representation $\rho^{\alpha}_v : W_S\to\GL(V^{\alpha}_v)$ as in Remarks \ref{rem:notation_rhos} and~\ref{rem:rho-v-alpha-refl-group}, with the same Cartan matrix $\Cart$ as~$\rho$.
It is easy to check (see \cite[Prop.\,4.1]{gw12} and \eqref{eqn:ninezeros}) that $\rho^{\alpha}_v : W_S\to\GL(V^{\alpha}_v)$ is still $P_1$-Anosov. 
The representation $\rho^{\alpha}_v$ is reduced (Definition~\ref{def:V-v-alpha}), hence preserves a properly convex open subset of $\PP(V^{\alpha}_v)$, namely the Tits--Vinberg domain for $\rho^{\alpha}_v(W_S)$ (Proposition~\ref{prop:maximal}.\eqref{item:max-1}).
By Fact~\ref{fact:Anosov}, the group $\rho^{\alpha}_v(W_S)$ is convex cocompact in $\PP(V^{\alpha}_v)$.
Note that the Cartan matrix $\Cart$ is of negative type: this follows from Proposition~\ref{prop:affine} if $N=2$, and from the fact that the infinite, word hyperbolic, irreducible Coxeter group $W_S$ is large if $N=3$ (use Fact~\ref{fact:types}).
Therefore $\rho(W_S)$ is convex cocompact in $\PP(V)$ by Lemma~\ref{lem:V-alpha-v}.\eqref{item:V-alpha-v-cc}.
\end{proof}

\subsection{Three examples}

First, we give an example of a right-angled but not word hyperbolic Coxeter group in $5$ generators such that the cyclic products $\Cart_{1,2} \Cart_{2,1}$ and $\Cart_{4,5} \Cart_{5,4}$ take only the value $4$ on all of $\Hom^{\mathrm{ref}}(W_S, \GL(\RR^4))$.

\begin{example}\label{ex:right-angled}
Let $W_S$ be the right-angled Coxeter group in $N = 5$ generators for which the Coxeter diagram is given by
\begin{center}
\begin{tikzpicture}[thick,scale=0.9, every node/.style={transform shape}]

\node[draw, circle, inner sep=1pt] (1) at (-3,0)   {$s_1$};
\node[draw, circle, inner sep=1pt] (2) at (-1.5,0) {$s_2$};
\node[draw, circle, inner sep=1pt] (3) at (0,0)    {$s_3$};
\node[draw, circle, inner sep=1pt] (4) at (1.5,0)  {$s_4$};
\node[draw, circle, inner sep=1pt] (5) at (3,0)    {$s_5$};

\draw (1) -- (2) node[midway,above] {$\infty$};
\draw (2) -- (3) node[above,midway] {$\infty$};
\draw (3) -- (4) node[above,midway] {$\infty$};
\draw (4) -- (5) node[above,midway] {$\infty$};
\end{tikzpicture}
\end{center}
Since there exist disjoint subsets $S' = \{s_1,s_2 \}$ and $S'' = \{s_4,s_5 \}$ of $S$ such that $W_{S'}$ and $W_{S''}$ are both infinite and commute, the group $W_S$ contains a subgroup isomorphic to~$\ZZ^2$, hence is not word hyperbolic.
Any compatible Cartan matrix for $W_S$ must be conjugate, by diagonal matrices, to
\begin{align*}
\Cart =  \begin{pmatrix}
  2 & -2x & 0  &  0 & 0  \\
 -2 & 2  & -2y &  0 & 0  \\
  0 & -2 & 2  & -2z & 0  \\
  0 & 0  & -2 &  2 & -2u \\
  0 & 0  & 0  & -2 & 2  \\
 \end{pmatrix}
\end{align*}
for some $x,y,z,u \geqslant 1$.
The $(2,4)-$minor of $\Cart$ is $16$, hence $\Cart$ always has rank $\geq 4$.
Rank equal to~$4$ is achieved, for example by the Cartan matrix for the Tits geometric representation, \ie $x=y=z=u=1$.
Since the rank is $4$, this Cartan matrix is also realized as the Cartan matrix for a reflection group in $V$ for $\dim V = 4$: for example, let $V$ be the span of the columns of $\Cart$ inside $\RR^5$.
A further linear algebra calculation shows that if $\mathrm{rank}\,\Cart = 4$, then $x =u = 1$. Indeed, it is an exercise to show that $\det(\Cart) = 32\,(xu+xz+yu -x-y-z-u+1)=0$ if and only if $x=u=1$, using the inequalities $x,y,z,u \geqslant 1$.
Hence if $\dim V = 4$, then all products $\Cart_{1,2}\Cart_{2,1}$ and $\Cart_{4,5}\Cart_{5,4}$ take only the value $4$ on all of $\Hom^{\mathrm{ref}}(W_S, \GL(V))$, which, up to conjugation, is two-dimensional.
\end{example}

Second, we give an example of a word hyperbolic but not right-angled Coxeter group in $6$ generators such that the set of characters $[\rho] \in \chi^{\mathrm{ref}}(W_S,\GL(\RR^5))$ for which $\rho(W_S)$ is convex cocompact in $\PP(\RR^5)$ is \textit{not} the interior of $\chi^{\mathrm{ref}}(W_S,\GL(\RR^5))$ in $\chi(W_S,\GL(\RR^5))$. In particular, the set of representations $\rho \in \Hom^{\mathrm{ref}}(W_S,\GL(\RR^5))$ for which $\rho(W_S)$ is convex cocompact in $\PP(\RR^5)$ is \textit{not} the interior of $\Hom^{\mathrm{ref}}(W_S,\GL(\RR^5))$ in $\Hom(W_S,\GL(\RR^5))$.

\begin{example}\label{ex:word-hyperbolic}
Let $W_S$ be the Coxeter group in $N = 6$ generators for which the Coxeter diagram is given by
\begin{center}
\begin{tikzpicture}[thick,scale=0.9, every node/.style={transform shape}]
\node[draw, circle, inner sep=1pt] (1) at (-1,1)  {$s_1$};
\node[draw, circle, inner sep=1pt] (2) at (-2,0)  {$s_2$};
\node[draw, circle, inner sep=1pt] (3) at (-1,-1) {$s_3$};
\node[draw, circle, inner sep=1pt] (4) at (0,0)   {$s_4$};
\node[draw, circle, inner sep=1pt] (5) at (1.5,0) {$s_5$};
\node[draw, circle, inner sep=1pt] (6) at (3,0)   {$s_6$};

\draw (1) -- (2) node[above,near end]   {$6$};
\draw (2) -- (3) node[below,near start] {$6$};
\draw (3) -- (4) node[below,near end]   {$6$};
\draw (4) -- (1) node[above,near start] {$6$};
\draw (4) -- (5) node[above,midway]     {$\infty$};
\draw (5) -- (6) node[above,midway]     {$6$};
\end{tikzpicture}
\end{center}
We can easily check by Moussong's hyperbolicity criterion (see Remark~\ref{rem:moussong}) that the Coxeter group $W_S$ is word hyperbolic, and see that any compatible Cartan matrix for $W_S$ must be conjugate, by diagonal matrices, to
\begin{align*}
\Cart =  \begin{pmatrix}
2                 & -\sqrt{3} &  0        &  -\sqrt{3} x &  0         &  0\\
-\sqrt{3}         &  2        & -\sqrt{3} &  0           &  0         &  0\\
 0                & -\sqrt{3} &  2        & -\sqrt{3}    &  0         &  0\\
 -\sqrt{3} x^{-1} &  0        & -\sqrt{3} &  2           & -2y        &  0\\
 0                &  0        &           0              & -2y        &  2 &  -\sqrt{3}\\
 0                &  0        &  0        & 0            &  -\sqrt{3} &  2\\
\end{pmatrix}
\end{align*}
for some $x > 0$ and $y \geq 1$.
The $(1,1)-$minor of $\Cart$ is $-4(2y^2+1)\neq 0$, hence $\Cart$ always has rank $\geq 5$.
In particular, all representations of~$W_S$ as a reflection group in $\RR^5$ must be irreducible, hence discussing character or equality up to conjugation is equivalent.
Rank equal to~$5$ is achieved by the Cartan matrix $\Cart$ with $\det(\Cart) = 32 y^2 - 9 (x+x^{-1}) - 14 = 0$.
Hence if $\dim V = 5$, then the space of representations in $\Hom^{\mathrm{ref}}(W_S, \GL(V))$ up to conjugation is homeomorphic to the line~$\RR$.
Moreover, the intersection of $\Hom^{\mathrm{ref}}(W_S, \GL(V))$ in those coordinates with the line $\{ y = 1 \}$ is reduced to the point $(x,y)=(1,1)$.
Hence, by Theorem~\ref{thm:main}, the line $\Hom^{\mathrm{ref}}(W_S, \GL(V))$ except one point corresponds to convex cocompact representations (see Figure~\ref{fig:ex1}).
\begin{figure}[h]
\centering
\labellist
\small\hair 2pt
\pinlabel {$x$} [u] at 345 25
\pinlabel {$y$} [u] at 25 345
\pinlabel {$(1,1)$} [u] at 205 100
\endlabellist
\includegraphics[scale=0.5]{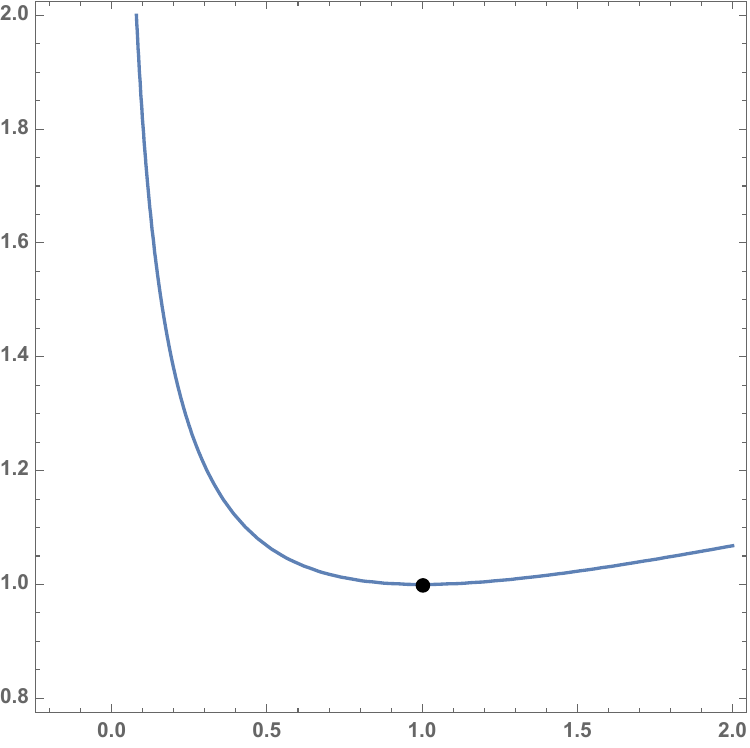}
\caption{The curve $\det(\Cart) = 0$ in terms of $x > 0$ and $y \geq 1$ in Example~\ref{ex:word-hyperbolic}. Every point of the curve except the black dot corresponds to a convex cocompact reflection group (up to conjugation).}
\label{fig:ex1}
\end{figure}
\end{example}

Lastly, we give an example of a not word hyperbolic and not right-angled Coxeter group in $4$ generators such that the set of characters $[\rho] \in \chi^{\mathrm{ref}}(W_S,\GL(\RR^4))$ for which $\rho(W_S)$ is convex cocompact in $\PP(\RR^4)$ is \textit{not} the interior of $\chi^{\mathrm{ref}}(W_S,\GL(\RR^4))$ in $\chi(W_S,\GL(\RR^4))$.

\begin{example} \label{ex:non-hyperbolic}
Let $W_S$ be the Coxeter group in $N = 4$ generators for which the Coxeter diagram is given by
\begin{center}
\begin{tikzpicture}[thick,scale=0.9, every node/.style={transform shape}]
\node[draw, circle, inner sep=1pt] (1) at (-1,1)  {$s_1$};
\node[draw, circle, inner sep=1pt] (2) at (-2.7,0)  {$s_2$};
\node[draw, circle, inner sep=1pt] (3) at (-1,-1) {$s_3$};
\node[draw, circle, inner sep=1pt] (4) at (0.7,0)   {$s_4$};
\draw (1) -- (2) ;
\draw (2) -- (3) ;
\draw (3) -- (4) ;
\draw (4) -- (1) node[above,midway]   {$4$};
\draw (1) -- (3) ;
\end{tikzpicture}
\end{center}
Since there is a subset $S' = \{s_1, s_2, s_3 \}$ of $S$ with $W_{S'}$ of type~$\widetilde{A}_2$, the group $W_S$ contains a subgroup isomorphic to~$\ZZ^2$, hence is not word hyperbolic.
We can easily see that any compatible Cartan matrix for $W_S$ must be conjugate, by diagonal matrices, to
\begin{align*}
\Cart =  \begin{pmatrix}
2         & -1      &  -1     &  -\sqrt{2} \\
-1        &  2      & -x      &  0         \\
-1        & -x^{-1} &  2      & -y         \\
-\sqrt{2} &  0      & -y^{-1} &  2         \\
\end{pmatrix}
\end{align*}
for some $x, y > 0$.
A simple calculation shows that
$$\det(\Cart)=- \Big( 2(x+x^{-1}) +2 \sqrt{2} (y+y^{-1}) + \sqrt{2} (xy + (xy)^{-1}) + 5  \Big) < 0.$$
In particular, all representations of~$W_S$ as a reflection group in $\RR^4$ must be irreducible, hence discussing character or equality up to conjugation is equivalent.
If $\dim V = 4$, then the space of representations in $\Hom^{\mathrm{ref}}(W_S, \GL(V))$ up to conjugation is homeomorphic to $\RR_{>0}^2$; by Theorem~\ref{thm:main}, convex cocompact representations correspond exactly to the complement of the line $\{x=1\}$ in $\RR_{>0}^2$.
\end{example}

\newpage

\appendix

\section{The spherical and affine Coxeter diagrams}\label{appendix:classi_diagram}

Here are all spherical and all affine irreducible diagrams, reproduced from~\cite{classi_cox_poly_coxeter}.

For spherical Coxeter groups (Table~\ref{table1}), the index --- in particular the $n$ for types $A_n$, $B_n$, $D_n$ --- is the number of nodes of the diagram, and the standard representation acts irreducibly on the $(n-1)$-dimensional sphere with a spherical simplex as a fundamental domain.

For affine Coxeter groups (Table~\ref{table2}), the index --- in particular the $n$ for types $\widetilde{A}_n$, $\widetilde{B}_n$, $\widetilde{C}_n$, $\widetilde{D}_n$ --- is \emph{one less than} the number of nodes; the group acts cocompactly on Euclidean $n$-space with a Euclidean simplex as a fundamental domain.

\newcommand{\scalee}{1.5}
\begin{table}[ht!]
\centering
\begin{minipage}[b]{7.7cm}
\centering
\begin{tikzpicture}[thick,scale=0.55, every node/.style={transform shape}]
\node[draw,circle] (A1) at (0,0.6) {};
\node[draw,circle,right=.8cm of A1] (A2) {};
\node[draw,circle,right=.8cm of A2] (A3) {};
\node[draw,circle,right=1cm of A3] (A4) {};
\node[draw,circle,right=.8cm of A4] (A5) {};
\node[left=.8cm of A1,scale=\scalee] {$\Huge A_n\ (n\geq 1)$};

\draw (A1) -- (A2)  node[above,midway] {};
\draw (A2) -- (A3)  node[above,midway] {};
\draw[loosely dotted,thick] (A3) -- (A4) node[] {};
\draw (A4) -- (A5) node[above,midway] {};


\node[draw,circle,below=1.7cm of A1] (B1) {};
\node[draw,circle,right=.8cm of B1] (B2) {};
\node[draw,circle,right=.8cm of B2] (B3) {};
\node[draw,circle,right=1cm of B3] (B4) {};
\node[draw,circle,right=.8cm of B4] (B5) {};
\node[left=.8cm of B1,scale=\scalee] {$B_n\ (n\geq 2)$};

\draw (B1) -- (B2)  node[above,midway] {$4$};
\draw (B2) -- (B3)  node[above,midway] {};
\draw[loosely dotted,thick] (B3) -- (B4) node[] {};
\draw (B4) -- (B5) node[above,midway] {};


\node[draw,circle,below=1.6cm of B1] (D1) {};
\node[draw,circle,right=.8cm of D1] (D2) {};
\node[draw,circle,right=1cm of D2] (D3) {};
\node[draw,circle,right=.8cm of D3] (D4) {};
\node[draw,circle, above right=.8cm of D4] (D5) {};
\node[draw,circle,below right=.8cm of D4] (D6) {};
\node[left=.8cm of D1,scale=\scalee] {$D_n\ (n\geq 4)$};

\draw (D1) -- (D2)  node[above,midway] {};
\draw[loosely dotted] (D2) -- (D3);
\draw (D3) -- (D4) node[above,midway] {};
\draw (D4) -- (D5) node[above,midway] {};
\draw (D4) -- (D6) node[below,midway] {};


\node[draw,circle,below=1.2cm of D1] (I1) {};
\node[draw,circle,right=.8cm of I1] (I2) {};
\node[left=.8cm of I1,scale=\scalee] {$I_2(p)\ (p\geq 5)$};

\draw (I1) -- (I2)  node[above,midway] {$p$};


\node[draw,circle,below=1.2cm of I1] (H1) {};
\node[draw,circle,right=.8cm of H1] (H2) {};
\node[draw,circle,right=.8cm of H2] (H3) {};
\node[left=.8cm of H1,scale=\scalee] {$H_3$};

\draw (H1) -- (H2)  node[above,midway] {$5$};
\draw (H2) -- (H3)  node[above,midway] {};


\node[draw,circle,below=1.2cm of H1] (HH1) {};
\node[draw,circle,right=.8cm of HH1] (HH2) {};
\node[draw,circle,right=.8cm of HH2] (HH3) {};
\node[draw,circle,right=.8cm of HH3] (HH4) {};
\node[left=.8cm of HH1,scale=\scalee] {$H_4$};

\draw (HH1) -- (HH2)  node[above,midway] {$5$};
\draw (HH2) -- (HH3)  node[above,midway] {};
\draw (HH3) -- (HH4)  node[above,midway] {};


\node[draw,circle,below=1.2cm of HH1] (F1) {};
\node[draw,circle,right=.8cm of F1] (F2) {};
\node[draw,circle,right=.8cm of F2] (F3) {};
\node[draw,circle,right=.8cm of F3] (F4) {};
\node[left=.8cm of F1,scale=\scalee] {$F_4$};

\draw (F1) -- (F2)  node[above,midway] {};
\draw (F2) -- (F3)  node[above,midway] {$4$};
\draw (F3) -- (F4)  node[above,midway] {};


\node[draw,circle,below=1.2cm of F1] (E1) {};
\node[draw,circle,right=.8cm of E1] (E2) {};
\node[draw,circle,right=.8cm of E2] (E3) {};
\node[draw,circle,right=.8cm of E3] (E4) {};
\node[draw,circle,right=.8cm of E4] (E5) {};
\node[draw,circle,below=.8cm of E3] (EA) {};
\node[left=.8cm of E1,scale=\scalee] {$E_6$};

\draw (E1) -- (E2)  node[above,midway] {};
\draw (E2) -- (E3)  node[above,midway] {};
\draw (E3) -- (E4)  node[above,midway] {};
\draw (E4) -- (E5)  node[above,midway] {};
\draw (E3) -- (EA)  node[left,midway] {};


\node[draw,circle,below=1.8cm of E1] (EE1) {};
\node[draw,circle,right=.8cm of EE1] (EE2) {};
\node[draw,circle,right=.8cm of EE2] (EE3) {};
\node[draw,circle,right=.8cm of EE3] (EE4) {};
\node[draw,circle,right=.8cm of EE4] (EE5) {};
\node[draw,circle,right=.8cm of EE5] (EE6) {};
\node[draw,circle,below=.8cm of EE3] (EEA) {};
\node[left=.8cm of EE1,scale=\scalee] {$E_7$};

\draw (EE1) -- (EE2)  node[above,midway] {};
\draw (EE2) -- (EE3)  node[above,midway] {};
\draw (EE3) -- (EE4)  node[above,midway] {};
\draw (EE4) -- (EE5)  node[above,midway] {};
\draw (EE5) -- (EE6)  node[above,midway] {};
\draw (EE3) -- (EEA)  node[left,midway] {};


\node[draw,circle,below=1.8cm of EE1] (EEE1) {};
\node[draw,circle,right=.8cm of EEE1] (EEE2) {};
\node[draw,circle,right=.8cm of EEE2] (EEE3) {};
\node[draw,circle,right=.8cm of EEE3] (EEE4) {};
\node[draw,circle,right=.8cm of EEE4] (EEE5) {};
\node[draw,circle,right=.8cm of EEE5] (EEE6) {};
\node[draw,circle,right=.8cm of EEE6] (EEE7) {};
\node[draw,circle,below=.8cm of EEE3] (EEEA) {};
\node[left=.8cm of EEE1,scale=\scalee] {$E_8$};

\draw (EEE1) -- (EEE2)  node[above,midway] {};
\draw (EEE2) -- (EEE3)  node[above,midway] {};
\draw (EEE3) -- (EEE4)  node[above,midway] {};
\draw (EEE4) -- (EEE5)  node[above,midway] {};
\draw (EEE5) -- (EEE6)  node[above,midway] {};
\draw (EEE6) -- (EEE7)  node[above,midway] {};
\draw (EEE3) -- (EEEA)  node[left,midway] {};

\draw (0,-18.5) node[]{} ;
\end{tikzpicture}
\caption{The diagrams of the spherical irreducible Coxeter groups \label{table1}}
\label{table:spheri_diag}
\end{minipage}
\begin{minipage}[t]{7.7cm}
\centering
\begin{tikzpicture}[thick,scale=0.55, every node/.style={transform shape}]
\node[draw,circle] (A1) at (0,0) {};
\node[draw,circle,above right=.8cm of A1] (A2) {};
\node[draw,circle,right=.8cm of A2] (A3) {};
\node[draw,circle,right=.8cm of A3] (A4) {};
\node[draw,circle,right=.8cm of A4] (A5) {};
\node[draw,circle,below right=.8cm of A5] (A6) {};
\node[draw,circle,below left=.8cm of A6] (A7) {};
\node[draw,circle,left=.8cm of A7] (A8) {};
\node[draw,circle,left=.8cm of A8] (A9) {};
\node[draw,circle,left=.8cm of A9] (A10) {};

\node[left=.8cm of A1,scale=\scalee] {$\widetilde{A}_n\ (n\geq 2)$};

\draw (A1) -- (A2)  node[above,midway] {};
\draw (A2) -- (A3)  node[above,midway] {};
\draw (A3) -- (A4) node[] {};
\draw (A4) -- (A5) node[above,midway] {};
\draw (A5) -- (A6) node[] {};
\draw (A6) -- (A7) node[] {};
\draw (A7) -- (A8) node[] {};
\draw[loosely dotted,thick] (A8) -- (A9) node[] {};
\draw (A9) -- (A10) node[] {};
\draw (A10) -- (A1) node[] {};


\node[draw,circle,below=1.7cm of A1] (B1) {};
\node[draw,circle,right=.8cm of B1] (B2) {};
\node[draw,circle,right=.8cm of B2] (B3) {};
\node[draw,circle,right=1cm of B3] (B4) {};
\node[draw,circle,right=.8cm of B4] (B5) {};
\node[draw,circle,above right=.8cm of B5] (B6) {};
\node[draw,circle,below right=.8cm of B5] (B7) {};
\node[left=.8cm of B1,scale=\scalee] {$\widetilde{B}_n\ (n\geq 3)$};

\draw (B1) -- (B2)  node[above,midway] {$4$};
\draw (B2) -- (B3)  node[above,midway] {};
\draw[loosely dotted,thick] (B3) -- (B4) node[] {};
\draw (B4) -- (B5) node[above,midway] {};
\draw (B5) -- (B6) node[above,midway] {};
\draw (B5) -- (B7) node[above,midway] {};


\node[draw,circle,below=1.5cm of B1] (C1) {};
\node[draw,circle,right=.8cm of C1] (C2) {};
\node[draw,circle,right=.8cm of C2] (C3) {};
\node[draw,circle,right=1cm of C3] (C4) {};
\node[draw,circle,right=.8cm of C4] (C5) {};
\node[left=.8cm of C1,scale=\scalee] (CCC) {$\widetilde{C}_n\ (n\geq 3)$};

\draw (C1) -- (C2)  node[above,midway] {$4$};
\draw (C2) -- (C3)  node[above,midway] {};
\draw[loosely dotted,thick] (C3) -- (C4) node[] {};
\draw (C4) -- (C5) node[above,midway] {$4$};


\node[draw,circle,below=0.8cm of C1] (D1) {};
\node[draw,circle,below right=0.8cm of D1] (D3) {};
\node[draw,circle,below left=0.8cm of D3] (D2) {};
\node[draw,circle,right=.8cm of D3] (DA) {};
\node[draw,circle,right=1cm of DA] (DB) {};
\node[draw,circle,right=.8cm of DB] (D4) {};
\node[draw,circle, above right=.8cm of D4] (D5) {};
\node[draw,circle,below right=.8cm of D4] (D6) {};
\node[below=0.8cm of CCC,scale=\scalee] {$\widetilde{D}_n\ (n\geq 4)$};

\draw (D1) -- (D3)  node[above,midway] {};
\draw (D2) -- (D3)  node[above,midway] {};
\draw (D3) -- (DA) node[above,midway] {};
\draw[loosely dotted] (DA) -- (DB);
\draw (D4) -- (DB) node[above,midway] {};
\draw (D4) -- (D5) node[above,midway] {};
\draw (D4) -- (D6) node[below,midway] {};


\node[draw,circle,below=2.5cm of D1] (I1) {};
\node[draw,circle,right=.8cm of I1] (I2) {};
\node[left=.8cm of I1,scale=\scalee] {$\widetilde{A}_1$};

\draw (I1) -- (I2)  node[above,midway] {$\infty$};


\node[draw,circle,below=1.2cm of I1] (H1) {};
\node[draw,circle,right=.8cm of H1] (H2) {};
\node[draw,circle,right=.8cm of H2] (H3) {};
\node[left=.8cm of H1,scale=\scalee] {$\widetilde{B}_2=\widetilde{C}_2$};

\draw (H1) -- (H2)  node[above,midway] {$4$};
\draw (H2) -- (H3)  node[above,midway] {$4$};


\node[draw,circle,below=1.2cm of H1] (HH1) {};
\node[draw,circle,right=.8cm of HH1] (HH2) {};
\node[draw,circle,right=.8cm of HH2] (HH3) {};
\node[left=.8cm of HH1,scale=\scalee] {$\widetilde{G}_2$};

\draw (HH1) -- (HH2)  node[above,midway] {$6$};
\draw (HH2) -- (HH3)  node[above,midway] {};


\node[draw,circle,below=1.2cm of HH1] (F1) {};
\node[draw,circle,right=.8cm of F1] (F2) {};
\node[draw,circle,right=.8cm of F2] (F3) {};
\node[draw,circle,right=.8cm of F3] (F4) {};
\node[draw,circle,right=.8cm of F4] (F5) {};
\node[left=.8cm of F1,scale=\scalee] {$\widetilde{F}_4$};

\draw (F1) -- (F2)  node[above,midway] {};
\draw (F2) -- (F3)  node[above,midway] {$4$};
\draw (F3) -- (F4)  node[above,midway] {};
\draw (F4) -- (F5)  node[above,midway] {};


\node[draw,circle,below=1.6cm of F1] (E1) {};
\node[draw,circle,right=.8cm of E1] (E2) {};
\node[draw,circle,right=.8cm of E2] (E3) {};
\node[draw,circle,above right=.8cm of E3] (E4) {};
\node[draw,circle,right=.8cm of E4] (E5) {};
\node[draw,circle,below right =.8cm of E3] (EA) {};
\node[draw,circle,right=.8cm of EA] (EB) {};
\node[left=.8cm of E1,scale=\scalee] {$\widetilde{E}_6$};

\draw (E1) -- (E2)  node[above,midway] {};
\draw (E2) -- (E3)  node[above,midway] {};
\draw (E3) -- (E4)  node[above,midway] {};
\draw (E4) -- (E5)  node[above,midway] {};
\draw (E3) -- (EA)  node[left,midway] {};
\draw (EB) -- (EA)  node[left,midway] {};


\node[draw,circle,below=1.6cm of E1] (EE1) {};
\node[draw,circle,right=.8cm of EE1] (EEB) {};
\node[draw,circle,right=.8cm of EEB] (EE2) {};
\node[draw,circle,right=.8cm of EE2] (EE3) {};
\node[draw,circle,right=.8cm of EE3] (EE4) {};
\node[draw,circle,right=.8cm of EE4] (EE5) {};
\node[draw,circle,right=.8cm of EE5] (EE6) {};
\node[draw,circle,below=.8cm of EE3] (EEA) {};
\node[left=.8cm of EE1,scale=\scalee] {$\widetilde{E}_7$};

\draw (EE1) -- (EEB)  node[above,midway] {};
\draw (EE2) -- (EEB)  node[above,midway] {};
\draw (EE2) -- (EE3)  node[above,midway] {};
\draw (EE3) -- (EE4)  node[above,midway] {};
\draw (EE4) -- (EE5)  node[above,midway] {};
\draw (EE5) -- (EE6)  node[above,midway] {};
\draw (EE3) -- (EEA)  node[left,midway] {};


\node[draw,circle,below=1.8cm of EE1] (EEE1) {};
\node[draw,circle,right=.8cm of EEE1] (EEE2) {};
\node[draw,circle,right=.8cm of EEE2] (EEE3) {};
\node[draw,circle,right=.8cm of EEE3] (EEE4) {};
\node[draw,circle,right=.8cm of EEE4] (EEE5) {};
\node[draw,circle,right=.8cm of EEE5] (EEE6) {};
\node[draw,circle,right=.8cm of EEE6] (EEE7) {};
\node[draw,circle,right=.8cm of EEE7] (EEE8) {};
\node[draw,circle,below=.8cm of EEE3] (EEEA) {};
\node[left=.8cm of EEE1,scale=\scalee] {$\widetilde{E}_8$};

\draw (EEE1) -- (EEE2)  node[above,midway] {};
\draw (EEE2) -- (EEE3)  node[above,midway] {};
\draw (EEE3) -- (EEE4)  node[above,midway] {};
\draw (EEE4) -- (EEE5)  node[above,midway] {};
\draw (EEE5) -- (EEE6)  node[above,midway] {};
\draw (EEE6) -- (EEE7)  node[above,midway] {};
\draw (EEE8) -- (EEE7)  node[above,midway] {};
\draw (EEE3) -- (EEEA)  node[left,midway] {};

\draw (0,-22.5) node[]{} ;
\end{tikzpicture}
\caption{The diagrams of the affine irreducible Coxeter groups \label{table2}}
\label{table:affi_diag}
\end{minipage}
\end{table}

\clearpage



\begin{thebibliography}{GGKW}

\bibitem[BDL]{bdl18}
\textsc{S. A. Ballas, J. Danciger, G.-S. Lee}, \textit{Convex projective structures on non-hyperbolic three-manifolds}, Geom. Topol.~22 (2018), p.~1593--1646.

\bibitem[Ba]{bar15}
\textsc{T. Barbot}, \textit{Deformations of Fuchsian AdS representations are quasi-Fuchsian}, J. Differential Geom.~101 (2015), p.~1--46.
  
\bibitem[BM]{bm12}
\textsc{T. Barbot, Q. M\'erigot}, \textit{Anosov AdS representations are quasi-Fuchsian}, Groups Geom. Dyn.~6 (2012), p.~441--483.

\bibitem[Be1]{ben97}
\textsc{Y. Benoist}, \textit{Propri\'et\'es asymptotiques des groupes lin\'eaires}, Geom. Funct. Anal.~7 (1997), p.~1--47.   

\bibitem[Be2]{ben00}
\textsc{Y. Benoist}, \textit{Automorphismes des c\^ones convexes}, Invent. Math.~141 (2000), p.~149--193.

\bibitem[Be3]{benQI}
\textsc{Y. Benoist}, \textit{Convexes hyperboliques et quasiisom\'etries}, Geom. Dedicata~122 (2006) p.~109--134. 

\bibitem[Be4]{ben06}
\textsc{Y. Benoist}, \textit{Convexes divisibles IV}, Invent. Math.~164 (2006), p.~249--278.

\bibitem[Be5]{ben08}
\textsc{Y. Benoist}, \textit{A survey on divisible convex sets}, in \textit{Geometry, analysis and topology of discrete groups}, p.~1--18, Advanced Lectures in Mathematics, vol.~6, International Press, Somerville, MA, Higher Education Press, Beijing, 2008.

\bibitem[Bl]{bla-PhD}
\textsc{P.-L. Blayac}, \textit{Aspects dynamiques des structures projectives convexes}, PhD thesis, Universit\'e Paris-Saclay, 2021, see \url{https://theses.hal.science/tel-03484026}.

\bibitem[BCR]{bcr}
\textsc{J. Bochnak, M. Coste, M.-F. Roy}, \textit{Real algebraic geometry}, Ergebnisse der Mathematik und ihrer Grenzgebiete, vol.~36, Springer, Berlin, 1998.

\bibitem[BPS]{bps19}
\textsc{J. Bochi, R. Potrie, A. Sambarino}, \textit{Anosov representations and dominated splittings}, J. Eur. Math. Soc.~21 (2019), p.~3343--3414.

\bibitem[Bo]{bourbaki_coxeter}
\textsc{N. Bourbaki}, \textit{\'El\'ements de math\'ematique. Fasc. XXXIV. Groupes et alg\`ebres de Lie. Chapitres IV, V et VI: Groupes de Coxeter et syst\`emes de Tits. Groupes engendr\'es par des r\'eflexions. Syst\`e\-mes de racines}, Actualit\'es scientifiques et industrielles, vol.~1337, Hermann, Paris, 1968.

\bibitem[Bu]{bus55}
\textsc{H. Busemann}, \textit{The geometry of geodesics}, Pure and Applied Mathematics, vol.~6, Academic Press Inc., New York, 1955.

\bibitem[Ca1]{cap09}
\textsc{P.-E. Caprace}, \textit{Buildings with isolated subspaces and relatively hyperbolic {C}oxeter groups}, Innov. Incidence Geom.~10 (2009), p.~15--31.

\bibitem[Ca2]{cap15}
\textsc{P.-E. Caprace}, \textit{Erratum to ``{B}uildings with isolated subspaces and relatively hyperbolic {C}oxeter groups''}, Innov. Incidence Geom.~14 (2015), p.~77--79.

\bibitem[CL]{cl15}
\textsc{S. Choi, G.-S. Lee}, \textit{Projective deformations of weakly orderable hyperbolic Coxeter orbifolds}, Geom. Topol.~19 (2015), p.~1777--1828.

\bibitem[CLM1]{clm-dehnfill}
\textsc{S. Choi, G.-S. Lee, L. Marquis}, \textit{Convex projective generalized Dehn filling}, Ann. Sci. \'Ec. Norm. Sup\'er.~53 (2020), p.~217--266.

\bibitem[CLM2]{clm-ecima}
\textsc{S. Choi, G.-S. Lee, L. Marquis}, \textit{Deformation spaces of Coxeter truncation polytopes}, J. London Math. Soc.~106 (2022), p.~3822--3864.

\bibitem[Cox34]{classi_cox_poly_coxeter}
\textsc{H. S. M. Coxeter}, \textit{Discrete groups generated by reflections}, Ann. Math.~35 (1934), p.~588--621.

\bibitem[CM]{cm14}
\textsc{M. Crampon, L. Marquis}, \textit{Finitude g\'eom\'etrique en g\'eom\'etrie de Hilbert}, with an appendix by the authors and C.~Vernicos, Ann. Inst. Fourier~64 (2014), p.~2299--2377.

\bibitem[DGK1]{dgk-ccHpq}
\textsc{J. Danciger, F. Gu\'eritaud, F. Kassel}, \textit{Convex cocompactness in pseudo-Riemannian hyperbolic spaces}, Geom. Dedicata~192 (2018), p.~87--126, special issue \textit{Geometries: A Celebration of Bill Goldman's 60th Birthday}.

\bibitem[DGK2]{dgk-proj-cc}
\textsc{J. Danciger, F. Gu\'eritaud, F. Kassel}, \textit{Convex cocompact actions in real projective geometry}, Ann. Sci. \'Ec. Norm. Sup\'er.~57 (2024), p.~1751--1841.

\bibitem[DGK3]{dgk-ex-cc}
\textsc{J. Danciger, F. Gu\'eritaud, F. Kassel}, \textit{Combination theorems in convex projective geometry}, arXiv:2407.09439.

\bibitem[Da]{dav08}
\textsc{M. W. Davis}, \textit{The geometry and topology of {C}oxeter groups}, London Mathematical Society Monographs, vol.~32, Princeton University Press, Princeton, NJ, 2008.

\bibitem[DH]{dh13}
\textsc{M. Desgroseilliers, F. Haglund}, \textit{On some convex cocompact groups in real hyperbolic space}, Geom. Topol.~17 (2013), p.~2431--2484.

\bibitem[FK]{fk05}
\textsc{T. Foertsch, A. Karlsson}, \textit{Hilbert metrics and Minkowski norms}, J. Geom.~83 (2005), p.~22--31.

\bibitem[GGKW]{ggkw17}
\textsc{F. Gu\'eritaud, O. Guichard, F. Kassel, A. Wienhard}, \textit{Anosov representations and proper actions}, Geom. Topol.~21 (2017), p.~485--584.

\bibitem[GW]{gw12}
\textsc{O. Guichard, A. Wienhard}, \textit{Anosov representations: Domains of discontinuity and applications}, Invent. Math.~190 (2012), p.~357--438.

\bibitem[Gu]{gui90}
\textsc{Y. Guivarc'h}, \textit{Produits de matrices al\'eatoires et applications aux propri\'et\'es g\'eom\'etriques des sous-groupes du groupe lin\'eaire}, Ergodic Theory Dynam. Systems~10 (1990), p.~483--512.

\bibitem[Kac]{kac90}
\textsc{V. Kac}, \textit{Infinite-dimensional {L}ie algebras}, Cambridge University Press, Cambridge, 1990.

\bibitem[KV]{kv67}
\textsc{V. Kac, E. B. Vinberg}, \textit{Quasihomogeneous cones}, Math. Notes~1 (1967), p.~231--235.

\bibitem[KLP1]{klp17}
\textsc{M. Kapovich, B. Leeb, J. Porti}, \textit{Anosov subgroups: Dynamical and geometric characterizations}, Eur. J. Math.~3 (2017), p.~808--898.

\bibitem[KLP2]{klp-survey}
\textsc{M. Kapovich, B. Leeb, J. Porti}, \textit{Some recent results on Anosov representations}, Transform. Groups~21 (2016), p.~1105--1121.

\bibitem[Kas]{kas19}
\textsc{F. Kassel}, \textit{Geometric structures and representations of discrete groups}, in \emph{Proceedings of the International Congress of Mathematicians 2018 (ICM 2018)}, p.~1113--1150, World Scientific, 2019.

\bibitem[KL]{kl06}
\textsc{B. Kleiner, B. Leeb}, \textit{Rigidity of invariant convex sets in symmetric spaces}, Invent. Math.~163 (2006), p.~657--676.

\bibitem[Kr]{kra94}
\textsc{D. Krammer}, \textit{The conjugacy problem for Coxeter groups}, PhD thesis, Universiteit Utrecht, 1994, published in Groups Geom. Dyn.~3 (2009), p.~71--171.

\bibitem[La]{lab06}
\textsc{F. Labourie}, \textit{Anosov flows, surface groups and curves in projective space}, Invent. Math.~165 (2006), p.~51--114.

\bibitem[LM]{lm18}
\textsc{G.-S. Lee, L. Marquis}, \textit{Anti-de Sitter strictly GHC-regular groups which are not lattices}, Trans. Amer. Math. Soc.~372 (2019), p.~153--186.

\bibitem[MV]{mv00}
\textsc{G. A. Margulis, E. B. Vinberg}, \textit{Some linear groups virtually having a free quotient}, J. Lie Theory~10 (2000), p.~171--180.

\bibitem[Ma1]{mar10}
\textsc{L. Marquis.} \textit{Espace des modules de certains poly{\`e}dres projectifs miroirs}, Geom. Dedicata~147 (2010), p.~47--86.

\bibitem[Ma2]{mar17}
\textsc{L. Marquis}, \textit{Coxeter group in Hilbert geometry}, Groups Geom. Dyn.~11 (2017), p.~819--877.

\bibitem[Me]{mes90}
\textsc{G. Mess}, \textit{Lorentz spacetimes of constant curvature} (1990), Geom. Dedicata~126 (2007), p.~3--45.

\bibitem[Mo]{mou87}
\textsc{G. Moussong}, \textit{Hyperbolic Coxeter groups}, PhD thesis, Ohio State University, 1987, see \url{https://people.math.osu.edu/davis.12/papers/Moussongthesis.pdf}.

\bibitem[P]{pot19}
\textsc{R. Potrie}, \textit{Robust dynamics, invariant structures and topological classification}, in \emph{Proceedings of the International Congress of Mathematicians 2018 (ICM 2018)}, p.~2057--2080, World Scientific,~2019.

\bibitem[Q]{qui05}
\textsc{J.-F. Quint}, \textit{Groupes convexes cocompacts en rang sup\'erieur}, Geom. Dedicata~113 (2005), p.~1--19.

\bibitem[Sa]{san54}
\textsc{L. Sandgren}, \textit{On convex cones}, Math. Scand.~2 (1954), p.~19--28.

\bibitem[Se]{sen06}
\textsc{E. Seneta}, \textit{Non-negative matrices and Markov chains}, revised reprint of the second (1981) edition, Springer Series in Statistics, Springer, New York, 2006.

\bibitem[St]{sti15}
\textsc{E. Stiemke}, \textit{\"Uber positive L\"osungen homogener linearer Gleichungen}, Math.
Ann.~76 (1915), p.~340--342.

\bibitem[T]{tit61}
\textsc{J. Tits}, \textit{Groupes et g\'eom\'etries de Coxeter}, mimeographed notes, IH\'{E}S, Bures-sur-Yvette, June 1961; reproduced in: \textit{Wolf Prize in Mathematics~2}, p.~740--754, World Scientific Publishing, River Edge, NJ, 2001.

\bibitem[Ve]{vey70}
\textsc{J. Vey}, \textit{Sur les automorphismes affines des ouverts convexes saillants}, Ann. Sc. Norm. Super. Pisa~24 (1970), p.~641--665.

\bibitem[Vi1]{vin71}
\textsc{E. B. Vinberg}, \textit{Discrete linear groups generated by reflections}, Math.\ USSR Izv.~5 (1971), p.~1083--1119.

\bibitem[Vi2]{vin84}
\textsc{E. B. Vinberg}, \textit{Absence of crystallographic groups of reflections in Lobachevski\u{\i} spaces of large dimension}, Trudy Moskov. Mat. Obshch.~47 (1984), p.~68--102.

\bibitem[Z]{zim}
\textsc{A. Zimmer}, \textit{Projective Anosov representations, convex cocompact actions, and rigidity}, J. Differential Geom.~119 (2021), p.~513--586.
\end{thebibliography}
\end{document}